\documentclass{amsart}

\usepackage{amssymb}
\usepackage{enumerate}  

 \numberwithin{equation}{section}

\usepackage{latexsym,amssymb,amsthm,amsmath,amscd}

\theoremstyle{plain}

\newtheorem{theorem}{Theorem}[section]
\newtheorem*{theorem2.3}{Theorem 2.3}
\newtheorem*{theorem2.4}{Theorem 2.4}
\newtheorem*{theorem2.5}{Theorem 2.5}

\newtheorem*{theorem5.1}{Theorem 5.1}
\newtheorem*{theorem5.2}{Theorem 5.2}
\newtheorem*{theorem5.3}{Theorem 5.3}
\newtheorem*{theorem5.4}{Theorem 5.4}

\newtheorem*{theorem7.3}{Theorem 7.3}
\newtheorem*{theorem7.4}{Theorem 7.4}
\newtheorem*{theorem7.7}{Theorem 7.7}
\newtheorem*{theorem8.1}{Theorem 8.1}
\newtheorem*{theorem9.1}{Theorem 9.1}
\newtheorem*{theorem9.2}{Theorem 9.2}
\newtheorem*{theorem10.1}{Theorem 10.1}
\newtheorem*{theorem10.2}{Theorem 10.2}
\newtheorem*{theorem11.2}{Theorem 11.2}
\newtheorem*{theorem12.1}{Theorem 12.1}
\newtheorem*{theorem12.2}{Theorem 12.2}
\newtheorem*{theorem2.7}{Theorem 2.7}
\newtheorem*{theorem2.8}{Theorem 2.8}

\newtheorem*{theorem3.3}{Theorem 3.3}
\newtheorem*{theorem3.4}{Theorem 3.4}

\newtheorem*{theorem3.5}{Theorem 3.5}
\newtheorem*{theorem6.2}{Theorem 6.2}
\newtheorem*{theoremA}{Theorem A}

\newtheorem*{theoremC}{Theorem C}
\newtheorem*{theoremD}{Theorem D}
\newtheorem*{theoremE}{Theorem E}
\newtheorem*{theoremF}{Theorem F}
\newtheorem*{remark1}{Remark 3.1}
\newtheorem*{remark7.1}{Remark 7.1}
\newtheorem*{remark7.2}{Remark 7.2}
\newtheorem*{corollary3.1}{Corollary 3.1}
\newtheorem*{corollary3.5}{Corollary 3.5}
\newtheorem*{corollary3.7}{Corollary 3.7}
\newtheorem*{corollary5.1}{Corollary 5.1}
\newtheorem*{corollary6.1}{Corollary 6.1}
\newtheorem*{lemma7.1}{Lemma 7.1}
\newtheorem*{lemma7.2}{Lemma 7.2}
\newtheorem*{lemma7.3}{Lemma 7.3}
\newtheorem*{corollary8.1}{Corollary 8.1}
\newtheorem*{lemma8.1}{Lemma 8.1}
\newtheorem*{lemA}{Lemma A}

\newtheorem{corollary}{Corollary}[section]
\newtheorem*{definition1}{Definition 2.1}
\newtheorem*{definition5.1}{Definition 5.1}
\newtheorem*{definition5.2}{Definition 5.2}
\newtheorem*{definition5.3}{Definition 5.3}
\newtheorem*{definition6.1}{Definition 6.1}
\newtheorem*{definition6.2}{Definition 6.2}
\newtheorem*{definition7.1}{Definition 7.1}
\newtheorem*{definition8.1}{Definition 8.1}
\newtheorem*{definition8.2}{Definition 8.2}
\newtheorem*{definition11.1}{Definition 11.1}

\numberwithin{equation}{section}

\newtheorem*{Theorem A}{{\bf Theorem A}}
\newtheorem*{Theorem B}{{\bf Theorem B}}
\newtheorem*{Theorem C}{Theorem C}
\newtheorem*{Theorem 5.1}{Theorem 5.1}
\newtheorem*{Theorem 5.2}{Theorem 5.2}

\newtheorem*{remark2.1}{Remark 2.1}

\theoremstyle{definition}

\numberwithin{equation}{section}


\newcommand{\E}{\epsilon}









\begin{document}

\title[Dualities in Comparison Theoryems and Bundle-Valued Generalized Harmonic Forms  
] {Dualities in Comparison Theorems and Bundle-Valued Generalized Harmonic Forms on Noncompact Manifolds}

\author[S.W. Wei]{Shihshu Walter Wei$^{\ast }$}
\address{Department of Mathematics\\
University of Oklahoma\\ Norman, Oklahoma 73019-0315\\ U.S.A.}
\email{wwei@ou.edu}

\begin{abstract} We observe, utilize dualities in differential equations and differential inequalities (see Theorem \ref{T: 2.1}), dualities between comparison theorems in differential equations (see Theorems E and \ref{T: 2.2}), and obtain dualities in ``swapping" 
comparison theorems in differential equations. These dualities generate comparison theorems on differential equations of mixed types I and II (see Theorems \ref{T: 2.3} and \ref{T: 2.4}) and lead to comparison theorems in Riemannian 
geometry (see Theorems \ref{T: 2.5} and \ref{T: 2.8}) with analytic, geometric, P.D.E.'s and physical applications. In particular, we prove Hessian comparison theorems (see Theorems \ref{T: 3.1} - \ref{T: 3.5}) and Laplacian comparison theorems (see Theorems \ref{T: 2.6}, \ref{T: 2.7}, \ref{T: 3.1} - \ref{T: 3.5}) under varied radial Ricci curvature, radial 
curvature, Ricci curvature and sectional curvature assumptions, generalizing and extending the work of Han-Li-Ren-Wei (\cite {HLRW}), and 
Wei (\cite {W3}). We also extend the notion of function or differential form growth to bundle-valued differential form growth of various types and discuss their interrelationship (see Theorem 5.4). These provide tools in extending the notion, integrability and decomposition 
of generalized harmonic forms to those of bundle-valued generalized harmonic forms, introducing Condition W for bundle-valued differential forms, and proving duality theorem and unity theorem, generalizing the work of  Andreotti and Vesentini \cite {AV} and Wei  \cite {W4}.    
We then apply Hessian and Laplacian comparison theorems to obtain comparison theorems in mean curvature, generalized sharp Caffarelli-Kohn-Nirenberg type inequalities on
Riemannian manifolds, embedding theorem  for weighted Sobolev spaces of functions on manifolds, geometric differential-integral inequalities, generalized sharp Hardy type inequalities on
Riemannian manifolds, monotonicity formulas and vanishing theorems for differential forms of degree $k$ with values in vector bundles, such as   $F$-Yang Mills fields (when $F$ is the identity map, they are Yang-Mills fields), generalized Yang-Mills-Born-Infeld
fields on manifolds, Liouville type theorems for $F$-harmonic maps (when $F(t) = \frac 1p (2t)^{\frac{p}{2}}\, , p > 1$, they become $p$-harmonic maps or harmonic maps if $p=2$), and Dirichlet problems on starlike domains for vector bundle valued differential $1$-forms and $F$-harminic maps (see Theorems \ref{T: 4.1}, \ref{T: 7.3} - \ref{T: 7.7}, \ref{T: 8.1}, \ref{T: 9.1} - \ref{T: 9.3}, \ref {T: 10.1}, \ref{T: 11.2}, \ref{T: 12.1}, \ref{T: 12.2}),
generalizing the work of Caffarelli-Kohn-Nirenberg (\cite {CKN}) and Costa (\cite {C}), in which $M=\mathbb{R}^n\, $ and its radial curvature $K(r) = 0\, ,$ 
the work of Wei and Li \cite {WL}, Chen-Li-Wei \cite {CLW1, CLW2}, Dong and Wei \cite {DW}, Wei \cite {W3}, Karcher and Wood \cite {KW}, etc.
The boundary value problem for bundle-valued differential $1$-forms
is in contrast to the Dirichlet problem for $p$-harmonic maps to which the solution
is due to Hamilton \cite {H1} for the case $p=2$ and $Riem^{N}\leq 0\, ,$ and  Wei \cite {W2} for $1 < p < \infty\, .$\end{abstract}

\keywords{radial curvature; Hessian; Laplacian; Caffarelli-Kohn-Nirenberg inequality; $F$-harmonic map; $F$-Yang-Mills field}

 \subjclass[2000]{Primary: 26D15, 53C21, 81T13; Secondary 53C20, 58E20}
\thanks{\\
$^*$ Research supported in part by NSF (DMS-1447008).}
\date{}
\maketitle
\section{Introduction}
Duality is a special type of symmetry that involves with “polar opposites” and their dynamical interplays. The two (``Yin" and ``Yang", studied by a legendary sage Lao Tzu in his book Tao Te Ching and in the Book of Changes or I-Ching) are not merely opposites. The more we learn about human striving, the more we see they are supplementary, complementary, integrative and inextricably bound together. Duality is very elegant, yet powerful and has a long and distinguished history going back thousands of years. It is a natural and precious phenomenon that permeates or occurs in practically all branches of mathematics, physics, engineering, logic, psychology, real life, food science, social sciences, natural sciences, medical sciences such as alternative or energy medicine, acupuncture, meditation, qigong, physical therapy, nutrition therapy, immunotherapy, etc. \smallskip

Fundamentally, duality gives {\it two} different points of view of looking at
the  {\it same} object. In the study of comparison theorems (in {\it differential equations, differential geometry, differential - integral inequalities}), harmonic forms, mean curvature, etc, we find many objects that have two different points
of view and in principle they are all dualities. Whereas, in {\it physics} electricity and  magnetism are dual objects, {\it Hodge theory of harmonic forms} 
is motivated in part by {\it Maxwell's equations of unifying}
magnetism with electricity in a {\it physics} world. In a {\it complex line bundle} $\mathbb L_{\mathbb C}$ with structure group $U(1)$ over a $4$-dimensional Lorentzian manifold, Maxwell's equations are the precise equations for the curvature $2$-form $\omega$ of a connection on $\mathbb L_{\mathbb C}$ being both closed $(d \omega = 0)$ and co-closed $(d^{\star} \omega = 0)$, and hence the curvature form $\omega$ is harmonic $\big (\Delta \omega := - (d d^{\star} + d^{\star} d) \omega = 0\big )$. Whereas in {\it topology}, duality can broadly distinguish between {\it contravariant} functors such as comology, $K$-theory, or more general bundle theory and {\it covariant} functor such as homology or homotopy. A de Rham
cohomology class involves with interplaying two objects - closed forms (coming from the {\it source} of one exterior differential operator $d$) and exact forms (coming from the {\it target} of its preceding operator $d$). Whereas in {\it operator theory}, Hodge Laplacian involves with utilizing dual objects - exterior differential operator $d$ and codifferential operator $d^{\star}$, harmonic forms are privileged representatives in a de Rham
cohomology class picked out by the Hodge Laplacian. From the view point of {\it calculus of variations}, harmonic forms are precisely the minimal
$L^2$ forms within their respective topological (cohomology)
classes. Whereas, many things embrace dualities {\it Yin} and {\it Yang} according to Lao Tzu, their dynamical interplay is viewed as the essence of all {\it natural} phenomena, {\it human} affairs,  and Chinese {\it medicine}, and has strong impacts, interactions and connections with unity.
Harmonic forms, motivated in part by Maxwell's equations, coming out of physics, generalize
harmonic functions in the study of {\it complex analysis, partial differential equations, 
potential theory}, and have
important connections with {\it several complex variables, Lie group representation theory and algebraic geometry} through the use of
K\"ahler metric and Bochner technique.
\smallskip

In the {\bf first} part of this paper, 
we begin with studying comparison theorems in differential equations and in differential geometry and the transitions between these two fields from the viewpoint of dualities.

We observe and describe the duality between two types of differential equations, the second-order, linear Jacobi type equation \eqref{2.1} and the first-order, nonlinear Riccati type equation \eqref{2.2}, and the duality between the initial condition in \eqref{2.1} and the asymptotic condition in \eqref{2.2} (see Theorem 2.1).  Moreover, to each type of differential equation, there corresponds a comparison theorem on its supersolutions and subsolutions
with appropriate initial or asymptotic condition (see Theorems E and Theorem \ref{T: 2.2}). The duality between these two types of differential equations \eqref{2.1} and \eqref{2.2} leads to the duality between two corresponding comparison theorems in differential equations of the same type. This, in turn gives rise to new duality in {\it swapping} differential equations of dual type and hence generates the following Comparison Theorems in Differential Equations of Mixed Types I and II with appropriate initial 
and asymptotic conditions: 

Denote $AC(0,t)$ the set of absolutely continuous real-valued function on an open interval $(0, t)$, i.e.,
\[ AC(0,t) = \{ f : (0, t) \to \mathbb R\, |\,  f \operatorname{is}\,  \operatorname{absolutely}\,  \operatorname{continuous}\, , \operatorname{where}\, (0, t) \subset (0, \infty)\}.   
\]

\begin{theorem2.3}$($Comparing Differential Equations of Mixed Type $\operatorname{I})$
Let $G_i$ be real-valued functions defined on $(0, t_i) \subset (0, \infty)\, , i =1,2$ satisfying \begin{equation}G_2 \le G_1\tag{2.15}
\end{equation}on $(0, t_1) \cap (0, t_2) $. Let $g_1\in AC(0, t_1)\, $  
be a solution of 
\begin{equation}\tag{2.24}
\qquad \, \, \begin{cases}
{g_1}^{\, \prime} + \frac {{g_1}^2}{\kappa_1 } + \kappa_1 G_1 \le\, 0\quad  \operatorname{a.}\, \operatorname{e.}\quad  \operatorname{in}\quad (0, t_1) \\
g_1(t) = \frac {\kappa_1 }{t} + O(1)\quad \operatorname{as}\quad t \to 0^+\, 
\end{cases}\end{equation}
and $f_2 \in C([0, t_2]) \cap C^1(0,t_2)$ with 
$f_2^{\prime}\in AC(0,t_2)$ be a positive solution of 
\begin{equation}
\begin{cases}
f_2^{\, \prime\prime} + G_2 f_2 \,   \ge\,  0\quad  \operatorname{a.}\, \operatorname{e.}\quad  \operatorname{in}\quad (0, t_2)   \\
f_2(0) = 0\, , f_2^{\, \prime}(0) = \kappa_2. 
\end{cases}\tag{2.14}
\end{equation}
with constants $\kappa _i$ satisfying
\begin{equation}  0 < \kappa_1 \le  \kappa_2\, .\tag{2.16}
\end{equation}
Then $\quad t_1 \le t_2$ and \begin{equation}g_1 \le \frac {\kappa_2 f_2^{\, \prime}}{f_2}\, \tag{2.30}
\end{equation} 
on $(0, t_1)\, .$
\end{theorem2.3}

\begin{theorem2.4}$($Comparing Differential Equations of Mixed Type $\operatorname{II})\, $  
Let functions $G_i : (0, t_i) \subset (0, \infty) \to \mathbb R$, $i = 1, 2$ satisfy \eqref{2.15}. Let $f_1 \in C([0, t_1]) \cap C^1(0,t_1)$ with 
$f_1^{\prime}\in AC(0,t_1)$ be a positive solution of 
\begin{equation}
\begin{cases}
f_1^{\, \prime\prime} + G_1 f_1 \,   \le\,  0\quad  \operatorname{a.}\, \operatorname{e.}\quad  \operatorname{in}\quad (0, t_1)   \\
f_1(0) = 0\, , f_1^{\, \prime}(0) = \kappa_1 
\end{cases}\tag{2.13}
\end{equation}
and $g_2\in AC(0, t_2)\, $  
be a solution of 
\begin{equation}\tag{2.25}
\qquad \, \quad \begin{cases}
{g_2}^{\, \prime} + \frac {{g_2}^2}{\kappa_2 } + \kappa_2 G_2 \ge\, 0\quad  \operatorname{a.}\, \operatorname{e.}\quad  \operatorname{in}\quad (0, t_2). \\
g_2(t) = \frac {\kappa_2 }{t} + O(1)\quad \operatorname{as}\quad t \to 0^+\, ,
\end{cases}\end{equation}
Assume \eqref{2.16}. Then  $t_1 \le t_2$ and \begin{equation}\frac {\kappa_1 f_1^{\, \prime}}{f_1} \le g_2\, \tag{2.32}
\end{equation} on $(0, t_1)\, .$
\end{theorem2.4}

When the radial Ricci curvature of a manifold is bounded below by a function $(n-1)\, G_1$ $\big (\operatorname{cf}.\, \eqref{2.34} \big )$, we show via Weitzenb\"ock formula (Theorem F), this is a {\it disguised supersolution} of a Riccati type equation \eqref{2.38}. Similarly, when the radial curvature of a manifold is bounded above by a function $\tilde {G_2}$$\big (\operatorname{cf}.\, \eqref{2.60} \big )$ $(\operatorname{resp.}\, \operatorname{bounded}\, \operatorname{below}\, \operatorname{by}\, \operatorname{a}\, \operatorname{function}\, G_1$$\big (\operatorname{cf}.\, \eqref{2.59} \big )$, we show this is a {\it disguised subsolution} $\big (\operatorname{resp.}\, \operatorname{supersolution}\, \big )$ 
of a Riccati type equation $\eqref{2.28}$ $\big (\operatorname{resp.}\, \eqref{2.27} \big )$ in which $G_2 = \tilde {G_2}$ and $t_2 = \tilde {t_2}$ derived from \eqref{2.66} 
$\big (\operatorname{resp.}\,  \eqref{2.65} \big )$. Thus, we are ready to utilize dualities in comparison theorems in differential equations of mixed types to
 generate comparison theorems in Riemannian geometry and provide simple and direct proofs.
In particular, we obtain Laplacian Comparison Theorem \ref{T: 2.5} when the radial Ricci curvature is bounded below as in \eqref{2.34}, and Hessian and Laplacian Comparison Theorem \ref{T: 2.8} when the radial curvature is bounded above as in \eqref{2.60} (resp. bounded below as in \eqref{2.59}).
Throughout this paper we fix a point $x_0$ in an $n$-dimensional manifold $M\, .$  Let $r$ be the distance function on $M\, $ relative to $x_0\, ,$ $D(x_0) = M \backslash (\operatorname{Cut}(x_0) \cup \{ x_0 \})\, ,$ 
  $B_{t}(x_0) = \{ x \in M: r(x) <  t \}\, ,$ and a punctured geodesic ball $\overset {\circ}B_{t}(x_0) = B_{t}(x_0) \backslash \{x_0\}.$ 

\begin{theorem2.5}$($Laplacian Comparison Theorem$)$ Let functions $G_i : (0, t_i) \subset (0, \infty) \to \mathbb R$, $i = 1, 2$ satisfy \eqref{2.15} on $(0, t_1) \cap (0, t_2)$. 
Assume 

\noindent
$\eqref{2.34}\hskip0.8in \qquad (n-1)\, G_1(r)\le \, \operatorname{Ric}^{\operatorname{M}}_{\operatorname{rad}}(r)$

\noindent
$\operatorname{on}\quad \overset {\circ} B_{t_1}(x_0)\subset D(x_0),$ and let 
$f_2 \in C([0, t_2]) \cap C^1(0,t_2)$ with 
$f_2^{\prime}\in AC(0,t_2)$
be a positive solution of 
\begin{equation}
\begin{cases}
f_2^{\prime\prime} + G_2 f_2 \ge 0\quad  \operatorname{a.}\, \operatorname{e.}\quad \operatorname{in}\quad (0, t_2) \\
f_2(0) = 0\, , f_2^{\prime}(0) = n-1\, .
 \end{cases}\tag{2.35}
\end{equation}
Then $\quad t_1 \le t_2\quad \, $ and
\begin{equation}
\Delta r\le (n-1)\frac{f_2^{\prime}}{f_2}(r)\tag{2.36}
\end{equation}
holds in $\overset {\circ} B_{t_1}(x_0)\, .$ If in addition, 
\eqref{2.34}  occurs in $D(x_0)\, ,$
then  
\eqref{2.36} holds pointwise on $D(x_0)\, $ and weakly on $M.$
\end{theorem2.5}

\begin{theorem2.8}$($Hessian and Laplacian Comparison Theorems$)$
Let 

\noindent
$\, \eqref{2.59} \hskip1.0in G_1(r) \le K(r)$ 

\noindent
on $\overset {\circ} B_{t_1}(x_0)\subset D(x_0) $ $\big ( \operatorname{resp.}\, $

\noindent
$\eqref{2.60}  \hskip1.0in K(r) \le \widetilde{G_2}(r)$ 

\noindent
on $\overset {\circ} B_{\tilde{t_2}}(x_0)\subset D(x_0)\, \big ),$
and let 
$f_2 \in C([0, t_2]) \cap C^1(0,t_2)$ with 
$f_2^{\prime} \in AC(0,t_2)$ be a 
positive solution of 
\begin{equation}
\quad \quad \begin{cases}
f_2^{\, \prime\prime} + G_2 f_2 \,   \ge\,  0\quad  \operatorname{a.}\, \operatorname{e.}\quad  \operatorname{in}\quad (0, t_2)   \\
f_2(0) = 0\, , f_2^{\, \prime}(0) = \kappa_2,
\end{cases}\tag{2.14}
\end{equation}
where $G_i : (0, t_i) \to \mathbb R\,  $ satisfy 

\noindent
$\eqref{2.15}\hskip1.0in G_2 \le G_1$ 

\noindent 
on $(0, t_1) \cap (0, t_2)$ and $1 \le \kappa _2\, .$

\noindent
$\big ( \operatorname{resp.}\,  f_1 \in C([0, \tilde{t_1}]) \cap C^1(0,\tilde{t_1})$ with
$f_1^{\prime} \in AC(0,\tilde{t_1}) $ be a positive solution of 
\begin{equation}
\begin{cases}
f_1^{\prime\prime} + \widetilde{G_1}f_1 \le 0\quad \operatorname{on}\quad (0, \tilde{t_1}) \\
f_1(0) = 0\, , f_1^{\prime}(0) = \kappa_1\, ,
 \end{cases} \tag{2.61}\end{equation}
where $\widetilde{G_i} : (0, \tilde{t_i}) \to \mathbb R\, $ satisfy 

\noindent
$\eqref{2.62} \hskip1.0in \widetilde{G_2} \le \widetilde{G_1}$

\noindent
on $(0, \tilde{t_1}) \cap (0, \tilde{t_2})$  
and $0 < \kappa_1 \le 1
\big )\, .$

\noindent
Then $\quad t_1 \le t_2\, ,$

\noindent
$\eqref{2.63}\hskip1.0in \operatorname{Hess} r \le \frac{\kappa_2 f_2^{\prime}}{f_2}(g-dr\otimes dr)\quad \operatorname{and}\quad  \Delta r \le (n-1)\frac{\kappa_2 f_2^{\prime}}{f_2}(r)$

\noindent
on $\overset {\circ} B_{t_1}(x_0)$
$\big ( \operatorname{resp.}\quad  \tilde{t_1} \le \tilde{t_2}\, ,$ 

\noindent
$\eqref{2.64}\hskip0.55in \frac{\kappa_1 f_1^{\prime}}{f_1}\big( g-dr\otimes dr \big) \le \operatorname{Hess} r \quad \operatorname{and}\quad  (n-1)\frac{\kappa_1 f_1^{\prime}}{f_1}(r)\le \Delta r $

\noindent
on $\overset {\circ} B_{\tilde{t_1}}(x_0)\big )$, in the sense of quadratic forms. If in addition \eqref{2.59} occurs on $D(x_0)$, then the second part of \eqref{2.63}  holds pointwise on $D(x_0)$ and weakly in $M\, .$
\end{theorem2.8}

This strengthens main theorems in  \cite [Theorem 4.1] {HLRW}, and \cite [Theorem D] {W3} by weakening the hypotheses on the domain and regularity of 
$G_i \, $ $(\operatorname{resp}.   \tilde{G_i}\, )$ from a smooth function  on $\mathbb {R}^{+} \cup \{0\}\, $ to an arbitrary real-valued function  on  $(0, t_i) \big ( \operatorname{resp.} (0, \tilde{t_i}) \big )$, (not necessarily continuous) and the hypotheses on the domain of
$K(r)$ from $D(x_0)$ to $\overset {\circ} B_{t_1}(x_0) \big (\operatorname{resp}.\, \overset {\circ} B_{\tilde{t_2}}(x_0)\big )\, .$ We also give direct and simple proofs with new applications.

In applying Theorem \ref{T: 2.5} under the radial Ricci curvature assumption,  \eqref{2.39} $\quad G_1=-(n-1)\frac {A(A-1)}{r^2}\leq \text{Ric}^{\rm{M}}_{\rm{rad}}\, , A \ge 1$, or equivalently \eqref{2.41}, we need \eqref{2.35} with appropriate $G_2$ for comparison. But in Theorem \ref{T: 2.6} we do not need to {\it assume} \eqref{2.35} as in Theorem \ref{T: 2.5}. Instead, utilizing $G_1$, we {\it find} \eqref{2.44} as a companion system for comparison by duality, and estimate $\frac {f_2^{\prime}}{f_2}$ (see Theorem \ref{T: 2.6} and its proof). Similarly, without assuming \eqref{2.35}, we have

\begin{theorem2.7}
$(1)$ If  

\noindent
\eqref{2.51} $\hskip1.0in (n-1)\frac {B_1(1-B_1)}{r^2}\leq \operatorname{Ric}^{\rm{M}}_{\rm{rad}}(r),\quad \operatorname{where} \quad  0 \le B_1 \le 1\,$ 

\noindent
on $\overset {\circ} B_{t_1}(x_0)\subset D(x_0)$, then

\noindent
\eqref{2.52}$\hskip1.0in \Delta r  \leq (n-1)\frac{1+\sqrt{1+4B_1(1-B_1)}}{2r}$ 
 
 \noindent
in $\overset {\circ} B_{t_1}(x_0)$. If in addition, \eqref{2.51} occurs on $D(x_0)\, ,$  then \eqref{2.52} holds pointwise on $D(x_0)$ and weakly on $M\, .$

\noindent
$(2)$ Equivalently, if 

\noindent 
$\eqref{2.53}\hskip1.0in   (n-1) \frac {B_1(1-B_1)}{(c+r)^2}\leq \operatorname{Ric}^{\rm{M}}_{\rm{rad}}(r) \, ,  \quad 0 \le B_1 \le 1\,$ 

\noindent
on $\overset {\circ} B_{t_1}(x_0) \subset D(x_0)\, $, $\operatorname{where}\, \, c \ge 0\, ,$ then \eqref{2.52} holds in $\overset {\circ} B_{t_1}(x_0)$. If in addition \eqref{2.53} occurs on $D(x_0)$, then \eqref{2.52} holds pointwise on $D(x_0)$ and weakly on $M\, .$
\end{theorem2.7}

As applications of Theorem \ref{T: 2.8}, we have Theorems \ref{T: 3.1} and \ref{T: 3.2} under the negative lower bound of the radial curvature assumption \eqref{3.1} $\, -\frac {A(A-1)}{r^2}\leq K(r)\, , A \ge 1\, \big ($or equivalently \eqref{3.4}$\big )$, and
under the negative upper bound of the radial curvature assumption \eqref{3.5} $\, K(r) \le -\frac {A_1(A_1-1)}{r^2}\, , A_1 \ge 1$ $\big ($or equivalently \eqref{3.7}$\big )$ respectively.

\begin{corollary3.1} $(1)$
If the
radial curvature $K$   satisfies  
\begin{equation}
- \frac {A(A-1)}{r^2}\leq K(r)\le - \frac {A_1(A_1-1)}{r^2}\quad \operatorname{on} \quad M\backslash \{x_0\}\quad \operatorname{where} \quad   A \ge  A_1 \ge 1\, ,\tag{3.14}\end{equation}

\noindent
then we have 

\noindent
 $\eqref{3.15} \quad \frac{A_1}{r}\bigg(
g-dr\otimes dr\bigg) \le \operatorname{Hess} r  \leq \frac{A}{r}\bigg(
g-dr\otimes dr\bigg)\, \operatorname{in}\, \operatorname{the}\, \operatorname{sense}\, \operatorname{of}\, \operatorname{quadratic}\, \operatorname{forms},$

\noindent
$\qquad \qquad \qquad (n-1)\frac{A_1}{r} \le  \Delta r  \leq (n-1)\frac{A}{r}\quad \operatorname{pointwise}\quad \operatorname{on}\, M\backslash \{x_0\} \quad \operatorname{and}$

\noindent
$\qquad \qquad \qquad  \Delta r \leq (n-1)\frac{A}{r}\quad \operatorname{weakly}\, \operatorname{on}\, M.$

\noindent
$(2)$ Equivalently, if $K$ satisfies  

\noindent
$\eqref{3.16} \hskip1.0in - \frac {A(A-1)}{(c+r)^2}\leq K(r) \le - \frac {A_1(A_1-1)}{(c+r)^2}\, , \quad  A \ge A_1 \ge 1\, ,$

\noindent
$\operatorname{on} \, M \backslash \{x_0\}\, \operatorname{where} \, c \ge 0\, ,$
then \eqref{3.15} holds. 
\end{corollary3.1}

Corollary \ref{C: 3.1} is equivalent to the following Theorem A in which $(1)$ is due to Han-Li-Ren-Wei (\cite {HLRW}), and extends the work of Greene and Wu (\cite [p.38-39] {GW}) from asymptotical estimates $\big ($off a geodesic ball$\,  B_{(a-1)}(x_0)\big )\, $ near infinity to pointwise estimates on $ M\backslash \{x_0\}\, $ 

\begin{theoremA}$\, (1$$)$$($\cite {HLRW}$)\, $  Let the
radial curvature $K$  satisfy
\begin{equation}
-\frac {A}{r^2}\leq K(r)\leq -\frac {A_1}{r^2}\quad \operatorname{where}\quad 0 \le A_1 \le A\, \tag{3.17}\end{equation}
$\operatorname{on}\, M\backslash \{x_0\}.$ Then 
\begin{equation}
\frac{1+\sqrt{1+4A_1}}{2r}\bigg(g-dr\otimes dr\bigg) \le  \operatorname{Hess} (r) \le \frac{1+\sqrt{1+4A}}{2r}\bigg(
g-dr\otimes dr\bigg)\quad \operatorname{on}\quad M\backslash \{x_0\}\, ,\tag{3.18}\end{equation}
\begin{equation}
(n-1)\frac{1+\sqrt{1+4A_1}}{2r} \le  \Delta r  \le (n-1)\frac{1+\sqrt{1+4A}}{2r}\, \operatorname{pointwise}\, \operatorname{on}\, M\backslash \{x_0\}\,  ,\operatorname{and}\,\tag{3.19}\end{equation}

\begin{equation}\Delta r \le (n-1)\frac{1+\sqrt{1+4A}}{2r} \quad \operatorname{weakly}\, \operatorname{on}\, M.\tag{3.20}\end{equation}

\noindent
$(2)$ Equivalently, if 
\begin{equation}
-\frac {A}{(c+r)^2}\leq K(r)\leq -\frac {A_1}{(c+r)^2}\quad \operatorname{where}\quad 0 \le A_1 \le A\, \tag{3.21}\end{equation}
$\operatorname{on}\, M\backslash \{x_0\},$ then \eqref{3.18}-\eqref{3.20} hold.

\end{theoremA}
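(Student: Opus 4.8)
The plan is to deduce Theorem A from Corollary~\ref{C: 3.1}, of which it is merely a reparametrization: the square-root constants in \eqref{3.18}--\eqref{3.20} are precisely the solutions of the quadratic $x(x-1)=A$. Concretely, given $A\ge A_{1}\ge 0$, I would set
\[
\mathcal A=\frac{1+\sqrt{1+4A}}{2},\qquad \mathcal A_{1}=\frac{1+\sqrt{1+4A_{1}}}{2},
\]
the unique roots in $[1,\infty)$ of $x^{2}-x-A=0$ and $x^{2}-x-A_{1}=0$, so that $\mathcal A(\mathcal A-1)=A$ and $\mathcal A_{1}(\mathcal A_{1}-1)=A_{1}$. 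Since $t\mapsto t(t-1)$ is a strictly increasing bijection of $[1,\infty)$ onto $[0,\infty)$, the hypothesis $0\le A_{1}\le A$ is equivalent to $1\le\mathcal A_{1}\le\mathcal A$; and, because $-A/r^{2}=-\mathcal A(\mathcal A-1)/r^{2}$ and $-A_{1}/r^{2}=-\mathcal A_{1}(\mathcal A_{1}-1)/r^{2}$, the curvature assumption \eqref{3.17} is exactly hypothesis \eqref{3.14} with the pair $(\mathcal A,\mathcal A_{1})$ in place of $(A,A_{1})$.

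Next I would apply Corollary~\ref{C: 3.1}(1) with the constants $\mathcal A\ge\mathcal A_{1}\ge 1$. It yields $\tfrac{\mathcal A_{1}}{r}(g-dr\otimes dr)\le\operatorname{Hess} r\le\tfrac{\mathcal A}{r}(g-dr\otimes dr)$ in the sense of quadratic forms, together with $(n-1)\tfrac{\mathcal A_{1}}{r}\le\Delta r\le(n-1)\tfrac{\mathcal A}{r}$ pointwise on $M\setminus\{x_{0}\}$ and $\Delta r\le(n-1)\tfrac{\mathcal A}{r}$ weakly on $M$. Replacing $\mathcal A$ by $\tfrac{1+\sqrt{1+4A}}{2}$ and $\mathcal A_{1}$ by $\tfrac{1+\sqrt{1+4A_{1}}}{2}$ turns these statements verbatim into \eqref{3.18}, \eqref{3.19} and \eqref{3.20}, establishing part (1). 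For part (2), the same substitution applied to Corollary~\ref{C: 3.1}(2), with $c+r$ ($c\ge 0$) replacing $r$ throughout, converts \eqref{3.21} into \eqref{3.16} for the pair $(\mathcal A,\mathcal A_{1})$, and its conclusion \eqref{3.15} reads, after substituting back, once again \eqref{3.18}--\eqref{3.20}.

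The whole analytic and geometric content --- the reduction through the mixed-type Riccati/Jacobi comparison of Theorem~\ref{T: 2.8}, the handling of the model bound $-A/r^{2}$, which is singular at the pole, through its Riccati rather than Jacobi companion, and the extension of the Laplacian inequality across $\operatorname{Cut}(x_{0})$ to a weak inequality on $M$ --- is already supplied by Theorem~\ref{T: 2.8} and Corollary~\ref{C: 3.1}, so there is no genuine obstacle. The main point, essentially the only one, is to observe that Theorem A and Corollary~\ref{C: 3.1} are the same statement under the substitution $A\leftrightarrow\mathcal A(\mathcal A-1)$; the rest is the elementary check that $t\mapsto t(t-1)$ maps $[1,\infty)$ bijectively and monotonically onto $[0,\infty)$, together with a glance at the degenerate cases $A_{1}=0$ (where $\mathcal A_{1}=1$, so the left-hand inequality in \eqref{3.18} becomes $\tfrac{1}{r}(g-dr\otimes dr)\le\operatorname{Hess} r$) and $A=A_{1}$, in which the upper and lower estimates coincide.
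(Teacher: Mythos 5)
Your proposal is correct and is essentially the paper's own argument: the paper proves Theorem A by the same substitution $A(A-1)=a^{2}$, i.e.\ $A=\tfrac{1+\sqrt{1+4a^{2}}}{2}$, rephrasing Theorems 3.1--3.2 (equivalently Corollary 3.1) in the new parameter and then renaming it, which is exactly your map $A\mapsto\mathcal A=\tfrac{1+\sqrt{1+4A}}{2}$ with $\mathcal A(\mathcal A-1)=A$. Your explicit check that $t\mapsto t(t-1)$ is an increasing bijection of $[1,\infty)$ onto $[0,\infty)$ merely makes precise what the paper leaves implicit.
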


As further applications of Theorem \ref{T: 2.8}, we also obtain
the following Theorems \ref{T: 3.3} and \ref{T: 3.4} under positive lower and upper bound on radial curvature assumptions respectively.

\begin{theorem3.3}
$(1)$ If 

\noindent
$\eqref{3.25}\hskip1.0in \frac {B_1(1-B_1)}{r^2}\leq K(r)\quad \operatorname{where} \quad  0 \le B_1 \le 1\, $

\noindent
on $\overset {\circ} B_{t_1}(x_0) \subset D(x_0)$, then
$\operatorname{Hess} r $ and $\Delta r$ satisfy 
 
 \noindent
 $\eqref{3.26} \hskip1.0in \operatorname{Hess} r  \le \frac{1+\sqrt{1+4B_1(1-B_1)}}{2r}\bigg(
g-dr\otimes dr\bigg)\quad \operatorname{and}$

\noindent
$\eqref{3.27}\hskip1.0in \Delta r  \le (n-1) \frac{1+\sqrt{1+4B_1(1-B_1)}}{2r}$

\noindent
on $\overset {\circ} B_{t_1}(x_0)$
respectively. If in addition \eqref{3.25} occurs on $D(x_0)\, ,$ then \eqref{3.26} holds pointwise on $D(x_0)\, $ and \eqref{3.27} holds weakly on $M$.

\noindent
$(2)$ Equivalently, if 

\noindent
$\eqref{3.28}\hskip1.0in \frac {B_1(1-B_1)}{(c+r)^2}\leq K(r)\, , \quad  0 \le B_1 \le 1\,$

\noindent
on $\overset {\circ} B_{t_1}(x_0) \subset D(x_0)$, where, $c \ge  0\, ,$ then \eqref{3.26} and \eqref{3.27} hold on $\overset {\circ} B_{t_1}(x_0)$. If in addition \eqref{3.28} occurs on $D(x_0)$, then \eqref{3.26} holds pointwise on $D(x_0)\, $ and 
\eqref{3.27} holds weakly on $M$.\end{theorem3.3}

\begin{theorem3.4}
$(1)$ If  

\noindent
$\eqref{3.35}\hskip1.0in K(r) \le \frac {B(1-B)}{r^2}\quad \operatorname{where}\quad  0 \le B \le 1 $

\noindent
on $\overset {\circ} B_{t_2}(x_0) \subset D(x_0)$, then $\operatorname{Hess} r $ and $\Delta r$ satisfy

\noindent
$\eqref{3.36}\hskip1.0in \frac{|B - \frac 12| + \frac 12}{r} \bigg(g-dr\otimes dr\bigg) \le \operatorname{Hess} r \quad \operatorname{and}$

\noindent
$\hskip1.4in (n-1)\frac{|B - \frac 12| + \frac 12}{r} \le  \, \, \Delta r $ 

\noindent
on $\overset {\circ} B_{t_2}(x_0)$ respectively. 

\noindent
$(2)$ Equivalently, $\operatorname{if}$ 

\noindent
$\eqref{3.37}\hskip1.0in K(r) \le \frac {B(1-B)}{(c+r)^2}\, ,\quad   0 \le B \le 1$

\noindent
on $\overset {\circ} B_{t_2}(x_0) \subset D(x_0)\, , \operatorname{where} \, c \ge 0\, ,$ then \eqref{3.36} holds on $\overset {\circ} B_{t_2}(x_0)$.
\label{T: 3.4}\end{theorem3.4}

\begin{theorem3.5}  $(1)$
If the
radial curvature $K$ satisfies 
\begin{equation} 
\frac {B_1}{r^2} \le K(r) \le \frac {B}{r^2} \quad \operatorname{where} \quad  0 \le B_1 \le B \le \frac 14\, \tag{3.45}\end{equation}
on $\overset {\circ} B_{\tau}(x_0) \subset D(x_0)\, ,$ then

\noindent
$\eqref{3.46}\hskip0.2in \frac{1+\sqrt{1-4B}}{2r} \bigg(
g-dr\otimes dr\bigg) \le  \operatorname{Hess} r  \le \frac{1+\sqrt{1+4B_1}}{2r}\bigg(g-dr\otimes dr\bigg)\quad \operatorname{and}$

\noindent
$\hskip0.6in (n-1)  \frac{1+\sqrt{1-4B}}{2r} \le  \,  \, \Delta r \le (n-1)\frac{1+\sqrt{1+4B_1}}{2r}
$

\noindent
hold $\operatorname{pointwise}\, \operatorname{on}\, \overset {\circ} B_{\tau}(x_0)$. If in addition \eqref{3.33} occurs on $D(x_0)\, ,$ then
\begin{equation}
\Delta r \leq  (n-1) \frac{1+\sqrt{1+4B_1}}{2r}\tag{3.47}\end{equation}
holds weakly $\operatorname{on}\, M\, .$

\noindent
$(2)$ Equivalently, if $K$ satisfies  
\begin{equation}
\frac {B_1}{(c+r)^2}\leq K(r) \le \frac {B}{(c+r)^2}\, ,\quad 0 \le B_1 \le B \le \frac 14\,
\tag{3.48}\end{equation}
\noindent
$\operatorname{on} \,  \overset {\circ} B_{\tau}(x_0) \subset D(x_0)\, , \operatorname{where} \,  c \ge 0\, ,$ 
then \eqref{3.46} holds on $\overset {\circ} B_{\tau}(x_0)$. If in addition, \eqref{3.33} occurs on $D(x_0)\, ,$ then \eqref{3.47} holds weakly $\operatorname{on}\, M\,.$
 \end{theorem3.5}
 
 The case $K(r) \le \frac {B}{r^2},\, B \le \frac 14$ is due to Han-Li-Ren-Wei (\cite{HLRW}). Theorem \ref{T: 3.5} is equivalent to the following  

\begin{corollary3.5} $(1)$
Let the
radial curvature $K$
  satisfy  
\begin{equation} \frac {B_1(1-B_1)}{r^2}\leq K(r)\le \frac {B(1-B)}{r^2}\, \quad  \operatorname{where}\quad 0 \le B \, , B_1 \le 1\, \tag{3.49}\end{equation} 
$\operatorname{on} \, \overset {\circ} B_{\tau}(x_0) \subset D(x_0)\, .$ Then 

\noindent
$\eqref{3.50}\hskip0.1in \frac {|B - \frac 12| + \frac 12}{r} \bigg(g-dr\otimes dr\bigg)    \le \operatorname{Hess} r  \le \frac{1+\sqrt{1+4B_1(1-B_1)}}{2r}\bigg(
g-dr\otimes dr\bigg)\quad \operatorname{and}$

\noindent
$\hskip0.5in (n-1)\frac{|B - \frac 12| + \frac 12}{r} \le  \Delta r  \le (n-1) \frac{1+\sqrt{1+4B_1(1-B_1)}}{2r}$

\noindent
hold $\operatorname{pointwise}\, \operatorname{on}\, \overset {\circ} B_{\tau}(x_0)$. If in addition \eqref{3.25} occurs on $D(x_0)\, ,$ then

\noindent
$\eqref{3.27}\hskip1.0in \Delta r \leq  (n-1) \frac{1+\sqrt{1+4B_1(1-B_1)}}{2r}$

\noindent 
holds weakly $\operatorname{on}\, M\, .$

\noindent
$(2)$ Equivalently, if $K$ satisfies  

\noindent
$\eqref{3.51}\hskip1.0in  \frac {B_1(1-B_1)}{(c+r)^2}\leq K(r) \le  \frac {B(1-B)}{(c+r)^2}\, ,\quad 0 \le B \, , B_1  \le 1\,$

\noindent
$\operatorname{on} \,  \overset {\circ} B_{\tau}(x_0) \subset D(x_0)\, , \operatorname{where} \,  c \ge 0\, ,$ 
then \eqref{3.50} holds on $\overset {\circ} B_{\tau}(x_0)$. If in addition, \eqref{3.28} occurs on $D(x_0)\, ,$ then \eqref{3.27} holds weakly $\operatorname{on}\, M\,.$
\end{corollary3.5}

 \begin{corollary3.7} 
If the
radial curvature $K$   satisfies 
 \begin{equation} \begin{aligned}-\frac {A}{r^2}\leq & K(r)\leq \frac {B}{r^2}\\
  (\operatorname{resp.}\, -\frac {A}{(c+r)^2}\leq & K(r)\leq \frac {B}{(c+r)^2})\, , \quad 0 \le A\, , \, 0 \le B \le \frac 14 \end{aligned}\tag{3.54}\end{equation}
$\operatorname{on}\, M\backslash \{x_0\},$ where $c > 0\, ,$ then  
\begin{equation}
\frac{1+\sqrt{1-4B}}{2r}\bigg(g-dr\otimes dr\bigg) \le  \operatorname{Hess} (r) \le \frac{1+\sqrt{1+4A}}{2r}\bigg(
g-dr\otimes dr\bigg)\quad \operatorname{and}\tag{3.55}\end{equation}
\begin{equation}
(n-1)\frac{1+\sqrt{1-4B}}{2r} \le  \Delta r  \le (n-1)\frac{1+\sqrt{1+4A}}{2r}\, \tag{3.56}\end{equation} 

\noindent
hold $\operatorname{on}\, M\backslash \{x_0\}\,  .$
\end{corollary3.7}

As immediate applications of Comparison Theorems in Differential Geometry (Theorems \ref{T: 3.1}-\ref{T: 3.5}), we obtain {\it Mean Curvature Comparison Theorems} (see Theorems \ref{T: 4.1}).  

We then utilize dualities to study harmonic forms and their decompostion, integrability and growth. It is
well-known that on a compact Riemannian manifold, a smooth
differential form $\omega$ is harmonic if and only if it is closed and co-closed. That is,  
\begin{equation}\tag{6.1} \Delta \omega
= 0 \qquad {\rm if}\quad {\rm and}\quad {\rm only}\quad {\rm
if}\qquad d\, \omega = 0\quad {\rm and}\quad d^{\star}\omega = 0.
\end{equation}

On {\it complete noncompact} Riemannian manifolds, although \eqref{6.1} holds for smooth $\omega$ with compact support, it does not hold in general.
Simple examples include, in $\mathbb{R}^n\, ,$ a closed, non-co-closed, harmonic form $\omega_1 = x_1 d x_1\, ,$
a non-closed, co-closed, harmonic form $\omega_2 = x_n d x_1\, ,$ and a non-closed, non-co-closed, harmonic form $\omega_3 = (x_1 + x_n)d x_1\, ,$ or $\omega_4 = x_1 x_n d x_1 + x_n d x_n\, $ (see \cite {W4}). However, it is proved in \cite {AV}.
\begin{Theorem B}$($A. Andreotti and E. Vesentini \cite {AV}$)$ On a complete noncompact Riemannian manifold $M$, \eqref{6.1} holds for every smooth $L^2$ differential form $\omega\, .$  \end{Theorem B}
It is interesting to explore any possible generalizations of Theorem B, in particular, to discuss whether or not  \eqref{6.1} holds for $L^q$ differential form $\omega\, ,$ where $q\ne 2\, .$ Some study of this generalization can be found in \cite  [p.663, Proposition 1]{Y1}, its counter-examples are given by D. Alexandru-Rugina (see \cite [p. 81, Remarque 3] {AR}) on the hyperbolic space $H^m_{-1}, m\geq 3\, ,$ and relevant remarks of  Pigola-Rigoli-Setti are discussed in \cite [p.260, Remark B.8] {PRS}.
\smallskip

In \cite {W4}, we introduce and add Condition $\operatorname{W}$ $\big ($see \eqref{6.5}, where $\Omega = \omega$ and $ A^k(\xi) = A^k\big )$ to the above work, so that the counter-examples provided in \cite {AR} cannot happen, the conclusion of the Proposition in \cite {Y1} still holds, and works in a more general setting with geometric and physical applications.
We extend the differential form in $L^2$ space in Theorem B in several ways. To this end, recall in extending functions in $L^2$ space (resp. in $L^q$ space), we introduce and study the notion of
function growth: ``\emph{$p$-finite},
\emph{$p$-mild}, \emph{$p$-obtuse}, \emph{$p$-moderate}, and
\emph{$p$-small}" growth $\big ($ for $q=2$ (resp. for the same value of $q$)$\big )$, and their counter-parts
``\emph{$p$-infinite}, \emph{$p$-severe}, \emph{$p$-acute},
\emph{$p$-immoderate}, and \emph{$p$-large}" growth \cite {WLW} (see
Definition \ref {5.1}).
We also introduce the notion of {\it $p$-balanced} and {\it
$p$-imbalanced} growth for functions and differential forms on
complete noncompact Riemannian manifold $M$ in \cite {W1}. Namely,
a function or a differential form $f$ has \emph{$p$-balanced
growth} $(or, simply, \emph{is $p$-balanced})$ if $f$ has one of
the following: \emph{$p$-finite}, \emph{$p$-mild},
\emph{$p$-obtuse}, \emph{$p$-moderate}, or \emph{$p$-small}
growth on $M$, and has \emph{$p$-imbalanced growth} $( $ or, simply,
is $p$-\emph{imbalanced}$)$ otherwise (see
Definition \ref{5.2}).
Furthermore, extending our techniques for function growth (see \cite {CW1, CW2}) to differential form growth, we use direct simple new methods.
Let $A^k$ be the space of smooth differential $k$-forms on $M\, ,$ and $\omega$ be a smooth differential $k$-form, $k \ge 0$ on $M\, .$ 
Denote by $\langle \cdot , \cdot \rangle\, $
and $\star : A^k  \to A^{n-k}\, ,$ the {\it inner product} 
and the {\it linear operator} which assigns to each $k$-form on $M$ an $(n-k)$-form and which satisfies $\star\, \star = (-1)^{nk+k+1}$ respectively.  In particular, we have the following theorem, generalizing \cite {Y2} from functions to differential forms (See also \cite {WLW} for generalizations to various types of functions).

\begin{theoremC} $($Unity Theorem$)$ $($\cite [Theorem 4.1] {W4}$)$
If a differential form $\omega \in A^k$ has $2$-balanced growth, for $q=2\, ,$ or for $1 < q(\ne2) < 3$ with $\omega$ satisfying $\operatorname{Condition} \operatorname{W}$, then the following six statements:
$($i$)$ $\langle\omega, \Delta \omega\rangle \ge 0\, .$
$($ii$)$ $\Delta \omega = 0\, .$ 
$($iii$)$ $d\, \omega = d^{\star}\omega = 0\, .$ 
$($iv$)$ $\langle \star\, \omega, \Delta \star\, \omega\rangle \ge 0\, .$ 
$($v$)$ $\Delta \star\, \omega = 0\, .$ 
$($vi$)$ $d\star\, \omega = d^{\star} \star\, \omega = 0\, .$ 
 are equivalent.
\end{theoremC}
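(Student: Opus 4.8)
\noindent The plan is to collapse the whole statement to the single implication $(\mathrm{i})\Rightarrow(\mathrm{iii})$ and then transport it by the Hodge star. On any Riemannian manifold the implications $(\mathrm{iii})\Rightarrow(\mathrm{ii})\Rightarrow(\mathrm{i})$ and $(\mathrm{vi})\Rightarrow(\mathrm{v})\Rightarrow(\mathrm{iv})$ are immediate, since $\Delta=-(dd^{\star}+d^{\star}d)$ annihilates every closed and co-closed form and $\Delta\omega=0$ forces $\langle\omega,\Delta\omega\rangle=0$. Moreover $\star$ is a fibrewise isometry with $\star\star=(-1)^{nk+k+1}$, it commutes with $\Delta$, and $d^{\star}=\pm\,\star d\,\star$, $d=\pm\,\star d^{\star}\star$ (signs depending only on $n$ and the degree); hence pointwise $\langle\star\omega,\Delta\star\omega\rangle=\langle\omega,\Delta\omega\rangle$, while $\Delta\star\omega=0\Leftrightarrow\Delta\omega=0$ and $\{d\star\omega=d^{\star}\star\omega=0\}\Leftrightarrow\{d\omega=d^{\star}\omega=0\}$. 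Thus $(\mathrm{iv}),(\mathrm{v}),(\mathrm{vi})$ are exactly $(\mathrm{i}),(\mathrm{ii}),(\mathrm{iii})$ for $\star\omega$; since $|\star\omega|=|\omega|$ pointwise, $\star\omega$ has the same balanced-growth type as $\omega$, and Condition $\mathrm{W}$ for $\star\omega$ is checked the same way. So it is enough to prove, under the hypotheses, that $(\mathrm{i})\Rightarrow(\mathrm{iii})$.

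For $q=2$ this is the cut-off form of Theorem B, extended from $2$-finite to arbitrary $2$-balanced growth. Choose Lipschitz cut-offs $\phi_{R}$ with $\phi_{R}\equiv1$ on $B_{R}(x_{0})$, $\operatorname{supp}\phi_{R}\subset B_{2R}(x_{0})$ and $|\nabla\phi_{R}|\le C/R$, and integrate $\int_{M}\phi_{R}^{2}\langle\omega,\Delta\omega\rangle$ by parts using $d(\phi_{R}^{2}\omega)=d(\phi_{R}^{2})\wedge\omega+\phi_{R}^{2}d\omega$ and $d^{\star}(\phi_{R}^{2}\omega)=\phi_{R}^{2}d^{\star}\omega+E_{R}$ with $|E_{R}|\le C|\nabla\phi_{R}|\,|\omega|$. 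After one application of Cauchy--Schwarz with a small parameter $\epsilon$ this gives
\[
\int_{M}\phi_{R}^{2}\bigl(|d\omega|^{2}+|d^{\star}\omega|^{2}\bigr)\le-\int_{M}\phi_{R}^{2}\langle\omega,\Delta\omega\rangle+\epsilon\int_{M}\phi_{R}^{2}\bigl(|d\omega|^{2}+|d^{\star}\omega|^{2}\bigr)+\frac{C}{\epsilon}\int_{M}|\nabla\phi_{R}|^{2}|\omega|^{2}.
\]
By $(\mathrm{i})$ the first term on the right is $\le0$; absorbing the $\epsilon$-term and using $|\nabla\phi_{R}|\le C/R$ yields $\int_{B_{R}(x_{0})}\bigl(|d\omega|^{2}+|d^{\star}\omega|^{2}\bigr)\le C R^{-2}\int_{B_{2R}(x_{0})\setminus B_{R}(x_{0})}|\omega|^{2}$. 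Whichever of the five $2$-balanced growth conditions $\omega$ satisfies, it supplies a sequence $R_{j}\to\infty$ along which the right-hand side tends to $0$; letting $j\to\infty$ gives $d\omega=d^{\star}\omega=0$, which is $(\mathrm{iii})$, and hence $(\mathrm{i})\Leftrightarrow(\mathrm{ii})\Leftrightarrow(\mathrm{iii})$.

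For $1<q(\ne2)<3$ with Condition $\mathrm{W}$ one argues similarly but tests $\langle\omega,\Delta\omega\rangle$ against the weight $\phi_{R}^{2}|\omega|^{q-2}$ (regularised near the zero set of $\omega$), invoking the Weitzenb\"ock formula together with the refined Kato inequality for $k$-forms: the range $q<3$ is precisely what makes the resulting Bochner term absorbable, and Condition $\mathrm{W}$ is exactly the hypothesis that eliminates the remaining term, so that one is again reduced to an inequality of the shape ``good terms $\le\epsilon(\text{good terms})+C_{\epsilon}R^{-\sigma}\!\int_{B_{2R}\setminus B_{R}}|\omega|^{q}$'', after which the balanced-growth limit closes the argument as before. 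I expect this last step --- getting the $q\ne2$ integration-by-parts identity into closed form with the correct handling of $\{\omega=0\}$ and with the Kato constant and Condition $\mathrm{W}$ dovetailing so as to keep the estimate absorbable --- to be the only genuine obstacle; the $q=2$ case and the $\star$-duality bookkeeping of the first paragraph are routine.
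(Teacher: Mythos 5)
Your Hodge-star reduction to the single implication $(\mathrm{i})\Rightarrow(\mathrm{iii})$ is fine and matches the paper's own bookkeeping (its Duality Theorem plays exactly that role), but the core step has a genuine gap. After absorbing the $\epsilon$-term you arrive at $\int_{B_R}\bigl(|d\omega|^2+|d^{\star}\omega|^2\bigr)\le C R^{-2}\int_{B_{2R}\setminus B_R}|\omega|^2$ and then assert that each of the five $2$-balanced growth conditions supplies a sequence $R_j\to\infty$ along which the right-hand side tends to $0$. That is false: any $\omega$ with $\int_{B_R}|\omega|^2$ comparable to $R^2$ is $2$-finite, $2$-mild, $2$-obtuse, $2$-moderate and $2$-small, yet $CR^{-2}\int_{B_{2R}\setminus B_R}|\omega|^2$ stays of order one; $2$-moderate growth even allows $\int_{B_R}|\omega|^2\sim R^2\psi(R)$ with $\psi\in\mathcal F$ unbounded (e.g. $\psi=\log$), so the right side can diverge. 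Your absorption therefore yields only finiteness of $\int_M(|d\omega|^2+|d^{\star}\omega|^2)$, not its vanishing. The proof the paper runs (for the bundle-valued generalization, Theorem 6.2, following \cite{W4}) deliberately does \emph{not} absorb this way: its key inequality (6.9) keeps the quadratic, annulus-structured form $\bigl(\int_{B(t)}\psi^2(\cdot)\bigr)^2\le \frac{C}{(t-s)^2}\bigl(\int_{B(t)\setminus B(s)}(|\omega|^2+\epsilon)^{q/2}\bigr)\cdot\bigl(\int_{B(t)\setminus B(s)}\psi^2(\cdot)\bigr)$, in which the ``good'' term on the right is taken only over the annulus $B(t)\setminus B(s)$; it is this difference inequality, fed into the $p$-mild/$p$-obtuse/$p$-moderate/$p$-small machinery of Section 5, that forces the energy to vanish under every one of the balanced-growth hypotheses. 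Your Cauchy--Schwarz-with-$\epsilon$ step destroys precisely that structure, and even a second pass using the finiteness you did obtain would need to retain it to cover all five growth classes.

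For $1<q(\ne 2)<3$ the proposed route via the Weitzenb\"ock formula and a refined Kato inequality is also off target: a Bochner identity introduces curvature terms of $M$, and Theorem C carries no curvature hypothesis whatsoever, so those terms cannot be absorbed. The actual mechanism is elementary: one tests $\Delta\omega$ against $\psi^2(|\omega|^2+\epsilon)^{q/2-1}\omega$ and integrates by parts; differentiating the weight produces the term $\langle d(|\omega|^2)\wedge\omega,d\omega\rangle$, which Condition W bounds by $2|\omega|^2|d\omega|^2$, contributing a factor $|q-2|$ in front of the good term, and the restriction $1<q<3$ makes $1-|q-2|>0$ so that it can be absorbed, returning one to the annulus-structured inequality above. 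Since you explicitly leave this step open, and the $q=2$ argument as written does not close either, the proposal does not establish the theorem.
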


In the {\bf second} part of the paper, we extend the notion of function growth and differential form growth $(\operatorname{see}$ \cite{W1,W4}$)$ to bundle-valued differential form growth of various types, and prove their interrelationship. In particular, we show
\begin{theorem5.4} 
For a given $q \in \mathbb R\, ,$ a function, or differential form or bundle-valued differential form $f$ is
\[
\begin{aligned}
& p-moderate\, \eqref{5.4}\quad  \Leftrightarrow \quad p-small\, \eqref{5.6}\quad \Rightarrow \quad p-
mild\, \eqref{5.2}\quad \Rightarrow \quad p-obtuse\, \eqref{5.3} \\
& 
\operatorname{or}\quad \operatorname{equiavalently},\\
& p-acute \quad   \Rightarrow \quad p-severe \quad \Rightarrow \quad p-large \quad \Leftrightarrow \quad p-immoderate.
\end{aligned}
\]
Hence, for a given $q \in \mathbb R\, ,$ $f$ is 
\[
\begin{aligned}
\quad p-\operatorname{balanced}\quad
& \Rightarrow \quad \operatorname{either} \quad p-\operatorname{finite}\, \eqref{5.1}\quad \operatorname{or}\quad p-\operatorname{obtuse}\, \eqref{5.3}\\
\quad p-\operatorname{imbalanced}\quad
& \Rightarrow \quad \operatorname{both} \quad p-\operatorname{infinite}\quad \operatorname{and}\quad p-\operatorname{immoderate}.
\end{aligned}\]
If in addition, $\int_{B(x_0;r)}|f|^{q}dv$
 is convex in $r$, then the following four types of growth are all equivalent: $f$ is \emph{$p$-mild}, \emph{$p$-obtuse}, \emph{$p$-moderate}, and
\emph{$p$-small} $(\operatorname{resp.}$ 
\emph{$p$-severe}, \emph{$p$-acute}, \emph{$p$-immoderate}, and 
\emph{$p$-large}$)$, i.e.,  $f$ is

$\qquad \eqref{5.2}\quad  \Leftrightarrow \quad \eqref{5.3}\quad \Leftrightarrow \quad \eqref{5.4}\quad \Leftrightarrow \quad \eqref{5.6}\quad $ for the same value of $q \in \mathbb R\, .$
\end{theorem5.4}
In particular, we have
\begin{corollary5.1}
 Every $L^q$ function or differential form or bundle-valued differential form $f$ on $M$ has \emph{$p$-balanced} growth, $p \ge 0\, ,$ and in fact, has \emph{$p$-finite},
\emph{$p$-mild}, \emph{$p$-obtuse}, \emph{$p$-moderate}, and
\emph{$p$-small} growth, $p \ge 0\, ,$  for the same value of $q$
\end{corollary5.1}
We also introduce $\operatorname{Condition} \operatorname{W}$ for bundle-valued differential forms in $A^k(\xi)$ and extend the
unity theorem from generalized harmonic forms in $A^k$ to generalized harmonic forms in $A^k(\xi)$ with values in a vector bundle, where 
$\xi :E\rightarrow M$ be a smooth Riemannian vector bundle over $(M,g)\, ,$ i.e. a vector bundle such that at each fiber is equipped with a positive inner product $\langle \quad , \quad \rangle_E\, .$
 Set $A^k(\xi )=\Gamma (\Lambda
^kT^{*}M\otimes E)$ the space of smooth $k$-forms on $M$ with
values in the vector bundle $\xi :E\rightarrow M$. 
Let $d^\nabla : A^k(\xi )\rightarrow
A^{k+1}(\xi )$ relative to the connection $\nabla ^E$ be the {\it exterior
differential operator}, ${\delta}^\nabla :A^{k+1}(\xi )\rightarrow
A^{k}(\xi )$ relative to the connection $\nabla ^E$ be the {\it codifferential operator}, and $\Delta ^\nabla = - (d^\nabla {\delta}^\nabla + {\delta}^\nabla d^\nabla) : A^k(\xi )\rightarrow
A^{k}(\xi) $.  Note that if $E$ is the trivial bundle $M \times \mathbb R$ equipped with the canonical metric, then $A^{k}$ is isometric to $A^{k}(\xi)\, ,  \, d = d^\nabla \big (\operatorname{as}\, \operatorname{in}\, \eqref{6.2} \big )\, ,$  $d^{\star} = \delta^\nabla\, \big (\operatorname{as}\, \operatorname{in}\, \eqref{6.3} \big )\, ,$ and $\Delta ^\nabla : A^k(\xi )\rightarrow
A^{k}(\xi) $ coincides with $\Delta = - (d d^{\star} + d^{\star} d) : A^k \rightarrow
A^{k}$. Denote by the same notations $\langle \cdot , \cdot \rangle\, ,$ $| \cdot | \, ,$ and $\star : A^k (\xi )\rightarrow  A^{n-k}(\xi )\, ,$ the {\it inner product}, the {\it norm} induced in fibers of various tensor bundles by the metric of $M\, ,$ and the {\it linear operator} which assigns to each $k$-form in $A^k(\xi)$ on $M$ an $(n-k)$-form in $A^{n-k}(\xi)$ and which satisfies $\star\, \star = (-1)^{nk+k+1}$ respectively. 

\begin{theorem6.2}$($Unity Theorem$)$
If a bundle-valued differential $k$-form $\Omega \in A^k(\xi )$ has $2$-balanced growth, for $q=2\, ,$ or for $1 < q(\ne2) < 3$ with $\Omega$ satisfying $\operatorname{Condition} \operatorname{W}\, \, \eqref{6.5}$, then the following six statements are equivalent.

\noindent
$($i$)$ $\langle\Omega, \Delta ^\nabla \Omega\rangle \ge 0$ $($ i.e. $\Omega$ is a generalized bundle-valued harmonic form$)$.\\
$($ii$)$ $\Delta ^\nabla \Omega = 0$ $($ i.e. $\Omega$ is a bundle-valued harmonic form$)$.\\
$($iii$)$ $d^\nabla\, \Omega = \delta^{^\nabla}\Omega = 0$ $($ i.e. $\Omega$ is closed and co-closed$)$.\\
$($iv$)$ $\langle \star\, \Omega, \Delta ^\nabla \star\, \Omega\rangle \ge 0$ $($ i.e. $\star\, \Omega$ is a generalized bundle-valued harmonic form$)$.\\
$($v$)$ $\Delta ^\nabla \star\, \Omega = 0\, $ $($ i.e. $\star\, \Omega$ is a bundle-valued harmonic form$)$.\\
$($vi$)$ $d^\nabla\,\star\, \Omega = \delta^{\nabla\,} \star\, \Omega = 0$ $($ i.e. $\star\, \Omega$ is closed and co-closed$)$.\label{T: 6.3}
\end{theorem6.2}

This generalizes the work of Andreotti and Vesentini \cite {AV} for the case $\Omega \in A^k\, $ and $\Omega$ is $L^2\, ,$ Theorem C $($\cite [Theorem 4.1] {W4}$)\, ,$ and 
leads to its immediate extension, 

\begin{corollary6.1} Let $\Omega \in A^{k}(\xi)$ be in $L^2$, or in $L^q, 1 < q(\ne 2) < 3$ satisfying $\operatorname{Condition} \operatorname{W}$$\big ($see \eqref{6.5}$\big ).$ Then
$\Omega$  is harmonic if and only if $\Omega$ is closed and co-closed. 
\end{corollary6.1}

In \cite {WL}, using an analog of Bochner's method or $``B^2 - 4AC \le 0"$ method, Wei and Li derive a geometric differential-integral inequality on a manifold (see Theorem \ref{T: 7.1}) and give the first application of the Hessian comparison theorems to generalized sharp Caffarelli-Kohn-Nirenberg type inequalities on
Riemannian manifolds(see Theorem \ref{T: 7.2}) . The case $M=\mathbb{R}^n$ is due to L. Caffarelli et al. \cite {CKN} and is sharp due to Costa \cite{C}. Furthermore, Wei and  Li \cite [Theorems 1 and 2, Corollaries 1.1-1.5] {WL} also gave the first application of the Hessian comparison theorems to generalized Hardy type inequalities on
Riemannian manifolds. The case $M=\mathbb{R}^n$ is sharp (see \cite {Mi, HLP}). 
\smallskip

In the {\bf third} part of the paper, using and extending the work in \cite {WL} and \cite {W3}, we apply Laplacian Comparison Theorems \ref{T: 2.6} and \ref{T: 2.7}, based on Theorem \ref{T: 2.5} to study geometric differential-integral inequalities on manifolds. In particular, we obtain 

\begin{theorem7.3}$($Generalized sharp Caffarelli-Kohn-Nirenberg type inequalities under radial Ricci curvature assumption$)$
 Let $M$ be an $n$-manifold with a pole such that the radial Ricci curvature $\text{Ric}^{\rm{M}}_{\rm{rad}} (r) $ of $M$ satisfies one of the
five conditions in \eqref{7.5}
where  $0 \le B_1 \le 1 \le A\, $ are constants. Then 
 for every $u\in W_{0}^{1,2}(M\backslash \left\{ x_{0}\right\} )$
and $a,b\in
\mathbb{R}\, ,$ \eqref{7.3} holds in which the constant $C$ is given by \eqref{7.6}.
\end{theorem7.3}

When $A=1\, ,$ the result is sharp and we recapture theorems of Wei and Li (see \cite [Theorems 3 and 4] {WL} or Theorem \ref{T: 7.2}).
Analogously, we apply Hessian Comparison Theorems \ref{T: 3.1}, \ref{T: 3.2}, \ref{T: 3.3}, and \ref{T: 3.4}, based on Theorem \ref{T: 2.8} to study geometric differential-integral inequalities. In particular we obtain
the following Theorem \ref{T: 7.4}. When $B_1 = 0\, ,$ we recapture theorems of Wei and Li (see \cite [Theorems 3 and 4] {WL} or Theorem \ref{T: 7.2}). The result is sharp when $M=\mathbb{R}^n\, .$   

\begin{theorem7.4}$($Generalized sharp Caffarelli-Kohn-Nirenberg type inequalities under radial curvature assumption$)$
 Let $M$ be an $n$-manifold with a pole and the radial curvature $K(r)$ of $M$ satisfy one of the
twelve conditions in \eqref{7.7} 
where  $0 \le B_1 \le 1 \le A\, $ are constants. Then 
 for every $u\in W_{0}^{1,2}(M\backslash \left\{ x_{0}\right\} )$
and $a,b\in
\mathbb{R}\, ,$ \eqref{7.3} holds in which the constant $C$ is given by \eqref{7.8}.
\end{theorem7.4}

Hessian comparison theorems lead to 
embedding theorems for weighted Sobolev spaces of functions on Riemannian manifolds (see Theorem \ref{T: 7.5}) and various geometric integral inequalities on manifolds $\big ($see \eqref{7.13}, \eqref{7.15}, \eqref{7.17}, \eqref{7.19}, \eqref{7.21}, \eqref{7.23}, \eqref{7.25} in Theorem \ref{T: 7.6}$\big )$. Concurrently, Laplacian comparison Theorems lead to the following theorem. \smallskip 

\begin{theorem7.7}$($Generalized sharp Hardy type inequality$)$
\label{T: 7.7} Let $M$ be an $n$-manifold with a pole satisfying 

\noindent
$\eqref{2.39}\hskip1.0in \operatorname{Ric}^{\rm{M}}_{\rm{rad}}(r) \ge - (n-1)\frac {A(A-1)}{r^2}\quad  \operatorname{where}\quad A \ge 1\, .$

\noindent
Then for every $u\in
W_{0}^{1,p}(M)$, $\frac {u}{r} \in L^p(M)$ with $p > (n-1) A + 1$, we have 

\noindent
$\eqref{7.27} \hskip1.0in 
\big (\frac{p-1-(n-1)A}{p}\big ) ^{p}\int_{M}\frac{|u|^p}{r^p}dv\leq \int_{M}\left\vert \nabla u\right\vert ^{p}dv\, . $
\end{theorem7.7}

The case $A=1$ is sharp and is due to Chen-Li-Wei $($see \cite[Theorem 5]{CLW2}$)$. Furthermore, the assumption $\frac {u}{r} \in L^p(M)$ cannot be dropped, or a counter-example is constructed in \cite [in Section 5] {CLW2}.
Moreover, this theorem
does not require $u\in W_0^{1,p}(M\backslash \{x_0\})$ and hence even for the case $p=2\, ,$ the result is
stronger than Hardy's inequality \eqref{7.25}, obtained by setting $a=1$ and $b=0$
in Theorems      
\ref{T: 7.3} and \ref{T: 7.4} $\big (\operatorname{see}\, \operatorname{Theorem}\, \ref{T: 7.6} (\text{vii}) \big ) $. More recently, through the study of comparison theorems in Finsler geometry, some generalized Hardy type inequalities and generalized Caffarelli-Kohn-Nirenberg type inequalities on
Riemannian manifolds have been extended to Finsler manifolds by Wei and Wu $($see \cite {WW}$)$. 
\smallskip

In the {\bf forth} part of the paper, following the framework in \cite {W3}, employing and augmenting the work of Dong and Wei \cite {DW}, Wei \cite {W3}, we apply comparison theorems to the study of differential forms of degree $k$ with values in vector bundles. In particular, we obtain a monotonicity formulae for vector bundle valued differential $k$-forms, if curvature satisfies one of the seven conditions in \eqref{8.12} (see Theorem \ref{T: 8.1}).

Whereas a ``microscopic" approach to monotonicity formulae leads to celebrated blow-up techniques 
due to de-Giorgi (\cite{Gi}) and Fleming (\cite{Fl}), 
and regularity theory in geometric measure theory (see \cite{FF,A,SU,PS,HL,Lu}). Examining a macroscopic viewpoint of the above monotonicity formula, we derive vanishing theorems for vector bundle valued $k$-forms (see Theorem \ref{T: 9.1}). This macroscopic viewpoint also leads to the study of global rigidity phenomena in locally conformal flat manifolds (see Dong-Lin-Wei \cite {DLW}).  

In \cite {DW}, Dong and Wei introduced $F$-Yang-Mills fields (When $F$ is the identity map, they are Yang-Mills fields) (see also \cite {G}). 
As an application of Theorem \ref{T: 9.1}, we obtain Vanishing Theorem for $F$-Yang-Mills fields (see Theorem \ref{T: 9.2}).

A natural link to unity (see \cite {W}) leads to Liouville theorems for $F$-harmonic maps (When $F(t) = \frac 1p (2t)^{\frac{p}{2}}\, , \, p > 1\, ,$ they become $p$-harmonic maps, or harmonic maps if $p=2\, .$) (see Theorem \ref{T: 10.1}).

In contrast to the work of Chang-Chen-Wei (\cite {CCW}) on Liouville properties for a $p$-harmonic morphism or a  $p$-harmonic function on a manifold that supports a weighted Poincar\'{e} inequality, we have Liouville theorems for $p$-harmonic maps   (see Theorem \ref{T: 10.2}).

The techniques can be further applied to generalized Yang-Mills-Born-Infeld
fields on manifolds. In particular we have vanishing theorems for generalized Yang-Mills-Born-Infeld
fields (with the plus sign) on Riemannian manifolds (see Theorem \ref{T: 11.2}).

In \cite {W2},
 we solve the Dirichlet problem for $p$-harmonic maps to which the solution
is due to Hamilton \cite {H1} in the case $p=2$ and $Riem^{N}\leq 0$:

\begin{theoremD} $($\cite {W2}$)$ Let $M$ be a compact Riemannian
$n$-manifold with boundary $\partial M$ and $N$ be a compact Riemannian manifold
with a contractible universal cover $\widetilde N$.
Assume that $N$ has no non-trivial $p$-minimizing tangent map
of $R^\ell$ for $\ell\leq n$.  Then any $u\in Lip(\partial M,N)\cap C^0(M,N)$
 of finite $p$-energy can be deformed to a
 $p$-harmonic map $u_0\in C^{1,\alpha}(M \backslash \partial M,N)\cap C^{\alpha}(M,N)$ minimizing $p$-energy
in the homotopic class with $u_0|_{\partial M}=u|_{\partial M}$
, where $1<p<\infty$.  In particular, every
$u \in C^1(M,N)$ can be deformed to a
$C^{1,\alpha}$ $p$-harmonic map $u_0$ in $M \backslash \partial M$ minimizing $p$-energy
in the homotopic class with H$\ddot o$lder continuous
$u_0|_{\partial M}=u|_{\partial M}$.
\end{theoremD}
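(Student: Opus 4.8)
The plan is to proceed by the direct method of the calculus of variations to produce an energy minimizer in the appropriate (relative) homotopy class, then invoke the partial regularity theory for $p$-energy minimizing maps to show the minimizer is regular, and finally use the asphericity of $N$ to identify the resulting $p$-harmonic map with a genuine deformation of $u$. First I would isometrically embed $N$ into some $\mathbb{R}^K$ by Nash's theorem, so that maps $M\to N$ become $\mathbb{R}^K$-valued Sobolev maps constrained to lie in $N$ almost everywhere. I would then fix the competitor class $\mathcal{C}$ of maps $w\in W^{1,p}(M,N)$ with trace $w|_{\partial M}=u|_{\partial M}$ and with the same $\lfloor p\rfloor$-homotopy type as $u$, in White's sense (the homotopy class of the restriction to a generic $\lfloor p\rfloor$-dimensional skeleton of a triangulation of $M$); since $u\in\mathrm{Lip}(\partial M,N)\cap C^0(M,N)$ has finite $p$-energy on the compact manifold $M$, the class $\mathcal{C}$ is nonempty. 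For a minimizing sequence $w_j$ for the $p$-energy $E_p(w)=\frac1p\int_M|dw|^p\,dv$, the energy bound yields a weakly $W^{1,p}$-convergent subsequence with limit $u_0$; Rellich compactness with a.e.\ convergence preserves the pointwise constraint $u_0(x)\in N$, continuity of the trace operator preserves the boundary datum, weak lower semicontinuity of $E_p$ shows $u_0$ is a minimizer, and White's theorem that the $\lfloor p\rfloor$-homotopy type is preserved under weak limits of bounded $p$-energy keeps $u_0\in\mathcal{C}$.

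Next I would establish regularity of $u_0$. Interior regularity follows from the Schoen--Uhlenbeck--Hardt--Lin--Luckhaus partial regularity theory for $p$-energy minimizers: $u_0\in C^{1,\alpha}$ on $M\setminus(\partial M\cup\mathrm{sing}(u_0))$ for a closed set $\mathrm{sing}(u_0)\subset M\setminus\partial M$, and the blow-up/dimension-reduction argument shows that at a point of $\mathrm{sing}(u_0)$ one may extract a nonconstant homogeneous degree-zero $p$-minimizing tangent map from $\mathbb{R}^\ell$ with $\ell\le n$. The hypothesis that $N$ carries no such non-trivial tangent map therefore forces $\mathrm{sing}(u_0)=\emptyset$, so $u_0\in C^{1,\alpha}(M\setminus\partial M,N)$. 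Because the boundary datum $u|_{\partial M}$ is Lipschitz, the boundary counterpart of this theory (ruling out boundary tangent maps by the same hypothesis, after reflecting across a collar of $\partial M$) gives $u_0\in C^\alpha(M,N)$ with $u_0|_{\partial M}=u|_{\partial M}$, and $u_0$ is $p$-harmonic in $M\setminus\partial M$.

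Finally I would upgrade ``$\lfloor p\rfloor$-homotopic'' to genuinely homotopic relative to $\partial M$, and this is exactly where the contractibility of the universal cover $\widetilde N$ is used: $N$ is aspherical, so $\pi_k(N)=0$ for all $k\ge 2$, and consequently the (relative) homotopy class of a continuous map into $N$ is determined by its action on the fundamental group, hence by its restriction to the $1$-skeleton. Since $1\le\lfloor p\rfloor$ for all $p>1$, the $\lfloor p\rfloor$-homotopy type already pins down the genuine homotopy type rel $\partial M$; as $u_0$ and $u$ share $\lfloor p\rfloor$-homotopy type and boundary trace, they are homotopic rel $\partial M$, so $u$ can be deformed to $u_0$, and $u_0$ minimizes $p$-energy in that homotopy class. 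The final assertion is immediate: any $u\in C^1(M,N)$ belongs to $\mathrm{Lip}(\partial M,N)\cap C^0(M,N)$ and, $M$ being compact, has finite $p$-energy, so the general statement applies and returns a $C^{1,\alpha}$ $p$-harmonic map on $M\setminus\partial M$ with H\"older continuous boundary values agreeing with $u$.

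The main obstacle is the regularity step, in two respects: converting the hypothesis on $p$-minimizing tangent maps into emptiness of the singular set via the full dimension-reduction machinery (standard in the interior after Schoen--Uhlenbeck, Hardt--Lin, Luckhaus, but delicate to carry through the boundary with only Lipschitz data, through a reflected collar), and simultaneously tracking the $\lfloor p\rfloor$-homotopy type through the weak limit. A secondary subtlety, needed to make the variational problem meaningful when $p<n$, is the precise formulation of the $\lfloor p\rfloor$-homotopy type together with White's compactness theorem for it; this is what legitimizes minimizing within a fixed relative homotopy class and, combined with the asphericity of $N$, closes the argument.
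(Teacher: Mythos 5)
The paper itself gives no proof of Theorem D: it is quoted verbatim from \cite{W2}, so the only meaningful comparison is with the proof in that cited source. Your proposal reproduces its architecture faithfully: Nash embedding, the direct method within White's Sobolev skeleton-homotopy classes with fixed trace, weak lower semicontinuity, the Hardt--Lin/Luckhaus partial regularity and dimension-reduction theory with the no-nontrivial-$p$-minimizing-tangent-map hypothesis emptying the singular set, H\"older continuity up to $\partial M$ for Lipschitz data, and the contractibility of $\widetilde N$ to pass from skeleton homotopy to genuine homotopy rel $\partial M$. So the route is the intended one.

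There is, however, one step that fails as written. You minimize in the class of maps with the same $\lfloor p\rfloor$-homotopy type as $u$ and invoke ``White's theorem that the $\lfloor p\rfloor$-homotopy type is preserved under weak limits of bounded $p$-energy.'' That statement is false when $p$ is an integer: for $p=2$, conformal dilations of the identity map of $S^2$ have constant energy and converge weakly in $W^{1,2}$ to a constant, so the $2$-homotopy type (the degree) jumps, and the same interior bubbling occurs for maps $B^3\to S^2$ with fixed trace, so prescribing the boundary does not rescue it. White's weak sequential continuity holds for the $d$-homotopy type only when $d<p$ (for non-integer $p$ this is $d=\lfloor p\rfloor$, but for integer $p$ only $d\le p-1$), so for integer $p$ -- including the classical case $p=2$ covered by the theorem -- your competitor class need not be weakly closed and the limit $u_0$ could a priori leave it. The repair is to use the asphericity of $N$ before the compactness step rather than after: since $\pi_k(N)=0$ for all $k\ge 2$, obstruction theory over $M\times I$ rel $\partial M\times I$ shows that the homotopy class rel $\partial M$ is already determined by the restriction to a generic $1$-skeleton, and the $1$-homotopy type rel boundary is weakly sequentially continuous for every $p>1$ because $1<p$; minimizing in that class and then converting $1$-homotopy rel boundary back into genuine homotopy rel boundary (again by asphericity) closes the argument, exactly as the hypothesis on $\widetilde N$ is used in \cite{W2}. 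A smaller caveat: the H\"older continuity up to $\partial M$ for merely Lipschitz data comes from boundary monotonicity and comparison arguments in the Hardt--Lin theory, with the tangent-map hypothesis ruling out boundary blow-ups, rather than from a reflection across a collar; this does not affect the structure of the proof but the reflection device you describe is not what actually carries that step.
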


Whereas the \emph {domain} of the solution $u$ of the above Dirichlet problem is an arbitrary compact Riemannian manifold (with boundary), we discuss a Dirichlet problem for which the \emph {target} of its solution is an arbitrary Riemannian manifold.
\smallskip

In the {\bf fifth} part of the paper, we apply the comparison theorems to the study of Dirichlet problems on starlike domains for vector bundle valued differential $1$-forms, augmenting the work of Dong and Wei $($\cite {DW}$)\, ,$ Wei \cite {W3}  and generalizing and refining the work of Karcher and Wood (\cite {KW}) on
harmonic maps on disc domains with constant boundary value (see Theorems \ref{T: 12.1} and \ref{T: 12.2}).
\smallskip

The author wishes to thank the referees for their comments and suggestions which helped the author prepared the final version of this paper. 
\section{Dualities in Comparison Theory}

We begin with a fundamental duality in comparison theory - the duality between the second-order linear Jacobi type equation with the initial conditions at $0$ and the first-order nonlinear Riccati type equation with the asymptotic condition at $0\, :$\smallskip

\noindent
\subsection{\bf Dualities between Differential Equations and Differential Inequalities}

\begin{theorem} Let $G (t)$ be a real-valued function defined on  $(0, \tau) \subset (0, \infty)\, ,$ $f \in C([0, \tau]) \cap C^1(0, \tau)\, ,$ $f > 0$ in $(0, \tau)\, ,$ $f^{\prime} \in AC(0, \tau)\, ,$ $\kappa > 0$ be a constant and $g \in AC(0, \tau)$.  Then 
\begin{equation}
{\begin{cases}
f^{ \, \prime \prime} (t)+ G (t)\, f (t) = 0\, (\operatorname{resp.}\, \le 0\, ,\, \ge 0 )\quad \operatorname{a.}\, \operatorname{e.}\quad \operatorname{in}\quad (0, \tau) \\
f (0) = 0\, , f^{\, \prime}(0) = \kappa
\end{cases}}\label{2.1}
\end{equation}

\noindent
is dual to $($or if and only if $)$

\begin{equation}{\begin{cases}
g^{\, \prime} (t) + \frac {g(t)^2}{\kappa} + \kappa G(t) = 0\, (\operatorname{resp.}\, \le 0\, ,\, \ge 0 )\quad \operatorname{a.}\, \operatorname{e.}\quad \operatorname{in}\quad (0, \tau)  \\
g(t)  = \frac{\kappa}{t} + O(1)
\quad \operatorname{as}\quad t \to 0^+\, .
\end{cases}}\label{2.2}
\end{equation}
Furthermore, \eqref{2.1} is transformed into \eqref{2.2}, 
via the transformer
\begin{equation} g = \frac {\kappa f^{\, \prime}}{f}
\label{2.3}
\end{equation}
in $(0, \tau)\, .$ Conversely, \eqref{2.1} is reversed from \eqref{2.2} and solution $($resp. supersolution, subsolution$)$ $f(t)$ of \eqref{2.1} enjoys 
\begin{equation}\frac{ f^{\, \prime}(t)}{f(t)} = \frac{g(t)}{\kappa}\, \label{2.4}
\end{equation}
in $(0, \tau)\, ,$ via the reverser
\begin{equation} f(t) = \kappa t \exp \bigg( \int_0^t \frac {g(s)}{\kappa}  - \frac 1s\, ds\bigg)\, 
\label{2.5}
\end{equation}
in $(0, \tau)$.   
\label{T: 2.1}
\end{theorem}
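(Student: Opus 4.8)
The plan is to establish the equivalence by exhibiting the explicit change of variables \eqref{2.3} and its inverse \eqref{2.5}, then checking that each of the three cases (equality, $\le$, $\ge$) is matched, together with the asymptotic/initial conditions. First I would assume \eqref{2.1} holds, set $g = \kappa f^{\prime}/f$ on $(0,\tau)$ — this is legitimate since $f>0$ there — and compute $g^{\prime}$ using the product/quotient rule. Because $f^{\prime}\in AC(0,\tau)$, $f\in C^1$, and $f>0$, the function $g$ is a product/quotient of absolutely continuous functions with an absolutely continuous (indeed $C^1$) nonvanishing denominator, so $g\in AC(0,\tau)$ and the usual differentiation rules hold a.e. One gets $g^{\prime} = \kappa f^{\prime\prime}/f - \kappa (f^{\prime})^2/f^2 = \kappa f^{\prime\prime}/f - g^2/\kappa$ a.e., so $g^{\prime} + g^2/\kappa + \kappa G = \kappa(f^{\prime\prime} + Gf)/f$; since $\kappa/f > 0$, the sign of the left side equals the sign of $f^{\prime\prime} + Gf$, which gives the three cases of the differential (in)equality in \eqref{2.2} at once. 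For the asymptotic condition, I would use $f(0)=0$, $f^{\prime}(0)=\kappa$, and $f\in C^1([0,\tau))$: by Taylor/L'Hôpital, $f(t)/t \to \kappa$ and $f^{\prime}(t)\to\kappa$ as $t\to 0^+$, hence $g(t) = \kappa f^{\prime}(t)/f(t) = \kappa f^{\prime}(t)\cdot t/f(t) \cdot (1/t) = \kappa/t + o(1/t)$; a slightly more careful expansion $f(t) = \kappa t + o(t)$, $f^{\prime}(t) = \kappa + o(1)$ yields $g(t) = \kappa/t + O(1)$, which is the required condition in \eqref{2.2}.

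Conversely, I would assume \eqref{2.2} and define $f$ by the reverser \eqref{2.5}, i.e. $f(t) = \kappa t\exp\!\big(\int_0^t (g(s)/\kappa - 1/s)\,ds\big)$. The integrand $g(s)/\kappa - 1/s$ is integrable near $0$ precisely because $g(s) = \kappa/s + O(1)$ makes the singular terms cancel, leaving a bounded (locally) function; so $f$ is well-defined, positive, and continuous on $[0,\tau)$ with $f(0)=0$. Differentiating, $f^{\prime}/f = g/\kappa$ (this is \eqref{2.4}, obtained directly from the exponential form and the fundamental theorem of calculus, valid a.e. since $g\in AC$), hence $f^{\prime} = (g/\kappa) f$. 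From $f(t) = \kappa t\,e^{o(1)}$ near $0$ we get $f^{\prime}(t) = (g(t)/\kappa) f(t) = (1/t + O(1))\cdot \kappa t\, e^{o(1)} \to \kappa$, so $f^{\prime}(0) = \kappa$. Moreover $f^{\prime} = (g/\kappa)f$ is a product of an $AC$ function and a $C^1$ positive function, hence $f^{\prime}\in AC(0,\tau)$, and differentiating once more a.e. gives $f^{\prime\prime} = (g^{\prime}/\kappa)f + (g/\kappa)f^{\prime} = (g^{\prime}/\kappa)f + (g^2/\kappa^2)f$, so $f^{\prime\prime} + Gf = \frac{f}{\kappa}\big(g^{\prime} + g^2/\kappa + \kappa G\big)$; since $f/\kappa > 0$ the sign is again preserved, recovering \eqref{2.1} in all three cases. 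This also shows $f\in C^1(0,\tau)$ since $f^{\prime}=(g/\kappa)f$ is continuous.

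The main obstacle, and the point that deserves the most care, is the regularity bookkeeping near the singular endpoint $t=0$: one must verify that the map $f\mapsto g$ and its inverse genuinely send the stated function classes ($C([0,\tau])\cap C^1(0,\tau)$ with $f^{\prime}\in AC$, $f>0$) to and from $AC(0,\tau)$ with the $\kappa/t + O(1)$ behavior, that the integral $\int_0^t(g/\kappa - 1/s)\,ds$ converges, and that the a.e. identities obtained by differentiating products and quotients of absolutely continuous functions are valid — here the hypotheses that the denominator $f$ is $C^1$ and bounded away from $0$ on compact subsets of $(0,\tau)$ is exactly what makes $1/f$ locally Lipschitz, so $g=\kappa f^{\prime}/f\in AC_{\mathrm{loc}}$. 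Once these regularity points are settled, the algebraic identities $g^{\prime} + g^2/\kappa + \kappa G = (\kappa/f)(f^{\prime\prime}+Gf)$ and its inverse do all the work, and the asymptotic matching is a one-line Taylor expansion. I would organize the write-up as: (1) the forward direction with the identity and the asymptotics; (2) the converse with well-definedness of \eqref{2.5}, the identity \eqref{2.4}, and the recovery of \eqref{2.1}; (3) a remark that \eqref{2.3} and \eqref{2.5} are mutually inverse, closing the "duality."
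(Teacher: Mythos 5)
Your overall route is the same as the paper's: the forward direction via the transformer (2.3) and the quotient rule, giving the sign-preserving identity $g^{\prime}+\frac{g^{2}}{\kappa}+\kappa G=\frac{\kappa}{f}\,(f^{\prime\prime}+Gf)$ a.e., and the converse via the reverser (2.5), checking local boundedness and integrability of $\frac{g(s)}{\kappa}-\frac 1s$ near $0$, deriving (2.4), computing $f^{\prime}(0^{+})=\kappa$, and differentiating once more to recover (2.1); your regularity bookkeeping (why $g\in AC$, why $f^{\prime}\in AC$, why $f\in C^{1}$) is also in order. So the architecture coincides with the paper's proof step for step.

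The one step that does not hold as written is your derivation of the asymptotic condition $g(t)=\frac{\kappa}{t}+O(1)$. From $f(0)=0$, $f^{\prime}(0)=\kappa$ alone you only get $f(t)=\kappa t+o(t)$ and $f^{\prime}(t)=\kappa+o(1)$, and these give $g(t)-\frac{\kappa}{t}=\kappa\,\frac{t f^{\prime}(t)-f(t)}{t\,f(t)}=o\!\left(\frac 1t\right)$ --- exactly what your first computation produced --- and nothing better; a ``slightly more careful expansion'' with the same little-$o$ remainders cannot upgrade $o(1/t)$ to $O(1)$, since an error of size $t^{-1/2}$ (e.g.\ $f(t)=\kappa t+t^{3/2}$, which is consistent with all the little-$o$ information) is not excluded. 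What is actually needed, and what the paper invokes at the corresponding point (its display (2.8)), are the quantitative remainders $f^{\prime}(t)=\kappa+O(t)$ and $f(t)=\kappa t+O(t^{2})$, equivalently $t f^{\prime}(t)-f(t)=O(t^{2})$, from which the $O(1)$ does follow. You should either justify such an expansion from whatever regularity you are willing to impose at $t=0$ or state it explicitly as the input; as it stands, this half of the equivalence --- the asymptotic condition in (2.2), which is precisely what the downstream comparison theorems use --- is not established by your argument.
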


\begin{proof}

$(\Rightarrow)$ Since $f > 0$ in $(0, \tau)$, \eqref{2.3} is well-defined in $(0, \tau)\, $ with $g \in AC(0, \tau)$.\smallskip

It follows from \eqref{2.3}, the quotient law and $(2.1)$ that 
\begin{equation}\begin{aligned}
g^{\, \prime} & 
 = \frac {\kappa f^{\, \prime\prime} f - \kappa ({f^{\, \prime}})^2}{f^{\, 2}} \\
& = \, (\operatorname{resp.}\, \le \, ,\, \ge  ) \frac {-\kappa Gf^{\, 2}}{f^{\, 2}} - \frac {\big (\frac {\kappa f^{\, \prime}}{f}\big )^2}{\kappa} \\
& = \, (\operatorname{resp.}\, \le \, ,\, \ge  ) - \kappa G - \frac {g^2}{\kappa} \quad \operatorname{a.} \operatorname{e.}\quad \operatorname{in}\quad (0, \tau). 
\end{aligned}
\label{2.6}
\end{equation} 
 Thus, we obtain the first-order nonlinear differential equation $( \operatorname{resp.}$ differential inequalities $)$
\begin{equation}
 g^{\, \prime} + \frac {g^2}{\kappa} + \kappa G = 0\, (\operatorname{resp.}\, \le 0\, ,\, \ge 0 )
\label{2.7}
\end{equation}  
a.e. in $(0, \tau)\, ,$ with the initial condition at $0$ in \eqref{2.1} being converted to the asymptotic condition at $0$ in \eqref{2.2}. Indeed,
 
\begin{equation}\begin{aligned}
g(t) & = \kappa \frac{f^{\, \prime}(t)}{f(t)} = \kappa \frac{\kappa +O(t)}{\kappa t+O(t^2)}
= \frac{\kappa }{t} \cdot \frac{1+O(t)}{1+O(t)} \\ 
& = \frac{\kappa }{t} \bigg (\frac{1}{1+O(t)} +O(t) \bigg )  = \frac{\kappa }{t} + O(1)
\quad \operatorname{as}\quad t \to 0^+
\end{aligned}\label{2.8}
\end{equation}
$(\Leftarrow)$ Observe $\frac {g(s)}{\kappa} - \frac {1}{s}$ is locally bounded in a neighborhood of $0$ and integrable in $[0, \tau)$.
In view of \eqref{2.5}, $f \in C([0, \tau]) \cap C^1(0,\tau)\, , f(0)=0\, ,\, f > 0$ on $(0, \tau)\, $ and $f^{\, \prime} \in AC(0, \tau)\, .$ Indeed, for every $t \in (0, \tau)\, ,$
\begin{equation}
\begin{aligned}
f^{\, \prime}(t) & = \kappa \exp \bigg( \int_0^t \frac {g(s)}{\kappa}  - \frac 1s\, ds\bigg) + \kappa t \exp \bigg( \int_0^t \frac {g(s)}{\kappa}  - \frac 1s\, ds\bigg) \cdot \bigg (\frac {g(t)}{\kappa}  - \frac 1t \bigg )\\ 
& =  g(t)  \cdot t \exp \bigg( \int_0^t \frac {g(s)}{\kappa}  - \frac 1s\, ds\bigg)  \\ 
& = g(t)  \cdot  \frac {f(t)}{\kappa}.
\end{aligned}\label{2.9}
\end{equation}
Thus, \eqref{2.4} holds and as $t \to 0^{+}\, ,$ $f^{\, \prime}(t) = \bigg ( \frac{\kappa }{t} + O(1) \bigg ) \cdot t \exp \bigg( \int_0^t \frac {g(s)}{\kappa}  - \frac 1s\, ds \bigg )\to \kappa\, .$
Hence, $f^{\, \prime} (0)  =  \kappa\, ,$ and it follows from \eqref{2.9} and \eqref{2.2} that
$$f^{\, \prime\prime} (t)  =  g^{\, \prime} (t) \cdot  \frac {f(t)}{\kappa} + \frac {g(t)^{\, 2}}{\kappa}  \cdot  \frac {f (t)}{\kappa} =  \, (\operatorname{resp.}\, \le \, ,\, \ge  ) - G (t) f (t)\, \operatorname{a.} \operatorname{e.}\, \operatorname{in}\, (0, \tau).$$
Consequently, we obtain the second-order linear differential equation $( \operatorname{resp.}$ differential inequalities $)$ 
\begin{equation}
f^{ \, \prime \prime} (t)+ G (t)\, f (t) = 0\, (\operatorname{resp.}\, \le 0\, ,\, \ge 0 )
\label{2.10}
\end{equation}
a.e. in $(0, \tau)$, with $f (0) = 0$ and $f^{\, \prime}(0) = \kappa$.
\end{proof}

\begin{corollary}
$(1)$  Let $G: (0, \tau) \subset (0, \infty) \to \mathbb R$, $f \in C^1(0, \tau)\, ,$ $f > 0$ in $(0, \tau)\, ,$ $f^{\prime} \in AC(0, \tau)\, ,$ and $g \in AC(0, \tau)$.  Then  
\begin{equation}
{\begin{cases}
f^{ \, \prime \prime} (t)+ G (t)\, f (t) = 0\, (\operatorname{resp.}\, \le 0\, ,\, \ge 0 )\quad \operatorname{a.} \operatorname{e.}\quad \operatorname{in}\quad (0, \tau) \\
f (0) = 0\, , f^{\, \prime}(0) = 1
\end{cases}}\label{2.11}
\end{equation}

\noindent
is dual to $($or if and only if $)$

\begin{equation}{\begin{cases}
g^{\, \prime} (t) + g(t)^2 + G(t) = 0\, (\operatorname{resp.}\, \le 0\, ,\, \ge 0 )\quad \operatorname{a.} \operatorname{e.}\quad \operatorname{in}\quad (0, \tau)  \\
g(t)  = \frac{1}{t} + O(1)
\quad \operatorname{as}\quad t \to 0^+\, .
\end{cases}}\label{2.12}
\end{equation}

\noindent
Furthermore, \eqref{2.11} is transformed into \eqref{2.12}, 
via the transformer
\eqref{2.3} in which $\kappa =1$.  
Conversely, \eqref{2.11}  is reversed from \eqref{2.12} via the reverser
\eqref{2.5} in which $\kappa =1$, and $f(t)$ in \eqref{2.11} enjoys
\eqref{2.4} in which $\kappa =1$, 

\noindent
$(2)$ Theorem \ref{T: 2.1} is equivalent to $(1)$.

\noindent
$(3)$ Let $\tilde f = \kappa f$ and $\tilde g = {\kappa }{g}$. Then the following four statements are equivalent:  
$$f\, \operatorname{satisfies}\, \eqref{2.11}. \Leftrightarrow \tilde f\, \operatorname{satisfies}\, \eqref{2.1}. \Leftrightarrow \tilde g \, \operatorname{satisfies}\,  \eqref{2.2}. \Leftrightarrow g \, \operatorname{satisfies}\, \eqref{2.12}. $$
\label{C: 2.1}\end{corollary}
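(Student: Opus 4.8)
The three assertions are all bookkeeping around the way the constant $\kappa$ enters: linearly in the Jacobi variable $f$ and as a multiplicative rescaling of the Riccati variable $g$. The plan is to obtain (1) by specialization, (2) by rescaling an arbitrary $\kappa$ to the normalized case $\kappa=1$, and (3) by unwinding the same rescaling together with the homogeneity of the two systems.

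For (1): I would observe that \eqref{2.11}--\eqref{2.12} are exactly \eqref{2.1}--\eqref{2.2} with $\kappa=1$, and that the transformer and reverser claims are \eqref{2.3} and \eqref{2.5} with $\kappa=1$. The only cosmetic discrepancy is that Theorem \ref{T: 2.1} lists $f\in C([0,\tau])\cap C^1(0,\tau)$ while (1) lists $f\in C^1(0,\tau)$; but any $f$ satisfying \eqref{2.11} has $f(0)=0$, so it extends continuously to $[0,\tau]$ and the two hypothesis sets describe the same class of $f$. Hence (1) is a direct consequence of Theorem \ref{T: 2.1}, and conversely Theorem \ref{T: 2.1} trivially implies (1). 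For the remaining implication in (2), namely (1)$\,\Rightarrow\,$Theorem \ref{T: 2.1}, take data as in Theorem \ref{T: 2.1} with an arbitrary constant $\kappa>0$ and set $\hat f:=\kappa^{-1}f$, $\hat g:=\kappa^{-1}g$. Since $\kappa>0$ is constant, multiplication by $\kappa^{-1}$ preserves the classes $C^1(0,\tau)$ and $AC(0,\tau)$, preserves positivity of $f$, preserves the trichotomy ($=$, $\le$, $\ge$) in $f''+Gf$, and yields $\hat f(0)=0$, $\hat f'(0)=1$; so $\hat f,\hat g$ satisfy the normalized hypotheses. One checks the elementary identities
\[
g'+\tfrac{g^2}{\kappa}+\kappa G=\kappa\big(\hat g'+\hat g^2+G\big),\qquad g(t)=\tfrac{\kappa}{t}+O(1)\iff\hat g(t)=\tfrac1t+O(1),\qquad g=\tfrac{\kappa f'}{f}\iff\hat g=\tfrac{\hat f'}{\hat f}.
\]
Applying (1) to $\hat f,\hat g$ and multiplying the conclusions back by $\kappa$ recovers every assertion of Theorem \ref{T: 2.1}: the duality between \eqref{2.1} and \eqref{2.2}, the transformer \eqref{2.3}, the identity \eqref{2.4}, and the reverser \eqref{2.5} (the last because $\hat f(t)=t\exp\int_0^t(\hat g(s)-\tfrac1s)\,ds$, multiplied by $\kappa$ and rewritten via $\hat g=g/\kappa$, is precisely \eqref{2.5}). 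Thus (1) and Theorem \ref{T: 2.1} are equivalent.

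For (3): with $\tilde f=\kappa f$, $\tilde g=\kappa g$ the chain of four equivalences is read off from homogeneity. First, $\tilde f''+G\tilde f=\kappa(f''+Gf)$ has the same sign, $\tilde f(0)=0$, $\tilde f'(0)=\kappa$, and $f\mapsto\kappa f$ is invertible, so $f$ satisfies \eqref{2.11} iff $\tilde f$ satisfies \eqref{2.1}. Second, by (1) a solution (resp.\ super-, subsolution) $f$ of \eqref{2.11} satisfies $g=f'/f=\tilde f'/\tilde f$, whence $\kappa\tilde f'/\tilde f=\kappa g=\tilde g$; so Theorem \ref{T: 2.1} applied to $\tilde f$ with the constant $\kappa$ gives that $\tilde f$ satisfies \eqref{2.1} iff $\tilde g$ satisfies \eqref{2.2}. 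Third, $\tilde g'+\tilde g^2/\kappa+\kappa G=\kappa(g'+g^2+G)$ has the same sign and $\tilde g(t)=\kappa/t+O(1)$ iff $g(t)=1/t+O(1)$, so $\tilde g$ satisfies \eqref{2.2} iff $g$ satisfies \eqref{2.12}. Concatenating the three gives the asserted four-fold equivalence.

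There is no genuinely hard step; the whole corollary is the observation that the substitution $f\mapsto\kappa f$ (equivalently $g\mapsto\kappa g$) is a smooth invertible operation that commutes with differentiation, with the $O(\cdot)$ bookkeeping at $0^+$, and with the $AC/C^1$ regularity classes. The only point that needs a moment's care is matching the transformer/reverser identities and the asymptotic conditions across the rescaling, which the displayed identities above make transparent.
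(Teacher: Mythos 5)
Your proposal is correct and follows essentially the same route as the paper: part (1) by specializing $\kappa=1$ in Theorem \ref{T: 2.1}, and parts (2)–(3) by the rescaling $\tilde f=\kappa f$, $\tilde g=\kappa g$ together with the homogeneity of the Jacobi and Riccati systems and Theorem \ref{T: 2.1} for the middle equivalence. Your extra checks (asymptotic condition, $AC/C^1$ classes, the $C([0,\tau])$ versus $C^1(0,\tau)$ hypothesis) only make explicit what the paper's terser proof leaves implicit.
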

\begin{proof} $(1)$ Choose $\kappa = 1$ in Theorem \ref{T: 2.1}. 
$(2)$ and $(3)$ Let $\tilde f = \kappa f$. Then $f$ satisfies \eqref{2.11} if and only if $\tilde f$ satisfies \eqref{2.1}. Let $g = \frac {\tilde g}{\kappa }$. Then $g$ satisfies \eqref{2.12} if and only if $\tilde g$ satisfies \eqref{2.2}.
Furthermore, by Theorem \ref{T: 2.1}, 
$\tilde f$ satisfies \eqref{2.1} if and only if 
$\tilde g$ satisfies \eqref{2.2}. Consequently, ``\eqref{2.11} is dual to \eqref{2.12}" is equivalent to ``\eqref{2.1} is dual to \eqref{2.2}" and we have shown $(3)$.
\end{proof}

\subsection{\bf Dualities between Comparison Theorems in Differential Equations}
\medskip

To each differential equation $(2.1)\, \big (\operatorname{resp.}\, (2.2)\big )$ , there is associated Comparison Theorem E $\, (\operatorname{resp.}\, \operatorname{Theorem }\, \ref{T: 2.2})$ between its supersolution and subsolution with appropriate initial condition $\, (\rm{resp.}\, \rm{asymptotic}\, \rm{condition})$.
We state and prove these ``opposites" as well as ``unity" in a general form.

\begin{theoremE}$($Sturn-Liouville Type Comparison Theorem$)\, $
Let $G_i : (0, t_i) \subset (0, \infty)$ $\to \mathbb R$ be functions satisfying \eqref{2.15} on $(0, t_1) \cap (0, t_2)$, and $f_i \in C([0, t_i]) \cap C^1(0,t_i)$ with 
$f_i^{\prime}\in AC(0,t_i)$ for $i=1, 2\, $  
be solutions of the problems
\begin{equation}
\begin{cases}
 f_1^{\, \prime\prime} + G_1f_1\,   \le\,  0 \quad \operatorname{a}.\, \operatorname{e}.\quad \operatorname{in}\quad (0, t_1)  \\
f_1(0) = 0\, , f_1^{\, \prime}(0) = \kappa_1
\end{cases}\label{2.13}
\end{equation}
\begin{equation}
\begin{cases}
f_2^{\, \prime\prime} + G_2 f_2 \,  \ge\,  0 \quad \operatorname{a}.\, \operatorname{e}.\quad \operatorname{in}\quad (0, t_2)  \\
f_2(0) = 0\, , f_2^{\, \prime}(0) = \kappa_2
\end{cases}\label{2.14}
\end{equation}with   \begin{equation}G_2 \le G_1\label{2.15}
\end{equation}on $(0, t_1) \cap (0, t_2)$ and\begin{equation}  0 < \kappa_1 \le  \kappa_2\, ,\label{2.16}
\end{equation}
where $f_i > 0\, $ on $(0, t_i)\, , i=1, 2\, .$  Then 

\begin{equation}
f_2 > 0\, \operatorname{on}\, (0, t_1), \quad \frac{f_1^{\, \prime}}{f_1} \le \frac{f_2^{\, \prime}}{f_2}\, \operatorname{in}\, (0, t_1)\quad  \operatorname{and}\quad f_1 \le f_2 \, \operatorname{in}\, (0,t_1). \label{2.17}
\end{equation}
\end{theoremE}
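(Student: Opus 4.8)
The plan is to reduce the Sturm--Liouville comparison statement to a Riccati-style comparison via the transformer of Theorem~\ref{T: 2.1}, and then to recover the inequalities among $f_1^{\prime}/f_1$, $f_2^{\prime}/f_2$ and $f_1,f_2$ by integrating back. First I would note that, a priori, positivity of $f_2$ is only assumed on $(0,t_2)$, so part of the work is to propagate it onto $(0,t_1)$; but since the final claims are all stated on $(0,t_1)$, I expect $t_1\le t_2$ is not needed here as a separate conclusion (it is, rather, a conclusion of the later Theorems 2.3, 2.4, 2.8 where one also controls $t_i$). So I treat $t_1$ as the relevant interval and work there.

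\textbf{Step 1 (pass to Riccati quotients).} On any subinterval of $(0,t_1)$ where both $f_1>0$ and $f_2>0$, set $h_i=\kappa_i f_i^{\prime}/f_i$ for $i=1,2$, which lie in $AC$ by the regularity hypotheses. By the computation \eqref{2.6} in the proof of Theorem~\ref{T: 2.1}, applied to the differential inequalities \eqref{2.13} and \eqref{2.14}, we get
\begin{equation*}
h_1^{\prime}+\frac{h_1^2}{\kappa_1}+\kappa_1 G_1\ge 0\qquad\text{and}\qquad h_2^{\prime}+\frac{h_2^2}{\kappa_2}+\kappa_2 G_2\le 0\quad\text{a.e.},
\end{equation*}
together with the asymptotics $h_i(t)=\kappa_i/t+O(1)$ as $t\to 0^+$ (from \eqref{2.8}). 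The point of introducing the factor $\kappa_i$ is to line up the leading singular term: $h_1-h_2\to(\kappa_1-\kappa_2)/t+O(1)$, which is $\le O(1)$ near $0$ by \eqref{2.16}; in fact one checks $\limsup_{t\to0^+}(h_1(t)-h_2(t))\le 0$.

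\textbf{Step 2 (differential inequality for the difference).} Subtract the two Riccati inequalities. Using $G_2\le G_1$ and $0<\kappa_1\le\kappa_2$, one estimates $\kappa_1 G_1-\kappa_2 G_2$ and the difference of quadratic terms so as to obtain, on the set where $h_1\le h_2$ fails or is tight, a differential inequality of the form $(h_1-h_2)^{\prime}\le \Phi\,(h_1-h_2)$ a.e., for a locally integrable coefficient $\Phi$ built from $h_1,h_2,\kappa_1,\kappa_2$ (the cross term $h_1^2/\kappa_1-h_2^2/\kappa_2$ splits as $(h_1-h_2)(h_1+h_2)/\kappa_1$ plus a sign-favorable remainder since $1/\kappa_1\ge 1/\kappa_2$ and $h_2>0$). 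A Gr\"onwall / integrating-factor argument, started from the boundary behavior $\limsup_{t\to0^+}(h_1-h_2)\le 0$ in Step~1, then forces $h_1\le h_2$, i.e. $\kappa_1 f_1^{\prime}/f_1\le\kappa_2 f_2^{\prime}/f_2$, on the interval in question. Since $\kappa_1\le\kappa_2$ and the quotients are positive near $0$, this in particular yields $f_1^{\prime}/f_1\le f_2^{\prime}/f_2$ there.

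\textbf{Step 3 (propagate positivity of $f_2$, then integrate).} The inequality $h_1\le h_2$ is what keeps $f_2$ from vanishing: a standard continuation argument shows the maximal subinterval of $(0,t_1)$ on which $f_1,f_2>0$ cannot terminate before $t_1$, because at a hypothetical first zero of $f_2$ the quotient $f_2^{\prime}/f_2\to-\infty$ while $f_1^{\prime}/f_1$ stays finite (as $f_1>0$ there is given), contradicting $f_1^{\prime}/f_1\le f_2^{\prime}/f_2$. Hence $f_2>0$ on all of $(0,t_1)$ and the quotient inequality holds throughout $(0,t_1)$. Finally, $\frac{d}{dt}\log(f_2/f_1)=f_2^{\prime}/f_2-f_1^{\prime}/f_1\ge 0$ a.e., so $\log(f_2/f_1)$ is nondecreasing on $(0,t_1)$; combined with $f_2/f_1\to\kappa_2/\kappa_1\ge 1$ as $t\to0^+$, this gives $f_1\le f_2$ on $(0,t_1)$, completing \eqref{2.17}.

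\textbf{Main obstacle.} The delicate point is Step~2: handling the quadratic Riccati terms with \emph{unequal} constants $\kappa_1<\kappa_2$ while still extracting a one-sided Gr\"onwall inequality, and simultaneously justifying the comparison near the singular endpoint $t=0$ where both $h_i$ blow up like $\kappa_i/t$. The weighting $h_i=\kappa_i f_i^{\prime}/f_i$ is precisely what makes the singular parts subtract to an $O(1)$ (or better) quantity, so the whole argument hinges on carrying that normalization consistently; the rest is routine absolutely-continuous calculus.
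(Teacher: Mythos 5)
Your overall plan (pass to Riccati quotients, compare, integrate back) is reasonable, but Step 2 as written has a genuine gap, and its intermediate claim is in fact false. A preliminary slip: under the transformer, \eqref{2.13} becomes the \emph{supersolution} inequality $h_1^{\prime}+h_1^2/\kappa_1+\kappa_1G_1\le 0$ and \eqref{2.14} becomes $h_2^{\prime}+h_2^2/\kappa_2+\kappa_2G_2\ge 0$; you have the two directions interchanged. The decisive problem is that after subtracting the two Riccati inequalities the zeroth-order term is $\kappa_1G_1-\kappa_2G_2$, which has no sign under \eqref{2.15}--\eqref{2.16} once the $G_i$ may be positive (e.g.\ $G_1=G_2=1$, $\kappa_1=1<2=\kappa_2$), so the Gr\"onwall step does not close. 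Indeed the weighted comparison $\kappa_1f_1^{\prime}/f_1\le\kappa_2f_2^{\prime}/f_2$ you aim for is false in general: take $G_1=G_2=1$, $f_1=\sin t$, $f_2=2\sin t$, $\kappa_1=1$, $\kappa_2=2$, so all hypotheses of Theorem E hold with $t_1=t_2=\pi$, yet on $(\pi/2,\pi)$ one has $\kappa_1f_1^{\prime}/f_1=\cot t>2\cot t=\kappa_2f_2^{\prime}/f_2$. Moreover, even where the weighted inequality does hold, your passage to the unweighted one is backwards: from $\kappa_1f_1^{\prime}/f_1\le\kappa_2f_2^{\prime}/f_2$ and $\kappa_1\le\kappa_2$ you get only $f_1^{\prime}/f_1\le(\kappa_2/\kappa_1)\,f_2^{\prime}/f_2$, which yields $f_1^{\prime}/f_1\le f_2^{\prime}/f_2$ precisely where $f_2^{\prime}\le 0$, not ``near $0$ where the quotients are positive'' as you assert.

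The repair is to drop the $\kappa$-weighting altogether: both unweighted quotients $u_i=f_i^{\prime}/f_i$ blow up like $1/t$, and subtracting their Riccati inequalities gives $(u_1-u_2)^{\prime}+(u_1+u_2)(u_1-u_2)\le-(G_1-G_2)\le 0$ using \eqref{2.15} alone, with no unsigned term left over. Multiplying by the integrating factor $f_1f_2>0$ turns this into exactly the Wronskian argument of the paper: $W:=f_2^{\prime}f_1-f_1^{\prime}f_2$ satisfies $W(0)=0$ by the initial data and $W^{\prime}=f_2^{\prime\prime}f_1-f_1^{\prime\prime}f_2\ge(G_1-G_2)f_1f_2\ge 0$ a.e.\ on $(0,t_1)\cap(0,t_2)$, hence $W\ge 0$ and $f_1^{\prime}/f_1\le f_2^{\prime}/f_2$ there. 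The constants $\kappa_i$ enter only at the end: integrating the quotient inequality from $\epsilon$ to $r$ and letting $\epsilon\to 0^{+}$, where $f_1(\epsilon)/f_2(\epsilon)\to\kappa_1/\kappa_2\le 1$, gives $f_1\le f_2$; positivity of $f_2$ on $(0,t_1)$ then follows either by your continuation argument or, as in the paper, because a first zero $\tau<t_1$ of $f_2$ would force $0<f_1(\tau)\le f_2(\tau)=0$. So your Steps 1 and 3 are fine in spirit, but Step 2 must be replaced by this unweighted (Wronskian) comparison.
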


\begin{proof}
By \eqref{2.13}, \eqref{2.14} and \eqref{2.15}, \begin{equation}(f_2^{\, \prime}f_1 - f_1^{\, \prime}f_2)(0) = 0\label{2.18}
\end{equation}and
\begin{equation}(f_2^{\, \prime}f_1 - f_1^{\, \prime}f_2)^{\, \prime} = f_2^{\, \prime\prime}f_1 - f_1^{\, \prime\prime}f_2\, \ge (G_1 - G_2) f_1 f_2\, \ge 0 \label{2.19}
\end{equation}a. e. in $(0, t_1) \cap (0, t_2)\, $ hold. Whence $f_2^{\, \prime}f_1 - f_1^{\, \prime}f_2 \ge 0\, $ and
\begin{equation}\frac {f_1^{\, \prime}}{f_1} \le \frac {f_2^{\, \prime}}{f_2} \label{2.20}
\end{equation}
in $(0, t_1) \cap (0, t_2)\, .$ Integrating \eqref{2.20} from $\epsilon (> 0)$ to $r (< \min \{t_1, t_2\})\, ,$ and passing $\epsilon$ to $0$ from the right, one has
\[
\ln f_1(t) \big {|}_{t=\epsilon}^{r} \le \ln f_2(t) \big {|}_{t=\epsilon}^{r} \quad \text{and} 
\]
$$f_1(r) = \lim_{\epsilon \to 0^+}f_1(r)\le \lim_{\epsilon \to 0^+}\frac{f_1(\epsilon)}{f_2(\epsilon)}f_2(r)= \frac{\kappa_1}{\kappa_2}f_2(r) \le f_2(r)\, \operatorname{in} \, [0, t_1)\cap [0, t_2)\, .$$
By the continuity of $f_1$ and $f_2$ ,
\begin{equation}f_1 \le f_2 \label{2.21}
\end{equation}
on $[0, t_1] \cap [0, t_2]\, .$ Let $t_2 = \sup \{ t: f_2 > 0\quad \operatorname{on}\quad (0, t)\}\, .$
We claim $t_1 \le t_2$. Otherwise, $t_2 < t_1$ would lead to, via \eqref{2.21} $0 < f_1(t_2) \le f_2(t_2)$, contradicting by continuity $f_2(t_2) = 0\, .$  
\end{proof}

\begin{corollary}
$(1)$  Let $G_i : (0, t_i) \subset (0, \infty) \to \mathbb R$ be functions satisfying \eqref{2.15} on $(0, t_1) \cap (0, t_2)$, and let $f_i \in C([0, t_i]) \cap C^1(0, t_i)\, ,$ with $f_i^{\prime} \in AC(0, t_i)\, $
be positive solutions of the problems
\begin{equation}
\begin{cases}
 f_1^{\, \prime\prime} + G_1f_1\,   \le\,  0 \quad \operatorname{a}.\, \operatorname{e}.\quad \operatorname{in}\quad (0, t_1)  \\
f_1(0) = 0\, , f_1^{\, \prime}(0) = 1
\end{cases}\label{2.22}
\end{equation}
\begin{equation}
\begin{cases}
f_2^{\, \prime\prime} + G_2 f_2 \,  \ge\,  0 \quad \operatorname{a}.\, \operatorname{e}.\quad \operatorname{in}\quad (0, t_2)  \\
f_2(0) = 0\, , f_2^{\, \prime}(0) = 1
\end{cases}\label{2.23}
\end{equation}
Then \eqref{2.17} holds.

\noindent
$(2)$ Theorem $\operatorname{E}$ is equivalent to $(1)$.
\label{C: 2.2}\end{corollary}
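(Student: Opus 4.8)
The plan is to read off part (1) as the special case $\kappa_1=\kappa_2=1$ of Theorem E, and then to recover Theorem E from (1) by a rescaling argument that uses only the linearity and homogeneity of the differential inequalities \eqref{2.13} and \eqref{2.14}.

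For (1), I would first note that when $\kappa_1=\kappa_2=1$ the hypothesis \eqref{2.16}, i.e. $0<\kappa_1\le\kappa_2$, holds trivially, and that \eqref{2.22} and \eqref{2.23} are precisely \eqref{2.13} and \eqref{2.14} specialized to $f_i^{\prime}(0)=1$. Since the remaining hypotheses --- on the $G_i$ (including \eqref{2.15}), on the regularity class of the $f_i$, and on their positivity on $(0,t_i)$ --- coincide with those of Theorem E, the conclusion \eqref{2.17} is immediate from Theorem E.

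For (2), one direction is trivial, since (1) is literally Theorem E with $\kappa_1=\kappa_2=1$. For the converse, suppose $f_1,f_2$ satisfy the hypotheses of Theorem E with $f_i^{\prime}(0)=\kappa_i$ and $0<\kappa_1\le\kappa_2$, and set $\hat f_i=f_i/\kappa_i$. Because each inequality $f_i^{\prime\prime}+G_if_i\le 0$ (resp. $\ge 0$) is linear and homogeneous in $f_i$, and $\kappa_i>0$ is constant, the function $\hat f_i$ satisfies the same inequality a.e. in $(0,t_i)$, lies in $C([0,t_i])\cap C^1(0,t_i)$ with $\hat f_i^{\prime}\in AC(0,t_i)$, is positive on $(0,t_i)$, and has $\hat f_i(0)=0$, $\hat f_i^{\prime}(0)=1$; thus $\hat f_1,\hat f_2$ satisfy \eqref{2.22} and \eqref{2.23}. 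Applying (1) gives $\hat f_2>0$ on $(0,t_1)$, $\hat f_1^{\prime}/\hat f_1\le\hat f_2^{\prime}/\hat f_2$ on $(0,t_1)$, and $\hat f_1\le\hat f_2$ on $(0,t_1)$. Translating back: positivity is scale invariant, so $f_2>0$ on $(0,t_1)$; the logarithmic derivative is scale invariant, so $f_1^{\prime}/f_1\le f_2^{\prime}/f_2$ on $(0,t_1)$; and $f_1/\kappa_1\le f_2/\kappa_2$ together with $\kappa_1\le\kappa_2$ yields $f_1\le(\kappa_1/\kappa_2)f_2\le f_2$ on $(0,t_1)$. This is exactly \eqref{2.17}, which establishes the equivalence.

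There is no genuine obstacle here; the only point worth a moment's care is verifying that the passage $f_i\mapsto f_i/\kappa_i$ preserves every hypothesis --- in particular the \emph{direction} of each inequality and the membership $f_i^{\prime}\in AC(0,t_i)$ --- which is immediate since $\kappa_i$ is a positive constant. Alternatively, one could invoke Corollary \ref{C: 2.1}(2)--(3), which already records exactly this kind of rescaling duality between the normalized and the general Jacobi/Riccati problems, and argue the equivalence of the two comparison statements in a parallel fashion.
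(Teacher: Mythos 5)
Your proof is correct and takes essentially the same route as the paper: part (1) is the specialization $\kappa_1=\kappa_2=1$ of Theorem E, and part (2) is the same rescaling correspondence the paper uses (the paper writes $\tilde f_i=\kappa_i f_i$, you write $\hat f_i=f_i/\kappa_i$; these are the two directions of one scaling). If anything, your back-translation of the conclusion, $f_1\le(\kappa_1/\kappa_2)f_2\le f_2$ using $\kappa_1\le\kappa_2$, is stated more carefully than the paper's, which glosses this step as an equivalence.
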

\begin{proof} $(1)$ Choose $\kappa _1 = \kappa _2 = 1$ in Theorem $\operatorname{ E}$. 
$(2)$ Let $\tilde f_i = \kappa_i f_i$. Then $f_1$ satisfies \eqref{2.22} if and only if $\tilde f_1$ satisfies \eqref{2.13},  $f_2$ satisfies \eqref{2.23} if and only if $\tilde f_2$ satisfies \eqref{2.14}. Furthermore, \eqref{2.20} holds in $(0, t_1) \cap (0, t_2)\, $ if and only if $\frac{\tilde f_1^{\, \prime}}{\tilde f_1} \le \frac{\tilde f_2^{\, \prime}}{\tilde f_2}$ in $(0, t_1) \cap (0, t_2)\, .$  These imply that \eqref{2.21} holds on $[0, t_1] \cap [0, t_2]\, $ if and only if $\tilde f_1 \le \tilde f_2 $ in on $[0, t_1] \cap [0, t_2]\, .$ It follows that \eqref{2.17} holds. Consequently, Theorem $\operatorname{E}$ is equivalent to $(1)$.
\end{proof}

\begin{theorem}$($Comparison Theorem for supersolutions and subsolutions of Riccati type equations$)$
Let functions $G_i : (0, t_i) \subset (0, \infty) \to \mathbb R$ satisfy \eqref{2.15} on $(0, t_1) \cap (0, t_2)$, and let $g_i \in \operatorname{AC}(0, t_i)\, , i = 1, 2$ be solutions of
\begin{equation}\begin{cases}
{g_1}^{\, \prime} + \frac {{g_1}^2}{\kappa_1} + \kappa_1 G_1 \le\, 0\quad  \operatorname{a.}\, \operatorname{e.}\quad \operatorname{in}\quad (0, t_1) \\
g_1(t) = \frac {\kappa_1}{t} + O(1)\quad \operatorname{as}\quad t \to 0^{+}\, 
\end{cases}\label{2.24}
\end{equation}
\begin{equation}\begin{cases}
{g_2}^{\, \prime} + \frac {{g_2}^2}{\kappa_2} + \kappa_2 G_2 \ge\, 0\quad  \operatorname{a.}\, \operatorname{e.}\quad  \operatorname{in}\quad (0, t_2)  \\
g_2(t) = \frac {\kappa_2}{t} + O(1)\quad \operatorname{as}\quad t \to 0^{+}\,  
\end{cases}\label{2.25}
\end{equation}
with constants $\kappa _i$ satisfying \eqref{2.16}.  
 Then in $(0, t_1)\, ,$ \begin{equation}  g_1 \le g_2\, .\label{2.26}
\end{equation}\label{T: 2.2}
\end{theorem}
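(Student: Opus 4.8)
\textbf{Proof proposal for Theorem \ref{T: 2.2}.}

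The plan is to reduce the Riccati comparison to the Jacobi (Sturm--Liouville) comparison already in hand, using the duality of Theorem \ref{T: 2.1} as a dictionary between the two settings. First I would apply the reverser \eqref{2.5} to each $g_i$: set
\[
f_i(t) = \kappa_i\, t \exp\!\bigg(\int_0^t \frac{g_i(s)}{\kappa_i} - \frac1s\, ds\bigg), \qquad i = 1,2,
\]
which by the $(\Leftarrow)$ direction of Theorem \ref{T: 2.1} is well-defined (the integrand is locally bounded near $0$ and integrable on $[0,t_i)$), lies in $C([0,t_i])\cap C^1(0,t_i)$ with $f_i' \in AC(0,t_i)$, is positive on $(0,t_i)$, and satisfies $f_i(0)=0$, $f_i'(0)=\kappa_i$. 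Moreover $f_1$ is a supersolution of the Jacobi equation with $G_1$ (it satisfies \eqref{2.13}) because $g_1$ satisfies the ``$\le 0$'' version \eqref{2.24}, and $f_2$ is a subsolution of the Jacobi equation with $G_2$ (it satisfies \eqref{2.14}) because $g_2$ satisfies the ``$\ge 0$'' version \eqref{2.25}; the relation \eqref{2.4}, i.e. $f_i'/f_i = g_i/\kappa_i$, holds throughout $(0,t_i)$.

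Next I would invoke Theorem E directly with these $f_i$, $G_i$, and $\kappa_i$: hypotheses \eqref{2.15} and \eqref{2.16} are exactly what is assumed here, and we have just checked $f_i$ are positive solutions of \eqref{2.13}, \eqref{2.14}. Theorem E then yields $f_2 > 0$ on $(0,t_1)$ (in particular $t_1 \le t_2$, matching the conclusion recorded in Theorems 2.3--2.4 that are built on this) and, crucially, $\dfrac{f_1'}{f_1} \le \dfrac{f_2'}{f_2}$ on $(0,t_1)$. Translating back through \eqref{2.4}, this says $\dfrac{g_1}{\kappa_1} \le \dfrac{g_2}{\kappa_2}$ on $(0,t_1)$. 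Finally, since $0 < \kappa_1 \le \kappa_2$ and $g_2(t) > 0$ near $0$ (indeed $g_2 = \kappa_2 f_2'/f_2$ with $f_2 > 0$; and on $(0,t_1)$ the chain $g_1 \le (\kappa_1/\kappa_2) g_2 \le g_2$ needs $g_2 \ge 0$ there, which follows from $f_2 > 0$ and monotonicity/positivity of $f_2$ near the origin — one can also argue $f_2'/f_2 \ge f_1'/f_1$ and $f_1'/f_1 > 0$ near $0$ since $f_1(t) \sim \kappa_1 t$), we get $g_1 \le \frac{\kappa_1}{\kappa_2} g_2 \le g_2$ on $(0,t_1)$, which is \eqref{2.26}.

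The main obstacle I anticipate is the last sign bookkeeping: passing from $g_1/\kappa_1 \le g_2/\kappa_2$ to $g_1 \le g_2$ requires knowing $g_2 \ge 0$ on the relevant interval, since $\kappa_1/\kappa_2 \le 1$ only helps when multiplying a nonnegative quantity. I would handle this by noting that $f_2 > 0$ on $(0,t_1)$ together with $f_2(0) = 0$ forces $f_2'/f_2 > 0$ on an initial subinterval, and more robustly that $f_1'/f_1 \le f_2'/f_2$ with $f_1(t) = \kappa_1 t(1 + o(1))$ giving $f_1'/f_1 = 1/t + O(1) > 0$ near $0$; combined with the fact (from \eqref{2.19} in the proof of Theorem E) that $f_2'f_1 - f_1'f_2 \ge 0$ is monotone increasing from $0$, one propagates nonnegativity of $f_2'/f_2$, hence of $g_2$, across all of $(0,t_1)$. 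A secondary, purely technical point is verifying the regularity and the $a.e.$ differential inequality for $f_i$ from that of $g_i$ — but this is precisely the content of the $(\Leftarrow)$ half of Theorem \ref{T: 2.1}, so it may simply be cited. An alternative, entirely self-contained route avoiding the reverser is to mimic the proof of Theorem E at the Riccati level: show $h := g_2/\kappa_2 - g_1/\kappa_1$ satisfies $h(0^+) = 0$ (from the asymptotic conditions, since both $g_i(t) = \kappa_i/t + O(1)$ give $g_i/\kappa_i = 1/t + O(1)$, so the $1/t$ singularities cancel) and a differential inequality of the form $h' \ge -(\text{something})\, h + (G_1 - G_2) \ge -(\text{bounded})\,h$ after using $g_1^2/\kappa_1^2$ vs $g_2^2/\kappa_2^2$ and factoring, then apply a Gronwall-type argument; but the reduction to Theorem E is cleaner and re-uses the duality machinery the paper is advertising, so I would present that.
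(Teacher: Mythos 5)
Your reduction is the same one the paper uses: apply the reverser \eqref{2.5} (the $(\Leftarrow)$ half of Theorem \ref{T: 2.1}) to each $g_i$ to produce positive $f_i \in C([0,t_i]) \cap C^1(0,t_i)$ with $f_i' \in AC(0,t_i)$ solving \eqref{2.13} and \eqref{2.14} and satisfying $f_i'/f_i = g_i/\kappa_i$, invoke Theorem E, and translate \eqref{2.17} back into $g_1/\kappa_1 \le g_2/\kappa_2$ on $(0,t_1)$. Up to that point your argument and the paper's coincide and are fine.

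The last step is where the trouble lies, and you were right to single it out -- but your patch does not close it, and in fact the step cannot be repaired without an extra hypothesis. The monotone quantity $f_2'f_1 - f_1'f_2 \ge 0$ from \eqref{2.19} only encodes $f_2'/f_2 \ge f_1'/f_1$; it does not keep $f_2'/f_2$ nonnegative, because $f_1'/f_1$ itself need not stay positive outside a neighborhood of $0$ (if $G_1 > 0$, then $f_1$ is concave and may be decreasing while still positive), and positivity of $f_2$ alone says nothing about the sign of $f_2'$. Concretely, on $(0,t_1)=(0,t_2)=(0,\tfrac{3\pi}{4})$ take $G_1=G_2\equiv 1$, $\kappa_1=1$, $\kappa_2=2$, $g_1=\cot t$, $g_2=2\cot t$: both systems \eqref{2.24}, \eqref{2.25} hold (with equality), as do \eqref{2.15}, \eqref{2.16} and the asymptotic conditions (each $g_i$ is smooth on the open interval with exactly the prescribed $\kappa_i/t$ blow-up at $0$, the same regularity any admissible solution must have), yet on $(\tfrac{\pi}{2},\tfrac{3\pi}{4})$ one has $g_2=2\cot t<\cot t=g_1$, because there $g_2<0$ and multiplying by $\kappa_1/\kappa_2<1$ moves the inequality the wrong way. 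What the argument genuinely yields is $g_1/\kappa_1 \le g_2/\kappa_2$, i.e. $g_1 \le (\kappa_1/\kappa_2)\,g_2$; the asserted \eqref{2.26} then follows only where $g_2 \ge 0$, or when $\kappa_1=\kappa_2$ (the case actually used in Theorem \ref{T: 2.5}, where $\kappa_1=\kappa_2=n-1$). Note that the paper's own proof passes over this point with the words ``via \eqref{2.16}'', so your instinct detected a real gap in the statement as written; your alternative Gronwall route meets the same obstruction, since it too only produces the normalized inequality $g_1/\kappa_1 \le g_2/\kappa_2$.
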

\begin{proof}
Proceeding as in the proof of Theorem \ref{T: 2.1}, we use
\eqref{2.5} in which $f = f_i$, $g = g_i$ and
$\kappa= \kappa_i, i =1, 2\,$, then the systems \eqref{2.24} and \eqref{2.25} are transformed into 
\eqref{2.13} and \eqref{2.14} respectively, satisfying \eqref{2.15} on $(0, t_1) \cap (0, t_2)$, \eqref{2.16}, and  
enjoying  \eqref{2.4} (in which $f = f_i$, $g = g_i$ and
$\kappa= \kappa_i $),
$f_i > 0$ on $(0, t_i)\, ,$ $f_i \in C^0([0, t_i]) \cap C^1(0, t_i)\, $ and $f_i^{\prime} \in AC(0, t_i)$, $i = 1, 2.$ 
It follows from Theorem E that \eqref{2.17} holds. By \eqref{2.4}, $\frac {g_1}{\kappa_1} \le \frac {g_2}{\kappa_2}$. Consequently, via \eqref{2.16} we obtain the desired \eqref{2.26}.
\end{proof}

\begin{corollary}
$(1)$  Let functions $G_i : (0, t_i) \subset (0, \infty) \to \mathbb R$ satisfy \eqref{2.15} on $(0, t_1) \cap (0, t_2)$,  and  $g_i \in \operatorname{AC}(0, t_i)\, , i = 1, 2$ be solutions of
\begin{equation}\begin{cases}
{g_1}^{\, \prime} + {g_1}^2 + G_1 \le\, 0\quad  \operatorname{a.}\, \operatorname{e.}\quad  \operatorname{in}\quad (0, t_1) \\
g_1(t) = \frac {1}{t} + O(1)\quad \operatorname{as}\quad t \to 0^{+}\, 
\end{cases}\label{2.27}
\end{equation}
\begin{equation}\begin{cases}
{g_2}^{\, \prime} + {g_2}^2 + G_2 \ge\, 0\quad  \operatorname{a.}\, \operatorname{e.}\quad  \operatorname{in}\quad (0, t_2)  \\
g_2(t) = \frac {1}{t} + O(1)\quad \operatorname{as}\quad t \to 0^{+}\, , 
\end{cases}\label{2.28}
\end{equation}Then \eqref{2.26} holds on $(0, t_1)$.
\smallskip

\noindent
$(2)$ Theorem \ref{T: 2.2} is equivalent to $(1)$.
\label{C: 2.3}\end{corollary}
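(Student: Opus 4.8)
The plan is to follow the pattern of Corollaries \ref{C: 2.1} and \ref{C: 2.2}: part $(1)$ will be the specialization $\kappa_1 = \kappa_2 = 1$ of Theorem \ref{T: 2.2}, and part $(2)$ will be a rescaling argument showing that this specialization is in fact equivalent to the general statement. Concretely, for $(1)$ I would set $\kappa_1 = \kappa_2 = 1$ in Theorem \ref{T: 2.2}; then the Riccati problems \eqref{2.24} and \eqref{2.25} become exactly \eqref{2.27} and \eqref{2.28}, the hypothesis \eqref{2.16} holds trivially, and the conclusion \eqref{2.26}, namely $g_1 \le g_2$ on $(0, t_1)$, is precisely what is asserted.

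For $(2)$, I would fix $G_1, G_2$ satisfying \eqref{2.15} and constants $0 < \kappa_1 \le \kappa_2$, and introduce the rescaled functions $\tilde g_i := \kappa_i\, g_i$ on $(0, t_i)$, $i = 1, 2$. Dividing the differential inequality in \eqref{2.24} by the positive constant $\kappa_1$, and the asymptotic relation $g_1(t) = \kappa_1/t + O(1)$ likewise by $\kappa_1$, shows that $\tilde g_1$ solves \eqref{2.24} if and only if $g_1$ solves \eqref{2.27}; the analogous computation shows that $\tilde g_2$ solves \eqref{2.25} if and only if $g_2$ solves \eqref{2.28}. Since multiplication by a positive constant preserves membership in $AC(0, t_i)$, the hypotheses of Theorem \ref{T: 2.2} for the pair $\tilde g_1, \tilde g_2$ correspond exactly, under $\tilde g_i = \kappa_i g_i$, to the hypotheses of $(1)$ for the pair $g_1, g_2$. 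Then Theorem \ref{T: 2.2} applied to $\tilde g_1, \tilde g_2$ gives $\tilde g_1 \le \tilde g_2$ on $(0, t_1)$, while $(1)$ applied to $g_1, g_2$ gives $g_1 \le g_2$ on $(0, t_1)$; reading these implications in both directions shows that Theorem \ref{T: 2.2} and $(1)$ carry the same content, hence are equivalent.

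The only point needing care---the ``main obstacle'', such as it is---is the bookkeeping in the concluding step: one must confirm that $g_1/\kappa_1 \le g_2/\kappa_2$ together with $0 < \kappa_1 \le \kappa_2$ returns $g_1 \le g_2$, exactly as in the last line of the proof of Theorem \ref{T: 2.2}, and that the $O(1)$ remainder and the a.e.\ validity of the Riccati inequalities transform cleanly under $\tilde g_i = \kappa_i g_i$. All of this is routine, so I would present it in one short paragraph modeled directly on the proof of Corollary \ref{C: 2.2}.
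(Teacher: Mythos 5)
Your proposal is correct and follows essentially the same route as the paper: part $(1)$ by setting $\kappa_1=\kappa_2=1$ in Theorem \ref{T: 2.2}, and part $(2)$ by the rescaling correspondence between \eqref{2.24}--\eqref{2.25} and \eqref{2.27}--\eqref{2.28} (the paper writes it as $\tilde g_i = g_i/\kappa_i$, you as $\tilde g_i = \kappa_i g_i$, which is the same correspondence read in the opposite direction), followed by the same final step $g_1/\kappa_1 \le g_2/\kappa_2$ plus \eqref{2.16} yielding \eqref{2.26}, exactly as in the paper's \eqref{2.29}.
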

\begin{proof} $(1)$ Choose $\kappa _1 = \kappa _2 = 1$ in Theorem \ref{T: 2.2}. 
$(2)$ Enough to show that $(1) \Rightarrow$ Theorem \ref{T: 2.2}.  Let $\tilde g_i = \frac {g_i}{\kappa_i}$. Then $\tilde g_1$ satisfies \eqref{2.27} if and only if $g_1$ satisfies \eqref{2.24} and $\tilde g_2$ satisfies \eqref{2.28} if and only if $g_2$ satisfies \eqref{2.25}. 
It follows from $(1)$ that $\tilde g_1 \le \tilde g_2$. Hence, 
\begin{equation}\frac {g_1}{\kappa_1} = \tilde g_1  \le \tilde g_2 = \frac {g_2}{\kappa_2}\label{2.29}
\end{equation}
in $(0, t_1)$. Consequently, using \eqref{2.16} yields the desired \eqref{2.26} on $(0, t_1)$.
\end{proof}

\noindent
\subsection{\bf Dualities in ``Swapping" Comparison Theorems in Differential Equations}
\smallskip

We note the duality between Jacobi type equation $(2.1)$ and Riccati type equation $(2.2)$ leads to  
the duality between 
the above two types of corresponding comparison theorems in differential equations - Theorem E and Theorem \ref{T: 2.2}.
This duality between two comparison theorems in differential equations of the same type, in turn gives rise to the duality in swapping differential equations of different types and hence generates
Comparison Theorems on Differential Equations of Mixed Types I and II. Whereas, mixed type I concerns with comparing supersolutions of a Riccati type equation with subsolutions of a Jacobi type equation, mixed type II concerns with comparing supersolutions of a Jacobi type equation with subsolutions of a Riccati type equation.   
\medskip   

\noindent
\subsection{\bf Comparison Theorems on Differential Equations of Mixed Types}\smallskip

\begin{theorem}$($Comparing Differential Equations of Mixed Type $\operatorname{I})\, $
Let functions $G_i : (0, t_i) \subset (0, \infty) \to \mathbb R$ satisfy \eqref{2.15} on $(0, t_1) \cap (0, t_2)$.  Let $g_1\in AC(0, t_1)\, $  
be a solution of \eqref{2.24} and $f_2 \in C([0, t_2]) \cap C^1(0,t_2)$ with 
$f_2^{\prime}\in AC(0,t_2)$ be a positive solution of \eqref{2.14}
with constants $\kappa _i$ satisfying \eqref{2.16}. Then $t_1 \le t_2$ and \begin{equation}g_1 \le \frac {\kappa_2 f_2^{\, \prime}}{f_2}\, \label{2.30}
\end{equation} on $(0, t_1)\, .$\label{T: 2.3}
\end{theorem}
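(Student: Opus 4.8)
The plan is to use the duality of Theorem \ref{T: 2.1} to convert the mixed pair of hypotheses --- a supersolution $g_1$ of a Riccati-type equation and a subsolution $f_2$ of a Jacobi-type equation --- into a pair of the same type, and then quote the comparison theorem already available there. Since it is $f_2$ that is of Jacobi type, the natural move is to turn $f_2$ into a Riccati-type object $g_2$ via the transformer \eqref{2.3} and then compare $g_1$ with $g_2$ by Theorem \ref{T: 2.2}.

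First I would observe that the hypothesis \eqref{2.14} on $f_2$ is exactly the statement that $f_2$ solves system \eqref{2.1} with the ``$\ge 0$'' alternative, $G = G_2$, $\kappa = \kappa_2$, $\tau = t_2$, and that $f_2$ carries the regularity demanded in Theorem \ref{T: 2.1} ($f_2 \in C([0,t_2]) \cap C^1(0,t_2)$, $f_2' \in AC(0,t_2)$, $f_2 > 0$ on $(0,t_2)$, $\kappa_2 > 0$ by \eqref{2.16}). Hence the $(\Rightarrow)$ half of Theorem \ref{T: 2.1}, applied in the ``$\ge 0$'' case, shows that $g_2 := \kappa_2 f_2'/f_2 \in AC(0,t_2)$ solves system \eqref{2.2} with the ``$\ge 0$'' alternative and the same $G$, $\kappa$, $\tau$; that is, $g_2$ solves \eqref{2.25}, the asymptotic normalization $g_2(t) = \kappa_2/t + O(1)$ as $t \to 0^+$ arising just as in the proof of Theorem \ref{T: 2.1}. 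Now $g_1 \in AC(0,t_1)$ solving \eqref{2.24}, $g_2 \in AC(0,t_2)$ solving \eqref{2.25}, $G_2 \le G_1$ on $(0,t_1) \cap (0,t_2)$ by \eqref{2.15}, and $0 < \kappa_1 \le \kappa_2$ by \eqref{2.16} are precisely the hypotheses of Theorem \ref{T: 2.2}; that theorem yields $g_1 \le g_2$ on $(0,t_1)$, which is \eqref{2.30} once $g_2$ is rewritten as $\kappa_2 f_2'/f_2$.

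It then remains only to record $t_1 \le t_2$, which is already contained in the proof of Theorem \ref{T: 2.2}: that proof pushes \eqref{2.24}, \eqref{2.25} back via the reverser \eqref{2.5} to the Jacobi-type systems \eqref{2.13}, \eqref{2.14} and invokes Theorem E, whose final step (reading $t_2$ as $\sup\{t : f_2 > 0 \text{ on } (0,t)\}$) gives $t_1 \le t_2$; alternatively one may apply the reverser \eqref{2.5} to $g_1$ to produce a positive Jacobi-type subsolution $f_1$ on $(0,t_1)$ and compare $f_1$ with $f_2$ through Theorem E directly. I do not expect a genuine obstacle here; the content is the observation, made available by Theorem \ref{T: 2.1}, that a mixed Jacobi/Riccati pair can be homogenised before comparing, and the only care required is bookkeeping --- verifying that regularity and the initial/asymptotic normalizations transfer correctly under \eqref{2.3} (resp. \eqref{2.5}), which is exactly what Theorem \ref{T: 2.1} provides, and extracting $t_1 \le t_2$ from Theorem E rather than presupposing $(0,t_1) \subseteq (0,t_2)$.
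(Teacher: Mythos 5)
Your proposal is correct and follows essentially the same route as the paper: the paper's proof also transforms the Jacobi subsolution $f_2$ of \eqref{2.14} into the Riccati subsolution $g_2=\kappa_2 f_2^{\,\prime}/f_2$ of \eqref{2.25} via the transformer \eqref{2.3} (i.e., Theorem \ref{T: 2.1}), then applies Theorem \ref{T: 2.2} to \eqref{2.24} and \eqref{2.25} to get $g_1\le g_2$, hence \eqref{2.30}. Your extra remarks on extracting $t_1\le t_2$ from Theorem E through the proof of Theorem \ref{T: 2.2} are a sound filling-in of a detail the paper leaves implicit.
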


\begin{proof}
Proceed as in the proof of Theorem \ref{T: 2.1}, via \eqref{2.3} in which $g=g_2\, , f = f_2\, ,$ and $\kappa = \kappa_2$, the system \eqref{2.14} is transformed into \eqref{2.25}
Applying Theorem \ref{T: 2.2} to \eqref{2.24} and \eqref{2.25}, we have \eqref{2.26}, and hence \eqref{2.30}, via \eqref{2.3} on $(0, t_1)\, .$
\end{proof}

\begin{corollary}
$(1)$ Let $G_i : (0, t_i) \subset (0, \infty) \to \mathbb R$, $i = 1, 2$ satisfy \eqref{2.15} on $(0, t_1) \cap (0, t_2)$, $g_1\in AC(0, t_1)\, $  
be a solution of \eqref{2.27},
and $f_2 \in C([0, t_2]) \cap C^1(0,t_2)$ with 
$f_2^{\prime}\in AC(0,t_2)$ be a positive solution of 
\eqref{2.23}. Then $t_1 \le t_2$ and \begin{equation}g_1 \le \frac {f_2^{\, \prime}}{f_2}\, \label{2.31}
\end{equation} 
on $(0, t_1)\, .$
\smallskip

\noindent
$(2)$ Theorem \ref{T: 2.3} is equivalent to $(1)$.\label{C: 2.4}\end{corollary}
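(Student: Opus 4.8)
\textbf{Proof proposal for Corollary \ref{C: 2.4}.}

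The plan is to mirror exactly the two-step reduction that the paper has already used for Corollaries \ref{C: 2.1}, \ref{C: 2.2}, and \ref{C: 2.3}, namely: first obtain part $(1)$ as the special case $\kappa_1 = \kappa_2 = 1$ of Theorem \ref{T: 2.3}, and then recover the general Theorem \ref{T: 2.3} from part $(1)$ by the rescaling substitution $\tilde g_1 = g_1/\kappa_1$, $\tilde f_2 = \kappa_2 f_2$ (or the pair that makes the normalized initial data come out to $1$), thereby establishing equivalence. For $(1)$, I would simply set $\kappa_1 = \kappa_2 = 1$ in Theorem \ref{T: 2.3}: the hypothesis \eqref{2.16} becomes the trivial $0 < 1 \le 1$, system \eqref{2.24} specializes to \eqref{2.27}, system \eqref{2.14} specializes to \eqref{2.23}, and the conclusion \eqref{2.30} becomes \eqref{2.31}.

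For the reverse implication $(1) \Rightarrow$ Theorem \ref{T: 2.3}, I would introduce $\tilde g_1 = g_1/\kappa_1 \in AC(0,t_1)$ and $\tilde f_2 = \kappa_2 f_2 \in C([0,t_2]) \cap C^1(0,t_2)$ with $\tilde f_2^{\,\prime} \in AC(0,t_2)$. One checks directly that $g_1$ solves \eqref{2.24} if and only if $\tilde g_1$ solves \eqref{2.27}: dividing the Riccati inequality ${g_1}^{\prime} + {g_1}^2/\kappa_1 + \kappa_1 G_1 \le 0$ by $\kappa_1 > 0$ gives $\tilde g_1^{\,\prime} + \tilde g_1^{\,2} + G_1 \le 0$, and the asymptotic condition $g_1(t) = \kappa_1/t + O(1)$ becomes $\tilde g_1(t) = 1/t + O(1)$. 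Similarly $f_2$ is a positive solution of \eqref{2.14} if and only if $\tilde f_2$ is a positive solution of \eqref{2.23}, since scaling a linear second-order ODE by the positive constant $\kappa_2$ preserves the inequality $f_2^{\prime\prime} + G_2 f_2 \ge 0$ and sends the initial data $(0,\kappa_2)$ to $(0,1)$. Applying $(1)$ to $\tilde g_1$ and $\tilde f_2$ yields $t_1 \le t_2$ and $\tilde g_1 \le \tilde f_2^{\,\prime}/\tilde f_2$ on $(0,t_1)$. Since $\tilde f_2^{\,\prime}/\tilde f_2 = f_2^{\prime}/f_2$, this reads $g_1/\kappa_1 \le f_2^{\prime}/f_2$, and multiplying through by $\kappa_1$ and then using $\kappa_1 \le \kappa_2$ together with $f_2^{\prime}/f_2 \ge 0$ on $(0,t_1)$ (which holds because $f_2 > 0$ there and, from \eqref{2.23} rescaled, $f_2$ is increasing near $0$; more robustly, $\tilde f_2^{\,\prime}/\tilde f_2 = g_2/\kappa_2 \ge 0$ via the correspondence of Theorem \ref{T: 2.1}) gives $g_1 \le \kappa_1 f_2^{\prime}/f_2 \le \kappa_2 f_2^{\prime}/f_2$, which is precisely \eqref{2.30}. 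Hence Theorem \ref{T: 2.3} follows from $(1)$, and the two statements are equivalent.

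The only delicate point — and the one I would be most careful about — is justifying the inequality $g_1 \le \kappa_2 f_2^{\prime}/f_2$ from $g_1 \le \kappa_1 f_2^{\prime}/f_2$, which needs the sign $f_2^{\prime}/f_2 \ge 0$ on $(0,t_1)$. This is not automatic from \eqref{2.14} alone over the whole interval, but on $(0,t_1) \subset (0,t_2)$ it does follow: via the transformer \eqref{2.3}, $g_2 := \kappa_2 f_2^{\prime}/f_2$ satisfies the Riccati system \eqref{2.25} with asymptotic condition $g_2(t) = \kappa_2/t + O(1) > 0$ near $0$, and comparison with the fact that $1/t$ stays positive — or, more simply, the conclusion of Theorem E applied to $f_2$ against a reference solution — keeps $f_2^{\prime}/f_2$ nonnegative there. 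Alternatively one can bypass this entirely by noting that the chain in the original proof of Theorem \ref{T: 2.3} already delivers $g_1 \le g_2 = \kappa_2 f_2^{\prime}/f_2$ directly after rescaling, since $(1)$ applied to $\tilde g_1$ and the \emph{rescaled} $\tilde g_2 = g_2/\kappa_2$ gives $\tilde g_1 \le \tilde g_2$, i.e. $g_1/\kappa_1 \le g_2/\kappa_2$, and then \eqref{2.16} finishes as in the proof of Corollary \ref{C: 2.3}$(2)$. I would present it this latter way to keep the argument parallel to the preceding corollaries and free of any auxiliary positivity lemma.
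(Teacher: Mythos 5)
Your proposal is correct and follows essentially the same route as the paper's own proof: part $(1)$ is obtained by setting $\kappa_1=\kappa_2=1$ in Theorem \ref{T: 2.3}, and part $(2)$ is the rescaling $\tilde g_1 = g_1/\kappa_1$, $\tilde f_2 = \kappa_2 f_2$ followed by an application of $(1)$ and the use of \eqref{2.16} to pass from $g_1 \le \kappa_1 f_2^{\prime}/f_2$ to $g_1 \le \kappa_2 f_2^{\prime}/f_2$. The sign condition $f_2^{\prime}/f_2 \ge 0$ that you carefully flag is left implicit in the paper's argument as well, so your extra justification (via the transformer \eqref{2.3} and the asymptotic behavior of $g_2$ near $0$) is a refinement of, not a departure from, the paper's proof.
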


\begin{proof} $(1)$ Choose $\kappa _1 = \kappa _2 = 1$ in Theorem \ref{T: 2.3}. 
$(2)$ Enough to show that $(1) \Rightarrow$ Theorem \ref{T: 2.3}.  Let $\tilde g_1 = \frac {g_1}{\kappa_1}$ and $\tilde f_2 = \kappa _2 f_2$. Then $\tilde g_1$ satisfies \eqref{2.27} if and only if $g_1$ satisfies \eqref{2.24} and $\tilde f_2$ satisfies \eqref{2.23} if and only if $f_2$ satisfies \eqref{2.14}. 
It follows from $(1)$ that $\tilde g_1 \le \frac {\tilde {f_2}^{ \prime}}{\tilde  f_2}$. Hence, on $(0, t_1)$,
\[\frac {g_1}{\kappa_1} = \tilde g_1  \le \frac {\tilde  f_2^{ \prime}}{\tilde  f_2} = \frac  {f_2^{ \prime}}{ f_2} \]
Consequently, using \eqref{2.16} yields $g_1\le \kappa_1 \frac  {f_2^{ \prime}}{ f_2} \le \kappa _2 \frac  {f_2^{ \prime}}{ f_2}$, the desired \eqref{2.30} on $(0, t_1)$.
\end{proof}

\begin{theorem}$($Comparing Differential Equations of Mixed Type $\operatorname{II})\, $
Let functions $G_i : (0, t_i) \subset (0, \infty) \to \mathbb R$, $i = 1, 2$ satisfy \eqref{2.15} on $(0, t_1) \cap (0, t_2)$.  Let $f_1 \in C([0, t_i]) \cap C^1(0,t_1)$ with 
$f_1^{\prime}\in AC(0,t_1)$ be a positive solution of 
\eqref{2.13}
and $g_2\in AC(0, t_2)\, $  
be a solution of 
\eqref{2.25}
with constants $\kappa _i$ satisfying \eqref{2.16}. Then $t_1 \le t_2$ and \begin{equation}\frac {\kappa_1 f_1^{\, \prime}}{f_1} \le g_2\, \label{2.32}
\end{equation} on $(0, t_1)\, .$\label{T: 2.4}
\end{theorem}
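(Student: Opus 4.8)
The plan is to run the same reduction used for Theorem \ref{T: 2.3}, except that the Jacobi-type ingredient $f_1$ (rather than an $f_2$) is converted into a Riccati-type ingredient, and it must be converted using its \emph{own} constant $\kappa_1$. First I would apply Theorem \ref{T: 2.1}, in the direction $(\Rightarrow)$, with $f=f_1$, $G=G_1$, $\kappa=\kappa_1$, $\tau=t_1$: since $f_1\in C([0,t_1])\cap C^1(0,t_1)$, $f_1>0$ on $(0,t_1)$, $f_1^{\prime}\in AC(0,t_1)$ and $\kappa_1>0$, the transformer \eqref{2.3} produces $g_1:=\kappa_1 f_1^{\prime}/f_1\in AC(0,t_1)$, and the subsolution relation \eqref{2.13} together with the initial data $f_1(0)=0$, $f_1^{\prime}(0)=\kappa_1$ is carried over to
\[
g_1^{\prime}+\frac{g_1^{2}}{\kappa_1}+\kappa_1 G_1\le 0\quad \text{a.e. in } (0,t_1),\qquad g_1(t)=\frac{\kappa_1}{t}+O(1)\ \text{as } t\to 0^{+};
\]
that is, $g_1$ solves \eqref{2.24}.

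With $g_1$ so constructed, I would then apply Theorem \ref{T: 2.2} to the pair $g_1$ (a solution of \eqref{2.24}) and $g_2$ (a solution of \eqref{2.25} by hypothesis), using $G_2\le G_1$ from \eqref{2.15} on $(0,t_1)\cap(0,t_2)$ and $0<\kappa_1\le\kappa_2$ from \eqref{2.16}. Its conclusion \eqref{2.26} is $g_1\le g_2$ on $(0,t_1)$, i.e. $\kappa_1 f_1^{\prime}/f_1\le g_2$ on $(0,t_1)$, which is precisely \eqref{2.32}. The endpoint bound $t_1\le t_2$ drops out of the same argument: the proof of Theorem \ref{T: 2.2} reverses the two Riccati systems to Jacobi systems via the reverser \eqref{2.5} and appeals to the Sturm--Liouville comparison Theorem E, whose proof forces $t_1\le t_2$; since \eqref{2.5} and \eqref{2.3} are mutually inverse and $\kappa_1$ was used, the $f_1$ reconstructed there is the given one, so the interval lengths in play are indeed $t_1$ and $t_2$, exactly as in Theorem \ref{T: 2.3}.

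The only point demanding real care is this bookkeeping of constants. Running the ``dual'' route instead --- reversing $g_2$ to a positive $f_2$ via \eqref{2.5} with $\kappa=\kappa_2$ (so that $f_2^{\prime}/f_2=g_2/\kappa_2$ by \eqref{2.4}), then applying Theorem E to $(f_1,f_2)$ --- yields only $\kappa_2 f_1^{\prime}/f_1\le g_2$, which is strictly weaker than \eqref{2.32} at points where $f_1^{\prime}<0$ (such points genuinely occur, e.g. beyond an interior maximum of $f_1$). So the right reduction is the one above, converting $f_1$ with its matching constant $\kappa_1$ and then invoking Theorem \ref{T: 2.2} (not Theorem E) for the final comparison. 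The remaining verifications --- that the regularity classes and the initial and asymptotic data transfer correctly under \eqref{2.3} --- are routine and are already part of Theorem \ref{T: 2.1}.
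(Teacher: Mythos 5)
Your proposal is correct and follows essentially the same route as the paper: transform $f_1$ into $g_1=\kappa_1 f_1^{\prime}/f_1$ via the transformer \eqref{2.3} (i.e. Theorem \ref{T: 2.1} with $\kappa=\kappa_1$), so that \eqref{2.13} becomes \eqref{2.24}, and then apply Theorem \ref{T: 2.2} to $g_1$ and $g_2$ to obtain \eqref{2.26} and hence \eqref{2.32}. Your added remarks on why the constant must be $\kappa_1$ and on how $t_1\le t_2$ emerges from Theorem E inside Theorem \ref{T: 2.2} are consistent with the paper's argument.
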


\begin{proof} 
Proceed as in the proof of Theorem \ref{T: 2.1}, via \eqref{2.3} in which $g=g_1\, , f = f_1\, ,$ and $\kappa = \kappa_1$, the system \eqref{2.13} is transformed into \eqref{2.24}.
Applying Theorem \ref{T: 2.2} to \eqref{2.24} and \eqref{2.25}, we have \eqref{2.26}, and hence \eqref{2.32}, via \eqref{2.3} on $(0, t_1)\, .$
\end{proof}

Similarly, we have

\begin{corollary}
$(1)$ Let $G_i : (0, t_i) \subset (0, \infty) \to \mathbb R$, $1 \le i \le 2$ satisfy \eqref{2.15} on $\quad (0, t_1) \cap (0, t_2)$.   Let $f_1 \in C([0, t_2]) \cap C^1(0,t_1)$ with 
$f_1^{\prime}\in AC(0,t_1)$ be a positive solution of 
\eqref{2.22}
and $g_2\in AC(0, t_2)\, $  
be a solution of \eqref{2.28}.
Then \begin{equation}\frac {f_1^{\, \prime}}{f_1} \le g_2\, \label{2.33}
\end{equation} on $(0, t_1)\, .$
\smallskip

\noindent
$(2)$ Theorem \ref{T: 2.4} is equivalent to $(1)$.\label{C: 2.5}\end{corollary}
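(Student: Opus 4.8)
The plan is to follow verbatim the template already established for Corollaries \ref{C: 2.1}--\ref{C: 2.4}: part $(1)$ is nothing but the special case $\kappa_1=\kappa_2=1$ of Theorem \ref{T: 2.4}, and part $(2)$ is obtained from part $(1)$ by an elementary rescaling that normalizes the weights $\kappa_i$ to $1$. For $(1)$, setting $\kappa_1=\kappa_2=1$ in Theorem \ref{T: 2.4} turns the hypothesis \eqref{2.13} on $f_1$ into \eqref{2.22}, the hypothesis \eqref{2.25} on $g_2$ into \eqref{2.28}, and \eqref{2.16} into the trivial $0<1\le 1$, while the conclusion ``$t_1\le t_2$ and \eqref{2.32}'' becomes ``$t_1\le t_2$ and \eqref{2.33}''; nothing further is required.

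For $(2)$, the implication ``Theorem \ref{T: 2.4} $\Rightarrow (1)$'' is immediate, so it suffices to prove ``$(1)\Rightarrow$ Theorem \ref{T: 2.4}''. Given $f_1$, $g_2$, and $0<\kappa_1\le\kappa_2$ as in Theorem \ref{T: 2.4}, I would introduce the rescaled functions $\tilde f_1 = f_1/\kappa_1$ and $\tilde g_2 = g_2/\kappa_2$. First I would check that these satisfy the normalized problems: since the equation in \eqref{2.13} is linear and homogeneous, $\tilde f_1$ is a positive solution of \eqref{2.22} (the regularity $\tilde f_1\in C([0,t_1])\cap C^1(0,t_1)$ with $\tilde f_1'\in AC(0,t_1)$ is inherited, and $\tilde f_1'(0)=1$); dividing the differential inequality in \eqref{2.25} by $\kappa_2>0$ gives $\tilde g_2' + \tilde g_2^{\,2} + G_2 \ge 0$ a.e.\ in $(0,t_2)$, and the asymptotics $g_2 = \kappa_2/t + O(1)$ become $\tilde g_2 = 1/t + O(1)$, so $\tilde g_2$ solves \eqref{2.28}. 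Applying part $(1)$ to the pair $\tilde f_1,\tilde g_2$ then yields $t_1\le t_2$ and $\tilde f_1'/\tilde f_1 \le \tilde g_2$ on $(0,t_1)$; since the rescaling cancels in the logarithmic derivative, this reads $f_1'/f_1 \le g_2/\kappa_2$ on $(0,t_1)$. Multiplying by $\kappa_1$ and invoking $0<\kappa_1\le\kappa_2$ (that is, \eqref{2.16}) exactly as at the end of the proof of Corollary \ref{C: 2.4} produces $\kappa_1 f_1'/f_1 \le g_2$, which is \eqref{2.32}.

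I do not expect any genuine obstacle here; the argument is mechanical and parallels Corollaries \ref{C: 2.1}--\ref{C: 2.4}. The only step meriting a moment's attention is the final passage from $\kappa_2\,(f_1'/f_1)\le g_2$ to $\kappa_1\,(f_1'/f_1)\le g_2$: this is the same elementary use of \eqref{2.16} that closes the proof of Corollary \ref{C: 2.4}, relying on the nonnegativity of $f_1'/f_1$ on $(0,t_1)$ (it behaves like $1/t$ near the origin). Everything else -- that the weights $\kappa_i$ rescale the linear problem \eqref{2.13}/\eqref{2.22} and the Riccati problem \eqref{2.25}/\eqref{2.28} in precisely the stated manner, and that $t_1,t_2$ are unaffected by the rescaling -- is a one-line verification.
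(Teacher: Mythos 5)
Your part $(1)$ and the direction ``Theorem \ref{T: 2.4} $\Rightarrow (1)$'' coincide with what the paper intends: the paper gives no proof of Corollary \ref{C: 2.5} at all (it says only ``Similarly''), the template being the proof of Corollary \ref{C: 2.4}, and your rescalings $\tilde f_1=f_1/\kappa_1$, $\tilde g_2=g_2/\kappa_2$ do carry \eqref{2.13} to \eqref{2.22} and \eqref{2.25} to \eqref{2.28} exactly as you say, so that part $(1)$ yields $f_1'/f_1\le g_2/\kappa_2$, i.e.\ $\kappa_2 f_1'/f_1\le g_2$, on $(0,t_1)$.

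The gap is in the one step you flagged. To pass from $\kappa_2\,(f_1'/f_1)\le g_2$ to $\kappa_1\,(f_1'/f_1)\le g_2$ you invoke ``the nonnegativity of $f_1'/f_1$ on $(0,t_1)$ (it behaves like $1/t$ near the origin)''; but the asymptotics at $0$ only give $f_1'>0$ near $0$, and the hypotheses of Theorem \ref{T: 2.4} do not force $f_1'\ge 0$ on all of $(0,t_1)$: with $G_1\equiv 1$ the function $f_1(t)=\kappa_1\sin t$ is a positive solution of \eqref{2.13} on $(0,\pi)$ with $f_1'/f_1=\cot t<0$ on $(\pi/2,\pi)$. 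Moreover the step cannot be repaired from the stated hypotheses alone: take also $G_2\equiv 1$ and $g_2(t)=\kappa_2\cot t$, which solves \eqref{2.25} with equality and has the required asymptotics; then the inequality furnished by part $(1)$ holds (indeed with equality), yet $\kappa_1 f_1'/f_1=\kappa_1\cot t>\kappa_2\cot t=g_2$ on $(\pi/2,\pi)$ whenever $\kappa_1<\kappa_2$, so the upgrade to \eqref{2.32} genuinely requires a sign condition (either $f_1'\ge 0$ or $g_2\ge 0$ on $(0,t_1)$, as in \eqref{2.16} with $\kappa_1=\kappa_2$) that is not among the hypotheses -- note this example satisfies every hypothesis of Theorem \ref{T: 2.4} and violates \eqref{2.32} beyond $\pi/2$, so the obstruction is not an artifact of your method. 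To be fair, the paper's own proof of Corollary \ref{C: 2.4}$(2)$ (the step $\kappa_1 f_2'/f_2\le \kappa_2 f_2'/f_2$) and the last line of the proof of Theorem \ref{T: 2.2} make the same tacit assumption, so your argument is faithful to the paper's template; but the justification you offer (positivity near the origin) does not close it, and you should either impose the sign condition explicitly or restrict the claimed equivalence in $(2)$ to the subinterval of $(0,t_1)$ where $f_1'\ge 0$ (equivalently where $g_2\ge 0$).
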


\noindent
\subsection{\bf Applications in Comparison Theorems in Riemannian Geometry}\smallskip

When the radial Ricci curvature $\operatorname{Ric}^{\operatorname{M}}_{\operatorname{rad}}$ of a manifold $M$ has a lower bound, it can be viewed as a supersolution of Riccati type 
equation in disguise. Similarly, when the radial curvature $K$ of a manifold has an upper bound $(\operatorname{resp.}\, \operatorname{a}\, \operatorname{lower}\, \operatorname{bound})\, ,$ it can be viewed as a subsolution $(\operatorname{resp.}\, \operatorname{supersolution})$ of Riccati type equation  in disguise. Thus, utilizing dualities in comparison theorems in differential equations of mixed types, we generate comparison theorems in Riemannian geometry
and provide simple and direct proofs.\smallskip

Let $M$ be an $n$-dimensional Riemannian manifold. 
$M$ is said to be a {\it manifold with a pole} $x_0\, ,$ if
$D(x_0) = M \backslash \{x_0\}\, .$  
We recall the radial vector field $\partial $ on $D(x_0)$ is the unit
vector field such that for any $x\in D(x_0)$, $\partial
\left( x\right) $ is the unit vector tangent to the unique geodesic $\gamma$ joining $%
x_{0}$ to $x$ and pointing away from $x_{0}.$ A radial plane is a plane $\pi
$ which contains $\partial (x)$\ in the tangent space $T_{x}M.$ By the
{\it radial curvature} $K $ of a manifold, we mean the
restriction of the sectional curvature function to all the radial planes.
We define $K(t)$ to be the {\it radial curvature} of $M$ at $x$
such that $r(x) = t$. We also study
\begin{definition1}
The {\it radial Ricci curvature} of a manifold is the
restriction of the Ricci curvature function to all the radial vector fields. 
Denoted $\operatorname{Ric}^{\operatorname{M}}_{\operatorname{rad}}(r)$, the {\it radial Ricci curvature} of $M$ at $x$
 such that $\operatorname{dist}(x_0, x) = r$.
\end{definition1}  
Let a tensor $g - dr \otimes dr = 0$  on the radial direction, and be just the
metric tensor $g$ on the orthogonal complement $\partial ^{\bot}$. At $x \in M\, ,$ the {\it  Hessian} of $r\, ,$ denoted by $\operatorname{Hess} r$ is a quadratic form on $T_x M$ given by
$\operatorname{Hess} r\, (v,w) = (\nabla _v dr)w = g(\nabla _v \nabla r, w)\, $ for $v,w \in T_x M\, .$ Here $\nabla _v$ is the covariant derivative in $v$ direction, $\nabla r$ is the gradient vector field of $r\, ,$ and hence is dual to the differential $dr$ of $r\, .$ Thus, $\operatorname{Hess} r (\nabla r,\nabla r) = 0\, .$ The {\it Laplacian} of $r\, ,$ is defined to be
$\Delta r = \operatorname{trace}\big(\operatorname{Hess} r \big)\, .$ We say $\Delta r \le f(r)\, $ holds \emph{weakly} on $M\, ,$ if for every $0 \le \varphi(r) \in C_0^{\infty}(M)\, ,$
$\int _M \varphi (r)\Delta r\, dv \le \int _M \varphi (r) f(r)\, dv\, . $ We recall
\begin{lemA} \cite [Lemma 9.1] {HLRW} $($see \cite {PRS}, \cite {WL}$)$   If $\Delta r \le f(r)\, $ holds pointwise in $D(x_0)\, ,$ where $f\in C^0(0, \infty)\, ,$ then $\Delta r \le f(r)\, $ holds weakly on $M\, .$
\end{lemA}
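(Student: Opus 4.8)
The plan is to reduce the statement to a purely distributional inequality and then prove that inequality by integration by parts on a good exhaustion of $D(x_0)$. Since $r$ is locally Lipschitz on $M$, it lies in $W^{1,\infty}_{\mathrm{loc}}$, so for $0\le\varphi(r)\in C_0^\infty(M)$ one has, by definition, $\int_M\varphi(r)\,\Delta r\,dv=-\int_M\langle\nabla\varphi(r),\nabla r\rangle\,dv$; hence it suffices to show $-\int_M\langle\nabla\varphi(r),\nabla r\rangle\,dv\le\int_M\varphi(r)f(r)\,dv$, and we may assume the right-hand side is finite (otherwise the inequality is trivial, the left side being finite). The key structural input is that $D(x_0)=M\setminus(\operatorname{Cut}(x_0)\cup\{x_0\})$ has full measure, $r$ is smooth there, and $\Delta r\le f(r)$ holds pointwise there by hypothesis; so I would exhaust $D(x_0)$ by open sets on which $r$ is smooth up to the boundary, integrate by parts, and pass to the limit, the only nontrivial point being the sign of the boundary terms.

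Concretely, let $c(x)$ denote the distance from $x_0$ to the cut point in the direction of $x$ (so $c>0$ and continuous on $D(x_0)$, and, crucially, $c$ is constant along each radial geodesic), and set $E_\delta=\{x\in D(x_0):r(x)>\delta,\ c(x)-r(x)>\delta\}$. Then $\overline{E_\delta}\subset D(x_0)$, $r\in C^\infty$ near $\overline{E_\delta}$, and $E_\delta\uparrow D(x_0)$ as $\delta\downarrow0$. Green's formula on $E_\delta$ gives
\[
-\int_{E_\delta}\langle\nabla\varphi(r),\nabla r\rangle\,dv=\int_{E_\delta}\varphi(r)\,\Delta r\,dv-\int_{\partial E_\delta}\varphi(r)\,\langle\nabla r,\nu\rangle\,dA ,
\]
with $\nu$ the outward unit normal. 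The portion of $\partial E_\delta$ lying on the geodesic sphere $S_\delta(x_0)$ has $\langle\nabla r,\nu\rangle=-1$ and area $O(\delta^{n-1})\to0$, so it contributes $o(1)$. On the portion lying in $\{c-r=\delta\}$, the set $E_\delta$ is locally $\{c-r>\delta\}$, so $\nu$ is parallel to $\nabla(r-c)=\nabla r-\nabla c$; the decisive observation is that since $c$ is constant along radial geodesics, $\langle\nabla c,\nabla r\rangle=0$, whence
\[
\langle\nabla r,\nu\rangle=\frac{\langle\nabla r,\nabla r-\nabla c\rangle}{|\nabla r-\nabla c|}=\frac{1}{\sqrt{1+|\nabla c|^{2}}}>0 .
\]
As $\varphi\ge0$, this boundary contribution is nonnegative and may be discarded, giving $-\int_{E_\delta}\langle\nabla\varphi(r),\nabla r\rangle\,dv\le\int_{E_\delta}\varphi(r)\,\Delta r\,dv+o(1)\le\int_M\varphi(r)f(r)\,dv+o(1)$, using $\Delta r\le f(r)$ on $D(x_0)\supseteq E_\delta$. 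Letting $\delta\downarrow0$ and applying dominated convergence on the left (the integrand is in $L^1(M)$ and $|M\setminus E_\delta|\to0$) yields the desired inequality.

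The step I expect to be the main obstacle is the legitimacy of Green's formula on $E_\delta$, i.e.\ the regularity of the level set $\{c-r=\delta\}$; I would resolve this either by invoking the local Lipschitz regularity of the cut-distance function (Itoh--Tanaka) together with the locally finite $(n-1)$-dimensional Hausdorff measure of $\operatorname{Cut}(x_0)$, so that $\partial E_\delta$ is a Lipschitz hypersurface (for a.e.\ $\delta$, if one prefers Sard's theorem), or by carrying out the same integration by parts in geodesic polar coordinates, where $\Delta r=\partial_r\log\sqrt{\det g}$ and Fubini requires only measurability of the cut-distance. The behaviour near $x_0$ is handled separately by splitting $\varphi=\psi\varphi+(1-\psi)\varphi$ with $\psi\ge0$ a cutoff supported near $x_0$: there $r$ is smooth on the punctured ball and $\Delta r=(n-1)/r+O(1)$ is $dv$-integrable for $n\ge2$, so the boundary term at $x_0$ vanishes. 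An equivalent alternative is Calabi's trick: for a cut point $q=\gamma(\ell)\ne x_0$ with $\gamma$ a minimizing unit-speed geodesic from $x_0$, the function $\varepsilon+d(\,\cdot\,,\gamma(\varepsilon))$ is a smooth upper support function for $r$ at $q$, so $\Delta r\le f(r)$ holds in the barrier sense throughout $M\setminus\{x_0\}$, and the standard lemma promoting barrier inequalities to distributional ones finishes the proof.
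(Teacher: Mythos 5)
The paper itself does not prove Lemma A: it quotes it from \cite[Lemma 9.1]{HLRW} (see also \cite{PRS}, \cite{WL}) and elsewhere describes the intended argument as ``Green's Identity and a double limiting argument,'' which is essentially what you do — exhaust $D(x_0)$ by sets avoiding $x_0$ and the cut locus, integrate by parts, note that the inner-sphere boundary term is $o(1)$ while the boundary term on the cut-locus side has a favorable sign (because $\varphi\ge 0$) and can be discarded, then let $\delta\to 0$. Your fallback in geodesic polar coordinates, where the boundary contribution at the cut distance in each direction is nonpositive and simply dropped, is exactly the classical proof and disposes of the one genuine technical point you flag (regularity of $\partial E_\delta$), as does the Calabi barrier argument used in \cite{PRS}; so the proposal is correct in substance and follows the same route as the cited source. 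Two small repairs: first, the opening claim that $\int_M\varphi(r)\,\Delta r\,dv=-\int_M\langle\nabla\varphi(r),\nabla r\rangle\,dv$ holds ``by definition'' is not an identity when $\Delta r$ means the a.e.\ classical Laplacian — $r$ is locally semiconcave near the cut locus, so its distributional Laplacian equals the a.e.\ Laplacian plus a nonpositive singular measure, and for $\varphi\ge 0$ one only has $-\int_M\langle\nabla\varphi,\nabla r\rangle\,dv\le\int_M\varphi\,\Delta r\,dv$; this is harmless, since what your exhaustion actually proves is the distributional inequality $-\int_M\langle\nabla\varphi,\nabla r\rangle\,dv\le\int_M\varphi f(r)\,dv$, which is the substantive meaning of ``weakly'' here, while the literal inequality with the pointwise $\Delta r$ already follows by integrating the hypothesis over the full-measure set $D(x_0)$. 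Second, since $f$ need not be nonnegative, the step $\int_{E_\delta}\varphi f\,dv\le\int_M\varphi f\,dv+o(1)$ should be justified by absolute continuity of the integral under your reduction to a finite right-hand side.
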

\smallskip

\noindent
\subsection{\bf Under radial Ricci curvature assumptions}\smallskip

\begin{theorem}$($Laplacian Comparison Theorem$)$  Let functions $G_i : (0, t_i) \subset (0, \infty) \to \mathbb R$, $i = 1, 2$ satisfy \eqref{2.15} on $(0, t_1) \cap (0, t_2)$. 
Assume 
\begin{equation}(n-1)\, G_1(r)\le \, \operatorname{Ric}^{\operatorname{M}}_{\operatorname{rad}}(r) \label{2.34}
\end{equation} 
$\operatorname{on}\, \overset {\circ} B_{t_1}(x_0)\subset D(x_0)\, ,$ and let 
$f_2 \in C([0, t_2]) \cap C^1(0,t_2)$ with 
$f_2^{\prime}\in AC(0,t_2)$
be a positive solution of 
\begin{equation}
\begin{cases}
f_2^{\prime\prime} + G_2 f_2 \ge 0\quad  \operatorname{a.}\, \operatorname{e.}\quad \operatorname{in}\quad (0, t_2) \\
f_2(0) = 0\, , f_2^{\prime}(0) = n-1\, .
\end{cases}\label{2.35}
\end{equation}
Then $\quad t_1 \le t_2\quad \, $ and
\begin{equation}
\Delta r\le (n-1)\frac{f_2^{\prime}}{f_2}(r)\label{2.36}
\end{equation}
in $\overset {\circ} B_{t_1}(x_0)\,.$ If in addition, 
\eqref{2.34} occurs in $D(x_0)\, ,$
then  
\eqref{2.27}  holds pointwise on $D(x_0)\,$ and weakly on $M\, .$
\label{T: 2.5}
\end{theorem}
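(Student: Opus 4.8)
The plan is to reinterpret the radial Ricci lower bound \eqref{2.34} as a Riccati differential inequality for $\Delta r$ along radial geodesics, and then to feed it into the Mixed Type~I comparison Theorem~\ref{T: 2.3} with the identification $\kappa_1=\kappa_2=n-1$.

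First I would fix $x\in\overset{\circ}{B}_{t_1}(x_0)$ and let $\gamma:[0,t_1)\to M$ be the unit-speed geodesic issuing from $x_0$ in the direction of $x$. Because $\overset{\circ}{B}_{t_1}(x_0)\subset D(x_0)$ contains no cut points of $x_0$, $\gamma$ stays minimizing on $(0,t_1)$, $r(\gamma(t))=t$ there, and $r$ is smooth near $\gamma((0,t_1))$; in particular $g_1(t):=(\Delta r)(\gamma(t))$ is smooth, hence lies in $AC(0,t_1)$. By the Bochner--Weitzenb\"ock formula (Theorem~F, the ``disguised supersolution'' observation of the introduction) the radial shape operator $A=\operatorname{Hess} r|_{\partial^{\perp}}$ obeys the matrix Riccati equation; taking traces, using $\operatorname{tr} A=\Delta r$, $\operatorname{Hess} r(\partial,\partial)=0$, and the Cauchy--Schwarz inequality $\operatorname{tr}(A^{2})\ge(\operatorname{tr} A)^{2}/(n-1)$, one gets
\[
g_1^{\prime}+\frac{g_1^{2}}{n-1}+\operatorname{Ric}^{\operatorname{M}}_{\operatorname{rad}}\le 0\qquad\text{on }(0,t_1).
\]
Combining with \eqref{2.34}, i.e.\ $(n-1)G_1\le\operatorname{Ric}^{\operatorname{M}}_{\operatorname{rad}}$, turns this into $g_1^{\prime}+\frac{g_1^{2}}{n-1}+(n-1)G_1\le 0$ on $(0,t_1)$, while the familiar expansion $\Delta r=\frac{n-1}{r}+O(r)$ near $x_0$ yields $g_1(t)=\frac{n-1}{t}+O(1)$ as $t\to 0^{+}$. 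Thus, with $\kappa_1:=n-1$, the function $g_1$ is a solution of \eqref{2.24}.

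Next I would observe that \eqref{2.35} is exactly \eqref{2.14} with $\kappa_2:=n-1$, that $f_2>0$ on $(0,t_2)$ and \eqref{2.15} hold by hypothesis, and that \eqref{2.16} is immediate since $\kappa_1=\kappa_2=n-1>0$. Applying Theorem~\ref{T: 2.3} to $g_1$ and $f_2$ then gives $t_1\le t_2$ together with $g_1\le\frac{\kappa_2 f_2^{\prime}}{f_2}=(n-1)\frac{f_2^{\prime}}{f_2}$ on $(0,t_1)$; evaluating at $t=r(x)$ and letting $x$ vary over $\overset{\circ}{B}_{t_1}(x_0)$ proves \eqref{2.36} there. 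If \eqref{2.34} additionally holds on all of $D(x_0)$, the same comparison along every radial geodesic of $D(x_0)$ delivers \eqref{2.36} pointwise on $D(x_0)$, and Lemma~A, applied with $f(r)=(n-1)\frac{f_2^{\prime}}{f_2}(r)$, upgrades it to the weak inequality on $M$.

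The only step I expect to require any care is the behaviour at the pole: one must verify that $g_1=\Delta r\circ\gamma$ genuinely belongs to $AC(0,t_1)$ and has the precise asymptotics $\frac{\kappa_1}{t}+O(1)$ with $\kappa_1=n-1$, so that the hypotheses of Theorem~\ref{T: 2.3} are met verbatim; once the Riccati inequality and this initial asymptotics are in hand, the conclusion is a mechanical application of the duality results of this section, requiring no further geometric input.
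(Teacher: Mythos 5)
Your proposal is correct and follows essentially the same route as the paper: it converts the radial Ricci lower bound into the Riccati inequality \eqref{2.38} for $g_1=\Delta r$ via the Weitzenb\"ock formula together with $|\operatorname{Hess} r|^2\ge(\Delta r)^2/(n-1)$ and the asymptotics $\Delta r=\frac{n-1}{r}+O(1)$, then applies the Mixed Type I comparison Theorem~\ref{T: 2.3} with $\kappa_1=\kappa_2=n-1$ and finishes the weak statement on $M$ with Lemma~A, exactly as in the paper's proof.
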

\begin{proof} 
For a bilinear form $\mathcal A$, we write $|\mathcal A|^2 = \text{trace}(\mathcal A {\mathcal A}^t)\, .$ 
Recall 

\begin{theoremF}$($Weitzenb\"ock formula$)$ For every function $f \in C^3(M)\, ,$ 
\begin{equation} \frac 12\Delta |\nabla f|^2 = | \operatorname{Hess}\, f|^2 + \langle \nabla \Delta f, \nabla f \rangle+ \operatorname{Ric}\big (\nabla f,\nabla f\big ) \label{2.37}\end{equation}\end{theoremF}
Substituting the distance function $r(x)$ for $f(x)$ into Weitzenb\"ock formula \eqref{2.37}, we have inside the cut locus of $x_0$ $(|\nabla r| = 1)$, via Gauss lemma   
\[ 0 = |\text{Hess}\, r|^2 + \frac {\partial }{\partial r} \Delta r + \text{Ric}(\frac {\partial }{\partial r}, \frac {\partial }{\partial r})  \]

Since $\operatorname{Hess} (\frac {\partial }{\partial r}, \frac {\partial }{\partial r}) = 0\, ,|\text{Hess}\, r|^2 \ge \frac {(\Delta r)^2}{n-1}\, ,$ $\text{Ric}(\frac {\partial }{\partial r}, \frac {\partial }{\partial r}) = \operatorname{Ric}^{\operatorname{M}}_{\operatorname{rad}}(r)\, $ and \eqref{2.34} occurs, it follows that
\begin{equation} 
\begin{cases}
\frac {\partial }{\partial r} \big ( \Delta r \big )  + \frac {(\Delta r)^2}{n-1} + (n-1) G_1(r) \le 0 \quad \operatorname{on}\quad \overset {\circ} B_{t_1}(x_0) \\
\Delta r = \frac{n-1}{r} + O(1)\quad \operatorname{as}\quad t \to 0^{+}\, .
\end{cases}\label{2.38}\end{equation}
Let $g_1 = \Delta r\, ,$ $\kappa_1 = n-1 = \kappa_2\, .$ Then \eqref{2.38} becomes \eqref{2.24} and \eqref{2.35} becomes \eqref{2.14}. Via 
\eqref{2.24} and \eqref{2.35} with $G_i$ satisfying \eqref{2.15} on $(0, t_1) \cap (0, t_2)$ and $\kappa_1$ satisfying \eqref{2.16}. Applying Theorem \ref{T: 2.3} we have \eqref{2.30} holds on $(0, t_1)$, i.e., \eqref{2.36} holds in $\overset {\circ} B_{t_1}(x_0)\, .$ 
If in addition, \eqref{2.34} occurs in $D(x_0)\, ,$
then \eqref{2.36} holds pointwise on $D(x_0)$. By Lemma A or using Green's Identity and a double limiting argument 
 (see \cite [Lemma 9.1] {HLRW},\cite {PRS}, \cite {WL}), \eqref{2.36} holds weakly on $M$.
\end{proof}

In applying Theorem \ref{T: 2.5}, when a specific Ricci curvature lower bound assumption is given, we do not need to assume $f_2$ in \eqref{2.35} for comparison. Instead, we find $f_2$ in \eqref{2.44} and estimate $\frac {f_2^{\prime}}{f_2}$ in \eqref{2.49} and obtain:

\begin{theorem}
$(1)$ If  \begin{equation}- (n-1)\frac {A(A-1)}{r^2}\leq \operatorname{Ric}^{\rm{M}}_{\rm{rad}}(r),\quad \operatorname{where} \quad  A \ge 1\,  \label{2.39}\end{equation} 
on $\overset {\circ} B_{t_1}(x_0)\subset D(x_0),$
then
\begin{equation}
 \Delta r  \leq (n-1)\frac{A}{r}\label{2.40}
\end{equation}
on $\overset {\circ} B_{t_1}(x_0)$. If in addition, \eqref{2.39} occurs on $D(x_0)\, ,$  then \eqref{2.40} holds pointwise on $D(x_0)$ and weakly on $M\, .$

\noindent
$(2)$ Equivalently, if \begin{equation}\quad - (n-1) \frac {A(A-1)}{(c+r)^2}\leq \operatorname{Ric}^{\rm{M}}_{\rm{rad}}(r), \quad  A \ge 1\, 
\label{2.41}\end{equation} on $\overset {\circ} B_{t_1}(x_0)\subset D(x_0)$, $\operatorname{where} \,  c \ge 0\, ,$
then \eqref{2.40} holds on $\overset {\circ} B_{t_1}(x_0)$. If in addition \eqref{2.41} occurs on $D(x_0)$, then \eqref{2.40} holds pointwise on $D(x_0)$ and weakly on $M\, .$
\label{T: 2.6}
\end{theorem}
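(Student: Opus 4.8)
The plan is to re-run the proof of Theorem \ref{T: 2.5}, but instead of \emph{postulating} a comparison function through \eqref{2.35}, to \emph{manufacture} the companion Jacobi system directly from the special lower bound \eqref{2.39} and then estimate $f_{2}'/f_{2}$ by hand. First, exactly as in the proof of Theorem \ref{T: 2.5} --- substituting the distance function $r$ into the Weitzenb\"ock formula (Theorem F), using Gauss lemma, $\operatorname{Hess}r(\tfrac{\partial}{\partial r},\tfrac{\partial}{\partial r})=0$, and $|\operatorname{Hess}r|^{2}\ge\frac{(\Delta r)^{2}}{n-1}$ --- hypothesis \eqref{2.39}, which is \eqref{2.34} with $G_{1}(r)=-\tfrac{A(A-1)}{r^{2}}$, shows that along each unit-speed radial geodesic from $x_{0}$ the function $g_{1}:=\Delta r$ is (pointwise) a subsolution of the Riccati equation $g_{1}'+\tfrac{g_{1}^{2}}{n-1}+(n-1)G_{1}\le 0$ on $\overset{\circ}{B}_{t_{1}}(x_{0})$, together with the asymptotics $\Delta r=\tfrac{n-1}{r}+O(1)$ as $r\to 0^{+}$; this is \eqref{2.38} with $\kappa_{1}=n-1$.

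Now, utilizing $G_{1}$ itself, I would take as companion the positive function
\begin{equation}
f_{2}(r):=r^{A},\qquad f_{2}''+G_{1}\,f_{2}=0\quad\text{on }(0,\infty),\qquad f_{2}(0)=0, \tag{2.44}\label{2.44}
\end{equation}
where now $f_{2}'(0)=n-1$ is \emph{not} imposed --- this is precisely why \eqref{2.35} is not needed. Its logarithmic derivative is
\begin{equation}
\frac{f_{2}'}{f_{2}}(r)=\frac{A}{r}\qquad\text{on }(0,\infty), \tag{2.49}\label{2.49}
\end{equation}
so that $\varphi(r):=(n-1)\tfrac{f_{2}'}{f_{2}}(r)=(n-1)\tfrac{A}{r}$, and the one-line identity $\varphi'+\tfrac{\varphi^{2}}{n-1}+(n-1)G_{1}=(n-1)\tfrac{A^{2}-A-A(A-1)}{r^{2}}=0$ shows that $\varphi$ is an \emph{exact} solution of the same Riccati equation of which $g_{1}$ is a subsolution.

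It remains to compare $g_{1}$ with $\varphi$. Subtracting, $w:=g_{1}-\varphi$ satisfies $w'+\tfrac{g_{1}+\varphi}{n-1}\,w\le 0$, a linear first-order homogeneous differential inequality, so by the integrating-factor principle $w$ cannot change sign from $-$ to $+$ along the geodesic. For $A>1$ one has $w=\tfrac{(n-1)(1-A)}{r}+O(1)\to-\infty$ as $r\to 0^{+}$, so $w<0$ near the pole and hence $w\le 0$, i.e. $\Delta r\le(n-1)\tfrac{A}{r}$, on all of $\overset{\circ}{B}_{t_{1}}(x_{0})$. The borderline $A=1$, where $\varphi$ and $g_{1}$ share the leading term $\tfrac{n-1}{r}$, is the genuinely delicate case: here one either uses the classical observation that $1/g_{1}$ is nondecreasing with $1/g_{1}(0^{+})=0$, or --- to avoid a case split --- first compares $g_{1}$ with the strict supersolution $\varphi_{\eta}(r):=(n-1)\tfrac{A+\eta}{r}$, $\eta>0$, for which $\varphi_{\eta}-g_{1}\to+\infty$ at $0^{+}$, obtains $g_{1}\le\varphi_{\eta}$ by the same argument, and lets $\eta\downarrow 0$. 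Finally, if \eqref{2.39} holds on all of $D(x_{0})$, this proves \eqref{2.40} pointwise on $D(x_{0})$, and since $r\mapsto(n-1)A/r\in C^{0}(0,\infty)$, Lemma A promotes it to the weak inequality on $M$.

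Part $(2)$ is then immediate: since $c\ge 0$ and $A\ge 1$ give $\tfrac{A(A-1)}{(c+r)^{2}}\le\tfrac{A(A-1)}{r^{2}}$, hypothesis \eqref{2.41} implies \eqref{2.39}, so part $(1)$ applies and yields \eqref{2.40}; and since part $(1)$ is the case $c=0$ of part $(2)$, the two statements are equivalent. The main obstacle is the comparison step: making the Riccati comparison rigorous in the presence of the potential $-A(A-1)/r^{2}$, which blows up at the pole --- most notably in the tight case $A=1$ --- and then crossing the cut locus to obtain the weak statement on $M$; by contrast the Weitzenb\"ock computation of the first step and the explicit companion ODE \eqref{2.44}--\eqref{2.49} are entirely routine.
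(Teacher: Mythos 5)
Your argument is correct, but at the decisive step it takes a genuinely different route from the paper. Both proofs start identically: Weitzenb\"ock plus Gauss's lemma turns \eqref{2.39} into the Riccati differential inequality \eqref{2.38} for $g_1=\Delta r$ along radial geodesics, and both note that $\phi=r^{A}$ (up to the factor $n-1$) solves the Jacobi equation with potential $-A(A-1)/r^{2}$. From there the paper does \emph{not} compare $g_1$ directly with $(n-1)A/r$; because the singular potential prevents a positive solution of the initial value problem with $f_2'(0)=n-1$, it first regularizes, proving part $(2)$ for $c>0$: it takes $f_2$ solving \eqref{2.44} with the smooth potential $-A(A-1)/(c+r)^{2}$ and $f_2'(0)=n-1$, establishes \eqref{2.49} $\,f_2'/f_2\le A/r$ by the Wronskian monotonicity \eqref{2.46}--\eqref{2.48} against $\phi=(n-1)r^{A}$, feeds this into Theorem \ref{T: 2.5} (hence ultimately the mixed-type comparison Theorem \ref{T: 2.3}), and only then recovers part $(1)$, the case $c=0$, by the limiting argument \eqref{2.50} along radial geodesics. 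You instead prove part $(1)$ head-on by an elementary Riccati comparison: $\varphi=(n-1)A/r$ is an exact solution of the limiting Riccati equation, $w=g_1-\varphi$ obeys a linear first-order inequality, and the singularity at the pole is handled by the asymptotics ($w\to-\infty$ when $A>1$, and the strict supersolutions $(n-1)(A+\eta)/r$ with $\eta\downarrow 0$ covering the borderline $A=1$ uniformly --- your $\eta$-argument in fact works for all $A\ge1$ and is airtight, whereas the ``$1/g_1$ nondecreasing'' variant needs the extra remark that $g_1'\le -g_1^2/(n-1)$ keeps $g_1$ nonpositive once it ceases to be positive); part $(2)$ then follows because \eqref{2.41} implies \eqref{2.39}, with $(1)$ being the case $c=0$. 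The trade-off: the paper's route stays inside its duality framework, reusing Theorems \ref{T: 2.5}/\ref{T: 2.3} as black boxes at the cost of the $c$-regularization and the $c\to0$ limit; your route is shorter and self-contained, bypasses \eqref{2.35} and the mixed-type machinery entirely, but must confront the singular potential and the tight case $A=1$ directly, which you do correctly. The final passage to $D(x_0)$ and to the weak inequality on $M$ via Lemma A is the same in both proofs.
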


\begin{proof}
$ (2) \Rightarrow (1)$
Apparently $(1)$ is a special case $c = 0$ in $(2)$.
$ (1) \Rightarrow (2)$  If \eqref{2.41}  occurs, then \eqref{2.39} takes place, since $[- (n-1) \frac {A(A-1)}{(c+r)^2}, \infty)\subset [- (n-1) \frac {A(A-1)}{r^2}, \infty)\, .$ We then apply $(1)\, $.
Now we prove $(2)$. First assume \eqref{2.41} occurs for $c > 0$.
Choose
$\phi=(n-1)r^{A}\, ,$ where $A \ge  1\, .$
Then $\phi^{\prime} = (n-1)Ar^{A - 1}\, .$  
Hence,  for $r > 0\, $ 
\begin{equation}\frac {\phi^{\prime}}{\phi} = \frac{A}{r}\label{2.42}
\end{equation}
and  $\phi^{\prime \prime} = (n-1) A (A - 1)r^{A - 2}\, .$  
That is, for $r > 0\, ,$ $\phi$ is a solution of the following differential equation:
 \begin{equation}\phi^{\prime \prime} + \frac{-A (A - 1)}{r^2} \phi = 0\, .\label{2.43}
\end{equation}

Let $f_2$ be a positive solution of 
\begin{equation}
\begin{cases}
f_2^{\prime\prime} + G_2 f_2 = 0\quad  \operatorname{a.}\, \operatorname{e.}\quad \operatorname{in}\quad (0, t_2) \\
f_2(0) = 0\, , f_2^{\prime}(0) = \kappa _2\, ,
 \end{cases}\label{2.44}
\end{equation} 
where $G_2 = \frac {-A (A - 1)}{(c+r)^2}\, ,$
$\kappa_ 2 = n-1$, and 
\begin{equation}
t_2 = \sup \{ r: f_2 > 0\quad \operatorname{on}\quad (0, r)\}\, . \label{2.45}
\end{equation}
We note $t_2 = \infty\, .$ This can be seen by comparing the solution $f_2$ of 
\eqref{2.44} with the solution $\tilde{f_2}(r) = (n-1)r$ of the following
$$\begin{cases}
\tilde{f_2}^{\prime\prime} + 0 \cdot \tilde{f_2} = 0, \\
\tilde{f_2}(0) = 0\, , \tilde{f_2}^{\prime}(0) = n-1\, ,
 \end{cases}
$$ and applying a standard Sturm comparison theorem.
Furthermore, \begin{equation}(\phi^{\prime} f_2 - f_2^{\prime} \phi )(0) = 0\, .\label{2.46}
\end{equation} 
In view of \eqref{2.43}, \eqref{2.44}, \eqref{2.45}, and $f_2 \phi  \big( \frac {A(A - 1)}{r^2} - \frac {A(A - 1)}{(c+r)^2}\big) \ge 0 $, for $r \in (0, \infty)\, $
\begin{equation}\aligned
(\phi^{\prime} f_2 - f_2^{\prime} \phi )^{\prime} & = \phi^{\prime\prime} f_2 - f_2^{\prime\prime} \phi
\\
& \ge 0
\endaligned
\label{2.47}
\end{equation}

The monotonicity then implies that \begin{equation}\phi^{\prime} f_2 \ge f_2^{\prime} \phi\, \label{2.48}
\end{equation} for $r \in (0, \infty)\, $
which in turn via \eqref{2.42} yields

\begin{equation}\frac {f_2^{\prime}}{f_2} \le \frac{\phi^{\prime}}{\phi} =  \frac{A}{r}\label{2.49}\end{equation}
on $(0, \infty)\, .$ Applying comparison Theorem \ref{T: 2.5} in which $G_1(r)= \frac {-A (A - 1)}{(c+r)^2} \ge  \frac {-A (A - 1)}{(c+r)^2} = G_2\, (c > 0)\, ,$ where $G_1$ is defined in $(0, t_1)\, ,$ $G_2$ is defined on $(0, \infty)\, $ and $\frac{f^{\prime}_2}{f_2}$ is estimated as in \eqref{2.49}, we have 
shown \eqref{2.40} holds on $\overset {\circ} B_{t_1}(x_0)$ under \eqref{2.41} for every $c > 0\, .$ 
Now we prove $(1)$ by passing $c \to 0$ in \eqref{2.41} in the following way: For every $x \in \overset {\circ} B_{t_1}(x_0)$, there exist sequences $\{x_i\}$ in a radial geodesic ray in $\overset {\circ} B_{t_1}(x_0)$ and $\{c_i > 0\}$ such that $x_i \to x\, , c_i \to 0\, ,$ and $r(x_i) + c_i = r(x)\, .$
It follows that for every $c_i > 0\, ,$ 
\begin{equation}\label{2.50} \begin{aligned}
&\quad \operatorname{Ric}^{\rm{M}}_{\rm{rad}}(r) (x_i) \ge  - (n-1)\frac {A(A-1)}{\big (r(x_i) + c_i \big )^2}  
 \Rightarrow  \Delta r  (x_i) \leq (n-1)\frac{A}{r} (x_i)\, ,\\
 & \operatorname{and}\, \operatorname{hence}\, ,\quad \operatorname{as}\quad i \to \infty\, ,\\
&\quad \operatorname{Ric}^{\rm{M}}_{\rm{rad}}(r) (x) \ge  - (n-1)\frac {A(A-1)}{\big (r(x) \big )^2}  
 \Rightarrow  \Delta r  (x) \leq (n-1)\frac{A}{r} (x)\, .
 \end{aligned}
\end{equation}  

This proves that \eqref{2.40} holds on $\overset {\circ} B_{t_1}(x_0)$ under \eqref{2.41} for $c \ge 0\, ,$ or under \eqref{2.39} on $\overset {\circ} B_{t_1}(x_0)$.
If in addition \eqref{2.39} or \eqref{2.41} occurs on $D(x_0)\, ,$  then
applying Lemma A (Lemma 9.1 in \cite{HLRW}), or a double limiting argument (see \cite [Sect. 3]{WL}), we obtain the desired \eqref{2.40} pointwise on $D(x_0)\, ,$ and weakly on $M\, .$ This proves $(1) \Rightarrow (2)$, and 
$(1)\, .$
\end{proof}
\begin{theorem}
$(1)$ If  \begin{equation}(n-1)\frac {B_1(1-B_1)}{r^2}\leq \operatorname{Ric}^{\rm{M}}_{\rm{rad}}(r),\quad \operatorname{where} \quad  0 \le B_1 \le 1\, \label{2.51}\end{equation} 
on $\overset {\circ} B_{t_1}(x_0)\subset D(x_0)$, then
\begin{equation}
 \Delta r  \leq (n-1)\frac{1+\sqrt{1+4B_1(1-B_1)}}{2r}\label{2.52}
\end{equation}
in $\overset {\circ} B_{t_1}(x_0)$. If in addition, \eqref{2.51} occurs on $D(x_0)\, ,$  then \eqref{2.52} holds pointwise on $D(x_0)$ and weakly on $M\, .$

\noindent
$(2)$ Equivalently, if \begin{equation} (n-1) \frac {B_1(1-B_1)}{(c+r)^2}\leq \operatorname{Ric}^{\rm{M}}_{\rm{rad}}(r) \, ,  \quad  0 \le B_1 \le 1\, 
\label{2.53}\end{equation} on $\overset {\circ} B_{t_1}(x_0) \subset D(x_0)\, $, $\operatorname{where}\,  c \ge 0\, ,$ then \eqref{2.52} holds in $\overset {\circ} B_{t_1}(x_0)$. If in addition \eqref{2.51} occurs on $D(x_0)$, then \eqref{2.52} holds pointwise on $D(x_0)$ and weakly on $M\, .$
\label{T: 2.7}
\end{theorem}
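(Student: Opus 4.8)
The plan is to recognize the assertion as Theorem \ref{T: 2.6} in disguise, under the substitution
\[
A \;=\; \frac{1+\sqrt{1+4B_1(1-B_1)}}{2}\,.
\]
First I would record two elementary facts about this $A$. Squaring $2A-1=\sqrt{1+4B_1(1-B_1)}$ gives the identity $A(A-1)=B_1(1-B_1)$, and since $B_1(1-B_1)\ge 0$ we have $A\ge 1$. Second, again because $B_1(1-B_1)\ge 0$,
\[
-(n-1)\frac{A(A-1)}{r^2}\;=\;-(n-1)\frac{B_1(1-B_1)}{r^2}\;\le\;(n-1)\frac{B_1(1-B_1)}{r^2}\,,
\]
and the same with $r$ replaced throughout by $c+r$. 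Consequently \eqref{2.51} forces \eqref{2.39} for this choice of constant $A\ge 1$, and \eqref{2.53} forces \eqref{2.41}.

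With this done the proof is immediate. For part $(1)$, apply Theorem \ref{T: 2.6}$(1)$ with the above $A$: its conclusion \eqref{2.40} says $\Delta r\le(n-1)\tfrac{A}{r}=(n-1)\tfrac{1+\sqrt{1+4B_1(1-B_1)}}{2r}$, which is precisely \eqref{2.52}, and the ``in addition'' clause of Theorem \ref{T: 2.6}$(1)$ yields the pointwise statement on $D(x_0)$ and the weak statement on $M$ verbatim. Part $(2)$ follows the same way from Theorem \ref{T: 2.6}$(2)$, now starting from \eqref{2.53} and the induced \eqref{2.41} with $c\ge 0$; and the equivalence of $(1)$ and $(2)$ is simply inherited from the equivalence of $(1)$ and $(2)$ in Theorem \ref{T: 2.6}. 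No new differential inequality has to be solved.

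If instead one wants a self-contained argument paralleling the proof of Theorem \ref{T: 2.6}, the steps are: for $c>0$ let $f_2$ solve the companion Jacobi system \eqref{2.44} with $G_2=-\tfrac{B_1(1-B_1)}{(c+r)^2}$ and $\kappa_2=n-1$, observe $t_2=\infty$ by a Sturm comparison with the linear solution (the potential $G_2$ is non-positive, so the equation is non-oscillatory); compare $f_2$ with $\phi=(n-1)r^{A}$, which solves $\phi''-\tfrac{B_1(1-B_1)}{r^2}\phi=0$ and has $\tfrac{\phi'}{\phi}=\tfrac{A}{r}$; since $\tfrac1{r^2}\ge\tfrac1{(c+r)^2}$, the quantity $\phi'f_2-f_2'\phi$ vanishes at $0$ and is nondecreasing, hence $\tfrac{f_2'}{f_2}\le\tfrac{A}{r}$ on $(0,\infty)$; feeding this together with $G_2\le G_1=\tfrac{B_1(1-B_1)}{(c+r)^2}$ into Theorem \ref{T: 2.5} gives \eqref{2.52} on $\overset{\circ}B_{t_1}(x_0)$ under \eqref{2.53}; and letting $c\to 0$ along a radial geodesic ray as in \eqref{2.50}, followed by Lemma A, yields part $(1)$ and the weak/pointwise extensions. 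In either route the only genuine content is the sign bookkeeping of the first paragraph — the point being that a nonnegative radial Ricci lower bound is a fortiori the negative bound $-(n-1)A(A-1)/r^2$ to which Theorem \ref{T: 2.6} (equivalently Theorem \ref{T: 2.5}) directly applies; everything else is routine.
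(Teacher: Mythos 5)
Your argument is correct, and your primary route, while mathematically equivalent in content, is packaged differently from the paper's proof: you observe that since $B_1(1-B_1)\ge 0$, the hypothesis \eqref{2.51} (resp.\ \eqref{2.53}) already implies the negative lower bound \eqref{2.39} (resp.\ \eqref{2.41}) with $A=\frac{1+\sqrt{1+4B_1(1-B_1)}}{2}\ge 1$ and $A(A-1)=B_1(1-B_1)$, so Theorem \ref{T: 2.6} applies verbatim and its conclusion \eqref{2.40} is literally \eqref{2.52}, including the pointwise and weak ``in addition'' clauses. The paper instead re-runs the comparison machinery from scratch: it keeps $G_1=+\frac{B_1(1-B_1)}{(c+r)^2}$ as the assumed lower bound, takes $G_2=-\frac{B_1(1-B_1)}{(c+r)^2}$, builds $f_2$ from \eqref{2.44} with $\kappa_2=n-1$, shows $t_2=\infty$ by Sturm comparison, estimates $\frac{f_2'}{f_2}\le\frac{1+\sqrt{1+4B_1(1-B_1)}}{2r}$ by Wronskian monotonicity against $\phi_1=r^{\alpha}$, and then invokes Theorem \ref{T: 2.5} followed by the $c\to 0$ limiting argument of \eqref{2.50} and Lemma A --- which is exactly your backup self-contained route, up to the harmless normalization $\phi=(n-1)r^{A}$ versus $\phi_1=r^{\alpha}$. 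Your citation route is shorter and makes transparent that the positivity of the Ricci lower bound is never exploited beyond its sign (the comparison potential $G_2$ is negative in either proof, so the resulting estimate is the same as in the negatively curved case); what the paper's longer presentation buys is a proof parallel to and independent of Theorem \ref{T: 2.6}, whose intermediate estimate \eqref{2.56} is reused later (e.g.\ in the proof of Theorem \ref{T: 3.3}). Both routes are sound, and your sign bookkeeping ($A\ge1$, $A(A-1)=B_1(1-B_1)$) is exactly the point needed to make the reduction legitimate.
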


\begin{proof}
$ (2) \Rightarrow (1)$
Obviously, $(1)$ is a special case $c = 0$ in $(2)$.
To prove $(1) \Rightarrow (2)$, enough to show \eqref{2.52} holds under \eqref{2.53}, for any $c > 0$. 
Choose
$\phi_1 = r^{\alpha}\, ,$ where $\alpha = \frac{1+\sqrt{1+4B_1(1-B_1)}}{2}\, .$
Then $\phi_1^{\prime} = \alpha r^{\alpha - 1}\, $ and $(2\alpha - 1)^2 = 1+4B_1(1-B_1)\, , $ i.e. $\alpha (\alpha - 1) =  B_1(1-B_1)\, . $
Hence,  
 \begin{equation}\frac {\phi_1^{\prime}}{\phi_1} = \frac{\alpha}{r} = \frac{1+\sqrt{1+4B_1(1-B_1)}}{2r}\label{2.54}
\end{equation}$\phi_1^{\prime \prime} = \alpha (\alpha - 1) r^{\alpha - 2}\, ,$  i.e.,
for $r > 0\, ,$ \begin{equation}\phi_1^{\prime \prime} - \frac{B_1(1-B_1)}{r^2} \phi_1 = 0\, .\label{2.55}
\end{equation}

Let $f_2 > 0$ satisfy \eqref{2.44},
where $G_2 = - \frac {B_1(1-B_1)}{(c+r)^2}\, ,$  $\kappa_ 2 = n-1$ and $t_2$ is as in \eqref{2.45}. Then $t_2 = \infty\, ,$ by applying the standard Sturm comparison theorem as in the proof of Theorem \ref{T: 2.6}. 
Similarly, we have \eqref{2.46}. \eqref{2.47} and  \eqref{2.48} hold for $r \in (0, \infty)$ in which $\phi = \phi_1$, since $f_2 \phi_1 \big(  \frac {B_1(1-B_1)}{r^2} + G_2 \big)\ge 0\, ,$ for $r \in (0, \infty)\, .$ 
It follows from \eqref{2.54} that 
\begin{equation}\frac {f_2^{\prime}}{f_2} \le \frac{\phi_1^{\prime}}{\phi_1} =  \frac{1+\sqrt{1+4B_1(1-B_1)}}{2r}\label{2.56}
\end{equation}
on $(0, \infty)\, .$
Applying comparison Theorem \ref{T: 2.5}, in which $G_1\big(G_1 : (0, t_1) \to \mathbb R\big ) = \frac {B_1(1-B_1)}{(c+r)^2} \ge$ $- \frac {B_1(1-B_1)}{(c+r)^2} = G_2 \big (G_2 : (0, t_2) \to \mathbb R\big ), \, c > 0$ on $(0, t_1) \cap (0, t_2)$, $\alpha = \frac{1+\sqrt{1+4B_1(1-B_1)}}{2}\, $ and $\frac{f^{\prime}_2}{f_2} \le \frac{1+\sqrt{1+4B_1(1-B_1)}}{2r}$,  we have 
shown \eqref{2.52} holds on $\overset {\circ} B_{t_1}(x_0)$ under \eqref{2.53} for any $c > 0$. Proceeding the limiting argument as in the proof of Theorem \ref{T: 2.6} and \eqref{2.50}, we prove similarly $(1) \Rightarrow (2)$, and 
$(1)\, .$
\end{proof}

\begin{corollary}
If the radial curvature $K$ satisfies  \begin{equation}- \frac {A(A-1)}{r^2}\leq K(r)\, \big (\operatorname{or}\, \operatorname{equivalently},\, - \frac {A(A-1)}{(c+r)^2}\leq K(r)\, , \, c \ge 0\big )\, \operatorname{where}\quad  1 \le A\,  \label{2.57}\end{equation} 
on $\overset {\circ} B_{t_1}(x_0)\subset D(x_0)$, then
\eqref{2.40} holds on $\overset {\circ} B_{t_1}(x_0)$, and if in addition \eqref{2.57} occurs on $D(x_0)\, ,$ then \eqref{2.43} holds pointwise on $D(x_0)$ and weakly on $M$.\label{C: 2.6}
\end{corollary}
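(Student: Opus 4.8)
The plan is to deduce this directly from Theorem \ref{T: 2.6} by converting the radial \emph{sectional} curvature bound into the radial \emph{Ricci} curvature bound required there. First I would fix $x\in\overset{\circ}B_{t_1}(x_0)$ with $r(x)=t$ and pick an orthonormal frame $\{e_1,\dots,e_{n-1},\partial\}$ of $T_xM$, so that
\[
\operatorname{Ric}^{\operatorname{M}}_{\operatorname{rad}}(t)=\operatorname{Ric}(\partial,\partial)=\sum_{i=1}^{n-1}\langle R(e_i,\partial)\partial,e_i\rangle
\]
is the sum of the sectional curvatures of the $n-1$ radial planes $e_i\wedge\partial$. Each of these is a radial plane, so hypothesis \eqref{2.57} gives $\langle R(e_i,\partial)\partial,e_i\rangle=K(e_i\wedge\partial)\ge-\frac{A(A-1)}{t^2}$ for every $i$, and summing yields $\operatorname{Ric}^{\operatorname{M}}_{\operatorname{rad}}(t)\ge-(n-1)\frac{A(A-1)}{t^2}$ on $\overset{\circ}B_{t_1}(x_0)$. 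This is exactly \eqref{2.39}. For the equivalent form, the identical computation with $(c+r)^{-2}$ in place of $r^{-2}$ produces \eqref{2.41}; note that since $(c+r)^2\ge r^2$ forces $-A(A-1)/(c+r)^2\ge-A(A-1)/r^2$, the $(c+r)^{-2}$ bound is a priori the stronger hypothesis, consistent with the equivalence of the two implications established in the proof of Theorem \ref{T: 2.6}.

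With \eqref{2.39} (resp. \eqref{2.41}) now in force on $\overset{\circ}B_{t_1}(x_0)$, I would simply invoke Theorem \ref{T: 2.6}$(1)$ (resp. $(2)$): its conclusion \eqref{2.40}, $\Delta r\le(n-1)\frac{A}{r}$, holds on $\overset{\circ}B_{t_1}(x_0)$. If in addition \eqref{2.57} holds on all of $D(x_0)$, then the Ricci estimate above holds on $D(x_0)$ as well, so the final clause of Theorem \ref{T: 2.6} applies verbatim and delivers \eqref{2.40} pointwise on $D(x_0)$ and weakly on $M$ (the latter via Lemma A).

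There is no real obstacle; the one point worth a word of care is the purely numerical bookkeeping that summing $n-1$ copies of the sharp radial sectional bound reproduces \emph{precisely} the radial Ricci bound $(n-1)\frac{A(A-1)}{r^2}$ appearing in \eqref{2.39}, so the reduction to Theorem \ref{T: 2.6} loses nothing and the corollary is immediate.
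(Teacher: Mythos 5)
Your argument is correct and follows the route the paper intends: Corollary \ref{C: 2.6} is stated as an immediate consequence of Theorem \ref{T: 2.6}, the only point to verify being that summing the radial sectional curvature bound \eqref{2.57} over the $n-1$ radial planes $e_i\wedge\partial$ yields exactly the radial Ricci bound \eqref{2.39} (resp. \eqref{2.41}), after which Theorem \ref{T: 2.6} and Lemma A give the pointwise and weak conclusions. Note also that the reference to \eqref{2.43} in the corollary's conclusion is evidently a typo for \eqref{2.40}, which is precisely how you read it.
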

\begin{corollary}
If the radial curvature $K$ satisfies  \begin{equation} \frac {B_1(1-B_1)}{r^2}\leq K(r)\, \big (\operatorname{or}\, \operatorname{equivalently}, \, \frac {B_1(1-B_1)}{(c+r)^2}\leq K(r)\, , \, c \ge 0\big )\,  \operatorname{where}\,  0 \le B_1 \le 1\,  \label{2.58}\end{equation}$\operatorname{on} \quad \overset {\circ} B_{t_1}(x_0)\subset D(x_0)\, ,$  
then \eqref{2.52} holds on $\overset {\circ} B_{t_1}(x_0)$, and if in addition \eqref{2.58} occurs on $D(x_0)\, ,$ then \eqref{2.52} holds pointwise on $D(x_0)$ and weakly on $M$.
\label{C: 2.7}
\end{corollary}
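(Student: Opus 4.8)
The plan is to reduce Corollary \ref{C: 2.7} to the radial Ricci curvature version, Theorem \ref{T: 2.7}, by passing from the sectional curvature lower bound on all radial planes to a lower bound on the radial Ricci curvature, and then quoting Theorem \ref{T: 2.7} verbatim.

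First I would fix a point $x \in \overset{\circ}{B}_{t_1}(x_0)$ with $r(x) = t$ and pick an orthonormal basis $e_1 = \partial(x), e_2, \ldots, e_n$ of $T_x M$ adapted to the radial direction. For each $i$ with $2 \le i \le n$, the $2$-plane $\pi_i = \operatorname{span}\{e_1, e_i\}$ contains $\partial(x)$, hence is a radial plane, so by hypothesis \eqref{2.58} its sectional curvature satisfies $K(\pi_i) \ge \frac{B_1(1-B_1)}{r^2}$ (resp. $\ge \frac{B_1(1-B_1)}{(c+r)^2}$). Writing the Ricci tensor as a trace over this frame and using that the $i=1$ term vanishes,
\[
\operatorname{Ric}^{\rm{M}}_{\rm{rad}}(r) = \operatorname{Ric}(\partial,\partial) = \sum_{i=2}^{n} K(\pi_i) \ge (n-1)\frac{B_1(1-B_1)}{r^2},
\]
so \eqref{2.51} (resp. \eqref{2.53}) holds on $\overset{\circ}{B}_{t_1}(x_0)$.

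Next I would invoke Theorem \ref{T: 2.7}(1) (resp. Theorem \ref{T: 2.7}(2)) with this lower bound to conclude that \eqref{2.52} holds on $\overset{\circ}{B}_{t_1}(x_0)$. For the final assertion, observe that if \eqref{2.58} holds throughout $D(x_0)$, then the frame computation above gives \eqref{2.51} (resp. \eqref{2.53}) on all of $D(x_0)$; the corresponding clause of Theorem \ref{T: 2.7} then yields \eqref{2.52} pointwise on $D(x_0)$, and, since $\frac{1+\sqrt{1+4B_1(1-B_1)}}{2r}$ is continuous on $(0,\infty)$, Lemma A upgrades this to the weak inequality on $M$.

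There is essentially no obstacle here; the only step requiring (minor) care is the elementary comparison-geometry observation that a uniform lower bound on the sectional curvatures of \emph{all} radial $2$-planes forces the stated lower bound on $\operatorname{Ric}(\partial,\partial)$, which is immediate from the decomposition of the Ricci tensor over an orthonormal frame whose first vector is $\partial$. Everything else is a direct citation of Theorem \ref{T: 2.7} and Lemma A.
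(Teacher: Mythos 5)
Your proposal is correct and follows the route the paper intends: Corollary \ref{C: 2.7} is stated as an immediate consequence of Theorem \ref{T: 2.7}, obtained exactly by observing that a lower bound on the sectional curvature of every radial plane, summed over an orthonormal frame $\{\partial, e_2,\dots,e_n\}$, gives $\operatorname{Ric}^{\rm{M}}_{\rm{rad}}(r)\ge (n-1)\frac{B_1(1-B_1)}{r^2}$ (resp.\ the $(c+r)^{-2}$ version), i.e.\ \eqref{2.51} (resp.\ \eqref{2.53}), after which Theorem \ref{T: 2.7} and Lemma A deliver \eqref{2.52} on $\overset{\circ}{B}_{t_1}(x_0)$, pointwise on $D(x_0)$, and weakly on $M$. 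No gaps; your frame computation is the standard identity $\operatorname{Ric}(\partial,\partial)=\sum_{i=2}^{n}\operatorname{Sec}(\operatorname{span}\{\partial,e_i\})$ and is all that is needed beyond citing Theorem \ref{T: 2.7}.
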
\smallskip

\noindent
\subsection{\bf Under the radial curvature assumptions}\smallskip

The following Theorem strengthens main theorems in  \cite [Theorem 4.1] {HLRW}, and \cite [Theorem D] {W3}.
We also give direct and simple proofs with applications from the view point of dualities..

\begin{theorem}$($Hessian and Laplacian Comparison Theorems$)\, $
Let 
\begin{equation}G_1(r) \le K(r)  
\label{2.59}\end{equation}
on $\overset {\circ} B_{t_1}(x_0)\subset D(x_0) $
$\big ( \operatorname{resp.}$
\begin{equation}
K(r) \le \widetilde{G_2}(r) \label{2.60}
\end{equation}
on $\overset {\circ} B_{\tilde{t_2}}(x_0)\subset D(x_0)\, \big ),$
and let 
\noindent
$f_2 \in C([0, t_2]) \cap C^1(0,t_2)$ with 
$f_2^{\prime} \in AC(0,t_2)$ be a 
positive solution of  \eqref{2.14},
where $G_i : (0, t_i) \to \mathbb R\,  $satisfy \eqref{2.15} on $(0, t_1) \cap (0, t_2)$,  where $1 \le \kappa _2\, .$
$\big ( \operatorname{resp.}\,  \operatorname{let}\, f_1 \in C([0, \tilde{t_1}]) \cap C^1(0,\tilde{t_1})$ with
$f_1^{\prime} \in AC(0,\tilde{t_1}) $ be a positive solution of 
\begin{equation}
\begin{cases} 
f_1^{\prime\prime} + \widetilde{G_1}f_1 \le 0\, \operatorname{on}\, (0, \tilde{t_1}) \\
f_1(0) = 0\, , f_1^{\prime}(0) = \kappa_1\, ,
\end{cases}
\label{2.61}\end{equation}
where $\widetilde{G_i} : (0, \tilde{t_i}) \to \mathbb R\, $ satisfy 
\begin{equation}
\widetilde{G_2} \le \widetilde{G_1}\label{2.62}\end{equation}
on $(0, \tilde{t_1}) \cap (0, \tilde{t_2})$,  where $0 < \kappa_1 \le 1\big ).$ 

\noindent
Then $\quad t_1 \le t_2\, , $ 
\begin{equation}
\operatorname{Hess} r \le \frac{\kappa_2 f_2^{\prime}}{f_2}(g-dr\otimes dr)\quad \operatorname{and}\quad  \Delta r \le (n-1)\frac{\kappa_2 f_2^{\prime}}{f_2}(r)
\label{2.63} \end{equation} on $\overset {\circ} B_{t_1}(x_0)$ $\big ( \operatorname{resp.}\, $ $\quad \tilde{t_1} \le \tilde{t_2}\, , $ 
\begin{equation}\frac{\kappa_1 f_1^{\prime}}{f_1}\big( g-dr\otimes dr \big) \le \operatorname{Hess} r \quad \operatorname{and}\quad  (n-1)\frac{\kappa_1 f_1^{\prime}}{f_1}(r)\le \Delta r \label{2.64}\end{equation}
$\operatorname{on}\,  \overset {\circ} B_{\tilde{t_1}}(x_0)\big )$ in the sense of quadratic forms. If in addition \eqref{2.59} occurs on $D(x_0)$, then the second part of \eqref{2.63} holds pointwise on $D(x_0)$ and weakly in $M\, .$
\label{T: 2.8}
\end{theorem}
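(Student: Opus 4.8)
The plan is to mimic the proof of Theorem 2.5, replacing the Weitzenböck identity with the Riccati equation satisfied by the shape operator of geodesic spheres, and then feed the resulting system into the Mixed Type I (resp. Mixed Type II) comparison theorem. Concretely, fix $x \in \overset{\circ}{B}_{t_1}(x_0)$ and a unit-speed minimal geodesic $\gamma$ from $x_0$ to $x$. Along $\gamma$, for any parallel unit field $E$ orthogonal to $\gamma'$, set $h_E(r) = \operatorname{Hess} r\,(E,E)$; the standard Riccati/Jacobi computation (differentiating $\operatorname{Hess} r$ along $\partial/\partial r$ and using the curvature term) gives $h_E' + h_E^2 + K_E(r) \le 0$ where $K_E(r)$ is the sectional curvature of the radial plane spanned by $\gamma'$ and $E$, together with the asymptotic normalization $h_E(r) = \frac1r + O(1)$ as $r \to 0^+$. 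Under hypothesis \eqref{2.59}, $K_E(r) \ge G_1(r)$, so $h_E$ is a subsolution of a Riccati equation with datum $G_1$ and $\kappa_1 = 1$; that is, $h_E$ satisfies \eqref{2.24} with $\kappa_1 = 1$. Meanwhile $f_2$ in \eqref{2.14} transforms (via \eqref{2.3} of Theorem 2.1, with $g_2 = \kappa_2 f_2'/f_2$) into a solution of the Riccati equation \eqref{2.25}, and the hypotheses $G_2 \le G_1$ on the overlap and $0 < 1 = \kappa_1 \le \kappa_2$ are exactly \eqref{2.15}, \eqref{2.16}.

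Applying Theorem \ref{T: 2.3} (Mixed Type I) to $g_1 = h_E$ and $f_2$ then yields $t_1 \le t_2$ and $h_E(r) \le \kappa_2 f_2'(r)/f_2(r)$ on $(0,t_1)$. Since $E$ was an arbitrary parallel unit field orthogonal to $\gamma'$, and $\operatorname{Hess} r(\nabla r, \nabla r) = 0$, this says precisely that $\operatorname{Hess} r \le \frac{\kappa_2 f_2'}{f_2}(g - dr\otimes dr)$ as quadratic forms at $x$; taking the trace over an orthonormal frame (one radial direction contributing $0$, the remaining $n-1$ each bounded by $\kappa_2 f_2'/f_2$) gives $\Delta r \le (n-1)\frac{\kappa_2 f_2'}{f_2}(r)$. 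This proves \eqref{2.63} on $\overset{\circ}{B}_{t_1}(x_0)$. The ``resp.'' statement is dual: under \eqref{2.60}, $K_E(r) \le \widetilde{G_2}(r) \le \widetilde{G_1}(r)$, so $h_E$ is now a \emph{supersolution} of a Riccati equation with datum $\widetilde{G_2}$ and $\kappa_2 = 1$, i.e. $h_E$ satisfies \eqref{2.25} with $\kappa_2 = 1$; transforming \eqref{2.61} via \eqref{2.3} produces a solution $g_1 = \kappa_1 f_1'/f_1$ of \eqref{2.24}, and with $0 < \kappa_1 \le 1$ the hypotheses of Theorem \ref{T: 2.4} (Mixed Type II) hold. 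That theorem gives $\tilde t_1 \le \tilde t_2$ and $\kappa_1 f_1'/f_1 \le h_E$, hence \eqref{2.64} by the same frame argument.

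For the final sentence: if \eqref{2.59} holds on all of $D(x_0)$, then the pointwise bound $\Delta r \le (n-1)\frac{\kappa_2 f_2'}{f_2}(r)$ holds throughout $D(x_0)$, and since $r \mapsto (n-1)\frac{\kappa_2 f_2'}{f_2}(r)$ is continuous on $(0,\infty)$, Lemma A promotes this to a weak inequality on all of $M$ (alternatively via Green's identity and a double limiting argument near $\operatorname{Cut}(x_0)$, as in \cite{WL, PRS}).

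The main obstacle I anticipate is the \emph{regularity bookkeeping}: the hypotheses only demand $G_i$ (resp. $\widetilde{G_i}$) be arbitrary real-valued functions and $f_i' \in AC$, so one must check that the Riccati equation for $h_E$ holds \emph{a.e.} with $h_E \in AC_{\mathrm{loc}}(0,t_1)$ — this is where the distance function's smoothness inside the cut locus, plus the care already built into Theorems \ref{T: 2.1}--\ref{T: 2.4} about ``a.e.'' solutions and the $O(1)$ asymptotics at $0$, must be invoked rather than the classical smooth comparison machinery. The inequality $|\operatorname{Hess} r|^2 \ge (\Delta r)^2/(n-1)$ used for the Laplacian bound is elementary (Cauchy–Schwarz on the traceless/radial decomposition), but one should note it is only needed for the Laplacian conclusion; the Hessian conclusion comes directly from applying the one-dimensional comparison in each orthogonal direction separately, which is cleaner and avoids that estimate entirely.
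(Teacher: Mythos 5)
Your first half is fine and is only a mild variant of the paper's argument: the paper runs the Riccati inequality through $g_1=\lambda_{\operatorname{max}}\circ\gamma$, the largest eigenvalue of $\operatorname{Hess} r$ on $(\nabla r)^{\perp}$, using a first-derivative test at the maximizing direction, whereas you apply Cauchy--Schwarz directly to each $h_E=\operatorname{Hess} r(E,E)$; since $|\nabla_E\nabla r|^2\ge h_E^2$ points the right way there, both routes legitimately produce a subsolution of \eqref{2.24} with $\kappa_1=1$, and Theorem \ref{T: 2.3} then gives $t_1\le t_2$ and the upper bounds in \eqref{2.63}.

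The genuine gap is in the ``resp.'' half, where you claim the argument dualizes. The exact identity $h_E'=-|\nabla_E\nabla r|^2-\langle R(E,\nabla r)\nabla r,E\rangle$ yields only the one-sided inequality $h_E'+h_E^2+K_E\le 0$ (your own formula), because the Cauchy--Schwarz discrepancy $|\nabla_E\nabla r|^2-h_E^2\ge 0$ enters with a definite sign. Combining this with the curvature upper bound $K_E\le\widetilde{G_2}$ gives nothing in the direction you need: it does not follow that $h_E'+h_E^2+\widetilde{G_2}\ge 0$, i.e.\ that $h_E$ solves \eqref{2.25} with $\kappa_2=1$, and in general this supersolution property is false for a parallel field $E$ that is not an eigendirection of $\operatorname{Hess} r$, since then $|\nabla_E\nabla r|^2$ strictly exceeds $h_E^2$. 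This is precisely why the paper does not argue by symmetry: it sets $g_2=\lambda_{\operatorname{min}}\circ\gamma$, notes that at the minimizing unit vector $\tilde v$ one has $\nabla_{\tilde v}\nabla r=\lambda_{\operatorname{min}}\tilde v$ so the discrepancy vanishes there, and transfers the derivative information through the first-derivative test at the interior minimum of $\operatorname{Hess} r(\tilde V,\tilde V)-g_2(t)$; only then does $g_2$ satisfy \eqref{2.67}, after which Theorem \ref{T: 2.4} (with $\kappa_2=1$) gives $\kappa_1 f_1'/f_1\le g_2\le\operatorname{Hess} r(w,w)$ for every unit $w\perp\nabla r$, hence \eqref{2.64}. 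Your trace step and the use of Lemma A for the weak inequality on $M$ are fine, but as written the lower Hessian and Laplacian bounds are not established; you need the minimum-eigenvalue (or matrix Riccati) device to close the argument.
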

\begin{proof}
Following the notation in \cite {HLRW}. Let $\gamma$ be the unit speed geodesic curve joining $x_0=\gamma (0)\, $ to $x=\gamma (t_0)\, ,$ and
$V$ be a parallel vector field along $\gamma(t)\, ,$ for $0 \le t \le t_0\, .$ A direct computation shows that 
\begin{equation}
\frac {d}{dt} \langle \nabla_V \nabla r , V \rangle + \langle  \nabla _{V} \nabla r, \nabla _{V} \nabla r \rangle
= - \langle R(V, \nabla r ) \nabla r, V\rangle
\le - G_1\label{2.65}
\end{equation}
(see \cite [p.174, (4.5)] {HLRW}). Define $$ \lambda_{\operatorname{max}}(x) = \underset{\{v \in T_x(M)\backslash \{0\},  v \bot \nabla r (x)\}}{\operatorname{max}} \frac {\operatorname{Hess} r (v,v)}{\langle v, v \rangle}\,.$$

Select a unit vector $v$ at $x = \gamma (t_0)$ such that $$\langle \nabla_v \nabla r , v \rangle : = \operatorname{Hess} r (v,v) = \lambda_{\operatorname{max}}\circ \gamma(t_0)\, .$$ Then
$$\langle  \nabla _{v} \nabla r, \nabla _{v} \nabla r \rangle = \lambda_{\operatorname{max}}^2 \circ \gamma(t_0)\, .$$

Let $g_1 = \lambda_{\operatorname{max}}\circ \gamma\, ,$ and let the parallel vector field $V$ along $\gamma$ satisfying $V(t_0)=v\, .$  Then the function $ \operatorname{Hess} r (V,V) - g_1(t)\le 0\, ,$ attains its maximum value $0$ at $t=t_0\, ,$  and at this point 
\[  \frac {d}{dt}\bigg|_{t=t_0} \operatorname{Hess} r (V,V) = \frac {d}{dt}\bigg|_{t=t_0}g_1(t)\, .
\]
It follows from \eqref{2.65} that $g_1$ satisfies \eqref{2.27}
(see \cite [p.175] {HLRW}). By assumption, $f_2$ is a positive solution of \eqref{2.14},
with $\kappa _2 \ge 1\, ,$ and  \eqref{2.15} holds on $(0, t_1)$. 
Applying  Theorem \ref{T: 2.3} in which $g_1 = \lambda_{\operatorname{max}}\circ \gamma$, we have $$\operatorname{Hess} r (w,w) \le \operatorname{Hess} r (v,v) = g_1 \le \frac {\kappa_2 f_2^{\, \prime}}{f_2}$$ $\operatorname{on}\, \overset {\circ} B_{t_1}(x_0)$ for any unit vector $w \bot \nabla r (x)$ at $x\, .$ Taking the trace, we obtain the desired \eqref{2.63}\, $\operatorname{on}\, \overset {\circ} B_{t_1}(x_0)$. If in addition \eqref{2.59} occurs on $D(x_0)$, then the second part of \eqref{2.63}  holds weakly in $M\, $ by Lemma A.

\noindent
Similarly, let $\tilde V$ be a parallel vector field along $\gamma(t)\, ,$ for $0 \le t \le t_0\, .$ By the radial curvature assumption \eqref{2.60},
\begin{equation}
\frac {d}{dt} \langle \nabla_{\tilde V} \nabla r , {\tilde V} \rangle + \langle  \nabla _{\tilde V} \nabla r, \nabla _{\tilde V} \nabla r \rangle
= - \langle R(\tilde V, \nabla r ) \nabla r, \tilde V\rangle
\ge - \widetilde{G_2}\, .\label{2.66}
\end{equation}
Define $$ \lambda_{\operatorname{min}}(x) = \underset{\{v \in T_x(M)\backslash \{0\},  v \bot \nabla r (x)\}}{\operatorname{min}} \frac {\operatorname{Hess} r (v,v)}{\langle v, v \rangle}\, .$$

Select a unit vector $\tilde v$ at $x = \gamma (t_0)$ such that $$\langle \nabla_{\tilde v} \nabla r , \tilde v \rangle : = \operatorname{Hess} r (\tilde v,\tilde v) = \lambda_{\operatorname{min}}\circ \gamma(t_0)\, .$$ Then
$$\langle  \nabla _{\tilde v} \nabla r, \nabla _{\tilde v} \nabla r \rangle = \lambda_{\operatorname{min}}^2 \circ \gamma(t_0)\, .$$

Let $g_2 = \lambda_{\operatorname{min}}\circ \gamma\, ,$ and let the parallel vector field $\tilde V$ along $\gamma$ satisfying $\tilde V(t_0)=\tilde v\, .$  Then the function $ \operatorname{Hess} r (\tilde V,\tilde V) - g_2(t)\ge 0\, ,$ attains its minimum value $0$ at $t=t_0\, ,$  and at this point 
\[  \frac {d}{dt}\bigg|_{t=t_0} \operatorname{Hess} r (\tilde V,\tilde V) = \frac {d}{dt}\bigg|_{t=t_0}g_2(t)\, .
\]
It follows from  \eqref{2.66} that $g_2$ satisfies 
\begin{equation}\begin{cases}
{g_2}^{\, \prime} + {g_2}^2 + \widetilde {G_2} \ge\, 0\quad  \operatorname{a.}\, \operatorname{e.}\quad  \operatorname{in}\quad (0, \tilde {t_2})  \\
g_2(t) = \frac {1}{t} + O(1)\quad \operatorname{as}\quad t \to 0^{+}\, , 
\end{cases}\label{2.67}
\end{equation}
We note $f_1$ is a positive solution of \eqref{2.61}, where $0 < \kappa _1 \le 1$. 
(see \cite [p.176] {HLRW}) and \eqref{2.62} holds on $(0, \tilde{t_1}) \cap (0, \tilde{t_2})$.
Applying Theorem \ref{T: 2.4}, in which $G_1 = \tilde{G_1}, t_1 = \tilde {t_1}$, $G_2 = \tilde{G_2}, t_2 = \tilde {t_2}$ and $\kappa_2 = 1$, we have
 
$$\frac {\kappa_1 f_1^{\, \prime}}{\tilde{f_1}}\le g_2 = \operatorname{Hess} r (\tilde v,\tilde v) \le \operatorname{Hess} r (w,w) $$ on $\overset {\circ} B_{\tilde{t_1}}(x_0)$ for any unit vector $w \bot \nabla r (x)$ at $x\, .$ Taking the trace, we obtain the desired \eqref{2.64} on $\overset {\circ} B_{\tilde{t_1}}(x_0)$.
\end{proof}
\begin{remark2.1}If $f_1 \in C([0, \tilde{t_1}]) \cap C^1(0,\tilde{t_1})$ with
$f_1^{\prime} \in AC(0,\tilde{t_1}) $ is a positive solution of 
\begin{equation}
\begin{cases}
f_1^{\prime\prime} + \widetilde{G_1}f_1 = 0\quad \operatorname{on}\quad (0, \tilde{t_1}) \\
f_1(0) = 0\, , f_1^{\prime}(0) = \kappa_1
 \end{cases}
\label{2.68}\end{equation}
then obviously $f_1$ is a positive solution of \eqref{2.61} Hence, we are ready for applying Theorem \ref{T: 2.8}. The advantage of \eqref{2.68} over \eqref{2.61} is that a solution $f_1$ of \eqref{2.68} will make the subsequent \eqref{3.11}, the monotonicity of $\phi^{\prime} f_1 - f_1^{\prime} \phi$, and the estimate \eqref{3.41} work.  
\end{remark2.1}

\begin{corollary} $(1)\, $ Let the radial curvature $K$ satisfy  \eqref{2.59} $\bigg( \operatorname{resp.}\,  \eqref{2.60} \bigg)$,
and let $f_2 \in C([0, t_2]) \cap C^1(0,t_2)$ with 
$f_2^{\prime} \in AC(0,t_2)$ $\big ( \operatorname{resp.}\,  f_1 \in C([0, \tilde{t_1}]) \cap C^1(0,\tilde{t_1})$ with 
$f_1^{\prime} \in AC(0,\tilde{t_1}) \big )$ be a 
positive solution of \eqref{2.23}
$ \bigg( \operatorname{resp.}\,  
\eqref{2.61}, \kappa _1 = 1\bigg )$.
Assume \eqref{2.15} occurs on $(0, t_1) \cap (0, t_2)$ 
$
\bigg( \operatorname{resp.} \, \eqref{2.62}\, $$ \operatorname{occurs}\, $
on $(0, \tilde{t_1}) \cap (0, \tilde{t_2}) \bigg)$. Then
\eqref{2.63} holds on $\overset {\circ} B_{t_1}(x_0)$ for $\kappa_2 =1$
$\bigg( \operatorname{resp.}\, \eqref{2.64}\, \operatorname{holds}\,$ on $\overset {\circ} B_{\tilde{t_1}}(x_0)$ for $\kappa_1 =1\bigg)$. If in addition \eqref{2.59} occurs on $D(x_0)$, then the second part of \eqref{2.63}  

\noindent
$(2)\, $ Theorem \ref{T: 2.8} is equivalent to $(1)\, .$
\label{C: 2.8}
\end{corollary}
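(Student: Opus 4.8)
The plan is to prove the equivalence exactly in the style of Corollaries \ref{C: 2.1}--\ref{C: 2.5}: one implication is a pure specialization, and the reverse one is a rescaling that exploits the scale‑invariance of the logarithmic derivative $f'/f$. The direction ``Theorem \ref{T: 2.8} $\Rightarrow$ (1)'' is immediate. In the first alternative, (1) is Theorem \ref{T: 2.8} with $\kappa_2 = 1$: the constraint $1 \le \kappa_2$ becomes an equality, \eqref{2.23} is \eqref{2.14} with $f_2^{\prime}(0)=1$, and \eqref{2.63} together with the weak/pointwise addendum under \eqref{2.59} on $D(x_0)$ are read off verbatim with $\kappa_2 = 1$. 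In the ``resp.'' alternative, (1) is Theorem \ref{T: 2.8} with $\kappa_1 = 1$ (so $0 < \kappa_1 \le 1$ becomes an equality), \eqref{2.61} with $\kappa_1 = 1$ being the normalized system appearing in (1), and \eqref{2.64} is read off with $\kappa_1 = 1$.

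For the reverse implication ``(1) $\Rightarrow$ Theorem \ref{T: 2.8}'' the task is to remove the normalization, and I would do it in two (equivalent) ways. The cleanest is to note that in the proof of Theorem \ref{T: 2.8} the constant $\kappa_2$ (resp. $\kappa_1$) enters only through the single invocation of Theorem \ref{T: 2.3} applied to $g_1 = \lambda_{\operatorname{max}}\circ\gamma$ (resp. of Theorem \ref{T: 2.4} applied to $g_2 = \lambda_{\operatorname{min}}\circ\gamma$); by Corollary \ref{C: 2.4} (resp. Corollary \ref{C: 2.5}) that step is equivalent to its $\kappa=1$ normalization, so Theorem \ref{T: 2.8} for general $\kappa_2$ (resp. $\kappa_1$) is already contained in its $\kappa_2=1$ (resp. $\kappa_1=1$) instance, which is (1). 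Alternatively and more directly, set $\hat f_2 = \tfrac{1}{\kappa_2}f_2$: by homogeneity of the inequality in \eqref{2.14}, $\hat f_2$ is a positive solution of \eqref{2.23}, it is positive on the same interval $(0,t_2)$, and $\tfrac{\hat f_2^{\prime}}{\hat f_2}=\tfrac{f_2^{\prime}}{f_2}$; with \eqref{2.15} unchanged, applying (1) to $\hat f_2$ gives $t_1 \le t_2$, the bounds $\operatorname{Hess} r \le \tfrac{f_2^{\prime}}{f_2}(g-dr\otimes dr)$ and $\Delta r \le (n-1)\tfrac{f_2^{\prime}}{f_2}(r)$ on $\overset{\circ}B_{t_1}(x_0)$, and the weak/pointwise addendum; then, since $\kappa_2 \ge 1$, since $\tfrac{f_2^{\prime}}{f_2}\ge 0$ there, and since $g-dr\otimes dr \ge 0$ as a quadratic form, multiplying up recovers \eqref{2.63} for the original $\kappa_2$. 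The ``resp.'' case is symmetric with $\hat f_1 = \tfrac{1}{\kappa_1}f_1$, a positive solution of \eqref{2.61} with $\kappa_1=1$ having the same logarithmic derivative and the same $\tilde t_1$, where now $0 < \kappa_1 \le 1$ is used to pass from $\tfrac{f_1^{\prime}}{f_1}(g-dr\otimes dr)\le \operatorname{Hess} r$ down to \eqref{2.64}.

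I do not expect a genuine obstacle: the whole argument is bookkeeping. The one point that needs care is the direction of the inequality when the logarithmic derivative is rescaled by $\kappa_2$ (resp. $\kappa_1$) inside the quadratic‑form bounds, i.e. that $\tfrac{f_2^{\prime}}{f_2}\le \kappa_2\tfrac{f_2^{\prime}}{f_2}$ (resp. $\kappa_1\tfrac{f_1^{\prime}}{f_1}\le \tfrac{f_1^{\prime}}{f_1}$); this rests on the sign of $f^{\prime}/f$ on the punctured ball together with $g-dr\otimes dr\ge 0$, and is handled exactly as in the proofs of Corollaries \ref{C: 2.4} and \ref{C: 2.5}. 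I would also point out explicitly that the weak and pointwise conclusions under the extra hypothesis that \eqref{2.59} holds on all of $D(x_0)$ are untouched by the rescaling, since they involve only $\Delta r$ and the same comparison function; and that the statements ``$t_1 \le t_2$'' (resp. ``$\tilde t_1 \le \tilde t_2$'') transfer directly because the positivity interval of $\hat f_2$ (resp. $\hat f_1$) coincides with that of $f_2$ (resp. $f_1$).
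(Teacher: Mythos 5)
The paper records Corollary \ref{C: 2.8} without any written proof, so the natural benchmark is the rescaling pattern used for Corollaries \ref{C: 2.1}--\ref{C: 2.5}, and that is exactly what you propose: the direction Theorem \ref{T: 2.8} $\Rightarrow$ (1) is the specialization $\kappa_2=1$ (resp. $\kappa_1=1$), and for the converse you pass to $\hat f_2=f_2/\kappa_2$ (resp. $\hat f_1=f_1/\kappa_1$), which solves \eqref{2.23} (resp. \eqref{2.61} with $\kappa_1=1$), has the same positivity interval and the same logarithmic derivative, and you then multiply the normalized bound by $\kappa_2\ge 1$ (resp. compare with $\kappa_1\le 1$). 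Up to that final multiplication the bookkeeping is correct and is clearly the intended argument.

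The final multiplication is, however, a genuine gap, and your deferral to Corollaries \ref{C: 2.4} and \ref{C: 2.5} does not close it: their proofs (and the last line of the proof of Theorem \ref{T: 2.2}) contain the identical assertion $\kappa_1\tfrac{f_2^{\prime}}{f_2}\le\kappa_2\tfrac{f_2^{\prime}}{f_2}$ with no justification. Passing from $\operatorname{Hess} r\le\tfrac{f_2^{\prime}}{f_2}(g-dr\otimes dr)$ to \eqref{2.63} requires $f_2^{\prime}\ge 0$ on $(0,t_1)$ (resp. $f_1^{\prime}\ge 0$ on $(0,\tilde{t_1})$ to descend from the normalized lower bound to \eqref{2.64}), and this is not implied by the hypotheses: with $G_1=G_2\equiv 1$ and $\kappa_2=2$, the function $f_2=2\sin r$ is an admissible solution of \eqref{2.14} on $(0,\pi)$ with $f_2^{\prime}/f_2=\cot r<0$ on $(\pi/2,\pi)$; at such points \eqref{2.63} is strictly stronger than the $\kappa_2=1$ bound furnished by (1), so no rescaling can produce it (on the unit sphere, where $\operatorname{Hess} r=\cot r\,(g-dr\otimes dr)$, the bound \eqref{2.63} with these data in fact fails, while the conclusion of (1) holds with equality). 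So the sign condition you flag is essential rather than cosmetic: the converse implication goes through only where $f_2^{\prime}\ge 0$ (for instance if $G_2\le 0$, since then $f_2^{\prime\prime}\ge 0$ a.e.\ and $f_2^{\prime}\ge\kappa_2$), resp. where $f_1^{\prime}\ge 0$, or trivially when $\kappa=1$, which is the only case the paper's later applications use. Your alternative route (a) inherits the same problem, since it rests on Corollary \ref{C: 2.4}(2) and Corollary \ref{C: 2.5}(2), whose proofs make the very same leap.
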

\smallskip

\noindent
\subsection{\bf Under the sectional curvature and Ricci curvature assumptions}\smallskip
\begin{corollary}
Denote $\operatorname{Ric}^M$ the Ricci curvature of $M$ and $\operatorname{Sec}^M$ the sectional curvature of $M\, .$
If  we replace $``\operatorname{Ric}^{\rm{M}}_{\rm{rad}}"$ in Theorems \ref{T: 2.5}, \ref{T: 2.6} and \ref{T: 2.7}  by $``\operatorname{Ric}^M"\, ,$ the results remain to be true. Likewise,
if  we replace $``K\, \operatorname{or}\, K(r)"$ in Theorems \ref{T: 2.8}, \ref{T: 3.1}, \ref{T: 3.2}, \ref{T: 3.3}, \ref{T: 3.4}, \ref{T: 3.5},  and Corollaries \ref{C: 2.6} - \ref{C: 2.8}, \ref{C: 3.1} - \ref{C: 3.5}  by $``\operatorname{Sec}^M"\, ,$ the results remain to be true. 
\label{C: 2.9}
\end{corollary}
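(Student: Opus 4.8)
The plan is to exploit the fact that each ``radial'' curvature quantity is, by its very definition, a \emph{restriction} of the corresponding full curvature tensor, so that any bound imposed on $\operatorname{Ric}^M$ (resp.\ on $\operatorname{Sec}^M$) automatically forces the same bound on $\operatorname{Ric}^{\operatorname{M}}_{\operatorname{rad}}$ (resp.\ on the radial curvature $K$). Consequently the ``radial'' hypotheses of the quoted statements are \emph{weaker} than the corresponding ``global'' ones, and the already-proved theorems apply verbatim.

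First I would recall from Section 2 that for $x \in D(x_0)$ with $r(x) = t$ one has $\operatorname{Ric}^{\operatorname{M}}_{\operatorname{rad}}(t) = \operatorname{Ric}^M\big(\partial(x),\partial(x)\big)$, where $\partial$ is the unit radial field, and $K(t) = \operatorname{Sec}^M(\pi)$ for a radial plane $\pi \ni \partial(x)$. Hence, if the lower bound in the hypothesis of Theorem \ref{T: 2.5}, \ref{T: 2.6} or \ref{T: 2.7} is assumed for $\operatorname{Ric}^M$ --- i.e.\ $\operatorname{Ric}^M(v,v) \ge (n-1)G_1(r)\,|v|^2$ on $\overset{\circ}{B}_{t_1}(x_0)$ for every tangent vector $v$, and likewise for the explicit bounds \eqref{2.39}, \eqref{2.41}, \eqref{2.51}, \eqref{2.53} --- then specializing to $v = \partial(x)$ returns precisely \eqref{2.34} (resp.\ \eqref{2.39}, \eqref{2.41}, \eqref{2.51}, \eqref{2.53}) for $\operatorname{Ric}^{\operatorname{M}}_{\operatorname{rad}}$. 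Similarly, a two-sided bound $G_1(r) \le \operatorname{Sec}^M \le \widetilde{G_2}(r)$ over all $2$-planes (and likewise the explicit bounds appearing in Theorems \ref{T: 3.1}--\ref{T: 3.5} and Corollaries \ref{C: 3.1}--\ref{C: 3.5}) restricts to the radial planes to yield \eqref{2.59} and \eqref{2.60}. Thus each radial hypothesis is implied, and the conclusions of Theorems \ref{T: 2.5}--\ref{T: 2.8}, \ref{T: 3.1}--\ref{T: 3.5} and Corollaries \ref{C: 2.6}--\ref{C: 2.8}, \ref{C: 3.1}--\ref{C: 3.5} follow unchanged.

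To be completely safe I would also re-examine the two points in the proofs where a radial curvature term actually enters, confirming that only one-sided control of that single term is used --- which is exactly what the global hypothesis supplies. In the Weitzenb\"ock derivation of \eqref{2.38} the relevant term is $\operatorname{Ric}\big(\frac{\partial}{\partial r},\frac{\partial}{\partial r}\big) = \operatorname{Ric}^{\operatorname{M}}_{\operatorname{rad}}(r)$, bounded below as soon as $\operatorname{Ric}^M$ is; in the Jacobi/Riccati derivations of \eqref{2.65} and \eqref{2.66} the term is $\langle R(V,\nabla r)\nabla r, V\rangle$, a sectional-curvature quantity on the radial plane spanned by $V$ and $\nabla r$, bounded above (resp.\ below) by $\widetilde{G_2}$ (resp.\ $G_1$) as soon as $\operatorname{Sec}^M$ is. The argument is otherwise routine; the only ``obstacle'' is this bookkeeping check that every listed statement invokes its curvature assumption solely through the radial restriction and in the one direction available from the global bound, which the two computations above confirm.
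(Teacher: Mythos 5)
Your proposal is correct and follows essentially the same route as the paper, whose proof simply observes that by Definition 2.1 and the definition of radial curvature the global bounds on $\operatorname{Ric}^M$ and $\operatorname{Sec}^M$ restrict to the radial hypotheses, so the quoted theorems apply verbatim. Your extra check of where the curvature terms enter (the Weitzenb\"ock step and the Jacobi/Riccati derivations) is sound but not needed beyond what the paper already records.
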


\begin{proof} This follows at once from Definition 2.1, the definition of the radial curvature, and the above Theorems and Corollaries stated in Corollary \ref{C: 2.9}.
\end{proof}
\section{Curvature in Comparison Theorems}

Hessian and Laplacian Comparison Theorem \ref{T: 2.8} has many geometric applications under curvature assumptions. 
Let  $ A\, , A_1\, , B\, , B_1\, $ be constants and $K(r)$ be the
radial curvature of $M$. 
\begin{theorem}$(1)$ If \begin{equation}- \frac {A(A-1)}{r^2}\leq K(r) \quad \operatorname{where} \quad  A \ge 1\, \label{3.1}\end{equation}
on $\overset {\circ} B_{t_1}(x_0) \subset D(x_0)$, then
\begin{equation}\begin{aligned}
\operatorname{Hess} r & \leq \frac{A}{r}\bigg(
g-dr\otimes dr\bigg)\, \operatorname{in}\, \operatorname{the}\, \operatorname{sense}\, \operatorname{of}\, \operatorname{quadratic}\, \operatorname{forms}\end{aligned}\label{3.2}
\end{equation}
and
\begin{equation}\begin{aligned}
 \Delta r & \leq (n-1)\frac{A}{r}\end{aligned}\label{3.3}
\end{equation}
hold on $\overset {\circ} B_{t_1}(x_0)$.  If in addition \eqref{3.1} occurs on $D(x_0)\, ,$ then
\eqref{3.2} and \eqref{3.3} hold pointwise on $D(x_0)\, ,$ and \eqref{3.3} holds weakly on $M\, .$   

\noindent
$(2)$ Equivalently, if \begin{equation}  - \frac {A(A-1)}{(c+r)^2}\leq K(r)\, ,\quad  A \ge 1\, 
\label{3.4}\end{equation} 
on $\overset {\circ} B_{t_1}(x_0) \subset D(x_0)$, $\operatorname{where}\,  c \ge 0\, ,$ then \eqref{3.2} and \eqref{3.3} hold on $\overset {\circ} B_{t_1}(x_0)$. If in addition \eqref{3.4} occurs on $D(x_0)$, then \eqref{3.2} and \eqref{3.3} hold pointwise on $D(x_0)\, ,$ and weakly on $M\, .$   
\label{T: 3.1}
\end{theorem}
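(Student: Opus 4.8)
The plan is to derive Theorem \ref{T: 3.1} as a direct application of the Hessian and Laplacian Comparison Theorem \ref{T: 2.8}, following the same pattern as the proof of Theorem \ref{T: 2.6}. First I would record the equivalence of (1) and (2): statement (1) is literally the case $c=0$ of (2), while conversely \eqref{3.4} forces $-\frac{A(A-1)}{r^2}\le-\frac{A(A-1)}{(c+r)^2}\le K(r)$, so \eqref{3.4} implies \eqref{3.1} and then (1) yields (2). Hence it suffices to prove (2), which I would do first for $c>0$ and then obtain the case $c=0$ by a limiting argument.

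For $c>0$, set $G_1(r)=G_2(r)=-\frac{A(A-1)}{(c+r)^2}$, so \eqref{2.15} holds trivially, and introduce the comparison function $\phi(r)=r^{A}$; since $A\ge1$ we have $\phi>0$ on $(0,\infty)$, $\phi''=\frac{A(A-1)}{r^2}\phi$, and $\frac{\phi'}{\phi}=\frac{A}{r}$. Let $f_2$ be a positive solution of \eqref{2.44} with this $G_2$ and with $\kappa_2=1$ (it exists and is $C^2$ on $[0,\infty)$ since $G_2$ is continuous there; taking the equality version of the ODE is what makes the Wronskian below monotone, cf. Remark 2.1), and put $t_2=\sup\{r:f_2>0\text{ on }(0,r)\}$. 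Comparing $f_2$ with the solution $\tilde f_2(r)=r$ of $\tilde f_2''=0,\ \tilde f_2(0)=0,\ \tilde f_2'(0)=1$ by the standard Sturm comparison theorem (as in the proof of Theorem \ref{T: 2.6}) gives $t_2=\infty$. The quantity $W=\phi'f_2-f_2'\phi$ satisfies $W(0)=0$ and
$$W'=\phi''f_2-f_2''\phi=A(A-1)\,f_2\,\phi\Bigl(\frac{1}{r^2}-\frac{1}{(c+r)^2}\Bigr)\ge0\quad\text{on }(0,\infty),$$
so $W\ge0$, whence $\frac{f_2'}{f_2}\le\frac{\phi'}{\phi}=\frac{A}{r}$ on $(0,\infty)$. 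Applying Theorem \ref{T: 2.8} with $\kappa_2=1$ (so $t_1\le t_2=\infty$ automatically), we get, on $\overset{\circ}{B}_{t_1}(x_0)$,
$$\operatorname{Hess} r\le\frac{f_2'}{f_2}(g-dr\otimes dr)\le\frac{A}{r}(g-dr\otimes dr),\qquad \Delta r\le(n-1)\frac{f_2'}{f_2}\le(n-1)\frac{A}{r},$$
that is, \eqref{3.2} and \eqref{3.3} under \eqref{3.4} with $c>0$.

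To pass to $c=0$ — i.e. to prove (1) — I would repeat the limiting argument of \eqref{2.50}: for $x\in\overset{\circ}{B}_{t_1}(x_0)$ pick $x_i$ on the radial geodesic ray through $x$ and $c_i>0$ with $x_i\to x$, $c_i\to0$, $r(x_i)+c_i=r(x)$; then \eqref{3.1} at $x$ gives \eqref{3.4} with $c=c_i$ at $x_i$, and the pointwise inequalities \eqref{3.2}, \eqref{3.3} at $x_i$ pass to the limit at $x$. Finally, if \eqref{3.1} (resp. \eqref{3.4}) holds on all of $D(x_0)$, the above yields \eqref{3.2} and \eqref{3.3} pointwise on $D(x_0)$, and since $r\mapsto(n-1)\frac{A}{r}$ lies in $C^0(0,\infty)$, Lemma A then gives \eqref{3.3} weakly on $M$.

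The Sturm/Wronskian comparison and the domain bookkeeping are routine; the only delicate point — and the reason for the detour through $c>0$ — is the endpoint $r=0$: the obvious candidate $\phi(r)=r^{A}$ cannot itself play the role of $f_2$ in Theorem \ref{T: 2.8} because $\phi'(0)=0\ne1$ when $A>1$, so one first regularizes (making $G_2$ bounded near $0$, so the genuine solution $f_2$ behaves like $r$ there and Theorem \ref{T: 2.8} applies) and only afterwards lets $c\to0$.
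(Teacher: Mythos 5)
Your proposal is correct and follows essentially the same route as the paper's own proof: the same auxiliary function $\phi=r^{A}$, the same choice $G_2=-\frac{A(A-1)}{(c+r)^2}$ with $\kappa_2=1$ in \eqref{2.44}, the same Sturm/Wronskian comparison giving $t_2=\infty$ and $\frac{f_2'}{f_2}\le\frac{A}{r}$, followed by Theorem \ref{T: 2.8}, the limiting argument as in \eqref{2.50}, and Lemma A for the weak inequality on $M$. The only addition is your explicit remark on why $\phi$ itself cannot serve as $f_2$, which is accurate commentary rather than a deviation.
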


\begin{proof}[Proof of Theorem  \ref{T: 3.1}]
$ (2) \Rightarrow (1)$
$(1)$ is the special case $c = 0$ in $(2)$.
$ (1) \Rightarrow (2)$ If \eqref{3.4}  occurs, then \eqref{3.1}  takes place, since $[- \frac {A(A-1)}{(c+r)^2}, \infty)\subset [- \frac {A(A-1)}{r^2}, \infty)\, .$ Then apply $(1)\, $.
Now we prove $(2)$. First assume \eqref{3.4} occurs for $c > 0$.
Proceed as in the proof of Theorem \ref{T: 2.6}, choose
$\phi=r^{A}\, ,$ where $A \ge  1\, .$
Then $\phi^{\prime} = Ar^{A - 1}\, .$  Hence,  for $r > 0\, ,$ we have \eqref{2.42} and \eqref{2.43}.
Let $f_2$ be a positive solution of 
\eqref{2.44}, where $G_2 = \frac{-A (A - 1)}{(c+r)^2}\, , \kappa _2 = 1$ 
and 
$t_2$ be as in \eqref{2.45}.
We note $t_2 = \infty\, ,$ by a standard Sturm comparison theorem.
Furthermore, $\eqref{2.46}\, (\phi^{\prime} f_2 - f_2^{\prime} \phi )(0) = 0\, $ holds. 
In view of \eqref{2.43}, \eqref{2.44} and \eqref{2.45},  we have 
$\eqref{2.47}\, (\phi^{\prime} f_2 - f_2^{\prime} \phi )^{\prime}  
= f_2 \phi \big (\frac{A (A - 1)}{r^2}-\frac{A (A - 1)}{(c+r)^2}\big )
\ge 0\, ,$ for $r \in (0, \infty)\, .$
The monotonicity then implies that $\phi^{\prime} f_2 \ge f_2^{\prime} \phi\, $ for $r \in (0, \infty)\, $ which in turn via (\ref{2.42}) yields
\begin{equation}\frac {f_2^{\prime}}{f_2} \le \frac{\phi^{\prime}}{\phi} =  \frac{A}{r}\, \tag{2.49}\end{equation}
$\operatorname{on}\, (0, \infty)\, .$ Applying comparison Theorem \ref{T: 2.8} in which $G_1 = \frac {-A (A - 1)}{(c+r)^2} \ge  \frac {-A (A - 1)}{(c+r)^2} = G_2\, (c > 0)\, , \kappa_2 =1\, , \frac {f_2^{\prime}}{f_2} \le \frac {A}{r}\, ,$  and taking the trace, we have 
shown on $\overset {\circ} B_{t_1}(x_0)\, ,$ \eqref{2.63} holds, i.e., \eqref{3.2} and \eqref{3.3} hold  under \eqref{3.4} for every $c > 0$. Proceeding the limiting argument as in \eqref{2.50} and applying Lemma A, we prove similarly  
$(1) \Rightarrow (2)\, ,$ 
and $(1)\, .$
\end{proof}

\begin{theorem}$(1)$ If  \begin{equation} K(r) \le - \frac {A_1(A_1-1)}{r^2}\quad \operatorname{on} \quad M\backslash \{x_0\}\quad \operatorname{where} \quad   A_1\ge 1\, ,\label{3.5}\end{equation}
then $\operatorname{Hess} r $ and $\Delta r$ satisfy 
\begin{equation}
 \frac{A_1}{r}\bigg(
g-dr\otimes dr\bigg) \le\operatorname{Hess} r\quad \operatorname{and}\quad (n-1)\frac{A_1}{r} \le \Delta r \, \label{3.6}
\end{equation}
on $M \backslash \{x_0\}\, ,$
respectively. 

\noindent
$(2)$ Equivalently, $\operatorname{if}$ \begin{equation} K(r) \le - \frac {A_1(A_1-1)}{(c+r)^2}, \quad A_1 \ge 1\,  
\label{3.7}\end{equation}
$\operatorname{on} \, M\backslash \{x_0\}\, \operatorname{where} \, c \ge 0\, ,$   then \eqref{3.6} holds.
\label{T: 3.2}
\end{theorem}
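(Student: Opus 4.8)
The plan is to obtain Theorem~\ref{T: 3.2} as a direct specialization of the ``resp.''\ (Hessian lower bound) case of Theorem~\ref{T: 2.8}, closely parallel to the proof of Theorem~\ref{T: 3.1}. Part $(1)$ is the special case $c=0$ of part $(2)$; I will establish the conclusion of $(2)$ for every $c>0$ and recover the case $c=0$ by a limiting argument. So fix $c>0$ and choose, in the ``resp.''\ half of Theorem~\ref{T: 2.8}, the radial-curvature upper bound $\widetilde{G_2}(r)=\widetilde{G_1}(r)=-\frac{A_1(A_1-1)}{(c+r)^2}$ on $(0,\infty)$, so that \eqref{2.62} holds with equality and \eqref{2.60} is exactly hypothesis \eqref{3.7}. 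Let $f_1$ be the positive solution of $f_1''+\widetilde{G_1}f_1=0$ with $f_1(0)=0$ and $f_1'(0)=1$, i.e.\ $\kappa_1=1$, the largest value allowed in \eqref{2.61}; since $\widetilde{G_1}\le 0$, $f_1$ is convex wherever positive, hence $f_1>0$ throughout $(0,\infty)$ and $\tilde{t_1}=\infty$ (alternatively by a Sturm comparison with the linear solution, as in the proof of Theorem~\ref{T: 2.6}).

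The crux is the estimate $f_1'/f_1\ge A_1/(c+r)$, which I would get by comparing $f_1$ with the explicit model $\phi=(c+r)^{A_1}$. Since $1+4A_1(A_1-1)=(2A_1-1)^2$, one has $\phi'/\phi=A_1/(c+r)$ and $\phi''=\frac{A_1(A_1-1)}{(c+r)^2}\phi$, so $\phi$ solves the same equation as $f_1$; hence the Wronskian $W=\phi'f_1-f_1'\phi$ satisfies $W'=\phi''f_1-f_1''\phi=0$, so $W$ is constant, equal to $W(0)=\phi'(0)f_1(0)-f_1'(0)\phi(0)=-c^{A_1}<0$. Therefore $\phi'f_1\le f_1'\phi$, i.e.\ $\frac{f_1'}{f_1}\ge\frac{\phi'}{\phi}=\frac{A_1}{c+r}$ on $(0,\infty)$. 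Feeding $\widetilde{G_1}$, $\widetilde{G_2}$, $f_1$, $\kappa_1=1$ into Theorem~\ref{T: 2.8} (resp.\ case) now gives, under \eqref{3.7}, that $\frac{A_1}{c+r}\big(g-dr\otimes dr\big)\le\frac{f_1'}{f_1}\big(g-dr\otimes dr\big)\le\operatorname{Hess} r$ and $(n-1)\frac{A_1}{c+r}\le\Delta r$ on $\overset{\circ}B_{\tilde{t_1}}(x_0)=M\backslash\{x_0\}$.

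It remains to pass to the sharp form \eqref{3.6}. Because $A_1\ge 1$ and $c\ge 0$ force $-\frac{A_1(A_1-1)}{r^2}\le-\frac{A_1(A_1-1)}{(c+r)^2}$, hypothesis \eqref{3.5} implies \eqref{3.7} for every $c>0$; running the argument above for each $c>0$ and letting $c\to 0^+$ as in \eqref{2.50} yields $\frac{A_1}{r}\big(g-dr\otimes dr\big)\le\operatorname{Hess} r$ and $(n-1)\frac{A_1}{r}\le\Delta r$ on $M\backslash\{x_0\}$, and the $\Delta r$ inequalities are simply the traces of the $\operatorname{Hess} r$ ones. I expect the point demanding the most care to be the bookkeeping of inequality directions: the ``resp.''\ case of Theorem~\ref{T: 2.8} requires a \emph{lower} bound on $\kappa_1 f_1'/f_1$, which is precisely why $\kappa_1$ must be taken at its maximal admissible value $1$ and why the comparison function must be $(c+r)^{A_1}$ rather than $r^{A_1}$ (comparing with $r^{A_1}$, as in Theorem~\ref{T: 3.1}, would only give $f_1'/f_1\le A_1/r$, the wrong way); verifying the regularity $f_1\in C([0,\infty))\cap C^1(0,\infty)$ with $f_1'\in AC$ and that $\tilde{t_1}=\infty$ is then routine.
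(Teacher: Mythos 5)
Your proposal is correct and follows essentially the same route as the paper's proof: the same auxiliary function $\phi=(c+r)^{A_1}$, the same Wronskian computation with $(\phi' f_1-f_1'\phi)(0)=-c^{A_1}\le 0$ giving $f_1'/f_1\ge A_1/(c+r)$, the same application of the ``resp.''\ case of Theorem \ref{T: 2.8} with $\widetilde{G_1}=\widetilde{G_2}=-\tfrac{A_1(A_1-1)}{(c+r)^2}$ and $\kappa_1=1$ (the paper invokes Remark 2.1 for using the equation \eqref{2.68} in place of the inequality \eqref{2.61}), followed by the same $c\to 0^{+}$ limiting step that the paper carries out in \eqref{3.13}. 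The only cosmetic differences are your convexity argument for $\tilde{t_1}=\infty$ in place of the paper's Sturm comparison and your fixed-point limiting argument in place of the paper's sequence of points $x_i$ along a radial geodesic.
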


\begin{proof}[Proof of Theorem \ref{T: 3.2}] $ (2) \Rightarrow (1)\, $  $(1)$ is the special case $c = 0$ in $(2)$.
$ (1) \Rightarrow (2)\, $  Enough to show $(2)$ holds for the case $c > 0$ in \eqref{3.7}.
Choose
$\phi = (c+r)^{A_1}\, ,$ where $A_1 \ge  1\, .$
Then $\phi^{\prime} = A_1 (c+r)^{A_1 - 1}\, .$  
Hence,  for $r > 0\, $ 
\begin{equation}\frac {\phi^{\prime}}{\phi} = \frac{A_1}{c+r}\label{3.8}
\end{equation}
and  $\phi^{\prime \prime} = A_1 (A_1 - 1) (c+r)^{A_1 - 2}\, .$  
That is, for $r > 0\, ,$ $\phi$ is a solution of the equation:
 \begin{equation}\phi^{\prime \prime} + \frac{-A_1 (A_1 - 1)}{(c+r)^2} \phi = 0\, .\label{3.9}
\end{equation}

Let $f_1$ be a positive solution of 
\begin{equation}
\begin{cases}
f_1^{\prime\prime} + \widetilde{G_1}f_1 = 0\quad \operatorname{on}\quad (0, \tilde{t_1}) \\
f_1(0) = 0\, , f_1^{\prime}(0) = \kappa_1
 \end{cases}
\tag{2.68}\end{equation}
where $\widetilde{G_1} = \frac {-A_1 (A_1 - 1)}{(c+r)^2}\, , \kappa _1 = 1$ and

\begin{equation}
\tilde {t}_1 =  \sup \{ r: f_1 > 0\quad \operatorname{on}\quad (0, r)\}
\label{3.10}
\end{equation}

We note $\tilde {t}_1
 = \infty\, ,$ by the same standard Sturm comparison theorem.
In view of \eqref{3.9}, \eqref{2.68} and \eqref{3.10}, for $r \in (0, \infty)\, $

\begin{equation}
\aligned
(\phi^{\prime} f_1 - f_1^{\prime} \phi )^{\prime} &  =  f_1 \phi  \big( \frac {A_1(A_1 - 1)}{(c+r)^2} - \frac {A_1(A_1 - 1)}{(c+r)^2} \big)\\
& = 0\, .
\endaligned
\label{3.11}
\end{equation}

Since $(\phi^{\prime} f_1 - f_1^{\prime} \phi) (0) = - c^{A_1} \le 0\, ,$ the monotonicity then implies that $\phi^{\prime} f_1 \le f_1^{\prime} \phi\, $ on $(0, \infty)$ which in turn via \eqref{3.8} yields

\begin{equation}\frac{A_1}{c+r} = \frac{\phi^{\prime}}{\phi} \le \frac {f_1^{\prime}}{f_1}\label{3.12}\end{equation}
$\operatorname{on}\, (0, \infty)\, .$
Applying Remark 2.1 and Theorem \ref{T: 2.8} in which $\widetilde{G_1} (r) = \frac {-A_1 (A_1 - 1)}{(c+r)^2} = \widetilde{G_2} (r) (c > 0) , \kappa_1 = 1, \frac {f_1^{\prime}}{f_1} \ge \frac{A_1}{c+r}\, $ and $\tilde{t_1} = \infty$ we have shown via \eqref{2.64} that
\begin{equation}
 \frac{A_1}{c+r}\bigg(
g-dr\otimes dr\bigg) \le\operatorname{Hess} r\quad \operatorname{and}\quad (n-1)\frac{A_1}{c+r} \le \Delta r \tag{3.6}
\end{equation}
 $\operatorname{on}\, M\backslash \{x_0\}\, ,$ under \eqref{3.7} for every $c > 0$. Now proceed analogously to the proof of Theorem \ref{T: 2.6} :
For every $x \in M\backslash \{x_0\}$, there exist sequences $\{x_i\}$ in a radial geodesic ray in $M\backslash (x_0)$ and $\{c_i > 0\}$ such that $x_i \to x\, , c_i \to 0\, ,$ and $r(x_i) + c_i = r(x)\, .$
It follows that for every $c_i > 0\, ,$ 
\begin{equation}\label{3.13} \begin{aligned}
\quad K(r) (x_i) \le  - \frac {A_1(A_1-1)}{\big (c_i + r(x_i)\big )^2}\quad   
 &\Rightarrow\quad   \Delta r  (x_i) \geq (n-1)\frac{A_1}{c_i + r (x_i)} \, ,\\
 \operatorname{and}\, \operatorname{hence}\, ,\quad &\operatorname{as}\quad i \to \infty\, ,\\
\quad K(r) (x) \le  - \frac {A_1(A_1-1)}{\big (r(x) \big )^2}\quad   
 &\Rightarrow\quad   \Delta r  (x) \ge (n-1)\frac{A_1}{r} (x)\, , \end{aligned}\end{equation}
and similarly, $\operatorname{Hess} r (x) \ge \frac{A_1}{r}\bigg(
g-dr\otimes dr\bigg) (x)$ for every $x \in M \backslash \{x_0\}\, .
$ This 
proves $(1) \Rightarrow (2)$ and  
$(1)$.

\end{proof}

\begin{corollary} $(1)$
If the
radial curvature $K$ satisfies  
\begin{equation}
- \frac {A(A-1)}{r^2}\leq K(r)\le - \frac {A_1(A_1-1)}{r^2}\quad \operatorname{where} \quad   A \ge A_1 \ge 1\, \label{3.14}\end{equation}
$\operatorname{on} \, M\backslash \{x_0\}\, ,$ then 
\begin{equation}\begin{aligned}
 \frac{A_1}{r}\bigg(
g-dr\otimes dr\bigg) \le \operatorname{Hess} r & \leq \frac{A}{r}\bigg(
g-dr\otimes dr\bigg)\, \operatorname{in}\, \operatorname{the}\, \operatorname{sense}\, \operatorname{of}\, \operatorname{quadratic}\, \operatorname{forms},\\
(n-1)\frac{A_1}{r} \le  \Delta r & \leq (n-1)\frac{A}{r}\quad \operatorname{pointwise}\quad \operatorname{on}\, M\backslash \{x_0\} \quad \operatorname{and}\\
& \Delta r \leq (n-1)\frac{A}{r}\quad \operatorname{weakly}\, \operatorname{on}\, M.\end{aligned}\label{3.15}
\end{equation} 
\noindent
$(2)$ Equivalently, if $K$  satisfies  \begin{equation}- \frac {A(A-1)}{(c+r)^2}\leq K(r) \le - \frac {A_1(A_1-1)}{(c+r)^2}\, ,\quad A  \ge A_1 \ge 1\, 
\label{3.16}\end{equation} 
$\operatorname{on} \quad M \backslash \{x_0\}\, , \operatorname{where} \,  c \ge 0\, ,$ then \eqref{3.15} holds. 
\label{C: 3.1}
\end{corollary}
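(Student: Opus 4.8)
The plan is to deduce Corollary \ref{C: 3.1} by pairing the two one-sided estimates already in hand: Theorem \ref{T: 3.1}, which converts a \emph{lower} bound on the radial curvature into an \emph{upper} bound on $\operatorname{Hess} r$ and $\Delta r$, and Theorem \ref{T: 3.2}, which converts an \emph{upper} bound on $K$ into a \emph{lower} bound on $\operatorname{Hess} r$ and $\Delta r$. Since $M$ has a pole $x_0$, we have $D(x_0) = M\backslash\{x_0\}$, so the pinching hypothesis \eqref{3.14} holds on all of $D(x_0)$ and the full hypotheses of both theorems are available globally.

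For part $(1)$: read \eqref{3.14} as the conjunction of its left inequality $-\frac{A(A-1)}{r^2}\le K(r)$, which is precisely \eqref{3.1} of Theorem \ref{T: 3.1} occurring on $D(x_0)$, and its right inequality $K(r)\le -\frac{A_1(A_1-1)}{r^2}$, which is precisely \eqref{3.5} of Theorem \ref{T: 3.2}. Applying Theorem \ref{T: 3.1}$(1)$ produces \eqref{3.2} and \eqref{3.3} pointwise on $M\backslash\{x_0\}$ together with the weak inequality $\Delta r\le (n-1)\frac{A}{r}$ on $M$ (here the ``in addition on $D(x_0)$'' clause is invoked, which is legitimate because $M$ has a pole). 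Applying Theorem \ref{T: 3.2}$(1)$ produces \eqref{3.6}, i.e.\ the lower halves of \eqref{3.15}. Combining these, and noting that $A\ge A_1\ge 1$ keeps all the bounds consistent, yields \eqref{3.15} in full.

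For part $(2)$: the $(c+r)$-pinching \eqref{3.16} splits in the same way — its left inequality is \eqref{3.4}, the hypothesis of Theorem \ref{T: 3.1}$(2)$ on $D(x_0)$, and its right inequality is \eqref{3.7}, the hypothesis of Theorem \ref{T: 3.2}$(2)$ — so applying Theorem \ref{T: 3.1}$(2)$ and Theorem \ref{T: 3.2}$(2)$ and combining as above again gives \eqref{3.15}. The asserted equivalence of $(1)$ and $(2)$ is inherited from the equivalences $(1)\Leftrightarrow(2)$ established inside Theorems \ref{T: 3.1} and \ref{T: 3.2}: $(2)\Rightarrow(1)$ is the case $c=0$, while $(1)\Rightarrow(2)$ uses that the lower curvature bound passes to $(c+r)$ via the inclusion $[-\tfrac{A(A-1)}{(c+r)^2},\infty)\subset[-\tfrac{A(A-1)}{r^2},\infty)$ and that the upper curvature bound is handled by the comparison-function-plus-limiting argument (choosing $\phi=(c+r)^{A_1}$ and letting $c\to 0$) from the proof of Theorem \ref{T: 3.2}. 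Since every needed ingredient is already proved, there is essentially no real obstacle; the only point deserving a moment's care is the bookkeeping of domains, namely checking that the hypotheses of Theorem \ref{T: 3.1} (local punctured ball versus all of $D(x_0)$) and of Theorem \ref{T: 3.2} (all of $M\backslash\{x_0\}$) are simultaneously satisfied — which they are, precisely because $M$ has a pole.
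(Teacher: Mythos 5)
Your proposal is correct and follows essentially the same route as the paper, whose proof of Corollary \ref{C: 3.1} is precisely the one-line combination of Theorem \ref{T: 3.1} (lower curvature bound $\Rightarrow$ upper Hessian/Laplacian bounds, pointwise and weakly) with Theorem \ref{T: 3.2} (upper curvature bound $\Rightarrow$ lower Hessian/Laplacian bounds). Your extra bookkeeping about splitting the pinching hypothesis, the pole giving $D(x_0)=M\backslash\{x_0\}$, and the $c\ge 0$ equivalence is consistent with how those two theorems are stated and proved, so nothing further is needed.
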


\begin{proof} This follows at once from Theorems \ref{T: 3.1} and \ref{T: 3.2}.  
\end{proof}

\begin{theoremA}$\, (1$$)$$($\cite {HLRW}$)\, $  Let the
radial curvature $K$  satisfy
\begin{equation}
-\frac {A}{r^2}\leq K(r)\leq -\frac {A_1}{r^2}\quad \operatorname{where}\quad 0 \le A_1 \le A\, \label{3.17}\end{equation}
$\operatorname{on}\, M\backslash \{x_0\}.$ Then 
\begin{equation}
\frac{1+\sqrt{1+4A_1}}{2r}\bigg(g-dr\otimes dr\bigg) \le  \operatorname{Hess} (r) \le \frac{1+\sqrt{1+4A}}{2r}\bigg(
g-dr\otimes dr\bigg)\quad \operatorname{on}\quad M\backslash \{x_0\}\, ,\label{3.18}\end{equation}
\begin{equation}
(n-1)\frac{1+\sqrt{1+4A_1}}{2r} \le  \Delta r  \le (n-1)\frac{1+\sqrt{1+4A}}{2r}\, \operatorname{pointwise}\, \operatorname{on}\, M\backslash \{x_0\}\,  ,\operatorname{and}\,\label{3.19}\end{equation}

\begin{equation}\Delta r \le (n-1)\frac{1+\sqrt{1+4A}}{2r} \quad \operatorname{weakly}\, \operatorname{on}\, M.\label{3.20}\end{equation}

\noindent
$(2)$ Equivalently, if 
\begin{equation}
-\frac {A}{(c+r)^2}\leq K(r)\leq -\frac {A_1}{(c+r)^2}\quad \operatorname{where}\quad 0 \le A_1 \le A\, \label{3.21}\end{equation}
$\operatorname{on}\, M\backslash \{x_0\},$ then \eqref{3.18}-\eqref{3.20} hold.  
\end{theoremA}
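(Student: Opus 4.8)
The plan is to deduce Theorem A from Corollary \ref{C: 3.1} by a single reparametrization that trades the curvature exponent for the Hessian coefficient. First I would record the elementary identity underlying the claimed equivalence: setting
\[
A^{\ast}=\frac{1+\sqrt{1+4A}}{2},\qquad A_1^{\ast}=\frac{1+\sqrt{1+4A_1}}{2},
\]
one has $A^{\ast}(A^{\ast}-1)=A$ and $A_1^{\ast}(A_1^{\ast}-1)=A_1$, since $2A^{\ast}-1=\sqrt{1+4A}$ forces $(2A^{\ast}-1)^2=1+4A$, and likewise for $A_1^{\ast}$. Moreover $x\mapsto\tfrac12\bigl(1+\sqrt{1+4x}\bigr)$ is strictly increasing on $[0,\infty)$ with value $1$ at $x=0$, while $y\mapsto y(y-1)$ is strictly increasing on $[1,\infty)$; hence the substitution sets up a bijection between the parameter range $\{0\le A_1\le A\}$ appearing in Theorem A and the range $\{1\le A_1^{\ast}\le A^{\ast}\}$ appearing in Corollary \ref{C: 3.1}, so nothing is lost in either direction.

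With this in hand I would rewrite hypothesis \eqref{3.17}: the bound $-\tfrac{A}{r^2}\le K(r)\le -\tfrac{A_1}{r^2}$ on $M\backslash\{x_0\}$ is, by the identity above, exactly the bound $-\tfrac{A^{\ast}(A^{\ast}-1)}{r^2}\le K(r)\le -\tfrac{A_1^{\ast}(A_1^{\ast}-1)}{r^2}$ with $A^{\ast}\ge A_1^{\ast}\ge 1$, which is hypothesis \eqref{3.14} of Corollary \ref{C: 3.1}. That corollary then gives \eqref{3.15} with $A,A_1$ replaced by $A^{\ast},A_1^{\ast}$, i.e.\ $\tfrac{A_1^{\ast}}{r}(g-dr\otimes dr)\le\operatorname{Hess} r\le\tfrac{A^{\ast}}{r}(g-dr\otimes dr)$ in the sense of quadratic forms, $(n-1)\tfrac{A_1^{\ast}}{r}\le\Delta r\le(n-1)\tfrac{A^{\ast}}{r}$ pointwise on $M\backslash\{x_0\}$, and $\Delta r\le(n-1)\tfrac{A^{\ast}}{r}$ weakly on $M$; substituting back $A^{\ast}=\tfrac12(1+\sqrt{1+4A})$ and $A_1^{\ast}=\tfrac12(1+\sqrt{1+4A_1})$ reproduces \eqref{3.18}, \eqref{3.19}, \eqref{3.20} verbatim. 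For part (2) the identical substitution converts \eqref{3.21} into \eqref{3.16}, and Corollary \ref{C: 3.1}(2) again yields \eqref{3.15}, hence \eqref{3.18}--\eqref{3.20}; the equivalence of (1) and (2) in Theorem A is then inherited directly from the equivalence of (1) and (2) in Corollary \ref{C: 3.1}.

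I do not anticipate a genuine obstacle here, since the argument collapses to the quadratic-formula identity together with the two monotonicity observations above. The only points meriting a sentence of care are (i) confirming that the reparametrization really is a bijection between the two parameter ranges, so that the universal quantifiers over $A,A_1$ and over $A^{\ast},A_1^{\ast}$ match up, and (ii) observing that the weak inequality \eqref{3.20} is just the weak part of \eqref{3.15} transported unchanged, which is legitimate because $r\mapsto(n-1)\tfrac{A^{\ast}}{r}$ is continuous on $(0,\infty)$, so that Lemma A applies exactly as in the proof of Corollary \ref{C: 3.1}.
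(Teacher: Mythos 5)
Your proposal is correct and takes essentially the same approach as the paper: the paper proves Theorem A by setting $A(A-1)=a^2$, i.e. $A=\tfrac{1}{2}\big(1+\sqrt{1+4a^2}\big)$, rephrasing Theorems 3.1--3.2 (equivalently Corollary 3.1) in the new parameter and then renaming $a^2$ as $A$, which is exactly your reparametrization $A^{\ast}=\tfrac{1}{2}\big(1+\sqrt{1+4A}\big)$ read in the opposite direction. Your explicit verification that the parameter ranges $\{0\le A_1\le A\}$ and $\{1\le A_1^{\ast}\le A^{\ast}\}$ correspond bijectively is a small added detail the paper leaves implicit.
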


\begin{proof}[{\bf Proof of $\operatorname{Theorem} \operatorname{A}$}]
Let  $A
(A-1) = a^2$ in \eqref{3.1}$\big ($resp. $A_1
(A_1-1) = a_1^2\, \text{in}\, \eqref{3.5} \big )$. Then $A = \frac{1+ \sqrt{1+4a^2}}{2}\, $ (resp. $A_1 = \frac{1+ \sqrt{1+4a_1^2}}{2}\, ) \ge 1$. 
Hence, $\frac {A}{r} = \frac{1 + \sqrt{1+4a^2}}{2r}$ (resp. $\frac {A_1}{r} = \frac{1 + \sqrt{1+4a_1^2}}{2r}$). 
Substitute these into Theorem \ref{T: 3.1} (resp. Theorem \ref{T: 3.2}) so that this Theorem is rephrased in terms of $a^2$ (resp. $a_1^2$). Replacing $a^2$ (resp. $a_1^2$) in this rephrase by $A$ (resp. $A_1$), we transform Corollary \ref{C: 3.1} into
Theorem A.\end{proof}

\begin{corollary} $(1)$
If the
radial curvature $K$ satisfies \begin{equation}K(r) = - \frac {A(A-1)}{r^2}\, , \operatorname{where}\,  1 \le A\, \big (\operatorname{resp.}\, K(r) = -\frac {A}{r^2}\, ,\operatorname{where} \,  0 \le A\big )\,  \label{3.22}\end{equation} 
$\operatorname{on}\, M\backslash \{x_0\}\, ,$ then
\begin{equation}\begin{aligned}
\operatorname{Hess} r &= \frac{A}{r}\, \big (\operatorname{resp.}\, \operatorname{Hess} r = \frac{1+\sqrt{1+4A}}{2r} (
g-dr\otimes dr) \big)\quad \operatorname{and}\\
\quad \Delta r & = (n-1)\frac{A}{r}\, \big (\operatorname{resp.}\,  \Delta r  = (n-1)\frac{1+\sqrt{1+4A}}{2r}\big )\, \label{3.23} \end{aligned} \end{equation} $\operatorname{ on}\, M \backslash \{x_0\}\, .$

\noindent
$(2)$ Equivalently, if $K$  satisfies  \begin{equation}K(r) = - \frac {A(A-1)}{(c+r)^2}\, , \operatorname{where}\,  1 \le A\, \big (\operatorname{resp.}\, K(r) = -\frac {A}{(c+r)^2}\, ,\operatorname{where} \,  0 \le A\big )\,  \label{3.24}\end{equation} 
on $M\backslash \{x_0\},\, \operatorname{and}\, c \ge 0\, ,$ then \eqref{3.23} holds on $M \backslash \{x_0\}.$
\label{C: 3.2}
\end{corollary}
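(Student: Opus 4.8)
The plan is to read Corollary \ref{C: 3.2} as the rigidity (equality) case of Corollary \ref{C: 3.1} (resp.\ of Theorem A): the hypothesis $K(r)=-\frac{A(A-1)}{r^2}$ (resp.\ $K(r)=-\frac{A}{r^2}$) is precisely the simultaneous validity of the lower and the upper radial-curvature bounds used there, with the two constants equal, so the two-sided estimates collapse to equalities and no new argument is needed.

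First I would observe that $K(r)=-\frac{A(A-1)}{r^2}$ on $M\backslash\{x_0\}$ says both $-\frac{A(A-1)}{r^2}\le K(r)$ and $K(r)\le -\frac{A_1(A_1-1)}{r^2}$ hold with $A_1=A$; since $A\ge 1$, the constraint $A\ge A_1\ge 1$ required by Corollary \ref{C: 3.1} is satisfied (this consistency check is the only bookkeeping point). Corollary \ref{C: 3.1}(1) with $A_1=A$ then yields
\[
\frac{A}{r}\bigl(g-dr\otimes dr\bigr)\le\operatorname{Hess} r\le\frac{A}{r}\bigl(g-dr\otimes dr\bigr)
\]
in the sense of quadratic forms and $(n-1)\frac{A}{r}\le\Delta r\le(n-1)\frac{A}{r}$ pointwise on $M\backslash\{x_0\}$, hence $\operatorname{Hess} r=\frac{A}{r}(g-dr\otimes dr)$ and, taking the trace (recall $\operatorname{trace}(g-dr\otimes dr)=n-1$ and $\operatorname{Hess} r(\nabla r,\nabla r)=0$, so the two bounds also agree in the radial direction), $\Delta r=(n-1)\frac{A}{r}$ on $M\backslash\{x_0\}$. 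For the parenthetical assertion I would run the same squeeze with Theorem A: $K(r)=-\frac{A}{r^2}$ with $A\ge 0$ is the case $A_1=A$ of $-\frac{A}{r^2}\le K(r)\le -\frac{A_1}{r^2}$, $0\le A_1\le A$, and \eqref{3.18}--\eqref{3.19} then force $\operatorname{Hess} r=\frac{1+\sqrt{1+4A}}{2r}(g-dr\otimes dr)$ and $\Delta r=(n-1)\frac{1+\sqrt{1+4A}}{2r}$ on $M\backslash\{x_0\}$.

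For part $(2)$, note that $(1)$ is the case $c=0$, while $(1)\Rightarrow(2)$ follows in the same way from Corollary \ref{C: 3.1}(2) (resp.\ Theorem A(2)) applied with $A_1=A$, since those statements already carry the $(c+r)$-version together with the passage $c\to 0$; one may equally invoke the equivalences $(1)\Leftrightarrow(2)$ of Theorems \ref{T: 3.1} and \ref{T: 3.2}. I do not anticipate a substantive obstacle here: the whole proof is a sandwich between coinciding upper and lower comparison bounds, and the only items deserving a sentence are the compatibility of the constant constraints when $A_1=A$ and the remark that $\operatorname{Hess} r$ and $g-dr\otimes dr$ both annihilate $\nabla r$, so that equality on $\nabla r^{\perp}$ is equality of quadratic forms on all of $T_xM$.
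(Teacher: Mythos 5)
Your proposal is correct and matches the paper's argument: the paper proves Corollary \ref{C: 3.2} by "combining Theorems \ref{T: 3.1} and \ref{T: 3.2} in which $A_1=A$," which is exactly your sandwich between coinciding lower and upper bounds (your use of Corollary \ref{C: 3.1} and Theorem A is just the already-combined form of those two theorems). The extra remarks about the constraint $A\ge A_1\ge 1$ and the radial direction are harmless bookkeeping and do not change the route.
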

\begin{proof} This follows at once from combining Theorems \ref{T: 3.1} and \ref{T: 3.2} in which $A_1 = A$.  
\end{proof}

\begin{theorem}\label{T: 3.3}
$(1)$ If \begin{equation}\frac {B_1(1-B_1)}{r^2}\leq K(r)\quad \operatorname{where} \quad  0 \le B_1 \le 1\, \label{3.25}\end{equation}
on $\overset {\circ} B_{t_1}(x_0) \subset D(x_0)$, then
$\operatorname{Hess} r $ and $\Delta r$ satisfy 
 \begin{equation} \begin{aligned}
 \operatorname{Hess} r & \le \frac{1+\sqrt{1+4B_1(1-B_1)}}{2r}\bigg(
g-dr\otimes dr\bigg)\quad \operatorname{and}\end{aligned}\label{3.26}\end{equation}
 \begin{equation} \begin{aligned}
\Delta r & \le (n-1) \frac{1+\sqrt{1+4B_1(1-B_1)}}{2r}\end{aligned}\label{3.27}\end{equation}
on $\overset {\circ} B_{t_1}(x_0) \subset D(x_0)$
respectively. If in addition \eqref{3.25} occurs on $D(x_0)\, ,$ then \eqref{3.26} holds pointwise on $D(x_0)\, $ and \eqref{3.27} holds weakly on $M$.

\noindent
$(2)$ Equivalently, if \begin{equation} \frac {B_1(1-B_1)}{(c+r)^2}\leq K(r),\quad 0 \le B_1 \le 1 
\label{3.28}\end{equation} on $\overset {\circ} B_{t_1}(x_0) \subset D(x_0)$,  $\operatorname{where}\, c \ge  0\, ,$  then \eqref{3.26} and \eqref{3.27} hold on $\overset {\circ} B_{t_1}(x_0)$. If in addition \eqref{3.28} occurs on $D(x_0)$, then \eqref{3.26} holds pointwise on $D(x_0)\, $ and 
\eqref{3.27} holds weakly on $M$.
\end{theorem}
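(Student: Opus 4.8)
The plan is to follow the pattern of the proofs of Theorems \ref{T: 2.6}, \ref{T: 2.7} and \ref{T: 3.1}, deducing everything from the Hessian and Laplacian Comparison Theorem \ref{T: 2.8}. First, $(2)\Rightarrow(1)$ is trivial, since \eqref{3.25} is the special case $c=0$ of \eqref{3.28}; and for $(1)\Rightarrow(2)$ it suffices to prove \eqref{3.26} and \eqref{3.27} under \eqref{3.28} for an arbitrary but fixed $c>0$ and then pass $c\to0^{+}$.

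So fix $c>0$. Put $\alpha=\frac{1+\sqrt{1+4B_1(1-B_1)}}{2}$, so that $(2\alpha-1)^2=1+4B_1(1-B_1)$, i.e.\ $\alpha(\alpha-1)=B_1(1-B_1)$, and set $\phi_1=r^{\alpha}$. Then $\frac{\phi_1^{\prime}}{\phi_1}=\frac{\alpha}{r}=\frac{1+\sqrt{1+4B_1(1-B_1)}}{2r}$ and $\phi_1^{\prime\prime}-\frac{B_1(1-B_1)}{r^2}\phi_1=0$ for $r>0$. Let $f_2$ be the positive solution of \eqref{2.44} with $G_2=-\frac{B_1(1-B_1)}{(c+r)^2}$, $\kappa_2=1$, and $t_2=\sup\{r:f_2>0\text{ on }(0,r)\}$; comparing $f_2$ with the solution $\tilde f_2(r)=r$ of $\tilde f_2^{\prime\prime}=0$, $\tilde f_2(0)=0$, $\tilde f_2^{\prime}(0)=1$ via the standard Sturm comparison theorem, exactly as in the proof of Theorem \ref{T: 2.6}, gives $t_2=\infty$.

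Next I would run the Wronskian--monotonicity argument of \eqref{2.46}--\eqref{2.49}: one has $(\phi_1^{\prime}f_2-f_2^{\prime}\phi_1)(0)=0$, and since $\phi_1^{\prime\prime}=\frac{B_1(1-B_1)}{r^2}\phi_1$ and $f_2^{\prime\prime}=\frac{B_1(1-B_1)}{(c+r)^2}f_2$,
\[
(\phi_1^{\prime}f_2-f_2^{\prime}\phi_1)^{\prime}=\phi_1^{\prime\prime}f_2-f_2^{\prime\prime}\phi_1=B_1(1-B_1)\Bigl(\tfrac1{r^2}-\tfrac1{(c+r)^2}\Bigr)f_2\phi_1\ge0
\]
on $(0,\infty)$ because $c>0$. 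Hence $\phi_1^{\prime}f_2\ge f_2^{\prime}\phi_1$, so $\frac{f_2^{\prime}}{f_2}\le\frac{\phi_1^{\prime}}{\phi_1}=\frac{1+\sqrt{1+4B_1(1-B_1)}}{2r}$ on $(0,\infty)$. Now apply Theorem \ref{T: 2.8} with $G_1(r)=\frac{B_1(1-B_1)}{(c+r)^2}$ (so \eqref{3.28} is precisely \eqref{2.59}), $G_2(r)=-\frac{B_1(1-B_1)}{(c+r)^2}$ (which satisfies \eqref{2.15} relative to this $G_1$), $\kappa_2=1\ge1$, and the bound just obtained for $\frac{f_2^{\prime}}{f_2}$; taking the trace of the resulting Hessian estimate yields \eqref{3.26} and \eqref{3.27} on $\overset{\circ}B_{t_1}(x_0)$ under \eqref{3.28} for this $c$.

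Finally, I would pass $c\to0^{+}$ by the limiting device of \eqref{2.50}: for each $x\in\overset{\circ}B_{t_1}(x_0)$ choose $x_i\to x$ along the radial geodesic ray through $x$ and $c_i\downarrow0$ with $r(x_i)+c_i=r(x)$; then \eqref{3.28} with parameter $c_i$ at $x_i$ gives \eqref{3.26}--\eqref{3.27} at $x_i$, and letting $i\to\infty$ recovers them at $x$, so \eqref{3.25} on $\overset{\circ}B_{t_1}(x_0)$ yields the stated estimates there. If \eqref{3.25} (equivalently \eqref{3.28}) holds on all of $D(x_0)$, the same argument gives \eqref{3.26} pointwise on $D(x_0)$, and Lemma A (Lemma 9.1 of \cite{HLRW}), or Green's identity with a double limiting argument, upgrades \eqref{3.27} to hold weakly on $M$. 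The only delicate point --- the natural candidate for the main obstacle --- is to verify that the direction of every inequality is preserved through the substitutions into Theorem \ref{T: 2.8}: one must take $\phi_1=r^{\alpha}$ rather than $(c+r)^{\alpha}$ so that the Wronskian is \emph{non-decreasing}, and one must use $\kappa_2=1$ (admissible in \eqref{2.14}) so that the factor $\kappa_2 f_2^{\prime}/f_2$ in \eqref{2.63} matches $\phi_1^{\prime}/\phi_1$; but this is bookkeeping, not a genuine difficulty, once Theorems \ref{T: 2.6}--\ref{T: 2.8} are available.
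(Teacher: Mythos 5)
Your proposal is correct and follows essentially the same route as the paper: the same auxiliary function $\phi_1=r^{\alpha}$ with $\alpha=\frac{1+\sqrt{1+4B_1(1-B_1)}}{2}$, the same positive solution $f_2$ of \eqref{2.44} with $G_2=-\frac{B_1(1-B_1)}{(c+r)^2}$ and $\kappa_2=1$, the same Wronskian monotonicity giving $\frac{f_2^{\prime}}{f_2}\le\frac{\alpha}{r}$, the same application of Theorem \ref{T: 2.8}, and Lemma A for the weak inequality on $M$. The only cosmetic difference is the last step: the paper gets part $(1)$ simply by noting that \eqref{3.25} implies \eqref{3.28} for every $c>0$ (the conclusion being independent of $c$, no limit is needed), so your $c\to 0^{+}$ limiting device as in \eqref{2.50} is an unnecessary, though harmless, detour.
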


\begin{proof}
  $ (2) \Rightarrow (1)\, $
$(1)$ is the special case $c = 0$ in $(2)$. To prove $(1) \Rightarrow (2)$, enough to show that on $\overset {\circ} B_{t_1}(x_0)\, ,$ \eqref{3.26} and \eqref{3.27} hold under \eqref{3.28}, on $\overset {\circ} B_{t_1}(x_0)$ for any $c > 0$, since we can combine this with $(1)\, .$  
Proceed as in the proof of Theorem \ref{T: 2.7}, choose
$\phi_1 = r^{\alpha}\, ,$ where $\alpha = \frac{1+\sqrt{1+4B_1(1-B_1)}}{2}\, , 0 \le B_1 \le 1\, .$
Then $\alpha \ge 1\, ,$ $\phi_1 ^{\prime} = \alpha r^{\alpha - 1}\, $ and $(2\alpha - 1)^2 = 1+4B_1(1-B_1)\, , $ i.e. $\alpha (\alpha - 1) =  B_1(1-B_1)\, .$    
Hence, for $r > 0\, ,$ we have $\frac{{\phi_1}^{\prime}}{\phi_1} = \frac {\alpha}{r}\, ,$ and $\phi_1^{\prime \prime} = \alpha (\alpha - 1) r^{\alpha - 2}\, ,$  i.e. \eqref{2.54}  and \eqref{2.55} hold.
Let $f_2$ be a positive solution of \eqref{2.44}, in which $G_2 = - \frac{B_1(1-B_1)}{(c+r)^2}\, , \kappa _2 = 1\, $
and 
$t_2$ be as in \eqref{2.45}.
Then by the standard comparison theorem as before, $t_2=\infty.$ 

Furthermore, we have \eqref{2.46}, \eqref{2.47} and  \eqref{2.48} hold on $\overset {\circ} B_{t_1}(x_0)$ in which $\phi = \phi_1$, since $f_2 \phi_1 \big(  \frac {B_1(1-B_1)}{r^2} + G_2 \big)\ge 0\, ,$ for $r \in (0, \infty)\, .$ 
It follows from \eqref{2.54} that \eqref{2.56}, i.e., 
$\frac {f_2^{\prime}}{f_2} \le \frac{\phi_1^{\prime}}{\phi_1} =  \frac{1+\sqrt{1+4B_1(1-B_1)}}{2r}$ holds on $(0, \infty)\, .$   

Applying comparison Theorem \ref{T: 2.8}, in which $G_1 = \frac {B_1(1-B_1)}{(c+r)^2} \ge - \frac {B_1(1-B_1)}{(c+r)^2} = G_2\, (c > 0)\, , \kappa_2 =1\, , \frac {f_2^{\prime}}{f_2} \le \frac {1+\sqrt{1+4B_1(1-B_1)}}{2r}\, ,$   we have 
shown that on $\overset {\circ} B_{t_1}(x_0)\, ,$  \eqref{2.63} holds, i.e., \eqref{3.26} and \eqref{3.27} hold under \eqref{3.28} for any $c > 0$. This proves $(1) \Rightarrow (2)$. This also proves $(1)$, since \eqref{3.25} occurs on $\overset {\circ} B_{t_1}(x_0)$ implies that \eqref{3.28} happens 
on $\overset {\circ} B_{t_1}(x_0)$ for any $c >0$.
If in addition \eqref{3.25} occurs on $D(x_0)$, then  \eqref{3.26} holds pointwise on $D(x_0)\, $ and \eqref{3.27} holds weakly on $M$ by Lemma A (\cite [Lemma 9.1]{HLRW}), or a double limiting argument (see \cite [Sect. 3]{WL}). 
\end{proof}

\begin{remark1} In proving Theorem \ref{T: 3.3}, we choose axillary function
$\phi_1 = r^{\alpha}\, ,$ where $\alpha = \frac{1+\sqrt{1+4B_1(1-B_1)}}{2}\, .$ We may choose, for example a different axillary function 
$\phi_1 = r^{{\alpha_1} +1}\, ,$ where ${\alpha_1} = \frac{1+\sqrt{1-4B_1(1-B_1)}}{2}\, ,$ and obtain estimates 
 \begin{equation} \begin{aligned}
 \operatorname{Hess} r & \le \frac{|B_1 - \frac 12| + \frac 32}{r}\bigg(
g-dr\otimes dr\bigg)\quad \operatorname{and}\\
\Delta r \, \, & \le (n-1) \frac{|B_1 - \frac 12| + \frac 32}{r}
\quad \operatorname{pointwise}\quad \operatorname{on}\quad \overset {\circ} B_{t_1}(x_0) \subset D(x_0)\end{aligned}\label{3.29}\end{equation}
If in addition \eqref{3.25} occurs on $D(x_0)\, ,$ then
\begin{equation}
\Delta r \le (n-1) \frac{|B_1 - \frac 12| + \frac 32}{r}
\quad \operatorname{weakly}\quad \operatorname{on}\quad M.
\label{3.30}
\end{equation}
However, the upper bound estimate $\frac{1+\sqrt{1+4B_1(1-B_1)}}{2r}\, $ obtained in \eqref{3.26} and \eqref{3.27} by selecting $\phi_1 = r^{\alpha}\, $ is better than the upper bound estimate $\frac{|B_1 - \frac 12| + \frac 32}{r}$ obtained in \eqref{3.29} and \eqref{3.30} by selecting $\phi_1 = r^{{\alpha_1}+1}\, .$ Indeed, $\frac{1+\sqrt{1+4B_1(1-B_1)}}{2r} \le \frac{B_1+1}{r} = \frac{B_1 - \frac 12 + \frac 32}{r} \le \frac{|B_1 - \frac 12| + \frac 32}{r}\, .$
For the completeness or comparison, we include the following derivation for the estimates \eqref{3.29} and \eqref{3.30}.

Note $(2{\alpha_1} - 1) = |2B_1 -1|$.
Analogously,  $\phi_1 = r^{{\alpha_1} +1}$ implies $\phi_1^{\prime} = ({\alpha_1} +1) r^{{\alpha_1}}\, .$ For $r > 0\, ,$ 
\begin{equation}\frac {\phi_1^{\prime}}{\phi_1} = \frac{{\alpha_1}+1}{r}= \frac{|B_1 - \frac 12| + \frac 32}{r}\label{3.31}
\end{equation} and
$\phi_1^{\prime \prime} =({\alpha_1} + 1) {\alpha_1}  r^{B_1 - 1}\, ,$  i.e.
\begin{equation}\phi_1^{\prime \prime} + \frac{-({\alpha_1}+1){\alpha_1}}{r^2} \phi_1 = 0\, .\label{3.32}
\end{equation}
Let $f_2$ be a positive solution of 
\eqref{2.44}, where $G_2 = \frac {-({\alpha_1}+1){\alpha_1}}{(c+r)^2}\, , \kappa_2 = 1$
and let 
$t_2$ be as in \eqref{2.45}.
Then $t_2=\infty,$
$ (\phi_1^{\prime} f_2 - f_2^{\prime} \phi_1 ) (0) = 0 
\, ,$ and for $r \in (0, \infty)\, $ 
$$(\phi_1^{\prime} f_2 - f_2^{\prime} \phi_1 )^{\prime} =  f_2 \phi_1 \big(  \frac {{\alpha_1}({\alpha_1} +1)}{r^2} + G_2 \big) \ge 0\, .$$
Monotonicity implies
$$\frac {f_2^{\prime}}{f_2} \le \frac{\phi_1^{\prime}}{\phi_1} =  \frac{|B_1 - \frac 12| + \frac 32}{r}\quad \operatorname{on}\quad (0, \infty)\, $$

Applying Theorem \ref{T: 2.8}, in which $G_1 = \frac {B_1(1-B_1)}{(c+r)^2} \ge 0 \ge \frac {-({\alpha_1}+1){\alpha_1}}{(c+r)^2} = G_2\, , \kappa_2 =1\, , \frac {f_2^{\prime}}{f_2} = \frac{|B_1 - \frac 12| + \frac 32}{r}\, ,$
and taking the trace, we obtain the estimates \eqref{3.29}. Applying Lemma A, if in addition, \eqref{3.25} occurs on $D(x_0)$, \eqref{3.30} follows .
\end{remark1}

\begin{corollary} 
If the
radial curvature $K$  satisfies
\begin{equation}
\frac {B_1}{r^2} \le K(r)\,   \big ( \operatorname{resp.}\,  \frac {B_1}{(c+r)^2}\leq  K(r)\, , c \ge 0\big )\, ,\, \operatorname{where}\, 0 \le B_1 \le \frac 14\,  \label{3.33}\end{equation}
$\operatorname{on}\, \overset {\circ} B_{t_1}(x_0) \subset D(x_0)\, ,$ then 
\begin{equation}
\operatorname{Hess} r  \le \frac{1+\sqrt{1+4B_1}}{2r}\bigg(g-dr\otimes dr\bigg)\quad \operatorname{and}\quad \Delta r \le (n-1)\frac{1+\sqrt{1+4B_1}}{2r} \label{3.34}\end{equation}
holds pointwise on $\overset {\circ} B_{t_1}(x_0)\, .$ If in addition, \eqref{3.33} holds in $D(x_0)$, then the second part of \eqref{3.34} holds weakly on $M\, .$\label{C: 3.3}
\end{corollary}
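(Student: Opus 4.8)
The plan is to deduce Corollary \ref{C: 3.3} from Theorem \ref{T: 3.3} by a reparametrization of the curvature constant. Given $B_1$ with $0 \le B_1 \le \frac{1}{4}$, I would introduce the auxiliary constant $\beta = \frac{1 - \sqrt{1-4B_1}}{2}$. Since $0 \le B_1 \le \frac{1}{4}$ forces $1-4B_1 \in [0,1]$, the number $\beta$ is real and satisfies $0 \le \beta \le \frac{1}{2} \le 1$; moreover $1-\beta = \frac{1+\sqrt{1-4B_1}}{2}$, and a one-line computation gives $\beta(1-\beta) = \frac{1-(1-4B_1)}{4} = B_1$.

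With this choice, hypothesis \eqref{3.33}, that is $\frac{B_1}{r^2} \le K(r)$ on $\overset {\circ} B_{t_1}(x_0)$, becomes exactly $\frac{\beta(1-\beta)}{r^2} \le K(r)$, which is hypothesis \eqref{3.25} of Theorem \ref{T: 3.3} with the constant there taken to be $\beta \in [0,1]$. Invoking Theorem \ref{T: 3.3}(1) then yields, pointwise on $\overset {\circ} B_{t_1}(x_0)$,
\[
\operatorname{Hess} r \le \frac{1+\sqrt{1+4\beta(1-\beta)}}{2r}\big(g - dr\otimes dr\big) \quad \operatorname{and} \quad \Delta r \le (n-1)\frac{1+\sqrt{1+4\beta(1-\beta)}}{2r}.
\]
Because $4\beta(1-\beta) = 4B_1$, the coefficient $\frac{1+\sqrt{1+4\beta(1-\beta)}}{2r}$ is identically $\frac{1+\sqrt{1+4B_1}}{2r}$, so these are precisely the estimates \eqref{3.34}. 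For the parenthetical (``resp.'') statement, with $\frac{B_1}{(c+r)^2} \le K(r)$ and $c \ge 0$, the same substitution converts the hypothesis into \eqref{3.28} of Theorem \ref{T: 3.3} with constant $\beta$, and part (2) of that theorem gives the identical conclusion. Finally, if \eqref{3.33} holds on all of $D(x_0)$, then \eqref{3.25} with constant $\beta$ holds on $D(x_0)$, and the concluding assertion of Theorem \ref{T: 3.3} (equivalently, Lemma A applied to the continuous function $r \mapsto (n-1)\frac{1+\sqrt{1+4B_1}}{2r}$) shows that the $\Delta r$ estimate in \eqref{3.34} holds weakly on $M$.

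There is no real obstacle beyond this bookkeeping: the only place the restriction $B_1 \le \frac{1}{4}$ is used is in guaranteeing that the auxiliary parameter $\beta$ is real and lies in $[0,1]$, which is exactly the range in which Theorem \ref{T: 3.3} is stated. One could equally well take $\beta = \frac{1+\sqrt{1-4B_1}}{2}$, since $B_1(1-B_1)$ is invariant under $\beta \mapsto 1-\beta$; both choices lie in $[0,1]$ and produce the same final bound, reflecting the same ``swapping'' symmetry exploited throughout this section.
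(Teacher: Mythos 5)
Your proposal is correct and follows essentially the same route as the paper: the paper also deduces Corollary \ref{C: 3.3} from Theorem \ref{T: 3.3} by the reparametrization $B_1 = \beta(1-\beta)$ (stated there as substituting $B_1(1-B_1)$ for $B_1$, noting $0 \le B_1(1-B_1) \le \frac14$ when $0 \le B_1 \le 1$). Your version merely makes the inverse substitution and the range check explicit, which is fine.
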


\begin{proof}
This follows immediately from substituting $B_1(1 - B_1)$ in Theorem \ref{T: 3.3} for  $B_1$, and $B_1(1 - B_1) = - (B_1 - \frac 12)^2 + \frac 14 \le \frac 14\, ,$  $B_1(1 - B_1) \ge 0$ if $0 \le B_1 \le 1\, .$ 
\end{proof}

\begin{theorem}
$(1)$ If  \begin{equation} K(r) \le \frac {B(1-B)}{r^2}\, , \quad  0 \le B \le 1 \label{3.35}\end{equation}
on $\overset {\circ} B_{t_2}(x_0) \subset D(x_0)$, then $\operatorname{Hess} r $ and $\Delta r$ satisfy 
 \begin{equation} \begin{aligned}
\frac{|B - \frac 12| + \frac 12}{r} \bigg(g-dr\otimes dr\bigg) \le & \operatorname{Hess} r \quad \operatorname{and}\\
(n-1)\frac{|B - \frac 12| + \frac 12}{r} \le & \, \, \Delta r \\
 \end{aligned}
\label{3.36}
\end{equation}
on $\overset {\circ} B_{t_2}(x_0)$ respectively. 

\noindent
$(2)$ Equivalently, if \begin{equation} K(r) \le \frac {B(1-B)}{(c+r)^2}\, , \quad   0 \le B \le 1 
\label{3.37}\end{equation}
on $\overset {\circ} B_{t_2}(x_0) \subset D(x_0)\, , \operatorname{where} \, c \ge 0\, ,$  then \eqref{3.36} holds on $\overset {\circ} B_{t_2}(x_0)\, .$
\label{T: 3.4}
\end{theorem}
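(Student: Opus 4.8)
The plan is to first reduce parts (1) and (2) to a single statement and then obtain the estimate by a Riccati comparison along radial geodesics. For the reduction: since $B(1-B)\ge 0$ and $(c+r)^{2}\ge r^{2}$, for $c>0$ the bound $K(r)\le\frac{B(1-B)}{(c+r)^{2}}$ already forces $K(r)\le\frac{B(1-B)}{r^{2}}$, so (2) with $c>0$ follows from (1), while (2) with $c=0$ \emph{is} (1); hence (1) $\Leftrightarrow$ (2) and it suffices to prove (1). Note that, in contrast to Theorems 3.1--3.3, here the $(c+r)^{2}$ form of the curvature bound is the \emph{stronger} one, so the limiting device $c\to 0^{+}$ used there is unavailable. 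I would then set $\beta:=|B-\tfrac12|+\tfrac12=\max(B,1-B)\in[\tfrac12,1]$; since $1-4B(1-B)=(1-2B)^{2}$, this $\beta$ satisfies $\beta(\beta-1)=-B(1-B)$, so that $\beta(1-\beta)=B(1-B)$, the function $h(t):=\frac{\beta}{t}$ is an exact solution of the Riccati equation $h'+h^{2}+\frac{B(1-B)}{t^{2}}=0$, and $r^{\beta}$ solves the companion Jacobi equation with logarithmic derivative $\frac{\beta}{r}$.

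Next, following the derivation of (2.66)--(2.67) in the proof of the ``resp.'' branch of Theorem 2.8, fix a unit-speed minimal geodesic $\gamma$ from $x_{0}$ and let $g_{2}(t):=\lambda_{\min}(\gamma(t))$ be the least eigenvalue of $\operatorname{Hess} r$ on $\nabla r(\gamma(t))^{\bot}$. The hypothesis $K(r)\le\frac{B(1-B)}{r^{2}}$ on $\overset{\circ}B_{t_{2}}(x_{0})$ makes $g_{2}$ a supersolution, $g_{2}'+g_{2}^{2}+\frac{B(1-B)}{t^{2}}\ge 0$ a.e.\ on $(0,t_{2})$, with the asymptotic condition $g_{2}(t)=\frac1t+O(1)$ as $t\to 0^{+}$ coming from the Gauss lemma and the smoothness of $r$ near $x_{0}$. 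I would then show $g_{2}\ge h$ on $(0,t_{2})$ (this is the heart of the matter, discussed below); granting it, $\operatorname{Hess} r(v,v)\ge g_{2}(r(x))\,|v|^{2}\ge\frac{\beta}{r(x)}\,|v|^{2}$ for every $x\in\overset{\circ}B_{t_{2}}(x_{0})$ and every $v\bot\nabla r(x)$, which together with $\operatorname{Hess} r(\nabla r,\nabla r)=0$ is exactly the Hessian estimate in (3.36) in the sense of quadratic forms; taking the trace over an orthonormal frame of $\nabla r^{\bot}$ gives $\Delta r\ge(n-1)\frac{\beta}{r}$.

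The one nonroutine ingredient, and the step I expect to be the main obstacle, is the Riccati comparison $g_{2}\ge h$. It cannot be quoted from Theorem 2.2, whose normalization assumes the two solutions share the same leading behavior, whereas here the leading coefficients differ ($1$ for $g_{2}$, $\beta\le 1$ for $h$); correspondingly, when $0<B(1-B)<\tfrac14$ the Jacobi equation $f''+\frac{B(1-B)}{r^{2}}f=0$ admits no solution with $f(0)=0$ and finite nonzero $f'(0)$, so Theorem 2.8 is not directly applicable either. I would instead use a barrier/blow-up argument for $\psi:=g_{2}-h$: using $\beta(1-\beta)=B(1-B)$ one verifies $\psi'+\psi^{2}+\frac{2\beta}{t}\psi\ge 0$ a.e., while $\psi(t)=\frac{1-\beta}{t}+O(1)\to+\infty$ as $t\to 0^{+}$. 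If $\psi(t_{*})<0$ for some $t_{*}\in(0,t_{2})$, let $\bar t\in(0,t_{*})$ be the largest zero of $\psi$; on $(\bar t,t_{*})$ the function $v:=-1/\psi>0$ satisfies $(t^{-2\beta}v)'\ge -t^{-2\beta}$, and since $v\to+\infty$ as $t\to\bar t^{+}$ while $\int_{\bar t}^{\,t_{*}}s^{-2\beta}\,ds<\infty$, integrating from just above $\bar t$ to $t_{*}$ yields a contradiction; hence $\psi\ge 0$ on $(0,t_{2})$. The degenerate case $\beta=1$, i.e.\ $B\in\{0,1\}$ and $B(1-B)=0$, is easier: there $w:=\frac1{g_{2}}-t$ is non-increasing wherever it is defined and $w(0^{+})=0$, so $g_{2}\ge\frac1t$ at once.
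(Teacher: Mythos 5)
Your proposal is correct, but it takes a genuinely different route from the paper's. The paper never works with the singular potential $B(1-B)/r^{2}$ directly: it regularizes, setting $\widetilde G_1=\widetilde G_2=\tfrac{B(1-B)}{(c+r)^{2}}$ with $c>0$, takes the solution $f_1$ of (2.68) with $f_1(0)=0$, $f_1'(0)=1$, compares it with the explicit Jacobi solution $\phi_2=r^{\beta}$ through the Sturm-type monotonicity of $\phi_2'f_1-f_1'\phi_2$ (with an l'Hospital argument at $0$ when $\beta\ne 1$, and an integral estimate showing $\tilde t_1=\infty$) to get $\tfrac{f_1'}{f_1}\ge\tfrac{\beta}{r}$, and then quotes Theorem 2.8 --- whose internal mechanism is exactly the $\lambda_{\min}$ Riccati step you carry out by hand --- to obtain (3.36) under (3.37) for each $c>0$; part (1) is then extracted by a limiting argument in $c$ analogous to (3.13), recorded in (3.42). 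You instead prove (1) directly, comparing $\lambda_{\min}$ with the exact singular Riccati solution $\beta/t$ via your barrier argument (with the easy separate case $\beta=1$), and deduce (2) from (1), which is legitimate since the $(c+r)^{-2}$ bound is the stronger hypothesis. Your diagnosis of why the paper's general theorems cannot be quoted verbatim is essentially right: Theorem 2.2 does allow $\kappa_1<\kappa_2$, but $\beta/t$ with $\kappa_1=\beta$ violates the normalized inequality in (2.24) (one gets $+\beta^{2}(1-\beta)/t^{2}\ge 0$ rather than $\le 0$), and for $0<B(1-B)<\tfrac14$ the unregularized Jacobi equation has no admissible $f_1$, which is precisely what forces the paper's regularization. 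As to what each approach buys: the paper's proof stays entirely inside its duality/comparison framework at the cost of the regularization and the $c\to 0$ passage --- a passage that is delicate for this theorem because, as you note, the direction of the inequality between the $r^{-2}$ and $(c+r)^{-2}$ bounds is reversed relative to Theorems 3.1--3.3; your proof is self-contained, avoids that limiting step altogether, and arguably gives a cleaner derivation of part (1), at the cost of a bespoke Riccati comparison not quotable from the paper's general theorems.
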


\begin{proof}
 $(2)$ $\Rightarrow$ $(1)\, $  $(1)$ is the special case $c = 0$ in $(2)$.
 $(1)$ $\Rightarrow$ $(2)\, $ If \eqref{3.37}  occurs, then \eqref{3.35}  occurs, since $(- \infty, \frac {B(1-B)}{(c+r)^2}]\subset (- \infty, \frac {B(1-B)}{r^2}]\, .$ We then apply $(1)\, .$
To prove $(2)$, we first assume \eqref{3.37} occurs for $c > 0$.
Choose $\beta = \frac{1+\sqrt{1-4B(1-B)}}{2}\, ,$ with $0 \le B \le  1\, ,$ and $\phi_2 = r^{\beta}\, .$
Then $2\beta - 1 = \sqrt {(2B- 1)^2}\, , $ i.e.  \begin{equation}\begin{aligned}\beta = |B - \frac 12| + \frac 12=\max \{B, 1-B\} = \begin{cases}B\quad &\text{if}\quad \frac 12 \le B\\
1-B\quad & \text{if}\quad 0 \le B < \frac 12\, ,\end{cases}\end{aligned}\label{3.38}\end{equation} $\beta (\beta - 1) = B(B-1)$ and $\phi_2^{\prime} = \beta r^{\beta - 1}\, $ for $r > 0\, .$ Hence,
\begin{equation}\frac {\phi_2^{\prime}}{\phi_2} = \frac{\beta}{r}\label{3.39}\end{equation}
for $r > 0\, ,$ and for $0 \le B \le 1\, ,$ 
 $\phi_2^{\prime \prime} = - B (1-B) r^{\beta - 2}\, ,$  i.e.
 \begin{equation}\phi_2^{\prime \prime} + \frac{B(1-B)}{r^2} \phi_2 = 0\, .\label{3.40}\end{equation} 
Let $f_1$ be a positive solution of \eqref{2.68},
where $\widetilde {G_1} =  \frac {B(1-B)}{(c+r)^2}\, , \kappa _1 = 1$
and let 
$\tilde{t_1}$ be as in \eqref{3.10}. Then 
$$ \aligned
(\phi_2^{\prime} f_1 - f_1^{\prime} \phi_2 ) (0) & = 0\quad {if}\quad \beta = 1\\
\underset{r \to 0^+}{\lim} (\phi_2^{\prime} f_1 - f_1^{\prime} \phi_2 ) (r) & = \underset{r \to 0^+}{\lim}  \frac {\beta}{1-\beta}f_1^{\prime}(r)\, r^{\beta} = 0\quad {if}\quad \beta \ne 1 .
\endaligned
$$
by l'Hospital's Rule.  Moreover, in view of  \eqref{3.40}, \eqref{2.68} and \eqref{3.10}, for $r \in (0, \tilde{t_1})\, $
$$
\aligned
(\phi_2^{\prime} f_1 - f_1^{\prime} \phi_2 )^{\prime} & = \phi_2 f_1 \big(\frac {-B(1-B)}{r^2} + \frac {B(1-B)}{(c+r)^2} \big)\\
&  \le  0
\endaligned
$$
Monotonicity implies that 
\begin{equation}
 \frac{\beta}{r} = \frac{\phi_2^{\prime}}{\phi_2} \le \frac {f_1^{\prime}}{f_1}\quad \operatorname{on}\quad (0, \tilde{t_1})\, .\label{3.41}\end{equation}
 
  Integrating \eqref{3.41} over $[\epsilon_1 \, , \tilde{t_1}-\epsilon_2] \subset (0, \tilde{t_1})\, ,$ and passing $\epsilon_2 \to 0$,  we have

\begin{equation}\notag
0 <  C(\epsilon_1) r^{\beta} \le f_1\quad \operatorname{on}\quad [\epsilon_1, \tilde{t_1}],\end{equation}
where $C(\epsilon_1) > 0\, $ is a constant depending on $\epsilon_1\, .$
Thus $f_1 > 0\, $ on $(0, \tilde{t_1}]\, .$ We claim $\tilde{t_1} = \infty\, .$ Otherwise there would exist a $\delta >0$ such that $\tilde{t_1} + \delta < \infty$ at which $f_1 > 0$ by the continuity, and would lead to, via \eqref{3.10} a contradiction
$${\tilde{t_1}} < {\tilde{t_1}} + \delta \le {\tilde{t_1}}\, .$$

Applying Theorem \ref{T: 2.8}, in which $\widetilde{G_1} = \frac {B(1-B)}{(c+r)^2} \ge \frac {B(1-B)}{(c+r)^2} = \widetilde{G_2}\, ,(c > 0)\, , \kappa_1 =1\, , \frac {f_1^{\prime}}{f_1} \ge \frac {\beta}{r}\, , \tilde{t_1} = t_1, \tilde{t_2} = t_2$, we have 
shown  
on $\overset {\circ} B_{t_2}(x_0)\, ,$ \eqref{3.36} holds under \eqref{3.37} for every $c > 0$.  
To prove $(1)\, ,$ we proceed analogously to the proof of Theorem \ref{T: 3.2} :
For every $x \in \overset {\circ} B_{t_2}(x_0)$, there exist sequences $\{x_i\}$  in a radial geodesic ray in $\overset {\circ} B_{t_1}(x_0)$ and $\{c_i > 0\}$ such that $x_i \to x\, , c_i \to 0\, ,$ and $r(x_i) + c_i = r(x)\, .$
It follows that for every $c_i > 0\, ,$ 
\begin{equation}\label{3.42} \begin{aligned}
&\quad K(r) (x_i) \le  \frac {B(1-B)}{\big (c_i + r(x_i)\big )^2}\quad  
 \Rightarrow \quad \Delta r (x_i) \ge (n-1)\frac{|B - \frac 12| + \frac 12}{c_i + r (x_i)} \, ,\\
 & \operatorname{and}\, \operatorname{consequently}\, ,\quad \operatorname{as}\quad i \to \infty\, ,\\
&\quad K(r) (x) \le  \frac {B(1-B)}{r^2} (x)\quad 
 \Rightarrow\quad  \Delta r (x) \ge (n-1)\frac{|B - \frac 12| + \frac 12}{r(x)}  \, ,\end{aligned}\end{equation}
 and \eqref{3.36} holds on $\overset {\circ} B_{t_2}(x_0)\, .$

\end{proof}

\begin{corollary}
If the
radial curvature $K$ satisfies 
\begin{equation} 
K(r) \le \frac {B}{r^2}\, \big (\operatorname{resp.}\,  K(r) \le \frac {B}{(c+r)^2}\, , c \ge 0\big )\, ,\, \operatorname{where}\, 0 \le B \le \frac 14\, \label{3.43}\end{equation}
$\operatorname{on} \, \overset {\circ} B_{t_2}(x_0) \subset D(x_0)$, then
\begin{equation} \begin{aligned}
& \frac{1+\sqrt{1-4B}}{2r} \le\,  \operatorname{Hess} r   \bigg(
g-dr\otimes dr\bigg)\quad
\operatorname{and}\\
 & (n-1)  \frac{1+\sqrt{1-4B}}{2r} \le\, \, \Delta r 
\end{aligned}\label{3.44}
\end{equation}\label{C: 3.4}
hold $\operatorname{pointwise}\, \operatorname{on}\, \overset {\circ} B_{t_2}(x_0)\, .$ \end{corollary}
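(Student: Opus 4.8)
The plan is to obtain Corollary \ref{C: 3.4} from Theorem \ref{T: 3.4} by a reparametrization, in exact parallel with the way Corollary \ref{C: 3.3} is obtained from Theorem \ref{T: 3.3}. Fix a constant $B$ with $0 \le B \le \tfrac14$. Since then $1 - 4B \in [0,1]$, the quantity $\tilde{B} := \tfrac{1 - \sqrt{1-4B}}{2}$ is well defined and satisfies $0 \le \tilde{B} \le \tfrac12 \le 1$, and a one-line computation gives $\tilde{B}(1 - \tilde{B}) = \tfrac{1 - (1 - 4B)}{4} = B$. Hence the hypothesis $K(r) \le \tfrac{B}{r^2}$ (resp. $K(r) \le \tfrac{B}{(c+r)^2}$) on $\overset{\circ}B_{t_2}(x_0)$ is literally the hypothesis $K(r) \le \tfrac{\tilde{B}(1-\tilde{B})}{r^2}$ (resp. its $(c+r)^2$ analogue) of Theorem \ref{T: 3.4}, with the theorem's constant taken to be $\tilde{B} \in [0,1]$ and the same $c \ge 0$.

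First I would apply Theorem \ref{T: 3.4} (resp. its ``resp.'' case) with this $\tilde{B}$, which yields, pointwise on $\overset{\circ}B_{t_2}(x_0)$,
\begin{equation*}
\frac{|\tilde{B} - \tfrac12| + \tfrac12}{r}\,\big(g - dr \otimes dr\big) \le \operatorname{Hess} r \quad\text{and}\quad (n-1)\,\frac{|\tilde{B} - \tfrac12| + \tfrac12}{r} \le \Delta r .
\end{equation*}
It then remains only to simplify the coefficient. Since $\tilde{B} - \tfrac12 = -\tfrac{\sqrt{1-4B}}{2}$, one has $|\tilde{B} - \tfrac12| + \tfrac12 = \tfrac{\sqrt{1-4B}}{2} + \tfrac12 = \tfrac{1 + \sqrt{1-4B}}{2}$, which is exactly the constant appearing in \eqref{3.44}, and the corollary follows. (The two boundary cases are consistent: $B = 0$ gives $\tilde{B}(1-\tilde{B}) = 0$ and coefficient $1$, while $B = \tfrac14$ gives $\tilde{B} = \tfrac12$ and coefficient $\tfrac12$; one may equally use the other root $\tilde{B} = \tfrac{1+\sqrt{1-4B}}{2} \in [\tfrac12,1]$, since $|\tilde{B} - \tfrac12|$ is unchanged.)

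I do not expect any genuine obstacle here: everything reduces to the elementary algebra of verifying that $\tilde{B}$ lies in the range $[0,1]$ demanded by Theorem \ref{T: 3.4}, that $\tilde{B}(1-\tilde{B})$ recovers $B$, and that $|\tilde{B}-\tfrac12| + \tfrac12$ collapses to $\tfrac{1+\sqrt{1-4B}}{2}$. The one point worth flagging is that, because the curvature bound in Theorem \ref{T: 3.4} is an \emph{upper} bound, the substitution must proceed by \emph{solving} $\tilde{B}(1-\tilde{B}) = B$ for $\tilde{B}$ rather than simply plugging a value in; for that reason I would present the argument in the ``solved'' form above rather than in the terser ``substitute $B(1-B)$ for $B$'' phrasing used in the proof of Corollary \ref{C: 3.3}.
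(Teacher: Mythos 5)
Your proposal is correct and is essentially the paper's own argument: the paper proves Corollary \ref{C: 3.4} by setting $B(1-B) = b^2$ in Theorem \ref{T: 3.4}, noting $|B-\tfrac12|+\tfrac12 = \tfrac{1+\sqrt{1-4b^2}}{2}$, and then relabeling $b^2$ as $B$, which is the same reparametrization you carry out by solving $\tilde B(1-\tilde B)=B$ explicitly. Your "solved" presentation is just a slightly more explicit phrasing of the identical substitution.
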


\begin{proof}
Let  $B
(1- B) = b^2$ in \eqref{3.35}. Then $B = \frac{1\pm \sqrt{1-4b^2}}{2}\, ,$ and by completing the square, we have $$b^2 = B
(1- B) = -(B - \frac 12)^2 + \frac 14 \le \frac 14\, .$$ 
Hence, $|B - \frac 12| + \frac 12 = \frac{1 + \sqrt{1-4b^2}}{2}$. 
Substitute these into Theorem \ref{T: 3.4}, so that this Theorem is rephrased in terms of $b^2$. We then replace $b^2$ in this rephrased Theorem by $B$ and obtain the desired. 
\end{proof}

\begin{theorem}
If the
radial curvature $K$   satisfies 
\begin{equation} 
\frac {B_1}{r^2} \le K(r) \le \frac {B}{r^2} \quad \operatorname{where} \quad  0 \le B_1 \le B \le \frac 14\, \label{3.45}\end{equation}
on $\overset {\circ} B_{\tau}(x_0) \subset D(x_0)\, ,$ then
 \begin{equation} \begin{aligned}
& \frac{1+\sqrt{1-4B}}{2r} \bigg(
g-dr\otimes dr\bigg) \le  \operatorname{Hess} r  \le \frac{1+\sqrt{1+4B_1}}{2r}\bigg(g-dr\otimes dr\bigg)\quad \operatorname{and} \\
&  (n-1)  \frac{1+\sqrt{1-4B}}{2r} \le  \,  \, \Delta r \le (n-1)\frac{1+\sqrt{1+4B_1}}{2r}
\end{aligned}\label{3.46}
\end{equation}
hold pointwise on $\overset {\circ} B_{\tau}(x_0)$.
If in addition \eqref{3.33} occurs on $D(x_0)\, ,$ then 
\begin{equation}
\Delta r \leq  (n-1) \frac{1+\sqrt{1+4B_1}}{2r}\label{3.47}\end{equation}
holds weakly $\operatorname{on}\, M\, .$

\noindent
$(2)$ Equivalently, if $K$ satisfies  
\begin{equation}
\frac {B_1}{(c+r)^2}\leq K(r) \le \frac {B}{(c+r)^2}\, ,\quad 0 \le B_1 \le B
 \le \frac 14\,
\label{3.48}\end{equation}
on $\overset {\circ} B_{\tau}(x_0) \subset D(x_0)\, , \operatorname{where} \,  c \ge 0\, ,$ 
then \eqref{3.46} holds on $\overset {\circ} B_{\tau}(x_0)$. If in addition, \eqref{3.33} occurs on $D(x_0)\, ,$ then \eqref{3.47} holds weakly $\operatorname{on}\, M\,.$
 \label{T: 3.5} \end{theorem}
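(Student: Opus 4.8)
The plan is to obtain Theorem \ref{T: 3.5} immediately by decoupling the two-sided pinch \eqref{3.45} and invoking the one-sided estimates already proved. There is essentially nothing to compute: the statement is the conjunction of Corollary \ref{C: 3.3} and Corollary \ref{C: 3.4}, together with the bookkeeping needed to reconcile their domains.

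First I would observe that \eqref{3.45} is the conjunction of the lower bound $\frac{B_1}{r^2}\le K(r)$ with $0\le B_1\le\frac14$ and the upper bound $K(r)\le\frac{B}{r^2}$ with $0\le B\le\frac14$ on $\overset{\circ}B_{\tau}(x_0)\subset D(x_0)$; in particular $1+4B_1\ge 0$ and $1-4B\ge 0$, so all radicals appearing in \eqref{3.46} are real. Feeding the lower bound into Corollary \ref{C: 3.3} (with its ball $\overset{\circ}B_{t_1}(x_0)$ taken to be $\overset{\circ}B_{\tau}(x_0)$) yields the upper estimates $\operatorname{Hess} r\le\frac{1+\sqrt{1+4B_1}}{2r}(g-dr\otimes dr)$ and $\Delta r\le(n-1)\frac{1+\sqrt{1+4B_1}}{2r}$ pointwise on $\overset{\circ}B_{\tau}(x_0)$, and, when the lower bound \eqref{3.33} holds throughout $D(x_0)$, the weak inequality \eqref{3.47} on $M$. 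Feeding the upper bound into Corollary \ref{C: 3.4} (with $\overset{\circ}B_{t_2}(x_0)=\overset{\circ}B_{\tau}(x_0)$) yields the lower estimates $\frac{1+\sqrt{1-4B}}{2r}(g-dr\otimes dr)\le\operatorname{Hess} r$ and $(n-1)\frac{1+\sqrt{1-4B}}{2r}\le\Delta r$ pointwise on $\overset{\circ}B_{\tau}(x_0)$. Combining the two pairs of inequalities — the Hessian bounds in the sense of quadratic forms — gives \eqref{3.46}, while the weak part of Corollary \ref{C: 3.3} gives \eqref{3.47}; this establishes $(1)$.

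For part $(2)$ I would argue exactly as in the proofs of Theorems \ref{T: 3.1}--\ref{T: 3.4}: statement $(1)$ is the special case $c=0$ of $(2)$, and conversely, if \eqref{3.48} holds for some $c>0$ on $\overset{\circ}B_{\tau}(x_0)$ one applies the $(c+r)^2$-versions of Corollaries \ref{C: 3.3} and \ref{C: 3.4} (which are already stated in exactly that form) and, if the lower bound \eqref{3.33} holds on $D(x_0)$, passes to the weak estimate \eqref{3.47} on $M$ by Lemma A or a double-limiting argument; alternatively one recovers \eqref{3.46} from the $c=0$ case by the limiting device of \eqref{2.50} and \eqref{3.42}, moving along a radial geodesic ray with $x_i\to x$, $c_i\to 0$ and $r(x_i)+c_i=r(x)$.

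Since Theorem \ref{T: 3.5} is a direct corollary of the preceding one-sided results, there is no substantive obstacle; the only care needed is bookkeeping — confirming that the common domain $\overset{\circ}B_{\tau}(x_0)$ is an admissible choice in both Corollary \ref{C: 3.3} and Corollary \ref{C: 3.4}, and checking that the hypotheses $B_1\le\frac14$ and $B\le\frac14$ are precisely what make the radicands $1\pm 4(\cdot)$ nonnegative and the auxiliary comparison powers $r^{\alpha}$ and $r^{\beta}$ used in those corollaries legitimate.
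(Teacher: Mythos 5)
Your proposal is correct and is essentially the paper's own proof: the paper proves Theorem \ref{T: 3.5} in one line by combining Corollary \ref{C: 3.3} (lower curvature bound giving the upper Hessian/Laplacian estimates, with the weak estimate \eqref{3.47} via Lemma A when \eqref{3.33} holds on $D(x_0)$) with Corollary \ref{C: 3.4} (upper curvature bound giving the lower estimates), exactly as you do. Your extra bookkeeping on the common domain $\overset{\circ}B_{\tau}(x_0)$ and the $c>0$ versions in part $(2)$ only makes explicit what the paper leaves implicit.
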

\begin{proof}
This follows at once from Corollaries \ref{C: 3.3} and \ref{C: 3.4}.
\end{proof}

\begin{corollary} $(1)$
Let the
radial curvature $K$   satisfy 
 \begin{equation} \frac {B_1(1-B_1)}{r^2}\leq K(r)\le \frac {B(1-B)}{r^2}\, , \quad  0 \le B \, , B_1 \le 1\, \label{3.49}\end{equation} 
$\operatorname{on} \, \overset {\circ} B_{\tau}(x_0) \subset D(x_0)\, .$ Then 
\begin{equation}\begin{aligned}
& \frac {|B - \frac 12| + \frac 12}{r} \bigg(g-dr\otimes dr\bigg)    \le \operatorname{Hess} r  \le \frac{1+\sqrt{1+4B_1(1-B_1)}}{2r}\bigg(
g-dr\otimes dr\bigg)\quad 
\operatorname{and} \\
& (n-1)\frac{|B - \frac 12| + \frac 12}{r} \le  \Delta r  \le (n-1) \frac{1+\sqrt{1+4B_1(1-B_1)}}{2r}\end{aligned}\label{3.50}
\end{equation} 
hold $\operatorname{pointwise}\, \operatorname{on}\, \overset {\circ} B_{\tau}(x_0)$.
If in addition, \eqref{3.25} occurs on $D(x_0)\, ,$ then
\begin{equation}
\Delta r \leq  (n-1) \frac{1+\sqrt{1+4B_1(1-B_1)}}{2r}\tag{3.27}
\end{equation}
holds $\operatorname{weakly}\, \operatorname{on}\, M.$

\noindent
$(2)$ Equivalently, if $K$  satisfies  \begin{equation}  \frac {B_1(1-B_1)}{(c+r)^2}\leq K(r) \le  \frac {B(1-B)}{(c+r)^2}\, ,\quad  0 \le B\, , B_1 \le 1\, 
\label{3.51}\end{equation} on $\overset {\circ} B_{\tau}(x_0),$ where $c \ge 0\, ,$ then \eqref{3.50} holds on $\overset {\circ} B_{\tau}(x_0)$. If in addition, \eqref{3.28} occurs on $D(x_0)\, ,$ then \eqref{3.27} holds $\operatorname{weakly}\, \operatorname{on}\, M\, .$  
\label{C: 3.5}
\end{corollary}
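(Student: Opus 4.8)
The plan is to obtain Corollary \ref{C: 3.5} by simply superposing the two one-sided comparison estimates already established in Theorems \ref{T: 3.3} and \ref{T: 3.4}, in exact parallel to the way Corollary \ref{C: 3.1} is deduced from Theorems \ref{T: 3.1} and \ref{T: 3.2} and Theorem \ref{T: 3.5} is deduced from Corollaries \ref{C: 3.3} and \ref{C: 3.4}. First I would read off from \eqref{3.49} the left inequality $\frac{B_1(1-B_1)}{r^2}\le K(r)$ on $\overset{\circ}B_{\tau}(x_0)$ and apply Theorem \ref{T: 3.3} with $t_1=\tau$; this gives
\[
\operatorname{Hess} r\le \frac{1+\sqrt{1+4B_1(1-B_1)}}{2r}\big(g-dr\otimes dr\big)\quad\text{and}\quad \Delta r\le (n-1)\frac{1+\sqrt{1+4B_1(1-B_1)}}{2r}
\]
pointwise on $\overset{\circ}B_{\tau}(x_0)$. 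Then I would read off from \eqref{3.49} the right inequality $K(r)\le \frac{B(1-B)}{r^2}$ on $\overset{\circ}B_{\tau}(x_0)$ and apply Theorem \ref{T: 3.4} with $t_2=\tau$; this gives
\[
\frac{|B-\frac12|+\frac12}{r}\big(g-dr\otimes dr\big)\le \operatorname{Hess} r\quad\text{and}\quad (n-1)\frac{|B-\frac12|+\frac12}{r}\le \Delta r
\]
pointwise on $\overset{\circ}B_{\tau}(x_0)$. Chaining the two pairs of inequalities yields \eqref{3.50}; note the constraints $0\le B,\,B_1\le 1$ in \eqref{3.49} are precisely those imposed in Theorems \ref{T: 3.3} and \ref{T: 3.4}, so there is no compatibility issue.

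For the weak part, if in addition \eqref{3.25} occurs on all of $D(x_0)$, then the final ``if in addition'' clause of Theorem \ref{T: 3.3} applies without change and produces \eqref{3.27} weakly on $M$ (through Lemma A). For statement $(2)$, I would handle the $(c+r)$-version \eqref{3.51} by invoking the already-proved equivalent $(c+r)$-formulations in Theorem \ref{T: 3.3}$(2)$ and Theorem \ref{T: 3.4}$(2)$ (or, equivalently, by repeating the limiting argument $c_i\downarrow 0$ with $r(x_i)+c_i=r(x)$ carried out in the proofs of those theorems); the weak conclusion under \eqref{3.28} then follows from Theorem \ref{T: 3.3}$(2)$ together with Lemma A.

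I do not expect any genuine obstacle here: all of the analytic work --- constructing the auxiliary power functions $r^{\alpha}$ with $\alpha=\frac{1+\sqrt{1+4B_1(1-B_1)}}{2}$ and $r^{\beta}$ with $\beta=|B-\frac12|+\frac12$, verifying that they solve the relevant Jacobi-type equations, establishing the monotonicity of the Wronskian-type quantity $\phi'f-f'\phi$, and feeding the resulting Riccati comparisons into the Hessian/Laplacian comparison Theorem \ref{T: 2.8} --- was already done inside Theorems \ref{T: 3.3} and \ref{T: 3.4}. The only thing deserving a moment's care is the bookkeeping: confirming that the single punctured ball $\overset{\circ}B_{\tau}(x_0)$ may legitimately serve simultaneously as the domain $\overset{\circ}B_{t_1}(x_0)$ in Theorem \ref{T: 3.3} and as the domain $\overset{\circ}B_{t_2}(x_0)$ in Theorem \ref{T: 3.4}, and that the upper and lower estimates are valid on the very same set so that they may be combined into the two-sided bound \eqref{3.50}.
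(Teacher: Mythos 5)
Your proposal is correct and is essentially the argument the paper intends: just as Corollary \ref{C: 3.1} is obtained by superposing Theorems \ref{T: 3.1} and \ref{T: 3.2}, and Theorem \ref{T: 3.5} by superposing Corollaries \ref{C: 3.3} and \ref{C: 3.4}, the two-sided bound \eqref{3.50} follows by applying Theorem \ref{T: 3.3} (with $t_1=\tau$) and Theorem \ref{T: 3.4} (with $t_2=\tau$) on the common punctured ball, with the weak inequality \eqref{3.27} and the $(c+r)$-version \eqref{3.51} handled exactly through the corresponding clauses of Theorems \ref{T: 3.3}$(2)$ and \ref{T: 3.4}$(2)$ and Lemma A. Your bookkeeping remarks (same domain, same parameter ranges $0\le B, B_1\le 1$) address the only points needing care, so there is no gap.
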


\begin{corollary} 
If the
radial curvature $K$  satisfies 
 \begin{equation} K(r) = 0 \, (\operatorname{resp.}\, \le 0\, ,\, \ge 0 )\label{3.52}\end{equation} 
$\operatorname{on} \, M\backslash \{x_0\}\, ,$ then 
\begin{equation}\begin{aligned}
 \operatorname{Hess} r &  = \, (\operatorname{resp.}\,  \operatorname{Hess} r \ge \, , \operatorname{Hess} r \le \,   )\,  \frac{1}{r}\bigg(
g-dr\otimes dr\bigg)\quad 
\operatorname{and} \\
\Delta r &  = \, (\operatorname{resp.}\, \Delta r  \ge \, ,\, \Delta r  \le \,  )\,  (n-1) \frac{1}{r}\end{aligned}\label{3.53}
\end{equation} $\operatorname{on} \, M\backslash \{x_0\}\, .
$ 
\label{C: 3.6}
\end{corollary}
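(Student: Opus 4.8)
The plan is to read off \eqref{3.53} as the degenerate instance of the Hessian and Laplacian Comparison Theorem \ref{T: 2.8} in which every comparison function is taken to be identically zero, using the linear Jacobi solution $f(r) = r$ as the model in all three cases. First I would record that $f(r) = r$ lies in $C([0,\infty)) \cap C^1(0,\infty)$ with $f^{\prime} \in AC(0,\infty)$, is positive on $(0,\infty)$, solves $f^{\prime\prime} + 0 \cdot f = 0$ with $f(0) = 0$ and $f^{\prime}(0) = 1$, satisfies $f^{\prime}/f \equiv 1/r$, and yields $+\infty$ for the supremum in \eqref{2.45} $\big(\operatorname{resp.} \eqref{3.10}\big)$; this is the trivial model making every hypothesis of Theorem \ref{T: 2.8} available with the choices below.

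For $K(r) \ge 0$ on $M \backslash \{x_0\}$ I would apply Theorem \ref{T: 2.8} with $G_1 \equiv G_2 \equiv 0$, so that \eqref{2.59} reads $G_1(r) = 0 \le K(r)$ on $\overset{\circ}{B}_{t_1}(x_0)$ with $t_1 = \infty$, with $f_2(r) = r$ the positive solution of \eqref{2.14} for $\kappa_2 = 1$, and with \eqref{2.15} trivially satisfied; then \eqref{2.63} gives $\operatorname{Hess} r \le \tfrac{1}{r}(g - dr \otimes dr)$ and $\Delta r \le (n-1)\tfrac{1}{r}$ on $M \backslash \{x_0\}$. Dually, for $K(r) \le 0$ on $M \backslash \{x_0\}$ I would use the parenthetical half of Theorem \ref{T: 2.8} with $\widetilde{G_1} \equiv \widetilde{G_2} \equiv 0$, so that \eqref{2.60} reads $K(r) \le \widetilde{G_2}(r) = 0$ on $\overset{\circ}{B}_{\tilde{t_2}}(x_0)$ with $\tilde{t_2} = \infty$, with $f_1(r) = r$ the positive solution of \eqref{2.68} $\big($hence of \eqref{2.61}$\big)$ for $\kappa_1 = 1$ and $\tilde{t_1} = \infty$, and with \eqref{2.62} trivially satisfied; then \eqref{2.64} gives $\tfrac{1}{r}(g - dr \otimes dr) \le \operatorname{Hess} r$ and $(n-1)\tfrac{1}{r} \le \Delta r$ on $M \backslash \{x_0\}$. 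Finally, when $K(r) = 0$ on $M \backslash \{x_0\}$ both sign conditions hold simultaneously, so both inequalities are valid and collapse to the equalities in \eqref{3.53}. Equivalently and more quickly, one may simply invoke Theorems \ref{T: 3.1} and \ref{T: 3.2} with $A = A_1 = 1$ and combine their conclusions, exactly as Corollary \ref{C: 3.1} is obtained.

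I do not expect any genuine obstacle; the only points that warrant a line of verification are that the constant functions $G_i \equiv \widetilde{G_i} \equiv 0$ are admissible inputs to Theorem \ref{T: 2.8} (they are, since it permits arbitrary real-valued $G_i$ and $\widetilde{G_i}$), that the choices $\kappa_1 = \kappa_2 = 1$ respect the constraints $0 < \kappa_1 \le 1$ and $1 \le \kappa_2$ in the two halves of that theorem, and that $f(r) = r$ meets the regularity and positivity demands placed on $f_1$ and $f_2$ — all immediate.
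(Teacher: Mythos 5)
Your proposal is correct: with $G_1\equiv G_2\equiv 0$ (resp.\ $\widetilde{G_1}\equiv\widetilde{G_2}\equiv 0$), the flat model $f(r)=r$, and $\kappa_2=1$ (resp.\ $\kappa_1=1$), the two halves of Theorem \ref{T: 2.8} deliver exactly the two one-sided inequalities in \eqref{3.53} via \eqref{2.63} and \eqref{2.64}, and the case $K\equiv 0$ is their conjunction; all the admissibility checks you list (regularity and positivity of $f$, the constraints $1\le\kappa_2$ and $0<\kappa_1\le 1$, arbitrariness of the $G_i$) do go through. The paper, however, does not go back to Theorem \ref{T: 2.8}: its proof is a one-line specialization of results already established in Section 3, namely Corollary \ref{C: 3.5} with $B_1=B=1$, or Theorem \ref{T: 3.5} with $B_1=B=0$, or Corollary \ref{C: 3.2} with $A=1$ (resp.\ $A=0$). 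Your closing alternative --- invoke Theorems \ref{T: 3.1} and \ref{T: 3.2} with $A=A_1=1$ and combine, as Corollary \ref{C: 3.1} is obtained --- is essentially the paper's route, since Corollary \ref{C: 3.2} is precisely that combination with $A_1=A$. What your primary route buys is self-containedness and a uniform treatment of all three cases: the single comparison function $f(r)=r$ handles $K\ge 0$, $K\le 0$ and $K=0$ at once, whereas the specializations the paper cites, taken literally, pin down the equality case $K\equiv 0$, the one-sided assertions being read off from the one-sided Theorems \ref{T: 3.1}--\ref{T: 3.4}. The only caveat, which you share with the paper's own formulation rather than introduce, is the domain: taking $t_1=\tilde{t_2}=\infty$ so that the conclusion holds on all of $M\backslash\{x_0\}$ tacitly uses $D(x_0)=M\backslash\{x_0\}$, i.e.\ the pole convention under which Corollaries \ref{C: 3.1}, \ref{C: 3.2} and Theorem \ref{T: 3.2} are likewise stated.
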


\begin{proof}
This follows at once from Corollary \ref{C: 3.5} in which $B_1 = B = 1\,,$ or Theorem \ref{T: 3.5}, in which $B_1 = B = 0\, ,$ or Corollary \ref{C: 3.2} in which $A = 1 \, (\operatorname{resp.} A =0). $ 
\end{proof}

\begin{corollary} 
If the
radial curvature $K$   satisfies 
 \begin{equation} \begin{aligned}-\frac {A}{r^2}\leq & K(r)\leq \frac {B}{r^2}\\
  (\operatorname{resp.}\, -\frac {A}{(c+r)^2}\leq & K(r)\leq \frac {B}{(c+r)^2})\, , \quad 0 \le A\, , \, 0 \le B \le \frac 14 \end{aligned}\label{3.54}\end{equation}
$\operatorname{on}\, M\backslash \{x_0\},$ where $c > 0\, ,$ then 
\begin{equation}
\frac{1+\sqrt{1-4B}}{2r}\bigg(g-dr\otimes dr\bigg) \le  \operatorname{Hess} (r) \le \frac{1+\sqrt{1+4A}}{2r}\bigg(
g-dr\otimes dr\bigg)\quad \operatorname{and}\tag{3.55}\label{3.55}\end{equation}
\begin{equation}
(n-1)\frac{1+\sqrt{1-4B}}{2r} \le  \Delta r  \le (n-1)\frac{1+\sqrt{1+4A}}{2r}\, \label{3.56}\end{equation} 
hold on $M\backslash \{x_0\}.$
\label{C: 3.7}
\end{corollary}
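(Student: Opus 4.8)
The plan is to prove Corollary \ref{C: 3.7} by splitting the two-sided pinch \eqref{3.54} into its two one-sided halves and invoking, for each half separately, a comparison result already established; concatenating the two outputs then yields \eqref{3.55} and \eqref{3.56}.

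First I would extract the \emph{upper} bounds in \eqref{3.55}--\eqref{3.56} from the lower curvature estimate $-A/r^2 \le K(r)$ (resp.\ its shifted form $-A/(c+r)^2 \le K(r)$) alone. Setting $A' = \frac{1+\sqrt{1+4A}}{2}$, one has $A' \ge 1$ and $A'(A'-1) = A$, so $-A/r^2 \le K(r)$ is exactly hypothesis \eqref{3.1} of Theorem \ref{T: 3.1} with the constant renamed from $A$ to $A'$ --- this is the same reparametrization ($A(A-1) = a^2$, then $a^2 \mapsto A$) used to pass from Corollary \ref{C: 3.1} to Theorem A. Since $K$ is bounded below on all of $M\setminus\{x_0\}$, Theorem \ref{T: 3.1}(1) (resp.\ (2)) gives $\operatorname{Hess} r \le \frac{A'}{r}\big(g-dr\otimes dr\big) = \frac{1+\sqrt{1+4A}}{2r}\big(g-dr\otimes dr\big)$ and $\Delta r \le (n-1)\frac{1+\sqrt{1+4A}}{2r}$ on $M\setminus\{x_0\}$, i.e.\ the right-hand inequalities of \eqref{3.55} and \eqref{3.56}.

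Next I would extract the \emph{lower} bounds in \eqref{3.55}--\eqref{3.56} from the upper curvature estimate $K(r) \le B/r^2$ with $0 \le B \le \frac14$ alone: this is precisely hypothesis \eqref{3.43} of Corollary \ref{C: 3.4}, applied with $\overset{\circ}B_{t_2}(x_0) = M\setminus\{x_0\}$, so \eqref{3.44} gives $\frac{1+\sqrt{1-4B}}{2r}\big(g-dr\otimes dr\big) \le \operatorname{Hess} r$ and $(n-1)\frac{1+\sqrt{1-4B}}{2r} \le \Delta r$ pointwise on $M\setminus\{x_0\}$ (the $(c+r)^2$-version is the ``resp.'' part of Corollary \ref{C: 3.4}). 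Stacking the two pairs of inequalities produces exactly \eqref{3.55} and \eqref{3.56}. I expect no genuine obstacle here --- the statement is a pure synthesis of Theorem \ref{T: 3.1} and Corollary \ref{C: 3.4}; the only points requiring a word of care are that each one-sided hypothesis is weaker than, hence implied by, the pinch \eqref{3.54}, and that one passes correctly between the $A(A-1)$-normalization in Theorem \ref{T: 3.1} and the $A$-normalization in \eqref{3.54}--\eqref{3.56}, exactly as in the proof of Theorem A.
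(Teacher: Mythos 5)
Your proposal is correct and is essentially the paper's own proof: the paper deduces Corollary 3.7 "at once from Theorem A and Corollary 3.4," and your passage through Theorem 3.1 with the reparametrization $A'=\frac{1+\sqrt{1+4A}}{2}$, $A'(A'-1)=A$ is exactly how the paper itself derives the upper-bound half of Theorem A. If anything, quoting Theorem 3.1 directly is marginally cleaner, since the upper Hessian/Laplacian estimates need only the lower curvature bound, whereas Theorem A is nominally stated under a two-sided assumption $-\frac{A}{r^2}\le K(r)\le -\frac{A_1}{r^2}$ that does not literally hold here.
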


\begin{proof}
This follows at once from Theorems A and Corollary \ref{C: 3.4}. 
\end{proof}

\section{Geometric Applications in Mean Curvature}
The following is an immediate geometric application:

\begin{theorem}$($Mean Curvature Comparison Theorems$)$\label{T:3.5} Let $H(r)$ be the mean curvature of the geodesic sphere $\partial B_{r}(x_0)$ of radius $r$ centered at $x_0$ in $M$ with respect to the unit outward normal. Then

\begin{equation}\label{4.1}
H(r) \le \left\{
\begin{aligned}
\frac{A}{r}, \quad & \text {if } \operatorname{Ric}^{\rm{M}}_{\rm{rad}}(r)  \text { satisfies}\, \eqref{2.39}\, \text{or}\, \eqref{2.41}\,  , \\
\frac{1+\sqrt{1+4B_1(1-B_1)}}{2r},  \quad & \text {if } \operatorname{Ric}^{\rm{M}}_{\rm{rad}}(r)  \text { satisfies}\, \eqref{2.51}\, or\, \eqref{2.53}\, ;
\end{aligned}
\right.
\end{equation}

\begin{equation}\label{4.2}
H(r) \le \left\{
\begin{aligned}
\frac{A}{r}, \quad & \text {if } K(r)  \text { satisfies}\, \eqref{3.1}\, or\, \eqref{3.4}\, , \\
\frac{1+\sqrt{1+4B_1(1-B_1)}}{2r},  \quad & \text {if } K(r)  \text { satisfies} \, \eqref{3.25}\, or\, \eqref{3.28}\, , \\
\frac{1+\sqrt{1+4B_1}}{2r},  \quad & \text {if } K(r)  \text { satisfies} \, \eqref{3.33}\, ;
\end{aligned}
\right.
\end{equation}

\begin{equation}\label{4.3}
H(r) \ge \left\{
\begin{aligned}
\frac{A_1}{r}, \quad   & \text {if } K(r)  \text { satisfies}\, \eqref{3.5}\, or\, \eqref{3.7}\, , \\
\frac{|B - \frac 12| + \frac 12}{r}, \quad & \text{if } K(r) \text { satisfies}\, \eqref{3.35}\, or\, \eqref{3.37} \, , \\
\frac{1+\sqrt{1-4B}}{2r}, \quad & \text{if } K(r) \text { satisfies}\, \eqref{3.43}\, ;
\end{aligned}
\right.
\end{equation}
  
\begin{equation}\label{4.4}
\left\{
\begin{aligned}
\frac{A_1}{r}   & \le  H(r)  \le \frac{A}{r},\quad   & \text {if }  K(r)  \text {satisfies}\, \eqref{3.14}\, or\, \eqref{3.16}\, ,   \\
\frac{1+\sqrt{1+4A_1}}{2r}, & \le   H(r)  \le (n-1)\frac{1+\sqrt{1+4A}}{2r}, \quad  & \text{if } K(r) \text{ satisfies} \eqref{3.17}\, or\, \eqref{3.21}\, ,\\
\frac{1+\sqrt{1-4B}}{2r}, & \le  H(r)  \le \frac{1+\sqrt{1+4B_1}}{2r},\, &  \text{if } K(r) \text{ satisfies}\, \eqref{3.45}\, or\, \eqref{3.48}\, , \\
\frac{|B - \frac 12| + \frac 12}{r} & \le  H(r)  \le \frac{1+\sqrt{1+4B_1(1-B_1)}}{2r},\quad  & \text{if } K(r) \text{ satisfies}\, \eqref{3.49}\, or\, \eqref{3.51}\, ,\\
\frac{1+\sqrt{1-4B}}{2r} & \le  H(r)  \le \frac{1+\sqrt{1+4A}}{2r},\quad  & \text{if } K(r) \text{ satisfies}\, \eqref{3.54}\, ;
\end{aligned}
\right.
\end{equation}

\begin{equation}\label{4.5}
H(r) \left\{
\begin{aligned}
& = \frac{A}{r}, \, \big (\operatorname{resp.}\, \frac{1+\sqrt{1+4A}}{2r}\big ), &  \text{if } K(r) \text{ satisfies}\, \eqref{3.22}\, or\, \eqref{3.24}\, , \\
& =\,  (\operatorname{resp.}\, \ge \, ,\,  \le \,  )\,  \frac{1}{r},&  \text{if } K(r) \text{ satisfies}\, \eqref{3.52}\, ;\end{aligned}
\right.
\end{equation}
where the corresponding geodesic spheres $\partial B_{r}(x_0)$ are as in Theorems \ref{T: 3.1}-\ref{T: 3.5}, Theorem A, and Corollaries \ref{C: 3.1}-\ref{C: 3.7}.
\label{T: 4.1}
\end{theorem}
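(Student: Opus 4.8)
The plan is to recognize that Theorem \ref{T: 4.1} is essentially a \emph{dictionary translation} of the Hessian and Laplacian comparison theorems already proved. The key observation is the classical identity relating the mean curvature of the geodesic sphere $\partial B_r(x_0)$ to the Laplacian of the distance function: for $x$ with $r(x) = r$ inside the cut locus, the shape operator of $\partial B_r(x_0)$ with respect to the outward unit normal $\partial/\partial r = \nabla r$ is precisely $\operatorname{Hess} r$ restricted to the tangent space $T_x(\partial B_r(x_0)) = (\nabla r)^\perp$, and hence the mean curvature (the trace of the shape operator, or its average depending on normalization) satisfies $H(r) = \operatorname{trace}_{(\nabla r)^\perp}(\operatorname{Hess} r) = \Delta r$, since $\operatorname{Hess} r(\nabla r, \nabla r) = 0$. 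I would open the proof by recording this identity explicitly (citing Gauss's lemma and the standard Riccati/tube computation), so that every statement about $H(r)$ becomes a statement about $\Delta r$.

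With that identity in hand, the proof reduces to quoting the appropriate Laplacian estimates term by term. First I would handle \eqref{4.1}: under \eqref{2.39} or \eqref{2.41}, Theorem \ref{T: 2.6} gives $\Delta r \le (n-1)\frac{A}{r}$; but one must be careful about normalization — if $H(r)$ is the \emph{normalized} mean curvature (average of principal curvatures over the $(n-1)$-dimensional sphere) then $H(r) = \frac{1}{n-1}\Delta r \le \frac{A}{r}$, matching the stated bound, whereas if $H(r)$ is the \emph{unnormalized} trace then the factor $(n-1)$ should appear. I would adopt the normalization that makes \eqref{4.1} literally correct, namely $H(r) = \frac{1}{n-1}\Delta r$, and note this once at the outset; then every line of \eqref{4.1}--\eqref{4.5} follows by dividing the corresponding Laplacian inequality by $n-1$. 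Concretely: \eqref{4.1} second line from Theorem \ref{T: 2.7}; \eqref{4.2} from Theorems \ref{T: 3.1}, \ref{T: 3.3}, and Corollary \ref{C: 3.3}; \eqref{4.3} from Theorems \ref{T: 3.2}, \ref{T: 3.4}, and Corollary \ref{C: 3.4}; \eqref{4.4} from Corollary \ref{C: 3.1}, Theorem A, Theorem \ref{T: 3.5}, Corollaries \ref{C: 3.5} and \ref{C: 3.7}; and \eqref{4.5} from Corollaries \ref{C: 3.2} and \ref{C: 3.6}. In the two-sided and equality cases I would invoke both the upper and lower Hessian/Laplacian bounds simultaneously.

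The main obstacle is not mathematical depth but rather bookkeeping and normalization consistency: one must check that the constants in \eqref{4.1}--\eqref{4.5} are stated with the normalization $H = \frac{1}{n-1}\Delta r$ rather than $H = \Delta r$, reconcile the one apparent typo in \eqref{4.4} (the second and fifth lines are written with a stray $(n-1)$ factor on the right side, which should be dropped under the normalized convention), and confirm that the geodesic spheres $\partial B_r(x_0)$ in each case lie within the region where the relevant curvature hypothesis and the relevant comparison theorem apply (i.e., within $\overset{\circ}{B}_{t_1}(x_0)$, $\overset{\circ}{B}_{t_2}(x_0)$, $\overset{\circ}{B}_\tau(x_0)$, or $M\backslash\{x_0\}$ as appropriate, all inside the cut locus so that $\operatorname{Hess} r$ genuinely computes the second fundamental form). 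I would therefore structure the written proof as: (i) state and justify $H(r) = \frac{1}{n-1}\operatorname{trace}(\operatorname{Hess} r) = \frac{1}{n-1}\Delta r$ on $D(x_0)$; (ii) for each displayed inequality, name the curvature hypothesis, cite the Hessian/Laplacian comparison theorem that yields it, and divide by $n-1$; (iii) in the two-sided and equality cases, combine the matching upper and lower estimates. No step requires new analysis beyond what Theorems \ref{T: 2.5}--\ref{T: 3.5}, Theorem A, and Corollaries \ref{C: 2.6}--\ref{C: 3.7} already supply.
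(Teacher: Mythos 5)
Your proposal is essentially identical to the paper's proof: the paper likewise invokes the Gauss lemma to record $H(r)=\frac{1}{n-1}\operatorname{trace}(\operatorname{Hess} r)=\frac{1}{n-1}\Delta r$ (i.e.\ the normalized mean curvature, citing \cite[(3.28)]{W3}) and then deduces each case directly from Theorems \ref{T: 2.6}--\ref{T: 2.7}, \ref{T: 3.1}--\ref{T: 3.5}, Theorem A, and Corollaries \ref{C: 3.1}--\ref{C: 3.7}. Your normalization discussion, including flagging the stray $(n-1)$ factor in the second line of \eqref{4.4}, is consistent with the convention the paper actually uses.
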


\begin{proof} By Gauss lemma
\begin{equation}
\begin{aligned}
\frac {1}{n-1} \Delta r & : = \frac {1}{n-1} \text{trace} \big( \text{Hess}\, r \big ) \\
& \, = \frac {1}{n-1} \big (\text{the}\, \text{trace}\, \text{of}\, \text{the}\, \text{second}\, \text{fundamental}\, \text{form}\, \text{of}\, \partial B_{r}(x_0)\, \text{in}\, M\big )\\
& : = H(r)
\end{aligned}
\end{equation}
(see \cite [(3.28)] {W3}). The results follow from Theorems \ref{T: 2.6}-\ref{T: 2.7}, \ref{T: 3.1}-\ref{T: 3.5}, Theorem A, and Corollaries \ref{C: 3.1}-\ref{C: 3.7}.
\end{proof}

\section{The Growth of Bundle-Valued Differential Forms and Their Interrelationship}

Let $(M,g)$ be a smooth Riemannian manifold. Let $\xi :E\rightarrow M$ be a smooth Riemannian vector bundle over $(M,g)\, ,$ i.e. a vector bundle such that at each fiber is equipped with a positive inner product $\langle \quad , \quad \rangle_E\, .$
 Set $A^k(\xi )=\Gamma (\Lambda
^k T^{*}M\otimes E)$ the space of smooth $k-$forms on $M$ with
values in the vector bundle $\xi :E\rightarrow M$.  For two forms $\Omega ,\Omega ^{\prime }\in A^k(\xi )$, the
induced inner product $ \langle\Omega ,\Omega ^{\prime }\rangle$ is defined as in \eqref{6.4}. For $\Omega \in A^k(\xi )$, set  $|\Omega|^2 = \langle \Omega, \Omega \rangle$. Then $|\Omega|^q = \langle \Omega, \Omega \rangle ^{\frac q2}$ and we are ready to make the following 
\begin{definition5.1}\label {D: 5.1} For a given $q\in \mathbb R\, ,$ a function or a differential form or a bundle-valued differential form $f$ has \emph{$p$-{finite growth}} $($or, simply, \emph{is
$p$-{finite}}$)$ if there exists $x_0 \in M$ such that
\begin{equation}
\liminf_{r\rightarrow\infty}\frac{1}{r^p}\int_{B(x_0;r)}|f|^{q}\, dv
<\infty\, , \label{5.1}
\end{equation}
and has \emph{$p$-{infinite growth}} $($or, simply, \emph{is
$p$-infinite}$)$ otherwise. \smallskip

For a given $q\in \mathbb R\, ,$ a  function or a differential form or a bundle-valued differential form $f$ has \emph{$p$-mild growth}
$($or, simply, \emph{is $p$-mild}$)$ if there exist  $ x_0 \in M\,
,$ and a strictly increasing sequence of $\{r_j\}^\infty_0$ going
to infinity, such that for every $l_0>0$, we have
\begin{equation}
\begin{array}{rll}
\sum\limits_{j=\ell_0}^{\infty}
\bigg(\frac{(r_{j+1}-r_j)^p}{\int_{B(x_0;r_{j+1})\backslash
B(x_0;r_{j})}|f|^q\, dv}\bigg)^{\frac1{p-1}}=\infty \, ,
\end{array}    \label{5.2}
\end{equation}
and has \emph{$p$-severe growth} $($or, simply, \emph{is
$p$-severe}$)$ otherwise. \smallskip

For a given $q\in \mathbb R\, ,$ a function or a differential form or a bundle-valued differential form $f$ has \emph{$p$-obtuse growth}
$($or, simply, \emph{is $p$-obtuse}$)$ if there exists $x_0 \in M$
such that for every $a>0$, we have
\begin{equation}
\begin{array}{rll}
\int^\infty_a\bigg( \frac{1}{\int_{\partial
B(x_0;r)}|f|^qds}\bigg)^\frac{1}{p-1}dr =
 \infty \, ,   \label{5.3}
\end{array}
\end{equation}
and has \emph{$p$-acute growth} $($or, simply, \emph{is
$p$-acute}$)$ otherwise. \smallskip

For a given $q\in \mathbb R\, ,$ a function or a differential form or a bundle-valued differential form $f$ has \emph{$p$-moderate
growth} $($or, simply, \emph{is $p$-moderate}$)$ if there exist  $
x_0 \in M$, and $\psi(r)\in {\mathcal F}$, such that
\begin{equation}\label{5.4}
\limsup _{r \to \infty}\frac {1}{r^p \psi^{p-1} (r)}\int_{B(x_0;r)}
|f|^{q}\, dv < \infty \, ,
\end{equation}
and has \emph{$p$-immoderate growth} $($or, simply, \emph{is
$p$-immoderate}$)$ otherwise, where
\begin{equation}\label{5.5} {\mathcal F} = \{\psi:[a,\infty)\longrightarrow
(0,\infty) |\int^{\infty}_{a}\frac{dr}{r\psi(r)}= \infty \ \ for \ \
some \ \ a \ge 0 \}\, .\end{equation} $($Notice that the functions
in {$\mathcal F$} are not necessarily monotone.$)$ \smallskip

For a given $q\in \mathbb R\, ,$ a function or a differential form or a bundle-valued differential form $f$ has \emph{$p$-small growth}
$($or, simply, \emph{is $p$-small}$)$ if there exists $ x_0 \in
M\, ,$ such that for every $a
>0\, ,$we have
\begin{equation}
\begin{array}{rll}
\int
_{a}^{\infty}\bigg(\frac{r}{\int_{B(x_0;r)}|f|^{q}\, dv}\bigg)^{\frac1{p-1}}
dr = \infty \, ,
\end{array}    \label{5.6}
\end{equation}
and has \emph{$p$-large growth} $($or, simply, \emph{is
$p$-large}$)$ otherwise. \end{definition5.1}

\begin{definition5.2}\label {D: 5.2} For a given $q\in \mathbb R\, ,$ a function or a differential form or a bundle-valued differential form $f$ has
\emph{$p$-balanced growth} $(or, simply, \emph{is $p$-balanced})$
if $f$ has one of the following: \emph{$p$-finite},
\emph{$p$-mild}, \emph{$p$-obtuse}, \emph{$p$-moderate}, or
\emph{$p$-small} growth, and has \emph{$p$-imbalanced growth} $( $
or, simply,
is $p$-\emph{imbalanced}$)$ otherwise.
\end{definition5.2}

The above definitions of ``$p$-balanced, $p$-finite, $p$-mild,
$p$-obtuse, $p$-moderate, $p$-small" and their counter-parts
``$p$-imbalanced, $p$-infinite, $p$-severe, $p$-acute,
$p$-immoderate, $p$-large" growth depend on $q$, and $q$ will be
specified in the context in which the definition is used.
\begin{theorem} For a given $q \in \mathbb R\, ,$ $f$ is \emph{$p$-moderate}$($resp. \emph{$p$-immoderate}$)$ if and
only if $f$ is \emph{$p$-small}$($resp. \emph{$p$-large}$)$.
\label{T: 5.1}
\end{theorem}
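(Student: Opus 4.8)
The plan is to show the equivalence $\eqref{5.4} \Leftrightarrow \eqref{5.6}$ by passing between the monotone quantity $I(r) := \int_{B(x_0;r)}|f|^q\,dv$ and an auxiliary function $\psi$ in the class $\mathcal F$, in both directions. Throughout I fix the basepoint $x_0$ that witnesses whichever growth hypothesis is assumed, and I note that $I(r)$ is nondecreasing in $r$, which is the only regularity of $I$ that I will use; I may assume $I(r) > 0$ for all large $r$ (if $I$ vanishes eventually, then both conditions hold trivially since the integrands in \eqref{5.4} and \eqref{5.6} are then $0$ or the expressions are vacuous). The identity $\int_a^\infty \frac{dr}{r\psi(r)} = \infty$ in the definition of $\mathcal F$ is the ``engine'' that must be linked to the divergence of the integral in \eqref{5.6}.

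First I would prove \textbf{$p$-small $\Rightarrow$ $p$-moderate}. Assume \eqref{5.6}: $\int_a^\infty \big(r/I(r)\big)^{1/(p-1)}\,dr = \infty$ for every $a > 0$. I define $\psi$ so that $r\psi^{p-1}(r)$ is, up to a constant, comparable to $I(r)$ — concretely, set $\psi(r) = \big(I(r)/r\big)^{1/(p-1)}$ for $r \ge a$. Then $r\,\psi^{p-1}(r) = I(r)$, so $\limsup_{r\to\infty}\frac{1}{r^p\psi^{p-1}(r)}I(r) = \limsup_{r\to\infty} \frac{I(r)}{r^{p-1}\cdot r\cdot \psi^{p-1}(r)/\psi^{p-1}(r)}$ — more cleanly, $\frac{1}{r^p\psi^{p-1}(r)}I(r) = \frac{I(r)}{r^p \cdot I(r)/r} = \frac{1}{r^{p-1}} \to 0$, which is certainly finite. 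It remains to check $\psi \in \mathcal F$, i.e. $\int_a^\infty \frac{dr}{r\psi(r)} = \infty$. But $\frac{1}{r\psi(r)} = \frac{1}{r}\big(r/I(r)\big)^{1/(p-1)} = \big(r/I(r)\big)^{1/(p-1)} \cdot r^{-1}$; one checks $r^{-1}(r/I(r))^{1/(p-1)} = (r/I(r))^{1/(p-1)}/r$, and since $I(r)\ge I(a)>0$ is bounded below while we need the full integrand, the cleanest route is to observe $\frac{1}{r\psi(r)} = r^{-1}\cdot r^{1/(p-1)} I(r)^{-1/(p-1)}$; comparing with the integrand of \eqref{5.6}, namely $r^{1/(p-1)}I(r)^{-1/(p-1)}$, the two differ by the factor $r^{-1}$, so $\int \frac{dr}{r\psi(r)} = \int \frac{1}{r}\big(r/I(r)\big)^{1/(p-1)}\,dr$, and I must argue this diverges given that $\int \big(r/I(r)\big)^{1/(p-1)}\,dr$ does. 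This is the technical crux: a direct inequality fails in general, so I would instead define $\psi$ via $\psi(r)^{p-1} = I(\lambda r)/r$ for a suitable dilation, or — more robustly — use the substitution $s = \log r$ and work with $J(s) = I(e^s)$, for which $\int \frac{dr}{r\psi(r)} = \int e^{-s(1 - 1/(p-1))} J(s)^{-1/(p-1)}\,ds$ matches the log-transformed \eqref{5.6} up to the harmless weight $e^{-s}$; here one uses that if $1 < p$ then $1 - \frac{1}{p-1}$ can have either sign, so I split into cases $p \ge 2$ and $1 < p < 2$ and in each case bound the relevant integral by the other using monotonicity of $J$.

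Conversely, for \textbf{$p$-moderate $\Rightarrow$ $p$-small}, assume \eqref{5.4}: there exist $x_0$ and $\psi \in \mathcal F$ with $\limsup_{r\to\infty} \frac{I(r)}{r^p\psi^{p-1}(r)} = C < \infty$, so $I(r) \le (C+1)\, r^p \psi^{p-1}(r)$ for all $r \ge a_0$. Then for $r \ge a_0$,
\begin{equation}
\left(\frac{r}{I(r)}\right)^{\frac{1}{p-1}} \ge \left(\frac{r}{(C+1)r^p\psi^{p-1}(r)}\right)^{\frac{1}{p-1}} = (C+1)^{-\frac{1}{p-1}}\, \frac{r^{\frac{1-p}{p-1}}}{\psi(r)} \cdot r^{\frac{?}{?}},
\end{equation}
and simplifying the exponent $\frac{1-p}{p-1} = -1$ gives $\big(r/I(r)\big)^{1/(p-1)} \ge (C+1)^{-1/(p-1)}\,\frac{1}{r\psi(r)}$. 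Integrating from $a_0$ to $\infty$ and using $\psi \in \mathcal F$ (so $\int_{a_0}^\infty \frac{dr}{r\psi(r)} = \infty$; if the defining $a$ for $\psi$ is smaller than $a_0$, divergence on $[a,\infty)$ forces divergence on $[a_0,\infty)$) yields $\int_{a_0}^\infty \big(r/I(r)\big)^{1/(p-1)}\,dr = \infty$. Since the integrand is nonnegative and $a_0$ can be taken as large as needed, \eqref{5.6} holds for every $a > 0$, i.e. $f$ is $p$-small. The $p$-immoderate $\Leftrightarrow$ $p$-large half is then purely formal: it is the contrapositive of the equivalence just established, since ``$p$-immoderate'' and ``$p$-large'' are by definition the negations of ``$p$-moderate'' and ``$p$-small'' respectively.

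The main obstacle, as flagged above, is the forward direction $p$-small $\Rightarrow$ $p$-moderate, specifically producing a genuine member $\psi$ of $\mathcal F$ from the bare divergence of the integral in \eqref{5.6} — the naive choice $\psi^{p-1} = I(r)/r$ gives the right finiteness in \eqref{5.4} but leaves a gap of a factor $r^{\pm 1}$ in the $\mathcal F$-condition that must be absorbed by monotonicity and a case split on the sign of $\frac{p-2}{p-1}$ (equivalently $p \lessgtr 2$), or by the logarithmic change of variables that converts both conditions into statements about the nondecreasing function $J(s) = I(e^s)$ on the half-line and makes the comparison transparent.
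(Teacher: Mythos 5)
Your forward direction ($p$-moderate $\Rightarrow$ $p$-small) is correct and is essentially the paper's argument: from $I(r)\le (C+1)r^p\psi^{p-1}(r)$ you get $\big(r/I(r)\big)^{1/(p-1)}\ge (C+1)^{-1/(p-1)}\frac{1}{r\psi(r)}$, and divergence follows from $\psi\in\mathcal F$. The converse, however, has a genuine gap, and it is exactly the one you flagged: with your choice $\psi^{p-1}(r)=I(r)/r$ the $\mathcal F$-condition requires $\int^\infty \frac1r\big(r/I(r)\big)^{1/(p-1)}dr=\infty$, and this does \emph{not} follow from the $p$-small hypothesis $\int^\infty\big(r/I(r)\big)^{1/(p-1)}dr=\infty$. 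Concretely, take $I(r)=r^p$: then $\big(r/I(r)\big)^{1/(p-1)}=1/r$, whose integral diverges (so $f$ is $p$-small), while $\frac1r\big(r/I(r)\big)^{1/(p-1)}=1/r^2$ is integrable, so your $\psi$ is not in $\mathcal F$. Moreover, neither of your proposed repairs closes this: after the substitution $r=e^s$ the two integrands differ by the exact factor $e^{-s}$, which has nothing to do with the sign of $\frac{p-2}{p-1}$ or with the monotonicity of $J(s)=I(e^s)$ (in the example above the case split changes nothing), and a dilation $\psi^{p-1}(r)=I(\lambda r)/r$ does not alter the scaling either.

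The fix is simply a different normalization of $\psi$, and it is what the paper does: set $\psi(r)=\big(\frac{1}{r^p}\int_{B(x_0;r)}|f|^{q}dv\big)^{1/(p-1)}$, i.e.\ $\psi^{p-1}(r)=I(r)/r^{p}$ (after disposing of the trivial case $I\equiv 0$). Then $\frac{1}{r\psi(r)}=\big(r/I(r)\big)^{1/(p-1)}$ \emph{exactly}, so $\psi\in\mathcal F$ is immediate from $p$-smallness with no case analysis, and $r^p\psi^{p-1}(r)=I(r)$ makes the quotient in \eqref{5.4} identically equal to $1$, so the $\limsup$ is finite and $f$ is $p$-moderate. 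With this choice your argument goes through verbatim and the "resp." half is, as you say, just the contrapositive; as written, though, the $p$-small $\Rightarrow$ $p$-moderate step is not yet a proof.
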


\begin{proof} ($\Rightarrow$)   
Since $f$ has $p$-moderate growth, we may assume, by the definition of the limit superior of functions,  there
exists a $\psi\in \mathcal{F} \big (\operatorname{as}\,  \operatorname{in}\, \eqref{5.5} \big )$, and a constant $K^{\prime} > 0$ such that
$\frac {1}{r^p \psi^{p-1}(r)}  \int_{B(x_0;r)}
|f|^{q}\, dv  \le K^{\prime}$ for $r > \ell_0\, .$ This implies that
 \begin{equation}
 \frac {r}{\int_{B(x_0;r)}
|f|^{q}\, dv} \ge  \frac {1}{K^{\prime}} \frac {1}{r^{p-1}  \psi^{p-1}(r)} \quad \operatorname{for} \quad r > {\ell}_0\, .\label{5.7}\end{equation}
Taking both sides to the power $\frac {1}{p-1}$ and integrating, we have
 \begin{equation}
\int _a^{\infty} \bigg (\frac {r}{\int_{B(x_0;r)}
|f|^{q}\, dv   } \bigg )^{\frac {1}{p-1}}\, dr \ge  \frac {1}{(K^{\prime})^{\frac {1}{p-1}}}\int _a^{\infty} \frac {1}{r  \psi(r)}\, dr \quad \operatorname{for} \quad r > {\ell}_0\, .\label{5.8}\end{equation}

If $f$ were $p$-large, \eqref{5.8} would lead to $\int _a^{\infty} \frac {1}{r  \psi(r)}\, dr < \infty$, contradicting \eqref{5.5}.
\smallskip

\noindent
($\Leftarrow$)\quad If $\int_{B(x_0;r)}
|f|^{q}\, dv = 0$ for $r > a > 0$, then we are done. Or, we let $\psi (r) = ( \frac {1}{r^p}\int_{B(x_0;r)}
|f|^{q}\, dv )^{\frac1{p-1}}\, ,\, r > a\, .$ Then \begin{equation}\label{5.9}\frac {1}{r \psi(r)}=\bigg( \frac {r}{\int_{B(x_0;r)}
|f|^{q}\, dv }
 \bigg)^{\frac1{p-1}}\, .\end{equation}
Integrating \eqref{5.9} from $a$ to $\infty$, we have

$$ \int _a^{\infty}\frac{1}{r\psi(r)}dr =  \int _a^{\infty}\bigg( \frac {r}{\int_{B(x_0;r)}
|f|^{q}\, dv }
 \bigg)^{\frac1{p-1}}dr = \infty, $$ by the assumption of $f$ being $p$-small. Hence
$\psi(r)\in \mathcal F$.
Suppose the
 contrary, $f$ were $p$-immoderate, i.e. (\ref{5.4}) were not true. Then we would 
 have, via \eqref{5.9}
 
 \noindent
 $\displaystyle
\limsup\limits_{r \rightarrow \infty}\frac {1}{r^p \psi ^{p-1} (r)}
\int_{B(x_0;r)}
|f|^{q}\, dv  =  \infty $, contradicting 

\noindent
$\limsup\limits_{r \rightarrow \infty}\frac {1}{r^p \psi ^{p-1} (r)}
\int_{B(x_0;r)}
|f|^{q}\, dv  =  \limsup\limits_{r \rightarrow \infty} \frac 1r \bigg( \frac {r}{\int_{B(x_0;r)}
|f|^{q}\, dv }
 \bigg)
\int_{B(x_0;r)}
|f|^{q}\, dv = 1\, .$
\end{proof}

\begin{theorem} 
For a given $q \in \mathbb R\, ,$ if $f$ has $p$-small growth then $f$ has $p$-mild growth. \label{T: 5.2}
\end{theorem}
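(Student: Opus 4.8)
The plan is to convert the statement ``$p$-small $\Rightarrow$ $p$-mild'' into a statement about a suitable choice of partition $\{r_j\}$ and then verify the divergence of the series in \eqref{5.2} by comparing it to the integral in \eqref{5.6}, which we are given diverges. First I would fix $x_0\in M$ witnessing the $p$-small growth, so that $\int_a^\infty\big(r/\int_{B(x_0;r)}|f|^q\,dv\big)^{1/(p-1)}\,dr=\infty$ for every $a>0$. If $\int_{B(x_0;r)}|f|^q\,dv=0$ for large $r$ the conclusion is immediate (the integrand is infinite on a tail, or one can take a degenerate partition), so assume $I(r):=\int_{B(x_0;r)}|f|^q\,dv>0$ for all large $r$. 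The natural choice of partition is the uniform one $r_j=\ell_0+j$ (or any fixed step $h$), so that $r_{j+1}-r_j=1$; this is legitimate because the definition of $p$-mild only requires the existence of \emph{some} strictly increasing sequence going to infinity, and it makes the numerator $(r_{j+1}-r_j)^p$ a constant.

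The key step is then the estimate
\[
\sum_{j=\ell_0}^{\infty}\Bigl(\frac{1}{\int_{B(x_0;r_{j+1})\setminus B(x_0;r_j)}|f|^q\,dv}\Bigr)^{1/(p-1)}
\ \ge\ \sum_{j=\ell_0}^{\infty}\Bigl(\frac{1}{\int_{B(x_0;r_{j+1})}|f|^q\,dv}\Bigr)^{1/(p-1)}
\ =\ \sum_{j=\ell_0}^{\infty}\Bigl(\frac{1}{I(r_{j+1})}\Bigr)^{1/(p-1)},
\]
using only that the annulus integral is bounded above by the full ball integral (and nonnegativity of $|f|^q$); note this requires $p>1$ so that $t\mapsto t^{1/(p-1)}$ is increasing and the implication goes the right way. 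Next I would compare this sum with the integral $\int_{\ell_0}^\infty (1/I(r))^{1/(p-1)}\,dr$. Since $I$ is nondecreasing in $r$, on each interval $[r_j,r_{j+1}]=[r_j,r_j+1]$ we have $I(r)\le I(r_{j+1})$, hence $(1/I(r))^{1/(p-1)}\ge (1/I(r_{j+1}))^{1/(p-1)}$, so integrating over the unit interval gives $\int_{r_j}^{r_{j+1}}(1/I(r))^{1/(p-1)}\,dr\ge (1/I(r_{j+1}))^{1/(p-1)}$. Summing over $j$ yields
\[
\sum_{j=\ell_0}^{\infty}\Bigl(\frac{1}{I(r_{j+1})}\Bigr)^{1/(p-1)}\ \ge\ \int_{\ell_0}^{\infty}\Bigl(\frac{1}{I(r)}\Bigr)^{1/(p-1)}\,dr.
\]

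Finally, I would relate $\int_{\ell_0}^\infty (1/I(r))^{1/(p-1)}\,dr$ to the $p$-small integral $\int_a^\infty (r/I(r))^{1/(p-1)}\,dr$. The two differ by the factor $r^{1/(p-1)}$, which is $\ge 1$ for $r\ge 1$; so for $\ell_0\ge 1$ we get $\int_{\ell_0}^\infty (1/I(r))^{1/(p-1)}\,dr \le \int_{\ell_0}^\infty (r/I(r))^{1/(p-1)}\,dr$, the \emph{wrong} direction for a direct comparison. The remedy, and what I anticipate to be the only real subtlety, is to instead bound things the other way or to localize: for $r$ in a bounded range the weight $r^{1/(p-1)}$ is comparable to a constant, and $p$-smallness forces divergence already on a sequence of such ranges — more cleanly, one applies the $p$-small hypothesis with the observation that if $\int_a^\infty (1/I(r))^{1/(p-1)}\,dr<\infty$ then, since $I$ is nondecreasing, one can split $[a,\infty)$ into dyadic blocks $[2^m,2^{m+1}]$ on which $r^{1/(p-1)}\le 2^{(m+1)/(p-1)}$ while the preceding block's contribution to $\int (1/I)^{1/(p-1)}$ controls it, contradicting $\int_a^\infty(r/I(r))^{1/(p-1)}\,dr=\infty$. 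So the main obstacle is precisely this comparison of the two weighted integrals; once it is handled, the chain of inequalities above shows the $p$-mild series diverges for the uniform partition, completing the proof. (Alternatively, and perhaps more in the spirit of the paper, one passes through Theorem~\ref{T: 5.1}: $p$-small $\Leftrightarrow$ $p$-moderate, and then proves $p$-moderate $\Rightarrow$ $p$-mild by choosing $r_j$ adapted to the moderating function $\psi$, turning $\int dr/(r\psi(r))=\infty$ into $\sum ((r_{j+1}-r_j)^p/\int_{annulus}|f|^q)^{1/(p-1)}=\infty$ via the same monotonicity comparisons.)
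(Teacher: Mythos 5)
Your reduction steps (annulus $\le$ ball, then comparing the sum to an integral by monotonicity of $I(r):=\int_{B(x_0;r)}|f|^q\,dv$) are fine, but the proof has a genuine gap exactly at the point you flag: with the uniform partition $r_{j+1}-r_j=1$ your chain only bounds the $p$-mild series from below by $\int_{\ell_0}^\infty I(r)^{-1/(p-1)}\,dr$, and to conclude you would need the implication ``$\int_a^\infty \bigl(r/I(r)\bigr)^{1/(p-1)}dr=\infty \Rightarrow \int_a^\infty I(r)^{-1/(p-1)}dr=\infty$.'' That implication is false: take $p=2$ and $I(r)=r^2\log r$ (nondecreasing), for which $\int^\infty (r/I(r))\,dr=\int^\infty \frac{dr}{r\log r}=\infty$ while $\int^\infty I(r)^{-1}dr=\int^\infty\frac{dr}{r^2\log r}<\infty$. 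Your proposed dyadic-block remedy does not repair this, because after bounding $r^{1/(p-1)}\le 2^{(m+1)/(p-1)}$ on $[2^m,2^{m+1}]$ and controlling each block of $\int I^{-1/(p-1)}$ by the preceding one, you are left summing a geometrically growing factor $2^{(m+1)/(p-1)}$ against a merely summable sequence, which need not converge; indeed the same $I(r)=r^2\log r$ defeats it. (In this example the $p$-mild series for the uniform partition does still diverge, but only because the annulus integrals are much smaller than the ball integrals — the step where you replaced the annulus by the full ball is what makes the uniform choice too lossy.)

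The paper's proof avoids this by choosing the sequence dyadically, $r_{j+1}=2r_j$, so that the step length is comparable to $r$ itself: $r_{j+1}-r_j=\tfrac14\,r_{j+2}=\tfrac12\,(r_{j+2}-r_{j+1})$. Then the numerator $(r_{j+1}-r_j)^p$ in \eqref{5.2} supplies precisely the weight $r^{1/(p-1)}$ that your comparison is missing, and after the same annulus-to-ball and monotonicity estimates one gets
\[
\sum_{j=\ell_0}^{\ell}\Bigl(\tfrac{(r_{j+1}-r_j)^p}{\int_{B(x_0;r_{j+1})\setminus B(x_0;r_j)}|f|^q dv}\Bigr)^{\frac1{p-1}}
\;\ge\; \frac{1}{2^{\frac{p+1}{p-1}}}\int_{r_{\ell_0+1}}^{r_{\ell+2}}\Bigl(\frac{r}{I(r)}\Bigr)^{\frac1{p-1}}dr,
\]
whose right-hand side diverges as $\ell\to\infty$ directly by the $p$-small hypothesis \eqref{5.6}. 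Your parenthetical alternative (passing through Theorem \ref{T: 5.1} and adapting $r_j$ to $\psi$) is not carried out, so as written it cannot close the gap either; the essential missing idea is the choice of a partition whose increments grow like $r$.
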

\begin{proof}
For a strictly
increasing sequence $\{r_j\}$ with $r_{j+1}=2r_j\, ,$ 
we obtain
 \begin{equation}\label{5.10}
 \begin{array}{rll}
 \sum\limits_{j=\ell_0}^{\ell}
\bigg(\frac{(r_{j+1}-r_j)^p}{\int_{B(x_0;r_{j+1})\backslash
B(x_0;r_{j})}|f|^q\, dv}\bigg)^{\frac1{p-1}}  & \ge \sum
\limits_{j=\ell_0}^{\ell} \bigg(\frac{r_{j+1}-r_j}{\int_{B(x_0;r_{j+1})}|f|^q\, dv} \cdot (r_{j+1}-r_j)^{p-1}\bigg)^{\frac1{p-1}}  \\
& = \sum
\limits_{j=\ell_0}^{\ell} \bigg(\frac{r_{j+1}-r_j}{\int_{B(x_0;r_{j+1})}|f|^q\, dv} \bigg)^{\frac1{p-1}} \cdot (r_{j+1}-r_j)  \\
& = \sum
\limits_{j=\ell_0}^{\ell} \bigg(\frac {\frac {1}{2^2}\, r_{j+2}}{\int_{B(x_0;r_{j+1})}|f|^q\, dv} \bigg)^{\frac1{p-1}} \cdot \frac 12 (r_{j+2}-r_{j+1})  \\
& \ge \sum
\limits_{j=\ell_0}^{\ell}  \frac {1}{2^\frac{p+1}{p-1}} \int _{r_{j +
1}}^{r_{j+2}} \bigg( \frac{r}{\int_{B(x_0;r)}|f|^q\, dv}
\bigg)^{\frac{1}{p-1}}dr\, \\ 
& = \frac {1}{2^\frac{p+1}{p-1}} \int _{r_{\ell_0 +
1}}^{r_{\ell+2}} \bigg( \frac{r}{\int_{B(x_0;r)}|f|^q\, dv}
\bigg)^{\frac{1}{p-1}}dr\, .\end{array}
 \end{equation}
 where we have applied the Mean Value Theorem for integrals to the forth step.
 
Now suppose 
 contrary, $f$ were $p$-severe. Letting $l\rightarrow\infty$ in \eqref{5.10} would imply
\[
\infty >  \sum\limits_{j=\ell_0}^{\ell}
\bigg(\frac{(r_{j+1}-r_j)^p}{\int_{B(x_0;r_{j+1})\backslash
B(x_0;r_{j})}|f|^q\, dv}\bigg)^{\frac1{p-1}}  \ge \frac {1}{2^\frac{p+1}{p-1}} \int _{r_{\ell_0 +
1}}^{\infty} \bigg( \frac {r}{\int_{B(x_0;r)}|f|^q\, dv}
\bigg)^{\frac{1}{p-1}}dr\, ,\] contradicting the hypothesis that $f$ has a $p$-small growth.
\end{proof}
\begin{theorem} For a given $q \in \mathbb R\, ,$ \begin{itemize}
 \item[(i)] If $f$ is \emph{$p$-acute}\, $($resp. \emph{$p$-mild}$)$ then $f$ is
\emph{$p$-severe\, }$($resp. \emph{$p$-obtuse}$)$. If $f$ is
\emph{$p$-severe\, }$($resp. \emph{$p$-moderate} or \emph{$p$-small}\, $)$
then $f$ is \emph{$p$-immoderate} and \emph{$p$-large}\, $($resp.
\emph{$p$-mild}$)$.

 \item[(ii)] Conversely, suppose $\int_{B(x_0;r)}|f|^{q}dv$
 is convex in $r$, then f is \emph{$p$-immoderate} or \emph{$p$-large} implies that
f is \emph{$p$-severe} or \emph{$p$-acute}. Hence, f is
\emph{$p$-severe}, \emph{$p$-acute}, \emph{$p$-immoderate}, and
\emph{$p$-large} are all equivalent. 

\end{itemize}\label{T: 5.3}
\end{theorem}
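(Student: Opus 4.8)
\textbf{Plan for the proof of Theorem \ref{T: 5.3}.}

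The plan is to treat the two items separately, with item (i) being a chain of implications that follow by elementary comparison of the defining integrals in Definition \ref{D: 5.1}, and item (ii) being the genuinely new content, where convexity of $r \mapsto \int_{B(x_0;r)}|f|^q\,dv$ is used as a converse device. For item (i), first I would recall that Theorem \ref{T: 5.1} already gives the equivalence $p$-moderate $\Leftrightarrow$ $p$-small (hence $p$-immoderate $\Leftrightarrow$ $p$-large), and Theorem \ref{T: 5.2} already gives $p$-small $\Rightarrow$ $p$-mild. So the only implications left in (i) are: $p$-mild $\Rightarrow$ $p$-obtuse (equivalently, by contraposition, $p$-acute $\Rightarrow$ $p$-severe); and $p$-severe $\Rightarrow$ $p$-immoderate (equivalently $p$-moderate $\Rightarrow$ $p$-mild, which also combines with Theorem \ref{T: 5.1} to give $p$-moderate or $p$-small $\Rightarrow$ $p$-mild). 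For $p$-mild $\Rightarrow$ $p$-obtuse I would compare the telescoping sum $\sum_j\bigl((r_{j+1}-r_j)^p / \int_{B(x_0;r_{j+1})\setminus B(x_0;r_j)}|f|^q\,dv\bigr)^{1/(p-1)}$ with the integral $\int_a^\infty\bigl(1/\int_{\partial B(x_0;r)}|f|^q\,ds\bigr)^{1/(p-1)}\,dr$: writing the integral over the annulus as $\int_{r_j}^{r_{j+1}}\bigl(\int_{\partial B(x_0;r)}|f|^q\,ds\bigr)dr$, one uses the power-mean / Jensen inequality in the form $(r_{j+1}-r_j)^p / \int_{r_j}^{r_{j+1}}\Phi(r)\,dr \le \int_{r_j}^{r_{j+1}}\Phi(r)^{-1/(p-1)}\,dr$ raised to the appropriate power — i.e. the reverse-Hölder estimate relating $\|\Phi\|_{L^1}^{-1}$ and $\|\Phi^{-1/(p-1)}\|_{L^1}$ on an interval — so that a divergent $p$-mild sum forces the $p$-obtuse integral to diverge. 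For $p$-severe $\Rightarrow$ $p$-immoderate, I would argue by contraposition: if $f$ is $p$-moderate then by Theorem \ref{T: 5.1} it is $p$-small, and by Theorem \ref{T: 5.2} it is $p$-mild, hence not $p$-severe.

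For item (ii), the claim is a converse to (i) under the convexity hypothesis, and the cleanest route is to show: convexity plus \emph{not} $p$-severe (i.e. $p$-mild) $\Rightarrow$ $p$-small, and then invoke Theorem \ref{T: 5.1} to close the cycle $p$-small $\Rightarrow$ $p$-moderate $\Rightarrow$ (nothing new) and $p$-small $\Rightarrow$ $p$-mild $\Rightarrow$ $p$-obtuse, so all four ``severe/acute/immoderate/large'' notions and their opposites coincide. Concretely, set $\Phi(r) = \int_{\partial B(x_0;r)}|f|^q\,ds$ and $I(r) = \int_{B(x_0;r)}|f|^q\,dv = \int_0^r \Phi$; convexity of $I$ means $\Phi = I'$ is monotone nondecreasing (a.e.). Then on the dyadic scale $r_{j+1} = 2r_j$ one gets $\int_{B(x_0;r_{j+1})\setminus B(x_0;r_j)}|f|^q\,dv = I(r_{j+1}) - I(r_j) \le I(r_{j+1})$ but also, by monotonicity of $\Phi$, comparisons between $I(r_{j+1})$ on consecutive annuli that let one pass back and forth between the $p$-mild sum and the $p$-small integral $\int_a^\infty \bigl(r/I(r)\bigr)^{1/(p-1)}\,dr$ with only dyadic constants lost — essentially running the estimate \eqref{5.10} of Theorem \ref{T: 5.2} in reverse, which is exactly where convexity is needed (without it, $I(r_{j+1}) - I(r_j)$ cannot be controlled below by something comparable to $I(r_{j+1})$ on the right scale).

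I expect the main obstacle to be the reverse direction in item (ii): making the comparison between the discrete $p$-mild series and the continuous $p$-small integral tight in \emph{both} directions. The forward bound (Theorem \ref{T: 5.2}) only needed $I$ nondecreasing; the converse needs the increments $I(r_{j+1}) - I(r_j)$ to not be too small relative to $I(r_{j+1})$, and it is precisely convexity of $I$ — equivalently monotonicity of the boundary integral $\Phi$ — that supplies $I(r_{j+1}) - I(r_j) = \int_{r_j}^{r_{j+1}}\Phi \ge (r_{j+1}-r_j)\Phi(r_j) \ge \tfrac{1}{2}(r_{j+2}-r_{j+1})\Phi(r_j) \ge \tfrac{1}{2}\int_{r_{j+1}}^{r_{j+2}}\Phi \cdot (\text{ratio bound})$, and hence a lower bound for the annular mass in terms of the next annulus. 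Assembling these dyadic comparisons, applying the Mean Value Theorem for integrals as in \eqref{5.10}, and bookkeeping the constants $2^{(p+1)/(p-1)}$-type factors so that divergence of one series/integral is genuinely equivalent to divergence of the other, is the technical heart; once that equivalence $p\text{-mild} \Leftrightarrow p\text{-small}$ (under convexity) is in hand, Theorems \ref{T: 5.1} and \ref{T: 5.2} and part (i) immediately collapse the four properties into one.
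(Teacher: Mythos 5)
Your treatment of part (i) is essentially the paper's own argument: H\"older's inequality applied to $t-s=\int_s^t (I')^{1/p}(I')^{-1/p}dr$ with $I(r)=\int_{B(x_0;r)}|f|^q dv$ gives exactly the estimate $\bigl((r_{j+1}-r_j)^p/\int_{B(x_0;r_{j+1})\backslash B(x_0;r_j)}|f|^q dv\bigr)^{1/(p-1)}\le\int_{r_j}^{r_{j+1}}(1/I')^{1/(p-1)}dr$, which after summing and the coarea formula yields $p$-mild $\Rightarrow$ $p$-obtuse, and the second assertion is the same contraposition via Theorems 5.1 and 5.2 (the paper additionally regularizes with $(f^2+\epsilon)^{q/2}$ and removes $\epsilon$ by dominated convergence, a technicality you omit).

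Part (ii), however, has a genuine gap. Your plan is to prove, under convexity, that $p$-mild $\Rightarrow$ $p$-small, i.e.\ $p$-large $\Rightarrow$ $p$-severe. Even granting that step, your chain only closes the loop among $p$-severe, $p$-immoderate and $p$-large: from (i) you have $p$-acute $\Rightarrow$ $p$-severe, but nowhere do you obtain $p$-severe $\Rightarrow$ $p$-acute (equivalently $p$-obtuse $\Rightarrow$ $p$-mild, or $p$-large $\Rightarrow$ $p$-acute), so the asserted equivalence of all four properties is not established; your own summary ``$p$-small $\Rightarrow$ $p$-mild $\Rightarrow$ $p$-obtuse'' never returns from the obtuse/acute pair. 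Moreover the dyadic reverse-engineering of \eqref{5.10} is itself shaky: $p$-mild only guarantees divergence of the sum for \emph{some} given sequence $\{r_j\}$, so you may not assume $r_{j+1}=2r_j$, and in your sketched chain the last comparison goes the wrong way, since monotonicity of $\Phi=I'$ gives $(r_{j+2}-r_{j+1})\Phi(r_j)\le\int_{r_{j+1}}^{r_{j+2}}\Phi$, not $\ge$. The paper's proof of (ii) is a one-line pointwise comparison that supplies precisely the missing link: since $I(0)=0$ and $I$ is convex, comparing the slope of the tangent with the slope of the chord through the origin gives $I'(r)\ge I(r)/r$ a.e., hence $(1/I'(r))^{1/(p-1)}\le (r/I(r))^{1/(p-1)}$ pointwise; if $f$ is $p$-large the right-hand integral is finite for some $a$, so by the coarea formula $\int_a^\infty\bigl(1/\int_{\partial B(x_0;r)}|f|^q ds\bigr)^{1/(p-1)}dr<\infty$ and $f$ is $p$-acute. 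Thus $p$-large (or $p$-immoderate, via Theorem 5.1) $\Rightarrow$ $p$-acute, which together with (i) and Theorem 5.1 makes all four properties equivalent. You should replace your dyadic argument by this tangent--chord inequality (or at least add the implication $p$-large $\Rightarrow$ $p$-acute by some means); as it stands the conclusion of (ii) does not follow from your outline.
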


\begin{proof}\quad(i) $\forall\,
0<s<t$, by the H\"older inequality, one gets:
\[
\begin{array}{rll}
t-s&=\int_{s}^{t}\big (\frac{d}{dr}\int_{B(x_0;r)}(f^2+\E)^{\frac{q}{2}}dv\big )^{\frac{1}{p}}\big (\frac{1}{\frac{d}{dr}\int_{B(x_0;r)}(f^2+\E)^{\frac{q}{2}}dv}\big )^{\frac{1}{p}}dr\\
&\le\bigg(\int_{s}^{t}\big (\frac{d}{dr}\int_{B(x_0;r)}(f^2+\E)^{\frac{q}{2}}dv\big )dr\bigg)^{\frac{1}{p}}\bigg(\int_{s}^{t}\big (\frac{1}{\frac{d}{dr}\int_{B(x_0;r)}(f^2+\E)^{\frac{q}{2}}dv}\big )^{\frac{1}{p-1}}dr\bigg)^{\frac{p-1}{p}}.\\
\end{array}
\]

Taking this to the power of $\frac {p}{p-1}$ on both sides, we have
\begin{equation}
\begin{array}{rll}
\bigg(\frac{(t-s)^p}{\int_{B(x_0;t)\backslash
B(x_0;s)}(f^2+\E)^{\frac{q}{2}}dv }\bigg)^{\frac 1{p-1}}\le
\int^t_s\big(\frac{1}{\frac{d}{dr}\int_{B(x_0;r)}(f^2+\E)^{\frac{q}{2}}}\big)^\frac{1}{p-1}dr\, .
\end{array}    \label{5.11}
\end{equation}

Hence, for every strictly increasing sequence of $\{r_j\}$ and every
$r_{\ell_0}>a$, setting $s=r_i, t=r_{i+1}$ and summing over $i$ from
$\ell_0$ to $\ell$ and letting $\ell\rightarrow \infty$, one gets:
\begin{equation}
\begin{array}{rll}
\sum\limits^{\infty}_{i=\ell_0}\bigg(\frac{(r_{i+1}-r_{i})^p}{\int_{B(x_0;r_{i+1})\backslash
B(x_0;s)}(f^2+\epsilon)^{\frac{q}{2}}dv}\bigg)^\frac{1}{p-1}\le
\int^{\infty}_{a}\big(\frac{1}{\frac{d}{dr}\int_{B(x_0;r)}|f|^q}\big)^\frac{1}{p-1}dr\, .
\end{array}    \label{5.12}
\end{equation}
Let $\E\rightarrow 0$, we get the desired first assertion by the
coarea formula and the Dominated Convergence Theorem. The proof of
the second assertion follows from Theorem \ref{T: 5.1} and Theorem \ref{T:
 5.2}. \\
(ii) By the convexity assumption, comparing the slope of the tangent and the slope of the chord at two distinct points (analogous to \cite {Gr}), we have  $$r > 0\, \Rightarrow \frac{d}{dr}\int_{B(x_0;r)}|f|^{q}dv
\ge \frac {\int_{B(x_0;r)}|f|^{q}dv - \int_{B(x_0;0)}|f|^{q}dv}{r-0} \, \operatorname{a.e.}\, .$$    Hence, $
\int_{a}^{\infty}\big(\frac{1}{\frac{d}{dr}\int_{B(x_0;r)}|f|^{q}dv}\big)^{\frac{1}{p-1}}dr
\le\int_{a}^{\infty}\big(\frac{r}{\int_{B(x_0;r)}|f|^{q}dv}\big)^{\frac{1}{p-1}}dr
< \infty$. The assertion follows from the coarea formula.
\end{proof}

\begin{proof}[{\bf Proof of Theorem 5.4}]
This follows at once from Theorems \ref{T: 5.1}, \ref{T: 5.2} and \ref{T: 5.3}.\end{proof}

\begin{definition5.3}  A function or differential form or bundle-valued differential form $f$ on $M$ is said to {\it belong to $L^p(M)$} $($or, simply, is $L^p)$ , denoted by $f \in L^p(M)$ if $ \int_{M} |f|^p dv < \infty\, .$ 
\end{definition5.3}

\begin{corollary}\label{C: 5.1}
 Every $L^q$ function or differential form or bundle-valued differential form $f$ on $M$ has \emph{$p$-balanced} growth, $p \ge 0\, ,$ and in fact, has \emph{$p$-finite},
\emph{$p$-mild}, \emph{$p$-obtuse}, \emph{$p$-moderate}, and
\emph{$p$-small} growth, $p \ge 0\, ,$  for the same value of $q$
\end{corollary}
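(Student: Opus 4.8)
The plan is to reduce the claim to a single, easy estimate: if $f \in L^q(M)$ then $V(r) := \int_{B(x_0;r)} |f|^q\, dv$ is a bounded, nondecreasing function of $r$, with $V(r) \uparrow \int_M |f|^q\, dv =: L < \infty$ as $r \to \infty$. Fix any $x_0 \in M$ (the definitions of the growth conditions only require the existence of such a point). From boundedness of $V$ I would verify each of the five ``balanced'' growth conditions in Definition 5.1 directly. For \emph{$p$-finite} growth \eqref{5.1}: $\frac{1}{r^p} V(r) \le \frac{L}{r^p} \to 0$ as $r \to \infty$ (for $p>0$; for $p=0$ the quantity is just $\le L < \infty$), so $\liminf_{r\to\infty} \frac{1}{r^p} V(r) < \infty$. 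For \emph{$p$-moderate} growth \eqref{5.4}: take $\psi \equiv 1 \in \mathcal F$ (since $\int_a^\infty \frac{dr}{r} = \infty$); then $\limsup_{r\to\infty} \frac{1}{r^p} V(r) \le L < \infty$ when $p \ge 0$. By Theorem \ref{T: 5.1}, \emph{$p$-moderate} is equivalent to \emph{$p$-small}, so \eqref{5.6} holds as well; and by Theorem \ref{T: 5.2}, \emph{$p$-small} implies \emph{$p$-mild}, giving \eqref{5.2}; and by Theorem \ref{T: 5.3}(i), \emph{$p$-mild} implies \emph{$p$-obtuse}, giving \eqref{5.3}. Thus $f$ has all five of \emph{$p$-finite}, \emph{$p$-mild}, \emph{$p$-obtuse}, \emph{$p$-moderate}, \emph{$p$-small} growth for the same value of $q$, hence is \emph{$p$-balanced} by Definition \ref{D: 5.2}.

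Alternatively, one can prove \emph{$p$-small} growth \eqref{5.6} by hand to make the chain self-contained: since $V(r) \le L$, we have $\left(\frac{r}{V(r)}\right)^{1/(p-1)} \ge \left(\frac{r}{L}\right)^{1/(p-1)}$, and $\int_a^\infty \left(\frac{r}{L}\right)^{1/(p-1)}\, dr = \infty$ because the exponent $\frac{1}{p-1} \ge 0$ for $p > 1$ (the cases $0 \le p \le 1$ require a separate, equally trivial bookkeeping, or one notes the growth hierarchy is only asserted for the regime where the integrals make sense). Then invoke Theorems \ref{T: 5.1}, \ref{T: 5.2}, \ref{T: 5.3} as above to cascade to the remaining four conditions, and \emph{$p$-finite} growth is immediate as already noted. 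Either route works; I would present the first, since it maximally reuses the structural theorems \ref{T: 5.1}--\ref{T: 5.3} and therefore extends verbatim to the bundle-valued setting, where $|\Omega|^q = \langle \Omega, \Omega\rangle^{q/2}$ and $V(r) = \int_{B(x_0;r)} |\Omega|^q\, dv$ is again bounded by $\int_M |\Omega|^q\, dv < \infty$.

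The only genuine subtlety — and it is minor — is the treatment of the exponent range in $p$ and the convention for $p = 1$, where $\frac{1}{p-1}$ is undefined; here I would follow the paper's standing convention (the growth notions as written presuppose $p > 1$ for the integral-type conditions \eqref{5.2}, \eqref{5.3}, \eqref{5.6}, while \eqref{5.1} and \eqref{5.4} make sense for all $p \ge 0$) and simply record that for every admissible $p \ge 0$ the bound $V(r) \le \int_M |f|^q\, dv$ forces each relevant condition. No compactness, no pole, and no curvature hypothesis is needed; the statement is purely measure-theoretic. I do not expect any real obstacle, only careful citation of Theorems \ref{T: 5.1}--\ref{T: 5.3} to avoid re-deriving the implications among the five growth types.
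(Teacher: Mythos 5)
Your proposal is correct and follows essentially the same route as the paper: both rest on the single observation that $\int_{B(x_0;r)}|f|^q\,dv \le \int_M|f|^q\,dv < \infty$, verify $p$-finite plus one anchor condition directly (the paper checks $p$-small by the same computation as your alternative route, while your primary route checks $p$-moderate with $\psi\equiv 1$, a trivially equivalent variant), and then cascade to the remaining conditions via Theorem 5.4, i.e.\ Theorems \ref{T: 5.1}--\ref{T: 5.3}. No substantive difference.
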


\begin{proof}
Since $f$ is $L^q$ on $M$, by Definition 5.3, we may assume $\int_{M}|f|^{q}\, dv = \tilde K$ for some constant $\tilde K < \infty\, .$ It follows from Definition 5.1 that every $L^q$ function or differential form or bundle-valued differential form $f$ on $M$ is both \emph{$p$-finite} and \emph{$p$-small}, $p \ge 0\, .$ Indeed, 
\[ \begin{aligned} \liminf_{r\rightarrow\infty}\frac{1}{r^p}\int_{B(x_0;r)}|f|^{q}\, dv & \le \liminf_{r\rightarrow\infty}\frac{1}{r^p}\tilde K = 0 
<\infty \\
\operatorname{and}\qquad\int _{a}^{\infty}\bigg(\frac{r}{\int_{B(x_0;r)}|f|^{q}\, dv}\bigg)^{\frac1{p-1}} & \ge \int _{a}^{\infty}\bigg(\frac{r}{\tilde K}\bigg)^{\frac1{p-1}}
dr = \infty\, . \end{aligned}
\]
Now the assertion follows from Definition 5.2 and Theorem 5.4.
\end{proof}

\section{Generalized harmonic forms with values in vector bundle, decomposition, integrability and growth}
By Stokes' theorem, a smooth
differential form $\omega$ on a compact Riemannian manifold is harmonic if and only if it is closed and co-closed. That is,  
\begin{equation}\label{6.1} \Delta \omega
= 0 \qquad {\rm if}\quad {\rm and}\quad {\rm only}\quad {\rm
if}\qquad d\, \omega = 0\quad {\rm and}\quad d^{\star}\omega = 0.
\end{equation}

The {\it exterior
differential operator} $d^\nabla :A^k(\xi )\rightarrow
A^{k+1}(\xi )$ relative to the connection $\nabla ^E$ is given by
\begin{equation}
\aligned
 d^\nabla \sigma \,
(X_1,...,X_{k+1})=&\sum_{i=1}^{k+1}(-1)^{i+1}\nabla
_{X_i}^E(\sigma (X_1,...,\widehat{X}_i,...,X_{k+1})) \\
&+\sum_{i<j}(-1)^{i+j}\sigma ([X_i,X_j],X_1,...,\widehat{X}_i,...,\widehat{X}%
_j,...,X_{k+1})\, ,\endaligned \label{6.2}
\end{equation}
where the symbols covered by $\, \, \widehat{}\, \, $ are omitted. Since the
Levi-Civita connection on $TM$ is torsion-free, we also have
\begin{equation}
(d^\nabla \sigma
)(X_1,...,X_{k+1})=\sum_{i=1}^{k+1}(-1)^{i+1}(\nabla _{X_i}\sigma
)(X_1,...,\widehat{X}_i,...,X_{k+1})\, .  \label{6.3}
\end{equation}

For two forms $\Omega ,\Omega ^{\prime }\in A^k(\xi )$, the
induced inner product is defined as follows:
\begin{equation}
\aligned
 \langle\Omega ,\Omega ^{\prime }\rangle&=\sum_{i_1<\cdots <i_k}\langle\Omega
(e_{i_1},...,e_{i_k}),\Omega ^{\prime }(e_{i_1},...,e_{i_k})\rangle_E \\
&=\frac 1{k!}\sum_{i_1,...,i_k}\langle\Omega
(e_{i_1},...,e_{i_k}),\Omega ^{\prime
}(e_{i_1},...,e_{i_k})\rangle_E\, ,
\endaligned\label{6.4}
\end{equation}
where $\{e_1, \cdots e_n\}$ is a local orthonormal frame field on $(M,g)\, .$ For $\Omega \in A^k(\xi )$, set $|\Omega|^2 = \langle \Omega, \Omega \rangle$ defined as in \eqref{6.4}. 
\smallskip

\begin{definition6.1} A bundle-valued differential form $\Omega\in A^k(\xi )$ on $M$ is said to satisfy 
{\bf Condition W} if 
\begin{equation}\label{6.5}
\aligned 
&|\langle d(|\Omega|^2)\wedge\Omega, d^\nabla\Omega\rangle | \leq 2|\Omega|^2|d^\nabla \Omega|^2\\
&\big |\langle d(| \star \Omega|^2)\wedge  \star\Omega, d^\nabla \star\Omega\rangle\big | \leq 2| \star\Omega|^2|d^\nabla  \star\Omega|^2\, .
\endaligned
\end{equation}\label{D:6.1}
\end{definition6.1}

Relative to the Riemannian structures of $E$ and $TM$, the
{\it codifferential operator} $\delta ^\nabla :A^k(\xi )\rightarrow
A^{k-1}(\xi )$ is characterized as the adjoint of $d$ via the
formula
$$
\int_M\langle d^\nabla \sigma ,\rho \rangle dv_g=\int_M\langle
\sigma ,\delta ^\nabla \rho\rangle dv_g\, ,
$$
where $\sigma \in A^{k-1}(\xi ),\rho \in A^k(\xi )$ , one of which
has compact support, and $dv_g$ is the volume element associated with the metric $g$ on $TM\, .$ Then
\begin{equation}
(\delta ^\nabla \rho )(X_1,...,X_{k-1})=-\sum_i(\nabla
_{e_i}\rho )(e_i,X_1,...,X_{k-1})\, .\label{6.6}  
\end{equation}

Let $\Delta ^\nabla: A^k(\xi) \to A^k(\xi) $ be given by \begin{equation}\Delta ^\nabla= -(d^\nabla \delta^\nabla + \delta^\nabla d^\nabla)\, .\label{6.7}\end{equation}

\begin{definition6.2}
$\Omega \in A^k(\xi )$ is said to be a {\it harmonic form with values in the vector bundle} $\xi :E\rightarrow M\, ,$ if $\Delta ^\nabla \Omega = 0$, and 
a {\it generalized harmonic form with values in the vector bundle} $\xi :E\rightarrow M\, ,$ if $\langle \Omega, \Delta^\nabla \Omega \rangle \ge 0$. 
\end{definition6.2}

\begin{theorem}$($Duality Theorem$)$\label{T: 6.1} $\quad (i)$ $\Omega \in A^k(\xi )$ is harmonic if and only if $\star\, \Omega \in A^{n-k}(\xi )\, $ is harmonic. $\quad (ii)$ $\Omega \in A^k(\xi )$ has $2$-balanced growth on $M$, for $q=2\, ,$ or for $1 < q(\ne2) < 3$ with $\Omega$ satisfying $\operatorname{Condition} \operatorname{W}$ $\big ($see \eqref{6.5}$\big ),$
if and only if  $\, \star\, \Omega \in A^{n-k}(\xi )\, $ has $2$-balanced growth on $M$, for $q=2\, ,$ or for $1 < q(\ne2) < 3$ with $\star\, \Omega$ satisfying $\operatorname{Condition} \operatorname{W}$ $\big ($see \eqref{6.5}$\big ).\quad (iii)$   
$\star\, \Omega$ is a solution of $\langle \star\, \Omega, \Delta \star\, \Omega\rangle \ge 0$ on $M$ if and only if $\star\, \Omega$ is closed and co-closed. 
\end{theorem}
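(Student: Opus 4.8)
The plan is to establish the three assertions by reducing everything to properties of the Hodge star operator $\star : A^k(\xi) \to A^{n-k}(\xi)$ and to the Unity Theorem (Theorem 6.2), which will be available once it is proved. For part $(i)$, I would first recall that $\star$ is (up to sign) an isometry of fibers and that, on a Riemannian vector bundle, $\Delta^\nabla$ commutes with $\star$: this is the identity $\star \Delta^\nabla = \Delta^\nabla \star$, which follows from $\delta^\nabla = \pm \star d^\nabla \star$ together with $\star\star = (-1)^{nk+k+1}$ exactly as in the classical (trivial-bundle) Hodge theory, the connection terms causing no trouble because $\nabla^E$ is metric. Hence $\Delta^\nabla \Omega = 0$ if and only if $\Delta^\nabla (\star\,\Omega) = 0$, which is $(i)$.

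For part $(ii)$, the key observation is that $|\star\,\Omega| = |\Omega|$ pointwise, since $\star$ is a fiberwise isometry. Therefore for every $x_0 \in M$, every $r > 0$, and every $q \in \mathbb R$, the integrals $\int_{B(x_0;r)} |\Omega|^q\, dv$ and $\int_{B(x_0;r)} |\star\,\Omega|^q\, dv$ coincide, and likewise the boundary integrals over $\partial B(x_0;r)$ agree. Consequently each of the growth conditions \eqref{5.1}--\eqref{5.6} in Definition 5.1 holds for $\Omega$ if and only if it holds for $\star\,\Omega$ with the same $q$; in particular $\Omega$ has $2$-balanced growth if and only if $\star\,\Omega$ does. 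It remains only to check that $\Omega$ satisfies Condition W $\eqref{6.5}$ if and only if $\star\,\Omega$ does: but the two inequalities in \eqref{6.5} are visibly interchanged under $\Omega \mapsto \star\,\Omega$ once one uses $\star\star\Omega = \pm\,\Omega$ and $|\star\star\Omega| = |\Omega|$, so the conjunction is symmetric in $\Omega$ and $\star\,\Omega$. This proves $(ii)$.

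For part $(iii)$, I would apply Theorem 6.2 (the Unity Theorem) to the bundle-valued form $\star\,\Omega \in A^{n-k}(\xi)$ in place of $\Omega$. By part $(ii)$, the hypothesis of Theorem 6.2 on $\star\,\Omega$ — namely that it has $2$-balanced growth, for $q=2$, or for $1 < q(\ne 2) < 3$ with $\star\,\Omega$ satisfying Condition W — is exactly the hypothesis made on $\Omega$. Theorem 6.2 then gives the equivalence of its six statements for $\star\,\Omega$; in particular statement (i) for $\star\,\Omega$, which reads $\langle \star\,\Omega, \Delta^\nabla \star\,\Omega\rangle \ge 0$, is equivalent to statement (iii) for $\star\,\Omega$, which reads $d^\nabla \star\,\Omega = \delta^\nabla \star\,\Omega = 0$, i.e. $\star\,\Omega$ is closed and co-closed. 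This is precisely $(iii)$.

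The only genuine point requiring care — and the main (mild) obstacle — is the commutation $\star \Delta^\nabla = \Delta^\nabla \star$ in the bundle-valued setting and the correct bookkeeping of the signs $(-1)^{nk+k+1}$ arising from $\star\star$ when passing between degree $k$ and degree $n-k$; everything else is a direct translation of the classical arguments using only that $\star$ is a fiberwise isometry and that Condition W is self-dual. Since the metric connection $\nabla^E$ is compatible with the inner product on $E$, the codifferential $\delta^\nabla$ is still given by $\pm\star d^\nabla\star$, so no new phenomenon occurs and the reduction to Theorem 6.2 goes through verbatim.
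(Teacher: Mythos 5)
Your parts (i) and (ii) are correct and supply exactly the reasoning the paper leaves implicit (Theorem \ref{T: 6.1} is stated without an explicit proof and then used as a tool): $\star$ is a fiberwise isometry, so $|\star\,\Omega|=|\Omega|$ and every growth integral in Definition 5.1 is the same for $\Omega$ and $\star\,\Omega$; $\delta^\nabla=\pm\,\star d^\nabla\star$ together with $\star\,\star=\pm 1$ gives $\star\,\Delta^\nabla=\Delta^\nabla\star$; and Condition W \eqref{6.5} is symmetric under $\Omega\mapsto\star\,\Omega$ since $\star\,\star\,\Omega=\pm\,\Omega$ and both inequalities are unchanged by an overall sign. (Note also that (iii) must be read with the growth hypotheses of (ii) in force; without them the equivalence fails, as the Euclidean examples in the introduction show.)

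The genuine problem is your part (iii). You obtain it by applying the Unity Theorem (Theorem \ref{T: 6.2}) to $\star\,\Omega$, but in the paper the proof of Theorem \ref{T: 6.2} cites the Duality Theorem exactly to get the starred equivalences (ii) $\Leftrightarrow$ (v) and (iv) $\Leftrightarrow$ (v) $\Leftrightarrow$ (vi); in particular (iv) $\Leftrightarrow$ (vi) for $\star\,\Omega$ is precisely your statement (iii). So, spliced into the paper as written, your argument is circular. The repair is small and is surely what is intended: the only ingredient you need is the self-contained cut-off/integration-by-parts step in the proof of Theorem \ref{T: 6.2} (the implication (i) $\Rightarrow$ (iii) there, the converse being trivial), which makes no use of duality and applies verbatim to any form in $A^{m}(\xi)$ with the stated growth. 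Prove (iii) by running that argument directly on $\star\,\Omega$, using your part (ii) to transfer the $2$-balanced growth for $q=2$ (and, for $1<q(\ne 2)<3$, Condition W) from $\Omega$ to $\star\,\Omega$, rather than by quoting Theorem \ref{T: 6.2} as a black box.
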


\begin{theorem}$($Unity Theorem$)$
If a bundle-valued differential $k$-form $\Omega \in A^k(\xi )$ has $2$-balanced growth, for $q=2\, ,$ or for $1 < q(\ne2) < 3$ with $\Omega$ satisfying $\operatorname{Condition} \operatorname{W}$, then the following six statements 
$($i$)$ $\langle\Omega, \Delta ^\nabla \Omega\rangle \ge 0\,,$ $($ii$)$ $\Delta ^\nabla \Omega = 0\, ,$ $($iii$)$ $d^\nabla\, \Omega = \delta^{^\nabla}\Omega = 0\, ,$ $($iv$)$ $\langle \star\, \Omega, \Delta ^\nabla \star\, \Omega\rangle \ge 0\, ,$
$($v$)$ $\Delta ^\nabla \star\, \Omega = 0\, ,$ $($vi$)$ $d^\nabla\,\star\, \Omega = \delta^{\nabla\,} \star\, \Omega = 0$  are equivalent.
\label{T: 6.2}
\end{theorem}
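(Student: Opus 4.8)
\textbf{Proof plan for the Unity Theorem (Theorem \ref{T: 6.2}).}

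The plan is to prove the six-way equivalence by exhibiting a cycle of implications together with the duality symmetry supplied by Theorem \ref{T: 6.1}. The backbone is the Bochner--Weitzenb\"ock--type integral identity: for $\Omega \in A^k(\xi)$ and any suitable cutoff, one integrates $\langle \Omega, \Delta^\nabla \Omega\rangle = -\langle \Omega, (d^\nabla\delta^\nabla + \delta^\nabla d^\nabla)\Omega\rangle$ against $\varphi^2$, where $\varphi = \varphi(r)$ is a Lipschitz cutoff, and moves one derivative onto $\varphi^2\Omega$ via the adjointness defining $\delta^\nabla$ (the formula displayed just before \eqref{6.6}). This produces
\[
\int_M \varphi^2\big(|d^\nabla\Omega|^2 + |\delta^\nabla\Omega|^2\big)\,dv = -\int_M \varphi^2\langle\Omega,\Delta^\nabla\Omega\rangle\,dv + \int_M \big(\text{terms linear in } d\varphi^2\big)\,dv,
\]
and the cross terms are controlled by Cauchy--Schwarz together with the hypothesis: either $q=2$, where the cross terms are $\langle d\varphi^2\wedge\Omega, d^\nabla\Omega\rangle$-type expressions bounded directly, or $1<q(\ne2)<3$, where Condition W \eqref{6.5} is exactly what is needed to absorb the extra factor of $|\Omega|^{q-2}$ arising from differentiating $|\Omega|^q$ rather than $|\Omega|^2$. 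The $2$-balanced growth hypothesis is then invoked through Theorem 5.4 / Corollary \ref{C: 5.1}: balanced growth lets one choose a sequence of cutoffs $\varphi_j \to 1$ for which the boundary-type integrals tend to $0$, exactly as in the proof strategy for Theorems \ref{T: 5.1}--\ref{T: 5.3}.

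First I would establish (i) $\Rightarrow$ (iii): assuming $\langle\Omega,\Delta^\nabla\Omega\rangle \ge 0$, the displayed identity forces $\int_M\varphi_j^2(|d^\nabla\Omega|^2+|\delta^\nabla\Omega|^2) \le (\text{vanishing remainder})$, hence letting $j\to\infty$ gives $d^\nabla\Omega = \delta^\nabla\Omega = 0$. The implication (iii) $\Rightarrow$ (ii) is immediate from the definition \eqref{6.7} of $\Delta^\nabla$, and (ii) $\Rightarrow$ (i) is trivial since $\Delta^\nabla\Omega = 0$ gives $\langle\Omega,\Delta^\nabla\Omega\rangle = 0 \ge 0$. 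This closes the cycle among (i), (ii), (iii). Next I would pass to the starred statements: by Theorem \ref{T: 6.1}(ii), $\Omega$ has $2$-balanced growth (with Condition W when $q\ne2$) if and only if $\star\,\Omega$ does, and $\star$ is a fiberwise isometry with $\star\star = (-1)^{nk+k+1}$, so applying the already-proved equivalence (i) $\Leftrightarrow$ (ii) $\Leftrightarrow$ (iii) to $\star\,\Omega$ in place of $\Omega$ yields (iv) $\Leftrightarrow$ (v) $\Leftrightarrow$ (vi). Finally, to link the two triples, I would use that $\Delta^\nabla$ commutes with $\star$ up to sign (equivalently, Theorem \ref{T: 6.1}(i): $\Omega$ harmonic $\Leftrightarrow$ $\star\,\Omega$ harmonic), which gives (ii) $\Leftrightarrow$ (v) directly, soldering the two cycles into a single six-way equivalence.

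The main obstacle is the absorption of the cross terms in the $q\ne2$ range and the verification that the remainder integrals genuinely vanish along a balanced sequence of cutoffs. Concretely, after integration by parts one is left with terms of the shape $\int_M \varphi\,\langle d\varphi\wedge\Omega, d^\nabla\Omega\rangle\,dv$ and, when working with $|\Omega|^q$, additional terms involving $|\Omega|^{q-2}\langle d(|\Omega|^2)\wedge\Omega, d^\nabla\Omega\rangle$; bounding the latter by $2|\Omega|^q|d^\nabla\Omega|^2$ is precisely Condition W \eqref{6.5}, and one must check the constant and sign work out so that a genuine fraction of $\int\varphi^2|d^\nabla\Omega|^2$ is retained on the left (this is where the restriction $1<q<3$ enters, keeping the Young's-inequality exponents admissible). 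The balanced-growth machinery of Section 5 then guarantees a choice of $\{r_j\}$ and piecewise-linear $\varphi_j$ supported on $B(x_0;r_{j+1})$, equal to $1$ on $B(x_0;r_j)$, with $|d\varphi_j| \le (r_{j+1}-r_j)^{-1}$, for which $\sum_j \big((r_{j+1}-r_j)^{-p}\int_{B_{j+1}\setminus B_j}|\Omega|^q\big)^{1/(p-1)} = \infty$ forces the relevant remainder to have a subsequence tending to $0$; I would carry this out with $p=2$, citing Theorem 5.4 and the proof of Theorem \ref{T: 5.2} for the cutoff construction. Everything else is bookkeeping with $\star$ and the definitions.
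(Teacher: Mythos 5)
Your proposal is correct and follows essentially the same route as the paper: the paper also proves (iii)$\Rightarrow$(ii)$\Rightarrow$(i) as immediate, establishes (i)$\Rightarrow$(iii) by pairing $\Delta^\nabla\Omega$ against $\psi^2(|\Omega|^2+\epsilon)^{\frac q2 -1}\Omega$ with a Lipschitz cutoff, using Condition W to absorb the cross terms (the constant $\frac{2\sqrt2 C_1}{1-|q-2|}$ is exactly where $1<q<3$ enters) and the $2$-balanced growth machinery of Section 5 to kill the annulus remainder, and then transfers everything to $\star\,\Omega$ via the Duality Theorem \ref{T: 6.1} to obtain (iv)$\Leftrightarrow$(v)$\Leftrightarrow$(vi) and (ii)$\Leftrightarrow$(v).
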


\begin{proof}
$\big (($i$)$ $\Leftarrow$ $($ii$)$ $\Leftarrow$ $($iii$)\, \big )$ It is obvious. $\quad $ $\big (($i$)$ $\Rightarrow$ $($iii$)\big )$ 
Choose a
smooth cut-off function $\psi(x)$ as in \cite [(3.1)]{W2}, i.e.
for any $x_0 \in M$ and any pair of positive numbers $s,t$ with $s
< t\, ,$  a rotationally symmetric Lipschitz continuous
nonnegative function $\psi(x)=\psi(x;s,t)$ satisfies $\psi \equiv
1$ on $B(s)$, $\psi \equiv 0$ off $B(t)\, ,$ and $| \nabla \psi|
\leq \frac{C_1}{t-s}, \ \ a.e. \ \ on \ \ M\, ,$ where $C_1
> 0$ is a constant (independent of $x_0,s,t$). Let $1 < q < \infty\, 
,$ to be determined later.  For any constant $\E > 0\, ,$ we compute
\begin{equation}\label{6.8}
\begin{array}{rll}
0& \le & \int _{B(t)}\langle
\psi^2(|\Omega|^2+\epsilon)^{\frac{q}{2}-1}\Omega,\Delta^\nabla
\Omega\rangle \, dv\\
\end{array}
\end{equation}
and obtain

\begin{equation}\label{6.9}
\begin{array}{rll}
&\big (\int_{B(t)}\psi^2(|\Omega|^2+\epsilon)^{\frac{q}{2}-1}(|d^\nabla \Omega|^2+|\delta^{\nabla}\Omega|^2)\, dv\big )^2
\\
&\leq \bigg (\frac {2\sqrt{2}C_1}{1-|q-2|}\big )^2\big( \frac {1}{(t-s)^2}\int_{B(t)\backslash
B(s)}(|\Omega|^2+\epsilon)^{\frac{q}{2}}\, dv\bigg )\\
& \qquad \cdot \bigg (\int_{B(t)\backslash
B(s)}\psi^2(|\Omega|^2+\epsilon)^{\frac{q}{2}-1}(|d^\nabla \Omega|^2+|\delta^{\nabla}\Omega|^2)\, dv\bigg )\, .
\end{array}
\end{equation}
Proceeding as in \cite {W4}, we obtain the desired $($iii$)$. Hence, $($i$)$ $\Leftrightarrow$ $($ii$)$ $\Leftrightarrow$ $($iii$)$.
On the other hand, applying Duality Theorem \ref{T: 6.1}, we prove $($ii$)$ $\Leftrightarrow$ $($v$)\, ,$ and $($iv$)$ $\Leftrightarrow$ $($v$)$ $\Leftrightarrow$ $($vi$)\, .$
Consequently, $($i$)$ through $($vi$)$ are all equivalent.
\end{proof}

\begin{corollary}
Every $L^2$ bundle-valued harmonic $k$-form $\Omega \in A^{k}(\xi)$ is closed and co-closed and every $L^q, 1 < q(\ne 2) < 3$ bundle-valued  harmonic $k$-form $\Omega \in A^{k}(\xi)$ satisfying Condition $\rm{W}$ is closed and co-closed. \label{C:6.1}
\end{corollary}

\section{Applications in Geometric Differential-Integral Inequalities on manifolds}

The derived Laplacian comparison Theorems \ref{T: 2.6} - \ref{T: 2.7} and Hessian comparison Theorems \ref{T: 3.1} - \ref{T: 3.4}  have many applications. In particular, they shed light on geometric differential-integral inequalities on Riemannian manifolds.  Using an analog of Bochner's Method or $``B^2 - 4AC \le 0"$ Method, Wei and Li \cite {WL} proved the following theorem. 

\begin{theorem}$($\cite {WL}$)$ For every $u\in W_{0}^{1,2}(M\backslash \left\{ x_{0}\right\} ),$
and every $a,b\in
\mathbb{R}\, ,$ the following inequality holds on a manifold $M$ with a pole $x_0\, :$

\begin{equation} \label{7.1}
\begin{array}{rll}
\quad \frac 12\big |\underset{M}{\int }\,\frac{\left\vert
u\right\vert
^{2}}{r^{a+b+1}}(r \Delta r - a - b) dv\big |

\leq \left(\underset{M}{\int }\, \frac{| u|^{2}}{r^{2a}}dv\right)
^{\frac{1}{2}}\left( \underset{M}{\int }\, \frac{\left\vert \nabla
u\right\vert ^{2}}{r^{2b}}dv\right) ^{\frac{1}{2}},
\end{array}
\end{equation}
where $dv$ is the volume element of $M\, .$ \label{T: 7.1}
\end{theorem}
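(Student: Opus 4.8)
The plan is to establish the differential-integral inequality by the ``$B^2-4AC\le 0$'' device applied pointwise to a judiciously chosen quadratic form, and then integrate. First I would record the elementary identity that, for the distance function $r$ on $M\backslash\{x_0\}$,
\[
\operatorname{div}\!\left(\frac{|u|^2}{r^{a+b+1}}\,\nabla r\right)
= \frac{|u|^2}{r^{a+b+1}}\,\Delta r
+ \frac{2u\,\langle \nabla u,\nabla r\rangle}{r^{a+b+1}}
- (a+b+1)\frac{|u|^2}{r^{a+b+2}},
\]
valid on $D(x_0)=M\backslash\{x_0\}$ (where $r$ is smooth and $|\nabla r|=1$). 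Integrating over $M$ against a suitable cut-off and using that $u\in W_0^{1,2}(M\backslash\{x_0\})$ so the boundary/divergence term vanishes, one obtains
\[
\int_M \frac{|u|^2}{r^{a+b+1}}\,(r\Delta r - a - b)\,dv
= -2\int_M \frac{u\,\langle \nabla u,\nabla r\rangle}{r^{a+b}}\,dv .
\]
This is the crucial conversion of the left-hand side into a ``cross term''; the factor $r\Delta r-a-b$ appears precisely because moving the extra power of $r$ from $r^{a+b+1}$ to $r^{a+b}$ contributes $-(a+b)$ while the divergence of the radial field contributes $r\Delta r$.

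Next I would estimate the right-hand side. Writing the integrand as a product and splitting the weight $r^{-(a+b)} = r^{-a}\cdot r^{-b}$, the Cauchy--Schwarz inequality in $L^2(M,dv)$ gives
\[
\left|\int_M \frac{u\,\langle\nabla u,\nabla r\rangle}{r^{a+b}}\,dv\right|
\le \left(\int_M \frac{|u|^2}{r^{2a}}\,dv\right)^{1/2}
\left(\int_M \frac{|\langle\nabla u,\nabla r\rangle|^2}{r^{2b}}\,dv\right)^{1/2}
\le \left(\int_M \frac{|u|^2}{r^{2a}}\,dv\right)^{1/2}
\left(\int_M \frac{|\nabla u|^2}{r^{2b}}\,dv\right)^{1/2},
\]
where the last step uses $|\langle\nabla u,\nabla r\rangle|\le|\nabla u|$ since $|\nabla r|=1$. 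Combining with the identity above and dividing by $2$ yields \eqref{7.1}. (Equivalently, one can phrase the whole argument as: the quadratic $Q(t)=\int_M t^2\frac{|u|^2}{r^{2a}} + t\int_M\frac{|u|^2}{r^{a+b+1}}(r\Delta r-a-b) + \int_M\frac{|\nabla u|^2}{r^{2b}}$ is nonnegative for all $t\in\mathbb R$, because after the divergence identity it is a perfect-square integral $\int_M \big|\,t\,\frac{u}{r^a}\nabla r + \frac{r^{-b}}{?}\nabla u\,\big|^2$-type expression; then ``$B^2-4AC\le 0$'' is exactly \eqref{7.1}.)

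The main technical obstacle is the justification of the integration by parts near the pole $x_0$ and at infinity, i.e.\ the vanishing of the divergence term. Since $u\in W_0^{1,2}(M\backslash\{x_0\})$, one approximates $u$ by functions compactly supported in $D(x_0)=M\backslash\{x_0\}$; for such functions the vector field $\frac{|u|^2}{r^{a+b+1}}\nabla r$ has compact support in the region where $r$ is smooth, so the divergence theorem applies with no boundary contribution, and then one passes to the limit using the definition of the Sobolev norm and dominated convergence (the weights $r^{-2a}$, $r^{-2b}$, $r^{-(a+b)}$ being controlled by the finiteness of the two norms on the right of \eqref{7.1}; if either of those norms is infinite the inequality is trivial). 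A secondary point is that although $r$ fails to be smooth on $\operatorname{Cut}(x_0)$ in general, under the standing hypothesis that $M$ has a pole we have $D(x_0)=M\backslash\{x_0\}$, so $r$ is smooth away from $x_0$ and these regularity issues do not arise beyond the pole itself. Everything else is routine, so I would keep the write-up short, emphasizing only the divergence identity and the single application of Cauchy--Schwarz together with $|\nabla r|=1$.
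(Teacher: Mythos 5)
Your proof is correct and is essentially the argument the paper attributes to Wei--Li \cite{WL}: the paper itself offers no proof of Theorem \ref{T: 7.1}, describing it only as the ``$B^2-4AC\le 0$'' method, and your divergence identity for the radial field followed by one application of Cauchy--Schwarz with $|\nabla r|=1$ is exactly that argument in expanded form, as your own parenthetical remark acknowledges. One bookkeeping slip: the vector field in your first display should be $\frac{|u|^2}{r^{a+b}}\,\nabla r$ rather than $\frac{|u|^2}{r^{a+b+1}}\,\nabla r$, since integrating the formula as written yields $\int_M \frac{|u|^2}{r^{a+b+2}}\,(r\Delta r-a-b-1)\,dv=-2\int_M \frac{u\,\langle\nabla u,\nabla r\rangle}{r^{a+b+1}}\,dv$ and not the identity you subsequently use; your prose (``moving the extra power of $r$ from $r^{a+b+1}$ to $r^{a+b}$'') shows you intended the correct field, and with that exponent fixed the rest of the argument, including the approximation by $C_0^{\infty}(M\backslash\{x_0\})$ functions and the use of the pole hypothesis to avoid cut-locus issues, goes through as stated.
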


\subsection{Generalized sharp Caffarelli-Kohn-Nirenberg type inequalities on
Riemannian manifolds}

\begin{definition7.1}
$u : M \to \mathbb R$ is said to belong to $W_0^{1,p}(M)$ $($or, simply, is $W_0^{1,p})$ , denoted by $u \in W_0^{1,p}(M)$  if there exists a sequence $\{u_i\}$ in $C_0^{\infty}(M)$ such that  $\left(\int_{M} |u-u_i|^p + |\nabla (u- u_i)|^p dv\right)^{\frac 1p} \to 0$, as $i \to \infty\, .$
\end{definition7.1}

Applying Corollary \ref{C: 3.6} to Theorem \ref{T: 7.1},  Wei and Li \cite {WL} proved the following theorem.
\begin{theorem}$($\cite {WL}$)$\label{T: 7.2}
 Let $M$ be an $n$-dimensional complete
Riemannian manifold with a pole such that the radial curvature $K(r)$ of $M$ satisfies one of the
following three conditions:

\begin{equation}
\begin{aligned}
( i ) & K(r)\ge  0 \, \operatorname{and}\,  n \le a+b+1,\\
( ii ) & K(r) \leq  0\,   \operatorname{and}\,  a+b+1\leq n, \\
( iii ) & K(r) = 0\, \operatorname{and}\,  a,b \in \mathbb{R} \operatorname{are}\, \operatorname{any}\, \operatorname{constants}.
\end{aligned}\label{7.2}
\end{equation}
Then
 for every $u\in W_{0}^{1,2}(M\backslash \left\{ x_{0}\right\} ),$
and $a,b\in\label{7.2}
\mathbb{R}$

\begin{equation} \label{7.3}
{C}\underset{M}{\int }\frac{\left\vert u\right\vert
^{2}}{r^{a+b+1}}dv\leq \left( \underset{M}{\int }\frac{\left\vert
u\right\vert ^{2}}{r^{2a}}dv\right) ^{\frac{1}{2}}\left( \underset{M}{\int }%
\frac{\left\vert \nabla u\right\vert ^{2}}{r^{2b}}dv\right) ^{\frac{1}{2}},
\end{equation}%
where the constant
$C$ is given by
\begin{equation}\label{7.4}
C = C(a,b) = \left\{
\begin{array}{cc}
-\frac{n-(a+b+1)}{2}, & \text {if } K(r)  \text { satisfies $($i$)$},\\
\frac{n-(a+b+1)}{2}, & \text{if } K(r) \text{ satisfies $($ii$)$},\\
\bigg|\frac{n-(a+b+1)}{2}\bigg|, & \text{if } K(r) \text{
satisfies} (iii)\, .
\end{array}
\right.
\end{equation}
\end{theorem}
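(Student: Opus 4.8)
The plan is to combine the differential-integral inequality of Theorem \ref{T: 7.1} with the Laplacian estimates for $r$ coming from Corollary \ref{C: 3.6}, and then to dispose of the absolute value in \eqref{7.1} by fixing the sign of the weight $r\Delta r - a - b$ in each of the three curvature regimes. Since $M$ has a pole, $r$ is smooth on $M\setminus\{x_0\}$, so $\Delta r$ is classically defined there and Corollary \ref{C: 3.6} applies pointwise on $M\setminus\{x_0\}$, hence almost everywhere with respect to $dv$. Concretely: $\Delta r = (n-1)/r$ when $K(r)=0$, $\Delta r \ge (n-1)/r$ when $K(r)\le 0$, and $\Delta r \le (n-1)/r$ when $K(r)\ge 0$; equivalently $r\Delta r - a - b$ is respectively equal to, $\ge$, or $\le$ the constant $n-(a+b+1)$.

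For case $(iii)$ the weight in \eqref{7.1} is the constant $n-(a+b+1)$, so the left side of \eqref{7.1} is exactly $\tfrac{|n-(a+b+1)|}{2}\int_M \tfrac{|u|^2}{r^{a+b+1}}\,dv = C\int_M \tfrac{|u|^2}{r^{a+b+1}}\,dv$ with $C$ as in \eqref{7.4}, and \eqref{7.3} is then just \eqref{7.1}. For case $(i)$, $n\le a+b+1$ gives $n-(a+b+1)\le 0$, while $\Delta r\le (n-1)/r$ gives $r\Delta r - a - b \le n-(a+b+1)\le 0$; integrating against the nonnegative density $\tfrac{|u|^2}{r^{a+b+1}}$,
\[
\int_M \frac{|u|^2}{r^{a+b+1}}(r\Delta r - a - b)\,dv \le \bigl(n-(a+b+1)\bigr)\int_M \frac{|u|^2}{r^{a+b+1}}\,dv \le 0,
\]
so the absolute value on the left of \eqref{7.1} is at least $\bigl(a+b+1-n\bigr)\int_M \tfrac{|u|^2}{r^{a+b+1}}\,dv$; dividing by $2$ yields \eqref{7.3} with $C=-\tfrac{n-(a+b+1)}{2}$. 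Case $(ii)$ is the mirror image: $a+b+1\le n$ and $\Delta r\ge (n-1)/r$ force $r\Delta r - a - b\ge n-(a+b+1)\ge 0$, the integrand is nonnegative, and the same computation gives \eqref{7.3} with $C=\tfrac{n-(a+b+1)}{2}$. If the right side of \eqref{7.1}/\eqref{7.3} is infinite the assertion is trivial, so one may assume all integrals finite; the passage from test functions in $C_0^\infty(M\setminus\{x_0\})$ to $u\in W_0^{1,2}(M\setminus\{x_0\})$ is already incorporated in Theorem \ref{T: 7.1}.

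The only point requiring care — and the step I expect to be the main obstacle, though it is purely bookkeeping rather than anything deep — is checking in each regime that the pointwise bound on $r\Delta r - a - b$ has the \emph{same} sign as $n-(a+b+1)$, so that it can be extracted from the absolute value in \eqref{7.1} without degrading the constant, and that the resulting $C$ matches the nonnegative value prescribed in \eqref{7.4}. Once the signs are lined up, the conclusion is an immediate substitution into Theorem \ref{T: 7.1}.
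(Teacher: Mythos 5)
Your proposal is correct and follows exactly the route the paper indicates for this result: apply the Laplacian comparison of Corollary \ref{C: 3.6} (the $K=0$, $K\le 0$, $K\ge 0$ cases) pointwise on $M\setminus\{x_0\}$ to fix the sign of $r\Delta r - a - b$, and then extract the constant from the absolute value in the differential-integral inequality of Theorem \ref{T: 7.1}. The sign bookkeeping in each of the three regimes matches the constant $C$ in \eqref{7.4}, so nothing further is needed.
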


The case $M=\mathbb{R}^n$ is due to Caffarelli et al. \cite {CKN} and Costa \cite{C}.

\begin{theorem}$($Generalized sharp Caffarelli-Kohn-Nirenberg type inequalities under radial Ricci curvature assmumptions$)$\label{T: 7.3}
 Let $M$ be an $n$-dimensional complete
Riemannian manifold with a pole such that the radial Ricci curvature $\text{Ric}^{\rm{M}}_{\rm{rad}}$ of $M$ satisfies   one of the
following five conditions:

\begin{equation} 
\begin{aligned}
( i )  & \text{Ric}^{\rm{M}}_{\rm{rad}} (r) \ge - (n-1)\frac {A(A-1)}{r^2}\,  \operatorname{and}\,  n \le  \frac {a+ b+ A}{A}, \\
( ii )  & \text{Ric}^{\rm{M}}_{\rm{rad}} (r) \ge - (n-1)\frac {A(A-1)}{(c+r)^2}\,  \operatorname{and}\,  n \le  \frac {a+ b+ A}{A}, \\
( iii )  & \text{Ric}^{\rm{M}}_{\rm{rad}} (r) \ge   (n-1)\frac {B_1(1-B_1)}{r^2}\,  \operatorname{and}\,  n \le \frac{2a+2b+1+\sqrt{1+4B_1(1-B_1)}}{1+\sqrt{1+4B_1(1-B_1)}},  \\
( iv )  & \text{Ric}^{\rm{M}}_{\rm{rad}} (r) \ge   (n-1)\frac {B_1(1-B_1)}{(c+r)^2}\,  \operatorname{and}\,  n \le \frac{2a+2b+1+\sqrt{1+4B_1(1-B_1)}}{1+\sqrt{1+4B_1(1-B_1)}},  \\
( v )  & \text{Ric}^{\rm{M}}_{\rm{rad}} (r) \ge   0\,  \operatorname{and}\,  n \le  a+ b+1\, ,
\end{aligned}\label{7.5}
\end{equation}
where  $0 \le B_1 \le 1 \le A\, $ are constants. Then 
 for every $u\in W_{0}^{1,2}(M\backslash \left\{ x_{0}\right\} ),$
and $a,b\in
\mathbb{R}$

\begin{equation} \tag{7.3}
{C}\underset{M}{\int }\frac{\left\vert u\right\vert
^{2}}{r^{a+b+1}}dv\leq \left( \underset{M}{\int }\frac{\left\vert
u\right\vert ^{2}}{r^{2a}}dv\right) ^{\frac{1}{2}}\left( \underset{M}{\int }%
\frac{\left\vert \nabla u\right\vert ^{2}}{r^{2b}}dv\right) ^{\frac{1}{2}}\, ,
\end{equation}%
where
\begin{equation}\label{7.6}
\begin{aligned}
C & = C(a,b,A,B_1)\\
& = \left\{
\begin{array}{cc}
\frac{a+b-(n-1)A}{2}, & \text {if } \text{Ric}^{\rm{M}}_{\rm{rad}}  \text { satisfies } (i)\, or\, (ii),\\
\frac{2a+2b-(n-1)\big (1+\sqrt{1+4B_1(1-B_1)}\big )}{4}, & \text{if } \text{Ric}^{\rm{M}}_{\rm{rad}}  \text{ satisfies } (iii)\, or\, (iv),\\
\frac {a+b+1-n}{2}, & \text{if } \text{Ric}^{\rm{M}}_{\rm{rad}}  \text{
satisfies } (v).
\end{array}
\right.
\end{aligned}
\end{equation}
\end{theorem}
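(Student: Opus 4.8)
The plan is to reduce Theorem \ref{T: 7.3} to the differential-integral inequality of Theorem \ref{T: 7.1} by controlling the term $r\Delta r - a - b$ via the Laplacian comparison theorems established earlier. The starting point is \eqref{7.1}: for every $u \in W_0^{1,2}(M\setminus\{x_0\})$ and $a, b \in \mathbb R$,
\[
\tfrac12\Big|\int_M \tfrac{|u|^2}{r^{a+b+1}}(r\Delta r - a - b)\,dv\Big| \le \Big(\int_M \tfrac{|u|^2}{r^{2a}}dv\Big)^{1/2}\Big(\int_M \tfrac{|\nabla u|^2}{r^{2b}}dv\Big)^{1/2}.
\]
So it suffices to show that under each of the five hypotheses in \eqref{7.5} one has a pointwise inequality of the form $C \le \tfrac12(a+b-r\Delta r)$ or $C \le \tfrac12(r\Delta r - a - b)$ with the stated sign, so that $C\int_M \tfrac{|u|^2}{r^{a+b+1}}dv$ is dominated by the left side of \eqref{7.1}. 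Concretely, in cases (i) and (ii), Theorem \ref{T: 2.6} gives $\Delta r \le (n-1)A/r$ weakly on $M$, hence $r\Delta r - a - b \le (n-1)A - a - b = -(a+b - (n-1)A)$; the hypothesis $n \le \frac{a+b+A}{A}$ rearranges to $(n-1)A \le a+b$, so $C = \frac{a+b-(n-1)A}{2} \ge 0$ and $\tfrac12|a+b - r\Delta r| \ge \tfrac12(a+b - r\Delta r) \ge C$ on the set where $r\Delta r \le a+b$; one must check the inequality $C\int_M \tfrac{|u|^2}{r^{a+b+1}} \le \tfrac12|\int_M \tfrac{|u|^2}{r^{a+b+1}}(r\Delta r - a-b)|$ holds by keeping track that $(r\Delta r - a - b)$ has the requisite sign so that the integrand $\tfrac{|u|^2}{r^{a+b+1}}(a+b-r\Delta r)$ is nonnegative and bounded below by $\tfrac{|u|^2}{r^{a+b+1}}\cdot 2C$.

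First I would record the three pointwise (weak) Laplacian bounds I need: from Theorem \ref{T: 2.6}, $\Delta r \le (n-1)\frac{A}{r}$ under \eqref{2.39} or \eqref{2.41} (cases (i),(ii)); from Theorem \ref{T: 2.7}, $\Delta r \le (n-1)\frac{1+\sqrt{1+4B_1(1-B_1)}}{2r}$ under \eqref{2.51} or \eqref{2.53} (cases (iii),(iv)); and from Corollary \ref{C: 2.6} or directly from Theorem \ref{T: 2.5} with $G_2 = 0$, $\Delta r \le \frac{n-1}{r}$ when $\operatorname{Ric}^{\mathrm M}_{\mathrm{rad}} \ge 0$ (case (v)). Next, for each case I translate the arithmetic hypothesis on $n$ into the sign statement for $C$: in (iii),(iv) the bound $n \le \frac{2a+2b+1+\sqrt{1+4B_1(1-B_1)}}{1+\sqrt{1+4B_1(1-B_1)}}$ is exactly $(n-1)\frac{1+\sqrt{1+4B_1(1-B_1)}}{2} \le a+b$, which says $C = \frac{2a+2b - (n-1)(1+\sqrt{1+4B_1(1-B_1)})}{4} \ge 0$; in (v), $n \le a+b+1$ is $(n-1) \le a+b$, i.e. $C = \frac{a+b+1-n}{2} \ge 0$.

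Then the core estimate is uniform across all five cases: the Laplacian bound gives $r\Delta r \le 2(a+b) - 2C + \text{(const)}$ — more precisely $r\Delta r \le a + b - 2C$ in each case by the very definition of $C$ — so $a + b - r\Delta r \ge 2C \ge 0$ pointwise on $D(x_0)$ (and the inequality passes to $M$ in the weak sense, which is all that is needed since $u$ is compactly supported away from $x_0$ and one integrates against $\tfrac{|u|^2}{r^{a+b+1}} \ge 0$). Therefore
\[
2C\int_M \tfrac{|u|^2}{r^{a+b+1}}dv \le \int_M \tfrac{|u|^2}{r^{a+b+1}}(a+b - r\Delta r)\,dv = \Big|\int_M \tfrac{|u|^2}{r^{a+b+1}}(r\Delta r - a - b)\,dv\Big|,
\]
and combining with \eqref{7.1} yields \eqref{7.3} with constant $C$ as in \eqref{7.6}.

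The main obstacle is the careful handling of the weak (distributional) nature of the Laplacian bounds near the cut locus and the pole $x_0$: $\Delta r$ is only bounded weakly on $M$, so the pointwise manipulation $a + b - r\Delta r \ge 2C$ must be interpreted as an inequality of measures, and one must verify that testing against the nonnegative function $\tfrac{|u|^2}{r^{a+b+1}}$ (which for $u \in W_0^{1,2}(M\setminus\{x_0\})$ is an admissible test function after the standard cut-off/approximation argument used in \cite{WL} and \cite{HLRW}) is legitimate. The secondary technical point is confirming that $\tfrac{|u|^2}{r^{a+b+1}}$, $\tfrac{|u|^2}{r^{2a}}$, $\tfrac{|\nabla u|^2}{r^{2b}}$ are all integrable for $u$ compactly supported in $M\setminus\{x_0\}$, which is immediate since $r$ is then bounded above and below on $\operatorname{supp} u$; this is exactly why the theorem requires $u \in W_0^{1,2}(M\setminus\{x_0\})$ rather than $W_0^{1,2}(M)$. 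Once these points are dispatched, the rest is the routine arithmetic translation recorded above, done case by case.
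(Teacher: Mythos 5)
Your proposal is correct and follows essentially the same route as the paper: apply the Laplacian comparison bounds \eqref{2.40} (Theorem \ref{T: 2.6}, covering cases (i)--(ii) and, with $A=1$, case (v)) and \eqref{2.52} (Theorem \ref{T: 2.7}, cases (iii)--(iv)) to control $r\Delta r$, note that the hypotheses on $n$ make $C\ge 0$ so that $a+b-r\Delta r\ge 2C\ge 0$, and then simplify the differential-integral inequality \eqref{7.1} of Theorem \ref{T: 7.1}. Your sign bookkeeping and the remark on the weak interpretation of the Laplacian bounds match what the paper relies on, so no gap.
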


\begin{proof}
This follows from Theorem$\ref{T: 2.6}$ and Theorem $\ref{T: 2.7}$ that we can apply $(\ref{2.40})$ and $(\ref{2.52})\, $ to simplify the integral inequality $(\ref{7.1}),$
based on Theorem \ref{T: 7.1} and obtain the desired.
\end{proof}

\begin{theorem}$($Generalized sharp Caffarelli-Kohn-Nirenberg type inequalities under radial curvature assmumptions$)$\label{T: 7.4}
 Let $M$ be an $n$-dimensional complete
Riemannian manifold with a pole such that the radial curvature $K(r)$ of $M$ satisfies one of the
following twelve conditions:

\begin{equation}
\begin{aligned}
( i )  & K(r)  \ge - \frac {A(A-1)}{r^2}\qquad    \operatorname{and}\qquad   n \le  \frac {a+b+A}{A}, \\
( ii )  & K(r)  \ge - \frac {A(A-1)}{(c+r)^2}\qquad  \operatorname{and}\qquad  n \le  \frac {a+b+ A}{A}, \\
( iii )  & K(r)  \le - \frac {A_1(A_1-1)}{r^2}\qquad    \operatorname{and}\qquad   n \ge  \frac {a+b+A_1}{A_1}, \\
( iv )  & K(r)  \le - \frac {A_1(A_1-1)}{(c+r)^2}\qquad  \operatorname{and}\qquad  n \ge  \frac {a+b+ A_1}{A_1}, \\
( v )  & K(r)  = - \frac {A(A-1)}{r^2}\qquad    \operatorname{and}\qquad    a, b \in \mathbb {R}\, \operatorname{are}\, \operatorname{any}\, \operatorname{constants},\\
( vi )  & K(r)  = - \frac {A(A-1)}{(c+r)^2}\qquad  \operatorname{and}\qquad   a, b \in \mathbb {R}\, \operatorname{are}\, \operatorname{any}\, \operatorname{constants}, \\
( vii )  & K(r)  = - \frac {A}{r^2}\qquad   \qquad  \operatorname{and}\qquad    a, b \in \mathbb {R}\, \operatorname{are}\, \operatorname{any}\, \operatorname{constants},\\
( viii )  & K(r)  = - \frac {A}{(c+r)^2}\qquad  \operatorname{and}\qquad   a, b \in \mathbb {R}\, \operatorname{are}\, \operatorname{any}\, \operatorname{constants}, \\
( ix) & K(r) \ge \frac {B_1(1-B_1)}{r^2}\qquad    \operatorname{and}\qquad    n \le \frac{2a+2b+1+\sqrt{1+4B_1(1-B_1)}}{1+\sqrt{1+4B_1(1-B_1)}},\\
( x) & K(r) \ge \frac {B_1(1-B_1)}{(c+r)^2}\qquad    \operatorname{and}\qquad    n \le \frac{2a+2b+1+\sqrt{1+4B_1(1-B_1)}}{1+\sqrt{1+4B_1(1-B_1)}},\\
( xi ) & K(r) \leq  \frac {B(1-B)}{r^2}\qquad     \operatorname{and}\qquad    n \ge \frac{a+b+|B - \frac 12| + \frac 12}{|B - \frac 12| + \frac 12}, \\
( xii ) & K(r) \le \frac {B(1-B)}{(c+r)^2}\qquad   \operatorname{and}\qquad     n \ge \frac{a+b+|B - \frac 12| + \frac 12}{|B - \frac 12| + \frac 12}\, ,
\end{aligned}\label{7.7}
\end{equation}
where $0 \le B, B_1 \le 1 \le A\, ,$ and $0 \le c$ are constants. Then 
 for every $u\in W_{0}^{1,2}(M\backslash \left\{ x_{0}\right\} ),$
and $a,b\in
\mathbb{R}$

\begin{equation} \tag{7.3}
{C}\underset{M}{\int }\frac{\left\vert u\right\vert
^{2}}{r^{a+b+1}}dv\leq \left( \underset{M}{\int }\frac{\left\vert
u\right\vert ^{2}}{r^{2a}}dv\right) ^{\frac{1}{2}}\left( \underset{M}{\int }%
\frac{\left\vert \nabla u\right\vert ^{2}}{r^{2b}}dv\right) ^{\frac{1}{2}}\, ,
\end{equation}%
where the constant
$C$ is given by
\begin{equation}\label{7.8}
C = C(a,b,B) = \left\{
\begin{array}{cc}
\frac{a+b-(n-1)A}{2}, & \text {if } K(r)  \text{ satisfies } (i)\, \text{or}\, (ii),\\
- \frac{a+b-(n-1)A}{2}, & \text {if } K(r)  \text{ satisfies } (iii)\, \text{or}\, (iv),\\
\big |\frac{a+b-(n-1)A}{2}\big |, & \text {if } K(r)  \text{ satisfies } (v)\, \text{or}\, (vi),\\
\big | \frac{  2a+2b-(n-1)\big (1+\sqrt{1+4A}\big )}{4} \big |, & \text {if } K(r)  \text { satisfies } (vii)\, or\, (viii),\\
\frac{  2a+2b-(n-1)\big (1+\sqrt{1+4B_1(1-B_1)}\big )}{4}, & \text {if } K(r)  \text { satisfies } (ix)\, or\, (x),\\
 \frac{(n-1)(|B - \frac 12| + \frac 12)-a-b}{2}, & \text{if } K(r)\text{ satisfies } (xi)\, or\, (xii).
\end{array}
\right.
\end{equation}
\end{theorem}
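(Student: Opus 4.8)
The plan is to reduce Theorem \ref{T: 7.4} to the basic integral inequality \eqref{7.1} of Wei and Li (Theorem \ref{T: 7.1}), exactly as Theorem \ref{T: 7.3} was reduced to it, but now feeding in the Hessian/Laplacian estimates coming from the \emph{radial curvature} assumptions rather than the radial Ricci curvature assumptions. Concretely, for $u \in W_0^{1,2}(M\backslash\{x_0\})$ and $a,b\in\mathbb R$, inequality \eqref{7.1} reads
\[
\tfrac12\Big|\int_M \tfrac{|u|^2}{r^{a+b+1}}\big(r\,\Delta r - a - b\big)\,dv\Big| \le \Big(\int_M \tfrac{|u|^2}{r^{2a}}dv\Big)^{1/2}\Big(\int_M \tfrac{|\nabla u|^2}{r^{2b}}dv\Big)^{1/2}.
\]
So the whole point is to control the sign and size of the weight $r\,\Delta r - a - b$ pointwise on $M\backslash\{x_0\}$ under each of the twelve hypotheses in \eqref{7.7}, and then pull the resulting constant out of the left-hand integral.

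First I would split the twelve cases of \eqref{7.7} into the ones giving an \emph{upper} bound on $\Delta r$ (cases (i), (ii), (ix), (x), via Theorems \ref{T: 3.1} and \ref{T: 3.3}), the ones giving a \emph{lower} bound on $\Delta r$ (cases (iii), (iv), (xi), (xii), via Theorems \ref{T: 3.2} and \ref{T: 3.4}), and the ones giving an \emph{equality} for $\Delta r$ (cases (v)--(viii), via Corollary \ref{C: 3.2}). In the upper-bound cases, say $\Delta r \le (n-1)\frac{A}{r}$ (case (i)), one gets $r\,\Delta r \le (n-1)A$, so under the arithmetic hypothesis $n \le \frac{a+b+A}{A}$, i.e. $(n-1)A \le a+b$, one has $r\,\Delta r - a - b \le (n-1)A - a - b \le 0$; hence $\big|r\,\Delta r - a - b\big| = a+b - r\,\Delta r \ge a+b-(n-1)A \ge 0$ — wait, that is the wrong direction, so in fact one needs a \emph{two-sided} control. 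The correct bookkeeping: since $r\,\Delta r \ge 0$ automatically (by, e.g., the comparison with Euclidean space, or simply because $\Delta r>0$ near $x_0$ and the curvature bounds propagate it), one gets $0 \le r\,\Delta r \le (n-1)A \le a+b$, whence $a+b - (n-1)A \le a+b - r\,\Delta r \le a+b$, so $\tfrac12|r\Delta r - a-b|\ge \tfrac12(a+b-(n-1)A)$, which is exactly the constant $C$ listed in \eqref{7.8} for cases (i),(ii). Substituting this lower bound for $\tfrac12|r\Delta r-a-b|$ into the left side of \eqref{7.1} and using $|u|^2/r^{a+b+1}\ge0$ yields \eqref{7.3}. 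The lower-bound cases are dual: from $\Delta r \ge (n-1)\frac{A_1}{r}$ and $n\ge\frac{a+b+A_1}{A_1}$, i.e. $(n-1)A_1 \ge a+b$, one gets $r\Delta r - a - b \ge (n-1)A_1 - a - b \ge 0$, and combined with the trivial \emph{upper} bound $r\Delta r \le (n-1)A_1$ coming from the same equality/one-sided estimate used with the opposite inequality (or from the companion Hessian bound), one extracts $C = -\frac{a+b-(n-1)A}{2} = \frac{(n-1)A-a-b}{2}$ (here the constant in the third line of \eqref{7.8} should be read with $A$ playing the role of the relevant exponent $A_1$). For cases (ix),(x),(xi),(xii) the same scheme runs with $(n-1)A$ replaced by $(n-1)\frac{1+\sqrt{1+4B_1(1-B_1)}}{2}$ (resp. $(n-1)(|B-\tfrac12|+\tfrac12)$), using \eqref{3.27} and \eqref{3.36}. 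For the equality cases (v)--(viii), $r\Delta r$ equals a constant exactly, so $|r\Delta r - a - b|$ is a constant and $C$ is its half, i.e. an absolute value, matching lines 3--4 of \eqref{7.8}.

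The steps, in order: (1) fix $u\in W_0^{1,2}(M\backslash\{x_0\})$ and $a,b\in\mathbb R$, invoke Theorem \ref{T: 7.1} to get \eqref{7.1}; (2) in each of the twelve cases, invoke the appropriate Hessian/Laplacian comparison theorem (\ref{T: 3.1}--\ref{T: 3.4}, Corollary \ref{C: 3.2}) to get the pointwise bound on $\Delta r$ on $M\backslash\{x_0\}$, noting these hold pointwise because the curvature hypotheses hold on all of $M\backslash\{x_0\}$; (3) combine with the stated arithmetic inequality relating $n$, $a$, $b$, and the curvature exponent to pin down both the sign of $r\Delta r - a - b$ and a one-sided estimate of $|r\Delta r - a - b|$ by the constant $C$ of \eqref{7.8}; (4) substitute this constant lower bound for $\tfrac12|r\Delta r - a - b|$ under the integral sign on the left of \eqref{7.1}, using that $|u|^2 r^{-(a+b+1)}\ge 0$, to obtain $C\int_M |u|^2 r^{-(a+b+1)}dv \le \tfrac12|\int_M\cdots|$, and chain with \eqref{7.1} to conclude \eqref{7.3}. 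The approach is a direct routine unwinding, so the only real subtlety — and hence the step I expect to be the main obstacle — is the careful case-by-case verification that each arithmetic hypothesis in \eqref{7.7} forces the \emph{correct} sign of $r\Delta r - a - b$ and that the resulting extremal constant is precisely the $C$ displayed in \eqref{7.8} (in particular getting the signs right in the lower-bound cases (iii),(iv),(xi),(xii), where the inequality $n\ge\cdots$ rather than $n\le\cdots$ flips everything, and making sure the ``$-\frac{a+b-(n-1)A}{2}$'' entries are read with $A_1$ in place of $A$). Everything else is a mechanical substitution into Theorem \ref{T: 7.1}.
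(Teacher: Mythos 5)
Your proposal is correct and is essentially the paper's own proof: the paper likewise simply feeds the pointwise Laplacian bounds from Theorems 3.1--3.4 and Corollary 3.2 (estimates (3.3), (3.6), (3.23), (3.27), (3.36)) into the Wei--Li inequality (7.1) of Theorem 7.1 and reads off the constants in (7.8). The only caveat is that your side remarks are superfluous: in each case the single one-sided bound on $r\Delta r$ together with the arithmetic hypothesis already fixes both the sign of $r\Delta r-a-b$ and the pointwise bound $|r\Delta r-a-b|\ge 2C$, so no ``two-sided control,'' no claim that $r\Delta r\ge 0$, and no upper bound on $r\Delta r$ in cases (iii), (iv), (xi), (xii) is needed (and you are right that the entry of (7.8) for (iii)--(iv) should be read with $A_1$ in place of $A$).
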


\begin{proof}
This follows from Theorems $\ref{T: 3.1}$, $\ref{T: 3.2}$, Corollary $\ref{C: 3.2}$, and  Theorems $\ref{T: 3.3}$, $\ref{T: 3.4}$ that we can apply $(\ref{3.3})$, $(\ref{3.6})$, $(\ref{3.23})\, ,$ $(\ref{3.27})$, and $(\ref{3.36}) $  to simplify the integral inequality $(\ref{7.1}),$
based on Theorem \ref{T: 7.1}.
\end{proof}

\subsection{Embedding theorems for weighted Sobolev spaces of functions}

Hessian comparison theorems via these geometric inequalities lead to
embedding theorems for weighted Sobolev spaces of functions on Riemannian manifolds. 
Let $M$ be manifold as in Theorem \ref{T: 7.3}, or Theorem \ref{T: 7.4}, or Theorem \ref{T: 7.2}. Following Costa's notation \cite {C}, or \cite {WL}, we let $D_{\gamma }^{1,2}(M)$ denote the completion of $C_{0}^{\infty }(M\backslash
\left\{ x_{0}\right\})$
with respect to the norm

\begin{equation}
\left\vert \left\vert u\right\vert \right\vert _{D_{\gamma
}^{1,2}(M)}:=\left( \underset{M}{\int }\frac{\left\vert \nabla
u\right\vert ^{2}}{r^{2\gamma }}dv\right) ^{\frac{1}{2}}\, .\label{7.9}
\end{equation}
$L_{\gamma }^{2}(M)$ denote the completion of $C_{0}^{\infty }(M\backslash
\left\{ x_{0}\right\})$
with respect to the norm

\begin{equation}
\left\vert \left\vert u\right\vert \right\vert _{L_{\gamma }^{2}(M)}:=\left(
\underset{M}{\int }\frac{\left\vert u\right\vert ^{2}}{r^{2\gamma }}%
dv\right) ^{\frac{1}{2}}\, \label{7.10}
\end{equation}%
and $H_{a,b}^{1}(M)$ denote the completion of $C_{0}^{\infty }(M\backslash
\left\{ x_{0}\right\} )$ with respect to the Sobolev norm

\begin{equation}
\left\vert \left\vert u\right\vert \right\vert _{H_{a,b}^{1}(M)}:=\bigg (
\underset{M}{\int }\big ( \frac{\left\vert u\right\vert ^{2}}{r^{2a}}+\frac{
\left\vert \nabla u\right\vert ^{2}}{r^{2b}}\big ) dv\bigg ) ^{\frac{1}{2}}\, .\label{7.11}
\end{equation}

If $\gamma = 0$ in \eqref{7.9}, we simply write $D_{0
}^{1,2}(M) = D^{1,2}(M)\, .$ Applying arithmetic-mean-geometric-mean inequality to Theorems \ref{T: 7.3} - \ref{T: 7.4} and \ref{T: 7.2}, we have

\begin{theorem}$($Embedding Theorem$)$\label{T: 7.5} Let $M$ be a manifold of radial curvature $K$ satisfying one of the five conditions in $(\ref{7.5})$, or one of the twelve conditions in $(\ref{7.7})$, or one of the three conditions in $(\ref{7.2})\, .$ Then the following continuous embeddings hold

\begin{equation}
H_{a,b}^{1}(M) \subset L_{\frac {a+b+1}{2}}^{2}(M)\qquad and \qquad H_{b,a}^{1}(M) \subset L_{\frac {a+b+1}{2}}^{2}(M)\, .\label{7.12}
\end{equation}
\end{theorem}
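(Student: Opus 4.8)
The plan is to deduce the Embedding Theorem directly from the Caffarelli--Kohn--Nirenberg type inequality \eqref{7.3} established in Theorems \ref{T: 7.3}, \ref{T: 7.4} and \ref{T: 7.2}, by splitting the right-hand product via the elementary arithmetic--geometric mean inequality. First I would fix the manifold $M$ with radial (or Ricci) curvature satisfying one of the listed hypotheses, so that \eqref{7.3} holds for every $u \in C_0^{\infty}(M\backslash\{x_0\})$ with the positive constant $C = C(a,b)$ given by \eqref{7.4}, \eqref{7.6} or \eqref{7.8}; note that in each case the hypotheses on the dimension $n$ are exactly the ones that force the relevant difference (e.g. $n \le a+b+1$, etc.) to have the sign making $C > 0$. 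Then for $u \in C_0^{\infty}(M\backslash\{x_0\})$ I would write
\[
C\int_M \frac{|u|^2}{r^{a+b+1}}\, dv \le \Big(\int_M \frac{|u|^2}{r^{2a}}\, dv\Big)^{1/2}\Big(\int_M \frac{|\nabla u|^2}{r^{2b}}\, dv\Big)^{1/2} \le \frac12\Big(\int_M \frac{|u|^2}{r^{2a}}\, dv + \int_M \frac{|\nabla u|^2}{r^{2b}}\, dv\Big),
\]
where the second step is $\sqrt{XY} \le \frac12(X+Y)$ with $X = \int_M |u|^2 r^{-2a}\, dv$ and $Y = \int_M |\nabla u|^2 r^{-2b}\, dv$. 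Recognizing the right-hand side as $\tfrac12\|u\|_{H^1_{a,b}(M)}^2$ and the left-hand side as $C\|u\|_{L^2_{(a+b+1)/2}(M)}^2$ gives $\|u\|_{L^2_{(a+b+1)/2}(M)} \le (2C)^{-1/2}\|u\|_{H^1_{a,b}(M)}$, i.e. the inclusion $H^1_{a,b}(M) \subset L^2_{(a+b+1)/2}(M)$ is a bounded linear map on the dense subspace $C_0^{\infty}(M\backslash\{x_0\})$.

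Next I would pass from the dense subspace to the completions. Since $H^1_{a,b}(M)$ is by \eqref{7.11} the completion of $C_0^{\infty}(M\backslash\{x_0\})$ in the norm $\|\cdot\|_{H^1_{a,b}}$ and $L^2_{(a+b+1)/2}(M)$ is by \eqref{7.10} the completion in $\|\cdot\|_{L^2_{(a+b+1)/2}}$, the inequality just derived shows the identity map on $C_0^{\infty}(M\backslash\{x_0\})$ is uniformly continuous from one norm to the other, hence extends uniquely to a bounded linear map between the completions; this is exactly the continuous embedding $H^1_{a,b}(M) \subset L^2_{(a+b+1)/2}(M)$. The second embedding $H^1_{b,a}(M) \subset L^2_{(a+b+1)/2}(M)$ follows by the identical argument after interchanging the roles of $a$ and $b$: the quantity $(a+b+1)/2$ is symmetric in $a,b$, the constant $C$ in each listed case depends only on $a+b$ (inspection of \eqref{7.4}, \eqref{7.6}, \eqref{7.8}), and the curvature hypotheses are likewise symmetric in $a,b$, so \eqref{7.3} with $a,b$ swapped holds with the same $C$, and one repeats the AM--GM splitting.

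The only genuine point requiring care — and what I expect to be the main obstacle — is the injectivity/well-definedness of the extended map, i.e. checking that the abstract extension to completions really is an \emph{inclusion} of function spaces and not merely a bounded operator. This is handled by the standard observation that both completions embed into a common space (e.g. $L^1_{loc}(M\backslash\{x_0\})$ or distributions on $M\backslash\{x_0\}$), into which the elementary inequality continues to hold by Fatou's lemma along an approximating Cauchy sequence $\{u_i\} \subset C_0^{\infty}(M\backslash\{x_0\})$; thus a null element of $H^1_{a,b}(M)$ maps to a null element of $L^2_{(a+b+1)/2}(M)$, and the map descends to a genuine injection. All remaining steps are routine, so the proof reduces to the display above together with this density-and-completion bookkeeping.
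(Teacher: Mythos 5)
Your proposal is correct and follows essentially the same route as the paper: the paper's proof is precisely the one-line application of the arithmetic--geometric mean inequality to the Caffarelli--Kohn--Nirenberg inequality \eqref{7.3} of Theorems 7.2--7.4, with the second embedding coming from the symmetry of the constants and hypotheses in $a+b$. Your additional density-and-completion bookkeeping is sound and merely spells out what the paper leaves implicit.
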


\subsection{Geometric differential-integral inequalities on manifolds}

As a consequence of embedding theorem, we have geometric differential-integral inequalities on manifolds:

\begin{theorem}\label{T: 7.6} Let $M$ be a manifold of radial curvature $K$ satisfying one of the five conditions in $(\ref{7.5})$, or one of the twelve conditions in $(\ref{7.7})$. Then we have the following seven geometric differential-integral inequalities:

\textbf{i$)$ }For any $u\in H_{b+1,b}^{1}(M)\, ,$

\begin{equation}\label{7.13}
C_1\underset{M}{\int }\frac{\left\vert
u\right\vert ^{2}}{r^{2(b+1)}}dv\leq \underset{M}{\int }\frac{\left\vert
\nabla u\right\vert ^{2}}{r^{2b}}dv\, ,
\end{equation}
where 
\begin{equation}\label{7.14}
C_1 = \left\{
\begin{array}{cc}
\left( \frac{(n-1)A-1}{2}-b\right) ^{2}, & \text {if } \operatorname{Ric}^{\rm{M}}_{\rm{rad}}(r)  \text { satisfies}\, \eqref{7.5} (i)\, \text{or}\, \eqref{7.5}(ii)\,   \\
& \operatorname{or}\, K(r)  \text {satisfies}\, \text{any}\, \text{of}\, \eqref{7.7} (i)-(vi)\, ,\\
\left(\frac{  4b+2-(n-1)\big (1+\sqrt{1+4A}\big )}{4}\right) ^{2}, & \text {if } K(r)  \text { satisfies}\, \eqref{7.5} (vii)\,  \text{or}\, \eqref{7.5}(viii)\, ,  \\
\left(\frac{4b+2-(n-1)\big (1+\sqrt{1+4B_1(1-B_1)}\big )}{4}\right) ^{2}, &\text {if }\operatorname{Ric}^{\rm{M}}_{\rm{rad}}(r)\text { satisfies}\, \eqref{7.5} (iii)\,  \text{or}\, \eqref{7.5}(iv)\\
& \operatorname{or} \, K(r)  \text { satisfies }\, \eqref{7.7} (ix)\, \text{or}\, \eqref{7.7}(x)\, , \\
\left( \frac {2b + 2 -n}{2}\right)^{2}, &\text {if } \operatorname{Ric}^{\rm{M}}_{\rm{rad}}(r)  \text { satisfies}\, \eqref{7.5} (v)\, ,\qquad \qquad  \\
\left(\frac{(n-1)|B - \frac 12| + \frac {n-3}{2}-2b}{2}\right) ^{2}, & \text{if } K(r)\text{ satisfies }\, \eqref{7.7}  (xi)\, \text{or}\, \eqref{7.7}(xii),
\end{array}
\right.
\end{equation}
$\operatorname{in}\, \operatorname{which}\, a = b + 1\, $ for \eqref{7.5} and \eqref{7.7};

\textbf{ii$)$ }For any $u\in H_{a,a+1}^{1}(M)\, ,$ 

\begin{equation}\label{7.15}
C_2 \left( \underset{M}{\int }\frac{\left\vert
u\right\vert ^{2}}{r^{2(a+1)}}dv\right)^2\leq \left( \underset{M}{\int }\frac{%
\left\vert u\right\vert ^{2}}{r^{2a}}dv\right) \left( \underset%
{M}{\int }\frac{\left\vert \nabla u\right\vert ^{2}}{r^{2(a+1)}}dv\right)\, ,
\end{equation}
where 
\begin{equation}\label{7.16}
C_2= \left\{
\begin{array}{cc}
\left( \frac{(n-1)A-1}{2} - a\right) ^{2}, & \text {if } \operatorname{Ric}^{\rm{M}}_{\rm{rad}}(r)  \text { satisfies}\, \eqref{7.5} (i)\, \text{or}\, \eqref{7.5}(ii)\,  \\
& \operatorname{or} \, K(r)  \text {satisfies}\, \text{any}\, \text{of}\, \eqref{7.7} (i)-(vi)\, ,\\
\left(\frac{  4a+2-(n-1)\big (1+\sqrt{1+4A}\big )}{4}\right) ^{2}, & \text {if } K(r)  \text { satisfies}\, \eqref{7.5} (vii)\,  \text{or}\, \eqref{7.5}(viii)\, ,  \\
\left(\frac{  4a+2-(n-1)\big (1+\sqrt{1+4B_1(1-B_1)}\big )}{4}\right) ^{2}, & \text {if } \operatorname{Ric}^{\rm{M}}_{\rm{rad}}(r)  \text { satisfies}\, \eqref{7.5} (iii)\,  \text{or}\, \eqref{7.5}(iv)\,  \\
& \operatorname{or} \, K(r)  \text { satisfies }\, \eqref{7.7} (ix)\, \text{or}\, \eqref{7.7}(x)\, , \\
\left( \frac {2a + 2 -n}{2}\right)^{2}, & \text {if } \operatorname{Ric}^{\rm{M}}_{\rm{rad}}(r)  \text { satisfies}\, \eqref{7.5} (v)\, ,  \qquad \qquad  \\
\left(\frac{(n-1)|B - \frac 12| + \frac {n-3}{2}-2a}{2}\right) ^{2}, & \text{if } K(r)\text{ satisfies }\, \eqref{7.7}  (xi)\, \text{or}\, \eqref{7.7}(xii),
\end{array}
\right.
\end{equation}
$\operatorname{in}\, \operatorname{which}\, b = a + 1\, $ for \eqref{7.5} and \eqref{7.7}\, ;

\textbf{iii$)$ }If $u\in H_{-(b+1),b}^{1}(M)$ then $u$ $\in L^{2}(M)$
and

\begin{equation}\label{7.17}
C_3 \left( \underset{M}{\int }\left\vert u\right\vert
^{2}dv\right) ^2 \leq \left( \underset{M}{\int }r^{2(b+1)}\left\vert u\right\vert
^{2}dv\right) \left( \underset{M}{\int }\frac{\left\vert
\nabla u\right\vert ^{2}}{r^{2b}}dv\right) ,
\end{equation}
where 
\begin{equation}\label{7.18}
C_3= \left\{
\begin{array}{cc}
\left( \frac{(n-1)A+1}{2} \right) ^{2}, & \text {if } \operatorname{Ric}^{\rm{M}}_{\rm{rad}}(r)  \text { satisfies}\, \eqref{7.5} (i)\, \text{or}\, \eqref{7.5}(ii)\,  \\
& \operatorname{or} \, K(r)  \text {satisfies}\, \text{any}\, \text{of}\, \eqref{7.7} (i)-(vi)\, ,\\
\left(\frac{  2+(n-1)\big (1+\sqrt{1+4A}\big )}{4}\right) ^{2}, & \text {if } K(r)  \text { satisfies}\, \eqref{7.5} (vii)\,  \text{or}\, \eqref{7.5}(viii)\, ,  \\
\left(\frac{  2+(n-1)\big (1+\sqrt{1+4B_1(1-B_1)}\big )}{4}\right) ^{2}, & \text {if } \operatorname{Ric}^{\rm{M}}_{\rm{rad}}(r)  \text { satisfies}\, \eqref{7.5} (iii)\,  \text{or}\, \eqref{7.5}(iv)\,  \\
& \operatorname{or} \, K(r)  \text { satisfies }\, \eqref{7.7} (ix)\, \text{or}\, \eqref{7.7}(x)\, , \\
\frac {n^{2}}{4}, & \text {if } \operatorname{Ric}^{\rm{M}}_{\rm{rad}}(r)  \text { satisfies}\, \eqref{7.5} (v)\, ,  \qquad \qquad  \\
\left(\frac{(n-1)|B - \frac 12| + \frac {n+1}{2}}{2}\right) ^{2}, & \text{if } K(r)\text{ satisfies }\, \eqref{7.7}  (xi)\, \text{or}\, \eqref{7.7}(xii),
\end{array}
\right.
\end{equation}
$\operatorname{in}\, \operatorname{which}\, a = -  b - 1\, $ for \eqref{7.5} and \eqref{7.7};

\textbf{iv$)$ } If $u\in H_{0,1}^{1}(M),$ then $u$ $\in L_{1}^{2}(M)$
and

\begin{equation}\label{7.19}
C_4\left( \underset{M}{\int }\frac{\left\vert u\right\vert ^{2}}{r^{2}}%
dv\right)^2\leq \left( \underset{M}{\int }\left\vert u\right\vert ^{2}dv\right)\left( \underset{M}{\int }\frac{\left\vert \nabla u\right\vert
^{2}}{r^{2}}dv\right)\, ,
\end{equation}
where 
\begin{equation}\label{7.20}
C_4= \left\{
\begin{array}{cc}
\left( \frac{(n-1)A-1}{2} \right) ^{2},  & \text {if } \operatorname{Ric}^{\rm{M}}_{\rm{rad}}(r)  \text { satisfies}\, \eqref{7.5} (i)\, \text{or}\, \eqref{7.5}(ii)\,  \\
& \operatorname{or} \, K(r)  \text {satisfies}\, \text{any}\, \text{of}\, \eqref{7.7} (i)-(vi)\, ,\\
\left(\frac{  2-(n-1)\big (1+\sqrt{1+4A}\big )}{4}\right) ^{2}, & \text {if } K(r)  \text { satisfies}\, \eqref{7.5} (vii)\,  \text{or}\, \eqref{7.5}(viii)\, ,  \\
\left(\frac{  (n-1)\big (1+\sqrt{1+4B_1(1-B_1)}\big )-2}{4}\right) ^{2}, & \text {if } \operatorname{Ric}^{\rm{M}}_{\rm{rad}}(r)  \text { satisfies}\, \eqref{7.5} (iii)\,  \text{or}\, \eqref{7.5}(iv)\,  \\
& \, \operatorname{or}\,  K(r)  \text { satisfies }\, \eqref{7.7} (ix)\, \text{or}\, \eqref{7.7}(x)\, , \\
(\frac {n-2}{2})^{2}, \,& \text {if } \operatorname{Ric}^{\rm{M}}_{\rm{rad}}(r)  \text { satisfies}\, \eqref{7.5} (v)\, ,  \qquad \qquad  \\
\left(\frac{(n-1)|B - \frac 12| + \frac {n-3}{2}}{2}\right) ^{2}, & \text{if } K(r)\text{ satisfies }\, \eqref{7.7}  (xi)\, \text{or}\, \eqref{7.7}(xii),
\end{array}
\right.
\end{equation}
$\operatorname{in}\, \operatorname{which}\, a = 0\, , b = 1\, $ for \eqref{7.5} and \eqref{7.7}\, ;

\textbf{v$)$ }If $u\in H_{-1,1}^{1}(M),$ then $u$ $\in
L_{\frac{1}{2}}^{2}(M)$ and

\begin{equation}\label{7.21}
C_5 \left( \underset{M}{\int }\frac{\left\vert u\right\vert
^{2}}{r}dv\right) ^2 \leq \left( \underset{M}{\int }r^{2}\left\vert u\right\vert
^{2}dv\right) \left( \underset{M}{\int }\frac{\left\vert
\nabla u\right\vert ^{2}}{r^{2}}dv\right)\, ,
\end{equation}
where 
\begin{equation}\label{7.22}
C_5= \left\{
\begin{array}{cc}
\left( \frac{(n-1)A}{2} \right) ^{2},  & \text {if } \operatorname{Ric}^{\rm{M}}_{\rm{rad}}(r)  \text { satisfies}\, \eqref{7.5} (i)\, \text{or}\, \eqref{7.5}(ii)\,    \\
& \operatorname{or}\, K(r)  \text {satisfies}\, \text{any}\, \text{of}\, \eqref{7.7} (i)-(vi)\, ,\\
\left(\frac{ (n-1)\big (1+\sqrt{1+4A}\big )}{4}\right) ^{2}, & \text {if } K(r)  \text { satisfies}\, \eqref{7.5} (vii)\,  \text{or}\, \eqref{7.5}(viii)\, ,  \\
\left(\frac{  (n-1)\big (1+\sqrt{1+4B_1(1-B_1)}\big )}{4}\right) ^{2}, & \text {if } \operatorname{Ric}^{\rm{M}}_{\rm{rad}}(r)  \text { satisfies}\, \eqref{7.5} (iii)\, \text{or}\, \eqref{7.5}(iv)\,     \\
& \operatorname{or}\, K(r)  \text { satisfies }\, \text{any}\, \text{of}\, \eqref{7.7} (ix)\, \text{or}\, \eqref{7.7}(x)\, , \\
(\frac {n-1}{2})^{2}, & \text {if } \operatorname{Ric}^{\rm{M}}_{\rm{rad}}(r)  \text { satisfies}\, \eqref{7.5} (v)\, ,  \qquad \qquad  \\
\left(\frac{(n-1)(|B - \frac 12| + \frac 12 )}{2}\right) ^{2}, & \text{if } K(r)\text{ satisfies }\, \eqref{7.7}  (xi)\, \text{or}\, \eqref{7.7}(xii)\, ,
\end{array}
\right.
\end{equation}
$\operatorname{in}\, \operatorname{which}\, a = -1\, , b = 1\, $ for \eqref{7.5} and \eqref{7.7};

\textbf{vi$)$ }If $u\in H^{1}(M)=$ $H_{0,0}^{1}(M),$ then $u$ $\in L_{\frac{1%
}{2}}^{2}(M)$ and

\begin{equation}\label{7.23}
C_6 \left( \underset{M}{\int }\frac{\left\vert u\right\vert
^{2}}{r}dv\right)^2 \leq \left( \underset{M}{\int }\left\vert u\right\vert
^{2}dv\right) \left( \underset{M}{\int }\left\vert \nabla
u\right\vert ^{2}dv\right)\, , 
\end{equation}
where 
\begin{equation}\label{7.24}
C_6 = C_5\, ,
\end{equation}
$\operatorname{in}\, \operatorname{which}\, a = 0\, , b = 0\, $ for \eqref{7.5} and \eqref{7.7}\, ;

\textbf{vii$)$ } For any $u\in D^{1,2}(M),$ 

\begin{equation}\label{7.25}
C_7 \underset{M}{\int }\frac{\left\vert u\right\vert ^{2}}{r^{2}}%
dv \leq \underset{M}{\int }\left\vert \nabla u\right\vert ^{2}dv\, ,
\end{equation}
where 
\begin{equation}\label{7.26}
C_7= \left\{
\begin{array}{cc}
\left( \frac{(n-1)A-1}{2} \right) ^{2},  & \text {if } \operatorname{Ric}^{\rm{M}}_{\rm{rad}}(r)  \text { satisfies}\, \eqref{7.5} (i)\, \text{or}\, \eqref{7.5}(ii)\,    \\
& \operatorname{or}\, K(r)  \text {satisfies}\, \text{any}\, \text{of}\, \eqref{7.7} (i)-(vi)\, ,\\
\left(\frac{2 - (n-1)\big (1+\sqrt{1+4A}\big )}{4}\right) ^{2}, & \text {if } K(r)  \text { satisfies}\, \eqref{7.5} (vii)\,  \text{or}\, \eqref{7.5}(viii)\, ,  \\
\left(\frac{(n-1)\big (1+\sqrt{1+4B_1(1-B_1)}\big )-2}{4}\right) ^{2}, & \text {if } \operatorname{Ric}^{\rm{M}}_{\rm{rad}}(r)  \text { satisfies}\, \eqref{7.5} (iii)\,  \text{or}\, \eqref{7.5}(iv)\,  \\
& \operatorname{or}\, K(r)  \text { satisfies }\, \eqref{7.7} (ix)\, \text{or}\, \eqref{7.7}(x)\, , \\
(\frac {n-2}{2})^{2}, & \text {if } \operatorname{Ric}^{\rm{M}}_{\rm{rad}}(r)  \text { satisfies}\, \eqref{7.5} (v)\, , \qquad \qquad  \\
\left(\frac{(n-1)|B - \frac 12| + \frac {n-3}{2}}{2}\right) ^{2}, & \text{if } K(r)\text{ satisfies }\, \eqref{7.7}  (xi)\, \text{or}\, \eqref{7.7}(xii),
\end{array}
\right.
\end{equation}
$\operatorname{in}\, \operatorname{which}\, a = 1\, , b = 0\, $ for \eqref{7.5} and \eqref{7.7}.
\end{theorem}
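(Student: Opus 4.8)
The plan is to derive all seven inequalities by specializing the two free parameters $a,b$ in the generalized sharp Caffarelli--Kohn--Nirenberg inequality \eqref{7.3}, which Theorems \ref{T: 7.3} and \ref{T: 7.4} supply for every $u\in W_0^{1,2}(M\setminus\{x_0\})$ and every $a,b\in\mathbb R$ under precisely the curvature conditions \eqref{7.5} and \eqref{7.7}. Two preliminary observations keep the bookkeeping transparent. First, under each of those conditions the constant $C=C(a,b,\dots)$ in \eqref{7.3} is nonnegative: the size constraints $n\le\frac{a+b+A}{A}$, $n\le a+b+1$, $n\le\frac{2a+2b+1+\sqrt{1+4B_1(1-B_1)}}{1+\sqrt{1+4B_1(1-B_1)}}$, and so on, are exactly the conditions that the relevant numerators in \eqref{7.6} and \eqref{7.8} have the right sign, the absolute values there handling the sign-ambiguous regimes. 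Second, inspection of \eqref{7.6} and \eqref{7.8} shows that $C$ depends on $a$ and $b$ only through their sum $a+b$; this streamlines the verification and accounts for the coincidences $C_6=C_5$ (both arising from $a+b=0$) and the agreement of the $a+b=1$ constants entering (iv) and (vii). Finally, each of $H^1_{a,b}(M)$, $D^{1,2}(M)$, $L^2_\gamma(M)$ is, by definition, the completion of $C_0^\infty(M\setminus\{x_0\})$ in a norm dominating both sides of the inequality to be proved, so it suffices to prove each inequality for $u\in C_0^\infty(M\setminus\{x_0\})$ --- where \eqref{7.3} applies since such $u$ lies in $W_0^{1,2}(M\setminus\{x_0\})$ --- and then pass to the limit.

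Each of the seven cases is then a single substitution into \eqref{7.3}. For \textbf{(i)} I take $a=b+1$, so that the weight $r^{a+b+1}$ on the left of \eqref{7.3} equals the weight $r^{2a}=r^{2b+2}$ of the first factor on the right; dividing \eqref{7.3} by $(\int_M|u|^2 r^{-2b-2}\,dv)^{1/2}$ --- the inequality being vacuous when this vanishes --- and squaring yields \eqref{7.13} with $C_1=C(b+1,b,\dots)^2$. For \textbf{(ii)} I take $b=a+1$ and square \eqref{7.3} directly to obtain \eqref{7.15} with $C_2=C(a,a+1,\dots)^2$. For \textbf{(iii)} I take $a=-(b+1)$, so the left-hand weight $r^{a+b+1}=r^0$ disappears while $r^{2a}=r^{-2(b+1)}$; squaring gives \eqref{7.17} with $C_3=C(-(b+1),b,\dots)^2$. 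Cases \textbf{(iv)}, \textbf{(v)}, \textbf{(vi)} are the choices $(a,b)=(0,1)$, $(-1,1)$, $(0,0)$ followed by squaring \eqref{7.3}, giving \eqref{7.19}, \eqref{7.21}, \eqref{7.23} with $C_4,C_5,C_6$ the squares of the corresponding $C$; since $a+b=0$ for both $(-1,1)$ and $(0,0)$, one gets $C_6=C_5$, which is \eqref{7.24}. For \textbf{(vii)} I take $(a,b)=(1,0)$, so $r^{a+b+1}=r^{2a}=r^2$, and argue exactly as in (i) to obtain \eqref{7.25} with $C_7=C(1,0,\dots)^2$.

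What remains is purely clerical: for each case and each of the (at most five) applicable curvature regimes, confirm that $C(a,b,\dots)^2$ with the substituted parameters coincides with the piecewise expression in \eqref{7.14}, \eqref{7.16}, \eqref{7.18}, \eqref{7.20}, \eqref{7.22}, \eqref{7.24}, \eqref{7.26}; for instance in the regime \eqref{7.5}(i)--(ii) or \eqref{7.7}(i)--(vi), where $C=\frac{a+b-(n-1)A}{2}$, one reads off $C^2=(\frac{(n-1)A-1}{2}-b)^2$ for $a=b+1$, $C^2=(\frac{(n-1)A+1}{2})^2$ for $a=-(b+1)$, $C^2=(\frac{(n-1)A-1}{2})^2$ for $(a,b)=(0,1)$ or $(1,0)$, and $C^2=(\frac{(n-1)A}{2})^2$ for $(a,b)=(-1,1)$ or $(0,0)$, matching the tables, and the remaining regimes are identical in form. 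The membership assertions accompanying (iii)--(vi) (namely $u\in L^2(M)$, $u\in L^2_1(M)$, $u\in L^2_{1/2}(M)$) follow from the same density setup: a sequence $\{u_i\}\subset C_0^\infty(M\setminus\{x_0\})$ converging to $u$ in the relevant completion is, by the already-established inequality applied to $u_i-u_j$, Cauchy in the corresponding $L^2_\gamma(M)$, so its limit lies there and the inequality survives the passage to the limit; alternatively one may invoke the embedding Theorem \ref{T: 7.5}. I anticipate no real obstacle: all of the analysis sits inside Theorems \ref{T: 7.3} and \ref{T: 7.4}, and the only points meriting a word of care are the vacuous-case remark needed when dividing before squaring in (i) and (vii), and the nonnegativity of $C$ that makes squaring \eqref{7.3} legitimate.
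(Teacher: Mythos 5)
Your proposal is correct and follows essentially the same route as the paper, whose proof consists precisely of the special choices $a=b+1$, $b=a+1$, $a=-b-1$, $(a,b)=(0,1),(-1,1),(0,0),(1,0)$ in Theorems \ref{T: 7.3} and \ref{T: 7.4}. The additional details you supply (nonnegativity of $C$ before squaring, the observation that $C$ depends only on $a+b$, and the density/completion argument for the membership claims) are the routine verifications the paper leaves implicit.
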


\begin{remark7.1}
The case $M= \mathbb{R}^n$ is due to Costa $($\cite{C}$)$.
\end{remark7.1}

\begin{proof} We make special choices in Theorems \ref{T: 7.3} and \ref{T: 7.4} as follows:

\textbf{i)} \ \ \ Let $a=b+1;$

\textbf{ii)} \ \ Let $b=a+1;$

\textbf{iii) }\ Let $a=-b-1;$

\textbf{iv)  }\ Let $a=0,b=1;$

\textbf{v)} \ \ Let $a=-1,b=1;$

\textbf{vi) }  Let $a=0,b=0; $

\textbf{vii) }  Let $a=1,b=0. $
\end{proof}

\subsection{Generalized sharp Hardy type inequalities on
Riemannian manifolds}

Laplacian comparison Theorems further lead to

\begin{theorem}$($Generalized Sharp Hardy Type Inequality$)$
\label{T: 7.7} Let $M$ be an $n$-manifold with a pole satisfying 
\begin{equation}\tag{2.39}\operatorname{Ric}^{\rm{M}}_{\rm{rad}}(r) \geq - (n-1)\frac {A(A-1)}{r^2}\quad \operatorname{where}\quad A \ge 1\, .\end{equation}  Then for every $u\in
W_{0}^{1,p}(M)$, $\frac {u}{r} \in L^p(M)$ with $p > (n-1) A + 1$, we have \begin{equation}  \label{7.27}
\big (\frac{p-1-(n-1)A}{p}\big ) ^{p}\int_{M}\frac{|u|^p}{r^p}dv\leq \int_{M}\left\vert \nabla u\right\vert ^{p}dv\, .
\end{equation}
\end{theorem}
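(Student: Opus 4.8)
The plan is to reduce the generalized Hardy inequality \eqref{7.27} to the Laplacian comparison estimate \eqref{2.40} of Theorem \ref{T: 2.6} via an integration-by-parts (divergence-theorem) argument, in the spirit of the classical proof of Hardy's inequality. First I would observe that it suffices to prove \eqref{7.27} for $u \in C_0^\infty(M)$, since both sides are continuous in the $W_0^{1,p}(M)$ norm once we know $u/r \in L^p(M)$; the class $W_0^{1,p}(M)$ (as opposed to $W_0^{1,p}(M\backslash\{x_0\})$) is allowed precisely because the vector field we integrate will turn out to be regular enough across $x_0$. For such $u$, consider the vector field $X = \frac{|u|^p}{r^{p-1}}\,\nabla r$. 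Computing its divergence inside $D(x_0)$, using $|\nabla r| = 1$ and $\operatorname{div}(\nabla r) = \Delta r$, gives
\begin{equation}
\operatorname{div} X = \frac{p\,|u|^{p-2}u\,\langle \nabla u, \nabla r\rangle}{r^{p-1}} - (p-1)\frac{|u|^p}{r^p} + \frac{|u|^p}{r^{p-1}}\Delta r.
\end{equation}

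Next I would integrate this identity over $M$. The term $\int_M \operatorname{div} X\, dv$ vanishes because $u$ has compact support (and $X$ is integrable, with the singularity at $x_0$ controlled by $|u|^p/r^{p-1}$ being bounded near $x_0$ when $u$ is smooth, so no boundary contribution at $x_0$ arises; one may also first integrate over $M \backslash B_\epsilon(x_0)$ and let $\epsilon \to 0$, noting $\int_{\partial B_\epsilon}|X| \lesssim \epsilon^{n-1}/\epsilon^{p-1} \to 0$ since $n - 1 > (n-1)A \ge p - 1$ would need care — actually $p > (n-1)A+1 \ge n$ only if $A \ge 1$ and then $\epsilon^{n-p}\to 0$; I should double-check this boundary estimate, it is one of the two delicate points). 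This yields
\begin{equation}
(p-1)\int_M \frac{|u|^p}{r^p}\,dv = \int_M \frac{|u|^p}{r^{p-1}}\Delta r\, dv + p\int_M \frac{|u|^{p-2}u\,\langle\nabla u,\nabla r\rangle}{r^{p-1}}\,dv.
\end{equation}
Now apply the Laplacian comparison \eqref{2.40}, $\Delta r \le (n-1)A/r$, which holds weakly on $M$ under hypothesis \eqref{2.39} by Theorem \ref{T: 2.6}; since $|u|^p/r^{p-1} \ge 0$, the weak inequality gives $\int_M \frac{|u|^p}{r^{p-1}}\Delta r\, dv \le (n-1)A\int_M \frac{|u|^p}{r^p}\,dv$. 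Substituting and rearranging produces $\big(p - 1 - (n-1)A\big)\int_M \frac{|u|^p}{r^p}\,dv \le p\int_M \frac{|u|^{p-1}|\nabla u|}{r^{p-1}}\,dv$, where I used $|\langle\nabla u,\nabla r\rangle| \le |\nabla u|$; the coefficient $p - 1 - (n-1)A$ is positive by the assumption $p > (n-1)A+1$.

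Finally I would apply Hölder's inequality with exponents $\frac{p}{p-1}$ and $p$ to the right-hand side: $\int_M \frac{|u|^{p-1}}{r^{p-1}}|\nabla u|\,dv \le \big(\int_M \frac{|u|^p}{r^p}\,dv\big)^{\frac{p-1}{p}}\big(\int_M |\nabla u|^p\,dv\big)^{\frac1p}$. Dividing through by $\big(\int_M \frac{|u|^p}{r^p}\,dv\big)^{\frac{p-1}{p}}$ (finite and, if zero, the inequality is trivial) and raising to the $p$-th power yields exactly \eqref{7.27} with constant $\big(\frac{p-1-(n-1)A}{p}\big)^p$. The main obstacle I anticipate is the justification that the boundary/cut-locus terms genuinely vanish: one must handle (a) the pole $x_0$, where $r$ is not smooth but $|X|$ is integrable and the flux through small spheres tends to $0$ under $p > (n-1)A + 1$, and (b) the fact that \eqref{2.40} is only known \emph{weakly} on $M$ (it holds pointwise only on $D(x_0) = M\backslash\{x_0\}$ since $M$ has a pole, so in fact the cut locus issue does not arise here — only the pole does), so the test function $|u|^p/r^{p-1}$ must be verified to be an admissible nonnegative Lipschitz test function with compact support, which it is for $u \in C_0^\infty(M)$. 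Once these regularity points are dispatched, the rest is the routine Hardy-type computation above.
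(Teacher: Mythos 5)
Your overall strategy is the same as the paper's: the paper's Lemma 7.1 is exactly a divergence-theorem/integration-by-parts inequality for (a regularized version of) your radial vector field $X=\frac{|u|^p}{r^{p-1}}\nabla r$, followed by the Laplacian comparison \eqref{2.40} of Theorem \ref{T: 2.6}, H\"older's inequality, and a density step. The gaps are precisely at the two points you flagged, and as written they are genuine. First, the flux at the pole: your estimate $\int_{\partial B_\epsilon}|X|\lesssim \epsilon^{n-1}/\epsilon^{p-1}=\epsilon^{n-p}$ does \emph{not} tend to zero, because $A\ge 1$ and $p>(n-1)A+1$ force $p>n$, so $\epsilon^{n-p}\to\infty$; moreover, after rearranging, the flux term sits on the wrong side of the inequality, so it cannot simply be dropped. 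No argument that ignores the hypothesis $\frac{u}{r}\in L^p(M)$ at this stage can work: for a smooth compactly supported $u$ with $u(x_0)\neq 0$ the left side of \eqref{7.27} is infinite and the inequality fails (this is exactly why the paper stresses the hypothesis cannot be removed). The repair must invoke $\frac{u}{r}\in L^p(M)$ in the boundary analysis, e.g.\ by noting that for continuous $u$ it forces $u(x_0)=0$, hence $|u|\lesssim r$ near $x_0$ and the flux is $O(\epsilon^{n})$, or by choosing a sequence $\epsilon_j\to 0$ along which $\int_{\partial B_{\epsilon_j}}|u|^p\epsilon_j^{1-p}\,dS\to 0$ via the coarea formula and integrability of $|u|^p/r^p$. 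The paper avoids the issue differently: the denominator is regularized to $r^p+\epsilon$ (Lemma 7.1), so the inner boundary term vanishes as $\delta\to 0$ for each fixed $\epsilon$, and the hypothesis $\frac{u}{r}\in L^p$ enters only when letting $\epsilon\to 0$ by dominated convergence, cf.\ \eqref{7.33}--\eqref{7.35}.

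Second, the reduction to $u\in C_0^\infty(M)$ is not justified by saying "both sides are continuous in the $W_0^{1,p}(M)$ norm": the functional $u\mapsto\int_M|u|^p/r^p\,dv$ is not continuous under $W^{1,p}$-convergence (that continuity is essentially the inequality being proved), and since $p>n$ a smooth approximating sequence $u_i\to u$ in $W^{1,p}$ need not even satisfy $u_i/r\in L^p(M)$, so the smooth-case inequality may not apply to the approximants themselves. The paper spends real effort here: it applies the smooth-case inequality to differences to show $\{u_i/r\}$ is Cauchy in $L^p$ (\eqref{7.36}), then uses completeness of $L^p$, a.e.\ convergence of a subsequence, and monotone convergence on $M\setminus B_\epsilon(x_0)$ to identify the $L^p$-limit with $u/r$ (\eqref{7.37}--\eqref{7.39}). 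You need to supply an argument of this kind (and check admissibility of the differences) rather than appeal to continuity. In short, the computational core of your proposal is correct and matches the paper, but both delicate steps --- vanishing of the boundary flux at the pole and the passage from $C_0^\infty(M)$ to $W_0^{1,p}(M)$ --- are either incorrect as stated (the $\epsilon^{n-p}\to 0$ claim) or unproved, and these are exactly where the hypotheses $\frac{u}{r}\in L^p(M)$ and $p>(n-1)A+1$ must be used.
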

\smallskip

We will apply a double limiting arguement to the following 

\begin{lemma7.1}$($Geometric Differential-Integral Inequality$)$ $($see \cite[(1.3)]{WL}, \cite[(3)]{CLW2}, \cite[(8.1)]{WW}$)$ Let $u\in C_{0}^{\infty}(M)$ and $\partial B_{\delta}(x_0)$ be the $C^1$ boundary of the geodesic ball $B_{\delta}(x_0)$ centered
at $x_0$ with radius $\delta >0$. Let $V$ be an open set with smooth boundary $\partial V$ such that $V\subset\subset M$, and $u=0$ off $V$. We choose a sufficiently small $\delta>0$ so that $\partial V\cap\partial B_{\delta}(x_0)=\emptyset$. Then  for every $\epsilon>0$ and  $p>1$, we have
\begin{equation}  \label{7.28}
\begin{array}{lll}
\left |-\int _{V\cap\partial B_\delta(x_0)}\frac{r }{r^{p}+\epsilon}%
|u|^p dS + \int _{M\backslash B_\delta(x_0)}\, \frac{%
(r^p+\epsilon)( r\Delta r +1)-p\,r^p }{(r^{p}+\epsilon)^2}\left\vert
u\right\vert ^{p}dv \right | &  &  \\
\leq p\bigg( \int _{M\backslash B_\delta(x_0)}\, \big(\frac{%
\left\vert u\right\vert ^{p-1}r}{r^p+\epsilon}\big)^{\frac {p}{p-1}} dv\bigg)%
^{\frac {p-1}{p}}\bigg( \int _{M\backslash B_\delta(x_0)}\,
\left\vert \nabla u\right\vert ^{p}dv\bigg)^{\frac 1p}, &  &  \\
&  &
\end{array}%
\end{equation}
where $dS$
and $dv$ are the volume element of $\partial B_{\delta}(x_0)$ and $M$
respectively.\end{lemma7.1}
\begin{proof}[Proof of Theorem \ref{T: 7.1}]
Applying the Laplacian comparison Theorem \ref{T: 2.6} under the assumption $%
\text{Ric}_{rad}\geq - (n-1)\frac {A(A-1)}{r^2}$, one has $r\Delta r+1 \leq (n-1) A + 1 < p$. 
By the triangle inequality, \eqref{7.28} implies
\begin{equation}  \label{7.29}
\begin{array}{lll}
 \int _{M\backslash B_\delta(x_0)}\, \frac{\big(p-(n-1)A -1\big)r^p- \big((n-1)A +1\big) \epsilon}{(r^{p}+\epsilon)^2}\left\vert
u\right\vert ^{p}dv  &  &  \\
\leq p\bigg( \int _{M\backslash B_\delta(x_0)}\, \big(\frac{%
\left\vert u\right\vert ^{p-1}r}{r^p+\epsilon}\big)^{\frac {p}{p-1}} dv\bigg)%
^{\frac {p-1}{p}}\bigg( \int _{M\backslash B_\delta(x_0)}\,
\left\vert \nabla u\right\vert ^{p}dv\bigg)^{\frac 1p}  + \left |\int _{\partial B_\delta(x_0)}\frac{r }{r^{p}+\epsilon}%
|u|^p dS \right |.&  &  \\
&  &
\end{array}%
\end{equation}
For sufficiently small $\delta>0$, one has
\begin{equation}\label{7.30}\int_{\partial B_{\delta}(x_0)}\frac{r}{r^p+\epsilon}|u|^p dS=0\quad \text{if}\quad x_0\notin V\end{equation} and
\begin{equation}\label{7.31}\left|\int_{\partial B_{\delta}(x_0)}\frac{r}{r^p+\epsilon}|u|^p dS\right|
\to 0\quad \text{as}\quad \delta\to 0\quad \text{if}\quad x_0\in V,\end{equation}
Indeed,  $\frac{r}{r^p+\epsilon}$ is a continuous, nondecreasing function for $r \in [0, \delta _0]$, where $\delta _0 = (\frac{\epsilon}{p-1})^{\frac 1p}$ and $u$ is bounded in $M$. Hence,  for $\delta < \delta _0\, ,$
\begin{equation}\label{7.32}\left|\int_{\partial B_{\delta}(x_0)}\frac{r}{r^p+\epsilon}|u|^p dS\right|
\leqslant\frac{\delta}{\delta ^p+\epsilon} \int _{\partial B_{\delta}(x_0)} \max_M |u|^p dS.
\end{equation} This implies \eqref{7.31}.
It follows from \eqref{7.28}, via \eqref{7.30} or \eqref{7.31} 
that for every $\epsilon>0$,
\begin{equation}  \label{7.33}\begin{aligned}
& \int _{M\backslash B_\delta(x_0)}\, \frac{\big(p-(n-1)A -1\big)r^p- \big((n-1)A +1\big) \epsilon}{(r^{p}+\epsilon)^2}\left\vert
u\right\vert ^{p}dv \\
& \le p\bigg(  \int _{M\backslash B_\delta(x_0)}\,  \big(\frac{\left\vert
u\right\vert ^{p-1}r}{r^p+\epsilon}\big)^{\frac {p}{p-1}} dv\bigg)^{\frac {%
p-1}{p}}\bigg( \int _{M}\, \left\vert \nabla u\right\vert ^{p}dv\bigg)^{\frac 1p}\, .\end{aligned}
\end{equation}

We observe that the integrands in the left, and in the first factor in the right of \eqref{7.33} are monotone and  uniformly bounded above by a positive constant multiple of $\big |\frac {u}{r}\big |^p$ on $M\, .$ Since $\frac {u}{r} \in L^p(M)$, by the dominated convergent theorem, as $\epsilon\to 0\, ,$
\begin{equation}\label{7.34}  \big (p-(n-1)A -1\big ) \left( \int _{M\backslash B_\delta(x_0)}\, \left|\frac {u}{r}\right|^p dv \right)
\leqslant p\left( \int _{M\backslash B_\delta(x_0)}\, \left|\frac {u}{r}\right|^p dv\right)^{\frac{p-1}{p}}
\left(\int_M| \nabla u|^p\, dv\right)^{\frac{1}{p}}.\end{equation}
Simplifying and raising it to the $p$-th power,
\begin{equation}\label{7.35}  \left(\frac{p-(n-1)A -1}{p}\right)^p \left(\int_{M}\left|\frac {u}{r}\right|^p dv\right)
\leqslant
\int_{M}|\nabla u|^p dv.\end{equation}

Now we {\it extend \eqref{7.35} from $u \in C^{\infty}_{0}(M)$ to $u \in W_{0}^{1,p}(M)\, .$} Let $\{u_i\}$ be a sequence of functions in  $ C_0^{\infty}(M)$ tending to $u \in W_{0}^{1,p}(M)\, .$  
Applying the inequality \eqref{7.35} to difference
$u_{i_m} - u_{i_n}\, ,$ we have

\begin{equation}\label{7.36}  \left(\int_{M}\left|\frac {u_{i_m} - u_{i_n}}{r}\right|^p dv\right)^{\frac 1p}
\leqslant
  \left(\frac{p}{p-(n-1)A -1}\right)\left(\int_{M}|\nabla u|^p dv\right )^{\frac 1p}.\end{equation}
  
\noindent
Hence $\{\frac {u_i}{r}\}$ is a Cauchy sequence  in $L^p(M)\, .$ Now we recall the following 
\begin{lemma7.2} $($see Theorem 3.3 in \cite{WW}$)\quad L^p(M)\, , 1 \le p \le \infty\, $ is complete, i.e. every Cauchy sequence $\{u_i\}$ in $L^p(M)$ converges $\big ($This means that if for every $\epsilon > 0$, there exists $N$ such that $\left(\int_{M} |u_{i} - u_{j}| ^p dv\right)^{\frac 1p} <  \epsilon$, when $i > N$ and $j > N$, then there exists a unique function $u\in L^p(M)$, such that $\left(\int_{M} |u_{i} - u| ^p \, dv \right)^{\frac {1}{p}} \to 0$, as $i \to \infty \big )$.
\end{lemma7.2}

 \begin{lemma7.3} $($see Theorem 3.4 in \cite{WW}$)\quad$ 
 If $\{u_i\}$ is a Cauchy sequence in $L^p(M)\, , 1 \le p \le \infty\, ,$ then there exists a subsequence  $\{u_{i_k}\}$ and a nonnegative function $U$ in $L^p(M)$ such that

\noindent
 $(1)$ $|u_{i_k}| \leqslant U$ almost everywhere in $M .$

 \noindent
 $(2)$ $\lim _{k \to \infty} u_{i_k} = u$ almost everywhere in $M .$
\end{lemma7.3}

In view of Lemma 7.2, there exists a limiting function $f(x) \in L^p(M)\, $ satisfying

\begin{equation}
\begin{aligned}
\int _{M} |f(x)|^p\, dv = \lim_{i\to \infty} \int _{M} \frac {|u_i(x)|^p}{r^p}\, dv
& \leqslant \bigg (\frac{p}{p-(n-1)A -1}\bigg )^p \lim_{i\to \infty} \int _{M} | \nabla u_{i} |^p \, dv\\
& \leqslant \bigg (\frac{p}{p-(n-1)A -1}\bigg )^p \int _{M} | \nabla u |^p \, dv.\\
\label{7.37}
\end{aligned}
\end{equation}
On the other hand, since $\frac {1}{r^p}$ is bounded in $M\backslash  B_{\epsilon}(x_0)\, ,$ where $  B_{\epsilon}(x_0)$ is the open geodesic ball of radius $\epsilon > 0\, ,$ centered at $x_0$, and the pointwise convergence in Lemma 7.3 (2), we have for every   $\epsilon > 0\, ,$

\begin{equation}
\begin{aligned}
\int _{M\backslash  B_{\epsilon}(x_0)} |f(x)|^p\, dv =  \lim_{i\to \infty} \int _{M\backslash  B_{\epsilon}(x_0)} \frac {|u_i(x)|^p }{r^p}\, dv &= \int _{M\backslash  B_{\epsilon}(x_0)} \frac {|u|^p }{r^p}\, dv\\
& = \int _{M} \chi _{M\backslash  B_{\epsilon}(x_0)} \frac {|u|^p }{r^p}\, dv,
\label{7.38}
\end{aligned}
\end{equation}
where $\chi _{M\backslash  B_{\epsilon}(x_0)}$ is the characteritic function on $ M\backslash  B_{\epsilon}(x_0)$.
As $\epsilon \to 0\, ,$ monotone convergence theorem and \eqref{7.38} imply that
\begin{equation}
\begin{aligned}
\int _{M} |f(x)|^p\, dv =  \lim_{i\to \infty} \int _{M} \frac {|u_i|^p }{r^p}\, dv & = \int _{M} \frac {|u|^p }{r^p}\, dv.\\
\end{aligned}
\label{7.39}\end{equation}
Substituting \eqref{7.39} into \eqref{7.37} we obtain the desired \eqref{7.27} for $u \in W_{0}^{1,p}(M)$.
\end{proof}

\begin{remark7.2}
Theorem \ref{T: 7.7}. is sharp and recaptures a theorem of Chen-Li-Wei $($see \cite {CLW1}$)$, when $A=1\, .$ A double limiting argument and techniques in \cite {WW} are employed in proving Theorem \ref{T: 7.7}. 
\end{remark7.2}

\section{Monotonicity Formulae}

Let $F:[0,\infty )\rightarrow [0,\infty )$
be a $C^2$
function such that $F^{\prime }>0$ on $[0,\infty )\, ,$ and $F(0)=0$. 
\begin{definition8.1}  The $F$-degree $d_F$ is defined to be

\begin{equation}  d_F=\sup_{t\geq 0}\frac{tF^{\prime }(t)}{F(t)}\, . \label{8.1}
\end{equation}
\end{definition8.1}
Let $\omega$ be a smooth $k$-forms on a smooth $n$-dimensional Riemannian manifold $M$ with
values in the vector bundle $\xi :E\rightarrow M$. At each fiber of $E$ is equipped with a positive inner product $\langle \quad , \quad \rangle_E\, .$
Set $|\omega|^2 = \langle \omega, \omega \rangle_E\, .$ 
The
$\mathcal{E}_{F,g}$-energy functional given by
\begin{equation}
\mathcal{E}_{F,g}(\omega )=\int_MF(\frac{|\omega |^2}2)dv_g \, . \label{8.2}
\end{equation}

The {\it stress-energy tensor $S_{F, \omega}$ associated with the $\mathcal{E}_{F,g}$-energy functional} is
defined as follows (see \cite {BE, Ba, ArM, LSC, DW}):
\begin{equation}
S_{F,\omega }(X,Y)=F(\frac{|\omega |^2}2)g(X,Y)-F^{\prime }(\frac{|\omega |^2%
}2)\langle i_X\omega ,i_Y\omega \rangle\, ,  \label{8.3}
\end{equation}
where $i_X\omega$ is the interior multiplication by the vector
field $X\, .$

\begin{definition8.2}
$\omega \in
A^k(\xi )$ ($k\geq 1$) is said to satisfy an
\emph {$F$-conservation law} if $S_{F,\omega }$ is divergence free, i.e. the $(0,1)$-type tensor field $\operatorname{div} S_{F,\omega }$ vanishes identically 
\begin{equation}
\operatorname{div} S_{F,\omega }\equiv 0.\label{8.4}
\end{equation}
\end{definition8.2}

Let $\flat $ denote the bundle isomorphism that identifies the vector field $X$ with the differential one-form $X^{\flat}$, and let $\nabla $ be the Riemannian connection of $M$. 
Then the covariant derivative $\nabla X^{\flat}$ of $X^{\flat}$ is a $(0,2)$-type tensor, given by 
\begin{equation} \nabla X^{\flat} (Y,Z) = \nabla _Z X^{\flat} Y = \langle \nabla _Z X, Y\rangle\, , \quad \forall\, X,Y \in \Gamma (M)\, . \label{8.5}
\end{equation}
If $X$ is conservative, then 

\begin{equation} X = \nabla f,\quad  X^{\flat} = df\quad \operatorname{and}\quad  \nabla X^{\flat} = \operatorname{Hess} (f)\, .\label{8.6}
\end{equation}
for some scalar potential $f$ (see \cite {CW3}, p. 1527).
 A direct computation yields (see, e.g., \cite {DW})\begin{equation} \operatorname{div}
(i_X S_{\omega}) = \langle S_{F,\omega
},\nabla X^{\flat}\rangle  +  (\operatorname{div}
S_{F, \omega}) (X)\, ,\quad \forall\, X\in \Gamma (M)\, . \label{8.7}
\end{equation} 
It follows from the divergence theorem that, if $\omega $ satisfies an $F$-conservation law, we
have for every bounded domain $D$ in $M\, $ with $C^1$ boundary $\partial D\, ,$
\begin{equation}
\int_{\partial D}S_{F,\omega }(X,\nu ) ds_g = \int_D\langle S_{F,\omega
},\nabla X^{\flat}\rangle dv_g\, , \label{8.8}
\end{equation}
where $\nu $ is unit outward normal vector field along $\partial
D$ with $(n-1)$-dimensional volume element $ds_g$. When we choose scalar potential $f (x) = \frac 12 r^2 (x)$, curvature via \eqref{8.6} and our Hessian comparison theorems will influence the behavior of the $F$-stress energy $S_{F,\omega
}$ and the underlying criticality $\omega$ with the help from the following. 

\begin{lemma8.1} $($\cite {DW}$)$ \label{L: 8.1} Let $M$ be a complete manifold with a pole $x_0$.
Assume that there exist two positive functions $h_1(r)$ and
$h_2(r)$ such that
\begin{equation}
h_1(r)[g-dr\otimes dr]\leq \operatorname{Hess} (r)\leq h_2(r)[g-dr\otimes dr]
\label{8.9}
\end{equation}
on $M\backslash \{x_0\}$. If $h_2(r)$ satisfies
\begin{equation}
rh_2(r)\geq 1\,,  \label{8.10}
\end{equation}
then
\begin{equation}
\langle S_{F,\omega },\nabla X^{\flat} \rangle\,\geq
\,\big(1+(n-1)rh_1(r)-2kd_F  r h_2(r)\big)F( \frac{|\omega |^2}2)\, ,  \label{8.11}
\end{equation}
where $X=r \nabla  r$.
\end{lemma8.1}

\begin{theorem} Let $(M,g)$ be an $n$-dimensional complete
Riemannian manifold with a pole $ x_0$. Let $\xi :E\rightarrow M$
be a Riemannian vector bundle on $M$ and $ \omega \in A^k(\xi )$.
Assume that the radial curvature $K(r)$ of $M$ satisfies one of the
following seven conditions:

\begin{equation}
\aligned
(i)&\quad \eqref{3.14}\, \operatorname{holds}\,  \operatorname{with}\,  1+(n-1)A_1-2kd_FA > 0; \\
(ii)&\quad \eqref{3.17}\, \operatorname{holds}\,   \operatorname{with}\,   1 + (n-1)\frac{1 + \sqrt {1+4A_1}}{2} -kd_F(1 + \sqrt {1+4A}) > 0;\\
(iii)&\quad \eqref{3.49}\, \operatorname{holds}\,   \operatorname{with}\,   1 + (n-1)(|B-\frac {1}{2}|+ \frac {1}{2}) -kd_F\big (1 + \sqrt {1+4B_1(1-B_1)}\big ) > 0;\\ 
(iv)&\quad \eqref{3.45}\, \operatorname{holds}\,  \operatorname{with}\,  1+ (n-1)\frac{1 + \sqrt {1-4B}}{2} -kd_F(1 + \sqrt {1+4B_1} )  > 0; \\
(v)& \quad -\alpha ^2\leq K(r)\leq -\beta ^2\quad  \operatorname{with}\quad \alpha > 0\, , \beta > 0 \quad \operatorname{and}\quad (n-1)\beta -2k\alpha d_F\geq 0;\\
(vi)& \quad K(r) = 0\quad  \operatorname{with}\quad  n-2kd_F>0;\\
(vii)& -\frac A{(1+r^2)^{1+\epsilon}}\leq K(r) \leq \frac B{(1+r^2)^{1+\epsilon}}\quad  \operatorname{with}\quad \epsilon > 0\, , A \ge 0\, , 0 < B < 2\epsilon\quad \operatorname{and}\\
&\qquad n - (n-1)\frac B{2\epsilon} -2k e^{\frac {A}{2\epsilon}}d_F > 0.
\endaligned\label{8.12}
\end{equation}
If $\omega $ satisfies an $F$-conservation law, then
\begin{equation}
\frac 1{\rho _1^\lambda }\int_{B_{\rho _1}(x_0)}F(\frac{|\omega
|^2}2) dv \leq \frac 1{\rho _2^\lambda }\int_{B_{\rho
_2}(x_0)}F(\frac{|\omega |^2}2) dv\label{8.13}
\end{equation}
for any $0<\rho _1\leq \rho _2$, where
\begin{equation}
\lambda =\left\{
\begin{array}{cc}
1+(n-1)A_1-2kd_FA, & \text{if }\quad K(r) \text{ satisfies $($i$)$}, \\
1 + (n-1)\frac{1 + \sqrt {1+4A_1}}{2} -kd_F(1 + \sqrt {1+4A}),& \text{if }\quad K(r) \text{ satisfies $($ii$)$}, \\
1 + (n-1)(|B-\frac {1}{2}| +\frac 12)-kd_F\big (1 + \sqrt {1+4B_1(1-B_1)}\big ), & \text{if }\quad K(r) \text{ satisfies $($iii$)$}, \\
1+ (n-1)\frac{1 + \sqrt {1-4B}}{2} -kd_F(1 + \sqrt {1+4B_1} ),  & \text{if }\quad K(r) \text{ satisfies $($iv$)$}, \\
n-2k\frac \alpha \beta d_F, & \text {if }\quad  K(r)  \text { satisfies $($v$)$},\\
n-2kd_F, &\text {if } K(r) \text{ satisfies $($vi$)$},\\
n - (n-1)\frac B{2\epsilon} -2k e^{\frac {A}{2\epsilon}}d_F, &\text{if } K(r) \text{
satisfies $($vii$)$}\, .

\end{array}
\right. \label{8.14}
\end{equation}\label{T: 8.1}
\end{theorem}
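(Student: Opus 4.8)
\textbf{Proof proposal for Theorem \ref{T: 8.1}.}

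The plan is to run the standard Sandwich-type monotonicity argument built around the $F$-conservation law, feeding it the Hessian comparison estimates produced in Section 3. First I would set $X = r\nabla r$ and apply the integrated identity \eqref{8.8}, which, because $\omega$ satisfies an $F$-conservation law, reads
\begin{equation}
\int_{\partial B_\rho(x_0)} S_{F,\omega}(X,\nu)\, ds_g = \int_{B_\rho(x_0)} \langle S_{F,\omega}, \nabla X^\flat\rangle\, dv_g
\label{plan-1}
\end{equation}
for a.e. $\rho > 0$. On the boundary sphere $\partial B_\rho(x_0)$ one has $\nu = \nabla r$ and $X = \rho\nabla r$, so $S_{F,\omega}(X,\nu) = \rho\, S_{F,\omega}(\nabla r,\nabla r) = \rho\bigl(F(\tfrac{|\omega|^2}{2}) - F'(\tfrac{|\omega|^2}{2})|i_{\nabla r}\omega|^2\bigr) \le \rho F(\tfrac{|\omega|^2}{2})$, using $F' \ge 0$ and $|i_{\nabla r}\omega|^2 \ge 0$; this gives the upper bound on the left-hand boundary integral needed at the end.

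The core step is the pointwise lower bound for $\langle S_{F,\omega},\nabla X^\flat\rangle$. By \eqref{8.6}, with scalar potential $f(x) = \tfrac12 r^2(x)$ one has $\nabla X^\flat = \operatorname{Hess}(f) = dr\otimes dr + r\operatorname{Hess}(r)$. I would then invoke Lemma \ref{L: 8.1}: in each of the seven cases I must check that the relevant Hessian comparison theorem from Section 3 supplies functions $h_1(r), h_2(r)$ with $h_1(r)[g - dr\otimes dr] \le \operatorname{Hess}(r) \le h_2(r)[g - dr\otimes dr]$ and $rh_2(r) \ge 1$, and then \eqref{8.11} yields
\begin{equation}
\langle S_{F,\omega},\nabla X^\flat\rangle \ge \bigl(1 + (n-1)rh_1(r) - 2kd_F\, rh_2(r)\bigr)F(\tfrac{|\omega|^2}{2}).
\label{plan-2}
\end{equation}
Concretely: in case (i) use \eqref{3.15} so $h_1 = A_1/r$, $h_2 = A/r$ (note $A_1 \ge 1$ gives $rh_2 \ge 1$), whence the bracket is the constant $\lambda = 1 + (n-1)A_1 - 2kd_FA$; in case (ii) use \eqref{3.18}, giving $h_1 = \tfrac{1+\sqrt{1+4A_1}}{2r}$, $h_2 = \tfrac{1+\sqrt{1+4A}}{2r}$; in (iii) use \eqref{3.50}, in (iv) use \eqref{3.46}; in (v) the two-sided bound $-\alpha^2 \le K \le -\beta^2$ gives, via the classical Hessian comparison with constant curvature model spaces, $\beta\coth(\beta r) \le h_1, h_2 \le \alpha\coth(\alpha r)$ and hence a bracket bounded below by $n - 2k\tfrac{\alpha}{\beta}d_F$ after using $r\coth(r) \ge 1$ and monotonicity; in (vi) $K \equiv 0$ gives $\operatorname{Hess}(r) = \tfrac1r[g - dr\otimes dr]$ exactly, bracket $= n - 2kd_F$; in (vii) one uses the standard comparison with the decaying curvature bound (as in \cite{DW}) to get $h_1(r) \ge \tfrac1r\bigl(1 - \tfrac{B}{2\epsilon}\bigr)$-type estimates and $h_2(r) \le \tfrac1r e^{A/2\epsilon}$, producing the bracket $n - (n-1)\tfrac{B}{2\epsilon} - 2ke^{A/2\epsilon}d_F$. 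In every case the bracket is $\ge \lambda$ as defined in \eqref{8.14}, and the hypothesis in \eqref{8.12} is exactly the statement $\lambda > 0$.

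Combining \eqref{plan-1}, the boundary upper bound, and \eqref{plan-2}, I set $\Psi(\rho) := \int_{B_\rho(x_0)} F(\tfrac{|\omega|^2}{2})\, dv$; the coarea formula gives $\Psi'(\rho) = \int_{\partial B_\rho(x_0)} F(\tfrac{|\omega|^2}{2})\, ds_g$ for a.e. $\rho$, so the chain of inequalities becomes $\lambda\,\Psi(\rho) \le \int_{B_\rho} \langle S_{F,\omega},\nabla X^\flat\rangle\, dv_g = \int_{\partial B_\rho} S_{F,\omega}(X,\nu)\, ds_g \le \rho\,\Psi'(\rho)$. This is the differential inequality $\tfrac{d}{d\rho}\bigl(\rho^{-\lambda}\Psi(\rho)\bigr) \ge 0$, and integrating from $\rho_1$ to $\rho_2$ yields \eqref{8.13}. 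The main obstacle I anticipate is the bookkeeping in the seven-case verification of the hypotheses of Lemma \ref{L: 8.1} — in particular confirming $rh_2(r) \ge 1$ and that the lower bracket simplifies to precisely the constant $\lambda$ of \eqref{8.14} in cases (v) and (vii), where $h_1, h_2$ are not exactly of the form $c/r$ and one must use monotonicity of $t\coth t$ (resp. the explicit estimates from \cite{DW}) to replace the $r$-dependent bracket by its infimum; everything else is the routine monotonicity machinery.
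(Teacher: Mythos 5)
Your proposal is correct and follows essentially the same route as the paper: take $X=\nabla(\tfrac 12 r^2)$, combine the conservation identity \eqref{8.8} with the boundary bound $S_{F,\omega}(X,\nu)\le \rho\, F(\tfrac{|\omega|^2}{2})$ and the interior bound from Lemma 8.1 (fed by the Hessian comparison theorems of Section 3, which is exactly how the paper verifies \eqref{8.10} and obtains \eqref{8.15} with the constant $\lambda$ of \eqref{8.14}), then apply the coarea formula and integrate the resulting differential inequality from $\rho_1$ to $\rho_2$. The only differences are expository: you spell out the seven-case bookkeeping that the paper compresses into citations of Corollary 3.1, Theorem A, Corollary 3.5 and Theorem 3.5 (with (v)--(vii) left to the standard estimates of \cite{DW}), and in case (v) the monotonicity actually needed is that $\coth$ is decreasing, so that $\alpha r\coth(\alpha r)\le \tfrac{\alpha}{\beta}\,\beta r\coth(\beta r)$, rather than monotonicity of $t\coth t$ — a cosmetic point.
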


\begin{proof}Choose a smooth conservative vector field $X= \nabla (\frac 12 r^2)$ on $M\, .$ If $K(r)$ satisfies $(\ref{8.12}) $, then by Corollary \ref{C: 3.1}, Theorem A, Corollary \ref{C: 3.5},  and Theorem \ref{T: 3.5}, (\ref{8.10}) holds. Hence Lemma 8.1 is applicable and by (\ref{8.11}) , we have on $B_\rho(x_0)\backslash\{x_0\}\, ,$ for every $\rho >0,$
\begin{equation}
\aligned \langle S_{F,\omega },\nabla X^{\flat} \rangle
\ge \lambda F( \frac{|\omega |^2}2) \, ,
\endaligned
\label{8.15}
\end{equation}
where $\lambda$ is as in $(\ref{8.14}) \, .$   Thus, by the continuity of $\langle S_{F,\omega },\nabla X^{\flat} \rangle\, $ and $ F( \frac{|\omega |^2}2), \, $ and (\ref{8.3}) , we
have for every $\rho >0,$ \eqref{8.15} holds on $B_\rho(x_0)$ and
\begin{equation}
\aligned
& \rho\, \, F(\frac{|\omega |^2}2)\geq S_{F,\omega }(X,\frac \partial {\partial r}) \qquad \text{on} \quad \partial B_\rho(x_0)\, . \endaligned\label{8.16}
\end{equation}
It follows from (\ref{8.8}), \eqref{8.15}  and (\ref{8.16})  that
\begin{equation}
\rho\int_{\partial B_\rho(x_0)}F(\frac{|\omega |^2}2) ds \geq \lambda
\int_{B_\rho(x_0)}F( \frac{|\omega |^2}2) dv\, . \label{8.17}
\end{equation}
Hence we get from (\ref{8.17})  the following
\begin{equation}
\frac{\int_{\partial B_\rho(x_0)}F(\frac{|\omega |^2}2) ds}{\int_{B_\rho(x_0)}F(\frac{%
|\omega |^2}2) dv} \geq \frac \lambda \rho\, .  \label{8.18}
\end{equation}
The coarea formula implies that
$$
\frac d{d\rho}\int_{B_\rho(x_0)}F(\frac{|\omega |^2}2) dv =\int_{\partial B_\rho(x_0)}F(%
\frac{|\omega |^2}2) ds\, .
$$
Thus, we have
\begin{equation}
\frac{\frac d{d\rho}\int_{B_\rho(x_0)}F(\frac{|\omega
|^2}2) dv}{\int_{B_\rho(x_0)}F( \frac{|\omega |^2}2) dv}\geq \frac \lambda
\rho
\label{8.19}
\end{equation}
for a.e. $\rho >0\, .$ By integration $(\ref{8.19})$ over $[\rho _1,\rho _2]$, we have
$$
\ln \int_{B_{\rho _2}(x_0)}F(\frac{|\omega |^2}2) dv -\ln
\int_{B_{\rho _1}(x_0)}F(\frac{|\omega |^2}2) dv \geq \ln \rho
_2^\lambda -\ln \rho _1^\lambda\, . $$
This proves (\ref{8.13}).
\end{proof}

\section{Vanishing Theorems} 
In this section we list some results that are immediate applications of the monotonicity formulae in the last section.

\subsection{Vanishing theorems for vector bundle valued $k$-forms}

\begin{theorem} \label{T: 9.1}Suppose the radial curvature $K(r)$ of $M$ satisfies the condition $(\ref{8.12})$. If $\omega \in A^k(\xi )$ satisfies an
$F$-conservation law $(\ref{8.4})$ and
\begin{equation}
\int_{B_\rho(x_0)}F(\frac{|\omega |^2}2)\, dv = o(\rho^\lambda )\quad \text{as
} \rho\rightarrow \infty\label{9.1}
\end{equation}
where $\lambda $ is given by $(\ref{8.14})$ depending on the curvature $K(r)$, then $F(\frac{|\omega |^2}2)\equiv 0\, ,$ and hence $\omega \equiv 0$. In
particular, if $\omega $ has finite $\mathcal{E}_{F,g}$-energy, then
$\omega \equiv 0$.
\end{theorem}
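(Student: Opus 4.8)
This is Theorem 9.1, a vanishing theorem that follows from the monotonicity formula in Theorem 8.1. Let me sketch the proof strategy.

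The plan is to exploit the monotonicity formula \eqref{8.13} together with the growth hypothesis \eqref{9.1}. The key observation is that Theorem \ref{T: 8.1} gives, under the curvature condition \eqref{8.12}, that for any fixed $\rho_1 > 0$ and all $\rho_2 \geq \rho_1$,
\[
\frac{1}{\rho_1^\lambda}\int_{B_{\rho_1}(x_0)}F\Big(\frac{|\omega|^2}{2}\Big)\,dv \leq \frac{1}{\rho_2^\lambda}\int_{B_{\rho_2}(x_0)}F\Big(\frac{|\omega|^2}{2}\Big)\,dv,
\]
where $\lambda$ is the positive exponent determined by $K(r)$ via \eqref{8.14}. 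First I would fix an arbitrary $\rho_1 > 0$ and let $\rho_2 \to \infty$ on the right-hand side. By hypothesis \eqref{9.1}, $\int_{B_{\rho_2}(x_0)}F(|\omega|^2/2)\,dv = o(\rho_2^\lambda)$, so the right-hand side tends to $0$. Hence the left-hand side, which does not depend on $\rho_2$, must be $\leq 0$; since $F \geq 0$ (as $F:[0,\infty)\to[0,\infty)$ and $F(0)=0$ with $F' > 0$), the integrand is nonnegative, forcing $\int_{B_{\rho_1}(x_0)}F(|\omega|^2/2)\,dv = 0$ for every $\rho_1 > 0$.

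From this I would conclude that $F(|\omega|^2/2) \equiv 0$ on all of $M$, since $M = \bigcup_{\rho_1 > 0} B_{\rho_1}(x_0)$ and a nonnegative continuous function with zero integral over every ball must vanish identically. Then, because $F$ is strictly increasing on $[0,\infty)$ with $F(0)=0$ (so $F(t) = 0 \iff t = 0$), we get $|\omega|^2/2 \equiv 0$, i.e., $\omega \equiv 0$. For the final assertion, if $\omega$ has finite $\mathcal{E}_{F,g}$-energy, then $\int_M F(|\omega|^2/2)\,dv < \infty$, which means $\int_{B_\rho(x_0)}F(|\omega|^2/2)\,dv$ is bounded as $\rho \to \infty$, hence is trivially $o(\rho^\lambda)$ since $\lambda > 0$; so \eqref{9.1} holds and the previous conclusion applies.

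The argument is essentially a direct corollary of Theorem \ref{T: 8.1}, so there is no serious obstacle. The only point requiring a little care is verifying that the exponent $\lambda$ appearing in \eqref{8.14} is genuinely positive under each of the seven curvature conditions in \eqref{8.12} — but this is precisely built into the strict inequalities imposed in \eqref{8.12} (each condition ends with a strict inequality of the form "$\cdots > 0$" or "$\cdots \geq 0$"), so $\lambda > 0$ (or $\lambda \geq 0$, in which case $o(\rho^\lambda)$ still forces the conclusion when combined with the monotone nonnegativity) in every case. Thus the combination "positive $\lambda$ + sublinear-in-$\rho^\lambda$ energy growth + monotonicity" closes the proof.
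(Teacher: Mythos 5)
Your proposal is correct and follows essentially the same route as the paper: apply the monotonicity formula \eqref{8.13} from Theorem 8.1, let $\rho_2 \to \infty$, and use the growth hypothesis \eqref{9.1} together with the nonnegativity and strict monotonicity of $F$ to force $F(\frac{|\omega|^2}{2}) \equiv 0$ and hence $\omega \equiv 0$, with the finite-energy case following since a bounded integral is $o(\rho^\lambda)$ for $\lambda > 0$. Your added care about the positivity of $\lambda$ is a sensible (and harmless) elaboration of what the paper leaves implicit.
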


\begin{proof}
By Theorem 8.1, the monotonicity formula \eqref{8.13} holds. Let $\lambda_ 2 \to \infty$ in \eqref{8.13}. Then \eqref{9.1} implies that $F(\frac{|\omega |^2}2)\equiv 0\, ,$ and hence $\omega \equiv 0$. 
\end{proof}

\subsection{Applications in $F$-Yang-Mills fields}

Let $R^\nabla $ be an $F$-Yang-Mills field, associated with an $F$-Yang-Mills connection $\nabla$ on the adjoint bundle $Ad(P)$ of a principle $G$-bundle over a manifold $M\, .$ Then $R^\nabla $ can be viewed as a $2$-form with values in the adjoint bundle over $M\, ,$ and by \cite [Theorem 3.1]{DW},
$\omega = R^\nabla$ satisfies an $F$-conservation law. We have the following vanishing theorem for $F$-Yang-Mills fields:

 \begin{theorem}\label{T: 9.2}Suppose the radial curvature $K(r)$ of $M$ satisfies the condition $(\ref{8.12})\, ,$ in which $k=2\, .$
 Assume $F$-Yang-Mills field  $R^{\nabla}$ satisfies the following growth
condition
\begin{equation}
\int_{B_\rho(x_0)} F(\frac{|R^{\nabla} |^2}2)\, dv = o(\rho^\lambda )\quad \text{as
} \rho\rightarrow \infty,\label{9.2}
\end{equation}
where $\lambda $ satisfies the condition $(\ref{8.14})\, $
for $k=2\, .$ Then $R^{\nabla} \equiv 0$ on $M\, .$
 In particular, every $F$-Yang-Mills field $R^{\nabla}$  with finite $F$-Yang-Mills energy vanishes on $M$.
\end{theorem}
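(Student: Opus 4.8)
The plan is to deduce Theorem~\ref{T: 9.2} as a direct specialization of the vector-bundle-valued vanishing Theorem~\ref{T: 9.1}, applied to the curvature $2$-form $\omega = R^{\nabla} \in A^2(\xi)$, where $\xi = \operatorname{Ad}(P) \to M$ is the adjoint bundle equipped with its natural fibre inner product. First I would recall, following \cite[Theorem 3.1]{DW}, that the curvature $R^{\nabla}$ of an $F$-Yang-Mills connection $\nabla$ on $\operatorname{Ad}(P)$ satisfies an $F$-conservation law, i.e. $\operatorname{div} S_{F,R^{\nabla}} \equiv 0$ as in \eqref{8.4}. This is the one external input, and it is exactly the hypothesis required to bring the stress-energy machinery of Section~8 to bear, with bundle-valued degree $k = 2$.

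Next, since the radial curvature $K(r)$ of $M$ is assumed to satisfy one of the seven conditions in \eqref{8.12} with $k = 2$, Theorem~\ref{T: 8.1} yields the monotonicity formula \eqref{8.13} for the $\mathcal{E}_{F,g}$-energy of $R^{\nabla}$ with exponent $\lambda$ prescribed by \eqref{8.14} in the $k = 2$ case; that is, $\rho^{-\lambda}\int_{B_{\rho}(x_0)} F(\frac{|R^{\nabla}|^2}{2})\,dv$ is nondecreasing in $\rho$. Inserting the growth hypothesis \eqref{9.2}, namely $\int_{B_{\rho}(x_0)} F(\frac{|R^{\nabla}|^2}{2})\,dv = o(\rho^{\lambda})$, into \eqref{8.13} and letting $\rho_2 \to \infty$ forces $\rho_1^{-\lambda}\int_{B_{\rho_1}(x_0)} F(\frac{|R^{\nabla}|^2}{2})\,dv = 0$ for every $\rho_1 > 0$, hence $F(\frac{|R^{\nabla}|^2}{2}) \equiv 0$ on $M$. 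Since $F' > 0$ on $[0,\infty)$ and $F(0) = 0$, the function $F$ is strictly increasing, so $F(t) = 0$ if and only if $t = 0$; therefore $|R^{\nabla}|^2 \equiv 0$, i.e. $R^{\nabla} \equiv 0$ on $M$. This step is just an application of Theorem~\ref{T: 9.1} verbatim. For the final assertion, if $R^{\nabla}$ has finite $F$-Yang-Mills energy then $\int_M F(\frac{|R^{\nabla}|^2}{2})\,dv < \infty$, so $\int_{B_{\rho}(x_0)} F(\frac{|R^{\nabla}|^2}{2})\,dv$ stays bounded as $\rho \to \infty$; because each condition in \eqref{8.12} is imposed precisely so that the corresponding $\lambda$ in \eqref{8.14} is positive, a bounded quantity is automatically $o(\rho^{\lambda})$, \eqref{9.2} holds, and the vanishing follows from the first part.

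There is essentially no analytic obstacle here beyond careful bookkeeping. The two points that need attention are: (i) that the $F$-conservation law for $R^{\nabla}$ is genuinely in force, which I take from \cite{DW}; and (ii) that the strict positivity of $\lambda$ in each of the seven curvature regimes — which is exactly the sign constraint built into \eqref{8.12} — is what makes the implication ``finite $F$-Yang-Mills energy $\Rightarrow$ \eqref{9.2}'' automatic. Once these are in hand, the proof is a one-line invocation of Theorem~\ref{T: 9.1} with $\omega = R^{\nabla}$ and $k = 2$.
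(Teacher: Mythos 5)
Your proposal is correct and follows the paper's own route exactly: the paper cites \cite[Theorem 3.1]{DW} for the $F$-conservation law of $R^{\nabla}$ and then proves Theorem 9.2 by applying Theorem 9.1 with $k=2$ and $\omega = R^{\nabla}$. The extra detail you supply (unwinding the monotonicity formula \eqref{8.13} and observing that positivity of $\lambda$ makes the finite-energy case automatic) is just the content of Theorems 8.1 and 9.1 spelled out, so there is nothing to correct.
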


\begin{proof} This follows at once from Theorem \ref{T: 9.1} in which $k=2$ and $\omega = R^\nabla\, .$
\end{proof}

This theorem becomes the following vanishing theorem for $p$-Yang-Mills fields, when $F(t) = \frac 1p (2t)^{\frac{p}{2}}, p > 1\, (\operatorname{Hence}\, d_F= \frac p2\, \operatorname{by}\, \operatorname{Definition} 8.1):$  
\begin{theorem} \label{T: 9.3}Suppose the radial curvature $K(r)$ of $M$ satisfies the one of the following seven conditions:

\begin{equation}
\aligned
(i)&\quad \eqref{3.14}\, \operatorname{holds}\,  \operatorname{with}\,  1+(n-1)A_1-2pA > 0; \\
(ii)&\quad (\ref{3.17})\, \operatorname{holds}\,   \operatorname{with}\,   1 + (n-1)\frac{1 + \sqrt {1+4A_1}}{2} -p(1 + \sqrt {1+4A}) > 0;\\
(iii)&\quad (\ref{3.49})\, \operatorname{holds}\,   \operatorname{with}\,   1 + (n-1)(|B-\frac 12|+\frac 12) - p\big (1 + \sqrt {1+4B_1(1-B_1)}\big ) > 0;\\ 
(iv)&\quad (\ref{3.45})\, \operatorname{holds}\,  \operatorname{with}\,  1+ (n-1)\frac{1 + \sqrt {1-4B}}{2} - p ( 1 + \sqrt {1+4B_1} ) > 0 \, ;\\
(v)& \quad -\alpha ^2\leq K(r)\leq -\beta ^2\quad  \operatorname{with}\quad \alpha > 0, \beta > 0 \quad \operatorname{and}\quad (n-1)\beta -2p \alpha \geq 0;\\
(vi)& \quad K(r) = 0\quad  \operatorname{with}\quad  n-2p>0;\\
(vii)& -\frac A{(1+r^2)^{1+\epsilon}}\leq K(r) \leq \frac B{(1+r^2)^{1+\epsilon}}\quad  \operatorname{with}\quad \epsilon > 0\, , A \ge 0\, , 0 < B < 2\epsilon\quad \operatorname{and}\\
&\qquad n - (n-1)\frac B{2\epsilon} -2p e^{\frac {A}{2\epsilon}} > 0.
\endaligned \label{9.3}
\end{equation}

Then every $p$-Yang-Mills field $R^{\nabla}$ with the following growth
condition vanishes:
\begin{equation}
\frac 1p \int_{B_\rho(x_0)}|R^{\nabla } |^p \, dv = o(\rho^\lambda )\quad \text{as
} \rho\rightarrow \infty\label{9.4}
\end{equation}
where \begin{equation}
\lambda =\left\{
\begin{array}{cc}
1+(n-1)A_1-2pA, & \text{if }\quad K(r) \text{ satisfies $($i$)$}, \\
1 + (n-1)\frac{1 + \sqrt {1+4A_1}}{2} -p(1 + \sqrt {1+4A}),& \text{if }\quad K(r) \text{ satisfies $($ii$)$}, \\
1 + (n-1)(|B-\frac 12|+\frac 12) - p \big (1 + \sqrt {1+4B_1(1-B_1)} \big ), & \text{if }\quad K(r) \text{ satisfies $($iii$)$}, \\
1 + (n-1)\frac{1 + \sqrt {1-4B(1-B)}}{2} - p - p \sqrt {1+4B_1}, & \text{if }\quad K(r) \text{ satisfies $($iv$)$},\\
n-2p\frac \alpha \beta,  & \text {if }\quad  K(r)  \text { satisfies $($v$)$},\\
n-2p, &\text {if } K(r) \text{ satisfies $($vi$)$},\\
n - (n-1)\frac B{2\epsilon} -2p e^{\frac {A}{2\epsilon}}, &\text{if } K(r) \text{
satisfies $($vii$)$}\, .

\end{array}
\right.\label{9.5}
\end{equation}
In particular, every $p$-Yang-Mills field $R^{\nabla}$  with finite $\mathcal{YM}_p$-energy vanishes on $M$.
\end{theorem}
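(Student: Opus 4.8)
The plan is to derive Theorem \ref{T: 9.3} as a direct corollary of Theorem \ref{T: 9.2}, the vanishing theorem for $F$-Yang-Mills fields, by specializing the function $F$. First I would set $F(t) = \frac 1p(2t)^{\frac p2}$ with $p > 1$ and observe that $F$ satisfies the standing hypotheses of Section 8: $F \in C^2([0,\infty))$, $F(0) = 0$, and $F'(t) = (2t)^{\frac p2 - 1} > 0$ on $(0,\infty)$. A short computation of the $F$-degree via Definition 8.1 gives
\begin{equation*}
d_F = \sup_{t \ge 0} \frac{t F'(t)}{F(t)} = \sup_{t \ge 0} \frac{t (2t)^{\frac p2 - 1}}{\frac 1p (2t)^{\frac p2}} = \frac p2\, ,
\end{equation*}
which is the key numerical input. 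Substituting $d_F = \frac p2$ into the seven curvature conditions \eqref{8.12} (with $k = 2$) turns each inequality of the form $\cdots - 2kd_F(\cdots) > 0$ into $\cdots - 2p(\cdots) > 0$, reproducing exactly the seven conditions \eqref{9.3}; likewise the seven values of $\lambda$ in \eqref{8.14} specialize to those in \eqref{9.5}.

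Next I would check that the $F$-energy functional and the growth hypothesis translate correctly. With this choice of $F$ one has $F(\frac{|\omega|^2}{2}) = \frac 1p |\omega|^p$, so the $\mathcal E_{F,g}$-energy is precisely the $p$-Yang-Mills energy $\frac 1p\int_M |R^\nabla|^p\, dv$, and the growth condition \eqref{9.2} becomes \eqref{9.4}. An $F$-Yang-Mills connection with this $F$ is by definition a $p$-Yang-Mills connection, and by \cite[Theorem 3.1]{DW} (invoked in the proof of Theorem \ref{T: 9.2}) $\omega = R^\nabla$ satisfies the $F$-conservation law \eqref{8.4}. Thus all hypotheses of Theorem \ref{T: 9.2} hold for $k = 2$, $\omega = R^\nabla$, and the given $F$.

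Applying Theorem \ref{T: 9.2} then yields $F(\frac{|R^\nabla|^2}{2}) \equiv 0$, hence $|R^\nabla|^p \equiv 0$, i.e.\ $R^\nabla \equiv 0$ on $M$. For the final clause, if $R^\nabla$ has finite $\mathcal{YM}_p$-energy then $\int_{B_\rho(x_0)} F(\frac{|R^\nabla|^2}{2})\, dv$ is bounded, hence automatically $o(\rho^\lambda)$ as $\rho \to \infty$ since every $\lambda$ in \eqref{9.5} is positive under the stated sign conditions, so the vanishing conclusion applies. The argument is essentially bookkeeping; I do not anticipate a genuine obstacle. The only point requiring care is the verification that each of the seven specialized conditions and each specialized exponent $\lambda$ matches after the substitution $d_F = \frac p2$ — in particular one should double-check the fourth case, where \eqref{9.5} as written displays $1 + (n-1)\frac{1+\sqrt{1-4B(1-B)}}{2} - p - p\sqrt{1+4B_1}$, and confirm this is the intended specialization of $1 + (n-1)\frac{1+\sqrt{1-4B}}{2} - kd_F(1+\sqrt{1+4B_1})$ from \eqref{8.14}(iv) with the conventions of Theorem \ref{T: 3.5}. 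Once that consistency is confirmed, the proof is complete.
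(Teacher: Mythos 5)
Your proposal is correct and is exactly the paper's route: the paper states Theorem 9.3 as the specialization of Theorem 9.2 to $F(t)=\frac 1p(2t)^{p/2}$, noting $d_F=\frac p2$ so that $2kd_F=2p$ with $k=2$, which is precisely your bookkeeping. Your flag on case (iv) is well taken — the $\sqrt{1-4B(1-B)}$ in \eqref{9.5} is inconsistent with \eqref{9.3}(iv) and with \eqref{8.14}(iv), and should read $\sqrt{1-4B}$.
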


\begin{corollary} Let  $M\, ,$ $N\, ,$ $K(r)\, ,$ $\lambda\, ,$ and the growth
condition (\ref{9.4}) be as in Theorem \ref{T: 9.3}, in which $p=2\, .$ Then every Yang-Mills field $R^\nabla \equiv 0$ on $M$.\label{C: 9.1}
\end{corollary}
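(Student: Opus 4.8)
The plan is to deduce Corollary \ref{C: 9.1} directly from Theorem \ref{T: 9.3} by specializing the exponent $p$ to the value $2$. The key observation is that when $p=2$, an $F$-Yang-Mills field with $F(t)=\frac 1p(2t)^{\frac p2}=\frac 12(2t)=t$ is precisely a classical Yang-Mills field, and the $\mathcal{YM}_p$-energy reduces to the ordinary Yang-Mills energy $\int_M |R^\nabla|^2\,dv$. So there is essentially nothing new to prove: the statement is a labeled special case.

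First I would note that, by hypothesis, $M$, $N$, the radial curvature $K(r)$, the exponent $\lambda$, and the growth condition \eqref{9.4} are all exactly as in Theorem \ref{T: 9.3}, but with $p=2$. In particular, one of the seven curvature conditions in \eqref{9.3} holds with $p=2$ substituted throughout, and $\lambda$ takes the corresponding value in \eqref{9.5} with $p=2$. Second, I would invoke Theorem \ref{T: 9.3} verbatim: under these hypotheses, every $p$-Yang-Mills field $R^\nabla$ satisfying the growth condition \eqref{9.4} vanishes identically on $M$. Third, I would recall that a $2$-Yang-Mills field is exactly a Yang-Mills field (since for $p=2$ the $F$-Yang-Mills equations reduce to the standard Yang-Mills equations), so the conclusion $R^\nabla\equiv 0$ on $M$ is precisely the assertion of Corollary \ref{C: 9.1}. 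Finally, the ``in particular'' clause of Theorem \ref{T: 9.3} gives that every Yang-Mills field of finite Yang-Mills energy vanishes, which is already contained in the displayed statement of the corollary.

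There is no genuine obstacle here, since this is a direct specialization; the only point requiring a word of care is the identification of $2$-Yang-Mills fields with Yang-Mills fields and of the $\mathcal{YM}_2$-energy with the Yang-Mills energy, which follows from Definition 8.1 (giving $d_F=\frac p2=1$ when $p=2$) together with \eqref{8.2} and the construction of $F$-Yang-Mills fields in \cite{DW}. Thus the proof is a single sentence: this follows at once from Theorem \ref{T: 9.3} by setting $p=2$, upon observing that a $2$-Yang-Mills field is a Yang-Mills field and that the $\mathcal{YM}_2$-energy is the Yang-Mills energy.
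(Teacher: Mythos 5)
Your proof is correct and is exactly the intended argument: the paper offers no separate proof of Corollary \ref{C: 9.1}, treating it as the immediate specialization of Theorem \ref{T: 9.3} to $p=2$, where $F(t)=t$, $d_F=1$, and the $2$-Yang-Mills (i.e.\ Yang-Mills) field with the growth condition \eqref{9.4} must vanish. Your identification of $2$-Yang-Mills fields with Yang-Mills fields and of the $\mathcal{YM}_2$-energy with the Yang-Mills energy is the only point needing mention, and you handled it.
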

\section{Liouville Type Theorems}

\subsection{Liouville type theorems for $F$-harmonic maps}

Let $u: M \to N$ be an $F$-harmonic map. Then its differential $du $ can be viewed as a $1$-form with values in the induced bundle $u^{-1}TN\, .$ Since
$\omega=du $ satisfies an $F$-conservation law $(\ref{8.4})\, ,$ we obtain
the following Liouville-type

\begin{theorem}\label{T: 10.1} Let $N$ be a Riemannian manifold. Suppose the radial curvature $K(r)$ of $M$ satisfies one of the seven conditions in \eqref{12.3}.
Then every $F$-harmonic map $u: M \to N$ with the following growth
condition is a constant:
\begin{equation}
\int_{B_\rho(x_0)}F(\frac{|du |^2}2)\, dv = o(\rho^\lambda )\quad \text{as
} \rho\rightarrow \infty\, ,\label{10.1}
\end{equation}
where $\lambda $ is as in \eqref{12.4}.  In particular, every $F$-harmonic map $u: M \to N$ with finite $F$-energy is a constant.
\end{theorem}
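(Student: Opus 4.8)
The plan is to realize the differential of an $F$-harmonic map as a bundle-valued $1$-form and then invoke the vanishing Theorem \ref{T: 9.1}. Given an $F$-harmonic map $u\colon M\to N$, its differential $du$ is a section of $T^*M\otimes u^{-1}TN$, that is, $\omega=du\in A^1(\xi)$ with $\xi\colon u^{-1}TN\to M$ carrying the pullback metric and connection. First I would recall the standard fact (see \cite{DW}) that, $u$ being $F$-harmonic, the $\mathcal{E}_{F,g}$-stress-energy tensor $S_{F,du}$ is divergence free, so that $\omega=du$ satisfies the $F$-conservation law \eqref{8.4}. Thus $\omega=du$ meets the hypotheses of Theorem \ref{T: 9.1} with $k=1$: the seven curvature hypotheses in \eqref{12.3} are precisely condition \eqref{8.12} read with $k=1$ (the value appropriate to a $1$-form), and the exponent $\lambda$ in \eqref{12.4} is \eqref{8.14} with $k=1$, so the monotonicity formula \eqref{8.13} of Theorem \ref{T: 8.1} holds for $F(\frac{|du|^2}{2})$.

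Next, the growth hypothesis \eqref{10.1} is literally \eqref{9.1} for the form $\omega=du$. Hence Theorem \ref{T: 9.1} gives $F(\frac{|du|^2}{2})\equiv 0$ on $M$. Since $F\colon[0,\infty)\to[0,\infty)$ satisfies $F(0)=0$ and $F'>0$ on $[0,\infty)$, the function $F$ is strictly increasing and vanishes only at the origin; therefore $F(\frac{|du|^2}{2})\equiv 0$ forces $|du|\equiv 0$, i.e. $du\equiv 0$. Since a manifold with a pole is connected, $u$ is constant on $M$.

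For the final assertion, suppose $u$ has finite $F$-energy, $\int_M F(\frac{|du|^2}{2})\,dv<\infty$. Under each of the seven conditions in \eqref{12.3} the strict inequalities built into them (exactly as in \eqref{8.12}) force $\lambda>0$, so $\rho^\lambda\to\infty$ as $\rho\to\infty$, while $\int_{B_\rho(x_0)}F(\frac{|du|^2}{2})\,dv$ stays bounded by the total $F$-energy; hence $\int_{B_\rho(x_0)}F(\frac{|du|^2}{2})\,dv=O(1)=o(\rho^\lambda)$ and the previous paragraph applies. I do not expect a substantive obstacle here: the only points that demand care are citing the $F$-conservation law for $du$ correctly and checking the bookkeeping that identifies \eqref{12.3}--\eqref{12.4} with the $k=1$ instance of \eqref{8.12}--\eqref{8.14}; granting these, the theorem is an immediate consequence of Theorems \ref{T: 8.1} and \ref{T: 9.1}.
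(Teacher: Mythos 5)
Your proposal is correct and follows essentially the same route as the paper: the paper's proof is precisely the observation that $du$ satisfies the $F$-conservation law (citing \cite{DW}) and then an application of Theorem \ref{T: 9.1} with $k=1$ and $\omega=du$, with \eqref{12.3} and \eqref{12.4} being \eqref{8.12} and \eqref{8.14} specialized to $k=1$. Your additional remarks (that $F(0)=0$ with $F'>0$ forces $du\equiv 0$, and that finite $F$-energy gives the $o(\rho^\lambda)$ growth since $\lambda>0$) are just the details the paper leaves implicit.
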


\begin{proof} This follows at once from Theorem \ref{T: 9.1}, in which $k=1$ and $\omega = du\, .$
\end{proof}

Analogously, we have the following Liouville Theorem for $p$-harmonic maps:

\begin{theorem}\label{T: 10.2} Let $N$ be a Riemannian manifold. Suppose the radial curvature $K(r)$ of $M$ satisfies one of the following seven conditions:
\begin{equation}
\aligned
(i)&\quad \eqref{3.14}\, \operatorname{holds}\,  \operatorname{with}\,  1+(n-1)A_1-pA > 0; \\
(ii)&\quad \eqref{3.17}\, \operatorname{holds}\,   \operatorname{with}\,   1 + (n-1)\frac{1 + \sqrt {1+4A_1}}{2} -p\frac{1 + \sqrt {1+4A}}{2} > 0;\\
(iii)&\quad \eqref{3.49}\, \operatorname{holds}\,   \operatorname{with}\,   1 + (n-1)(|B-\frac 12|+\frac 12) -p\frac{1 + \sqrt {1+4B_1(1-B_1)}}{2} > 0;\\ 
(iv)&\quad \eqref{3.45}\, \operatorname{holds}\,  \operatorname{with}\,  1+ (n-1)\frac{1 + \sqrt {1-4B}}{2} -p\frac{1 + \sqrt {1+4B_1}}{2} > 0; \\
(v)& \quad -\alpha ^2\leq K(r)\leq -\beta ^2\quad  \operatorname{with}\quad \alpha > 0, \beta > 0 \quad \operatorname{and}\quad (n-1)\beta -p\alpha \geq 0;\\
(vi)& \quad K(r) = 0\quad  \operatorname{with}\quad  n - pk>0;\\
(vii)& -\frac A{(1+r^2)^{1+\epsilon}}\leq K(r) \leq \frac B{(1+r^2)^{1+\epsilon}}\quad  \operatorname{with}\quad \epsilon > 0\, , A \ge 0\, , 0 < B < 2\epsilon\quad \operatorname{and}\\
&\qquad n - (n-1)\frac B{2\epsilon} -p e^{\frac {A}{2\epsilon}} > 0.
\endaligned\label{10.2}
\end{equation}

Then every $p$-harmonic map $u: M \to N$ with the following $p$-energy growth
condition is a constant:
\begin{equation}
\frac 1p \int_{B_\rho(x_0)}|du|^p\, dv = o(\rho^\lambda )\quad \text{as
} \rho\rightarrow \infty\, , \label{10.3}
\end{equation}
where \begin{equation}
\lambda =\left\{
\begin{array}{cc}
1+(n-1)A_1-pA, & \text{if }\quad K(r) \text{ satisfies $($i$)$}, \\
1 + (n-1)\frac{1 + \sqrt {1+4A_1}}{2} -p\frac{1 + \sqrt {1+4A}}{2},& \text{if }\quad K(r) \text{ satisfies $($ii$)$}, \\
1 + (n-1)(|B-\frac 12|+\frac 12) -p\frac{1 + \sqrt {1+4B_1(1-B_1)}}{2}, & \text{if }\quad K(r) \text{ satisfies $($iii$)$}, \\
1+ (n-1)\frac{1 + \sqrt {1-4B}}{2} -p\frac{1 + \sqrt {1+4B_1}}{2}, & \text{if }\quad K(r) \text{ satisfies $($iv$)$\, },\\
n-p\frac \alpha \beta,  & \text {if }\quad  K(r)  \text { satisfies $($v$)$},\\
n-p, &\text {if } K(r) \text{ satisfies $($vi$)$},\\
n - (n-1)\frac B{2\epsilon} -p e^{\frac {A}{2\epsilon}}, &\text{if } K(r) \text{
satisfies $($vii$)$}\, .
\label{10.4}
\end{array}
\right.
\end{equation}

In particular,  every $p$-harmonic map $u: M \to N$ with finite $p-$energy is a constant.
\end{theorem}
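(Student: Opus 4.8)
The plan is to deduce Theorem \ref{T: 10.2} directly from the $F$-harmonic-map Liouville theorem, Theorem \ref{T: 10.1}, by specializing $F$ to the $p$-energy density function $F(t) = \frac 1p (2t)^{p/2}$, $p > 1$. First I would record that $F$ is $C^2$ on $[0,\infty)$ with $F'>0$ and $F(0)=0$, so that it falls under the setup of Section 8, and that its $F$-degree is
\[
d_F = \sup_{t \ge 0} \frac{tF'(t)}{F(t)} = \frac p2\, ,
\]
as already noted in the excerpt just before Theorem \ref{T: 9.3}. Consequently $2kd_F = pk$, and with $k=1$ (the differential $du$ is a $1$-form with values in $u^{-1}TN$) one gets $2kd_F = p$. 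Substituting $2kd_F = p$ into each of the seven curvature conditions \eqref{8.12}, and into the corresponding exponents \eqref{8.14}, turns them termwise into the seven conditions \eqref{10.2} and the exponent list \eqref{10.4}: for instance condition (i) of \eqref{8.12}, namely $1+(n-1)A_1 - 2kd_F A > 0$, becomes $1+(n-1)A_1 - pA > 0$, which is precisely (i) of \eqref{10.2}, and similarly for (ii)–(vii); in (vi) the hypothesis $n - 2kd_F > 0$ becomes $n - pk > 0$, i.e.\ $n-p>0$ since $k=1$.

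Next I would invoke the fact stated in the paper (citing \cite{DW}) that $\omega = du$ satisfies an $F$-conservation law $\operatorname{div} S_{F,\omega} \equiv 0$, so that Theorem \ref{T: 10.1} applies verbatim with this choice of $F$. The growth hypothesis \eqref{10.1} of Theorem \ref{T: 10.1}, written out for $F(t) = \frac 1p (2t)^{p/2}$, reads
\[
\int_{B_\rho(x_0)} F\Bigl(\tfrac{|du|^2}{2}\Bigr)\, dv = \int_{B_\rho(x_0)} \frac 1p |du|^p\, dv = o(\rho^\lambda)\qquad \text{as } \rho \to \infty\, ,
\]
which is exactly \eqref{10.3}; and the value of $\lambda$ provided by \eqref{12.4} (equivalently \eqref{8.14}) under the substitution $2kd_F = p$, $k=1$ reduces to the list \eqref{10.4}. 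Thus Theorem \ref{T: 10.1} yields $F(\frac{|du|^2}{2}) \equiv 0$, hence $|du| \equiv 0$, hence $u$ is constant. Finally, for the last sentence, if $u$ has finite $p$-energy then $\int_M \frac 1p |du|^p\, dv < \infty$, so in particular $\int_{B_\rho(x_0)} \frac 1p |du|^p\, dv$ is bounded and is therefore $o(\rho^\lambda)$ whenever $\lambda > 0$ — which holds under each of (i)–(vii) by the very inequalities imposed in \eqref{10.2} — so the constancy conclusion follows again.

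The only genuine bookkeeping obstacle is verifying that the algebraic substitution $2kd_F \mapsto pk$ (with $k=1$) carries \emph{every} line of \eqref{8.12} and \eqref{8.14} to the matching line of \eqref{10.2} and \eqref{10.4}; this is a routine line-by-line check but must be done carefully for case (iv), where the exponent in \eqref{8.14} is $1 + (n-1)\frac{1+\sqrt{1-4B}}{2} - kd_F(1+\sqrt{1+4B_1})$, so with $k=1$, $d_F = p/2$ one obtains $1 + (n-1)\frac{1+\sqrt{1-4B}}{2} - \frac p2(1+\sqrt{1+4B_1})$, matching \eqref{10.4}(iv). I do not anticipate any analytic difficulty beyond what is already established in Theorems \ref{T: 9.1}, \ref{T: 10.1} and Lemma \ref{L: 8.1}; the proof is purely a specialization argument.

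\begin{proof}
This follows at once from Theorem \ref{T: 10.1} by taking $F(t) = \frac 1p (2t)^{\frac p2}$ with $p>1$, so that $d_F = \frac p2$ and $k=1$ gives $2kd_F = p$; under this substitution the seven conditions \eqref{12.3} and the exponents \eqref{12.4} become, respectively, \eqref{10.2} and \eqref{10.4}, while $F(\frac{|du|^2}{2}) = \frac 1p |du|^p$ turns \eqref{10.1} into \eqref{10.3}. Hence $|du| \equiv 0$ and $u$ is constant. If $u$ has finite $p$-energy, then $\int_{B_\rho(x_0)} \frac 1p |du|^p\, dv$ is bounded, hence $o(\rho^\lambda)$ since $\lambda > 0$ under each of (i)--(vii), and the conclusion follows as before.
\end{proof}
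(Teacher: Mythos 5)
Your proposal is correct and is essentially the paper's own argument: the paper proves Theorem \ref{T: 10.2} in one line by applying Theorem \ref{T: 10.1} with $F(t)=\frac 1p(2t)^{p/2}$ and $d_F=\frac p2$ (so $k=1$, $2kd_F=p$), exactly the specialization you carry out. Your additional line-by-line verification that \eqref{12.3}/\eqref{12.4} become \eqref{10.2}/\eqref{10.4}, and the observation that finite $p$-energy gives \eqref{10.3} since $\lambda>0$, are just the bookkeeping the paper leaves implicit.
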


\begin{proof} This follows immediately from Theorem \ref{T: 10.1} in which $F(t)=\frac 1p(2t)^{\frac p2}\, $ and $d_F = \frac p2\, .$
\end{proof}

\begin{corollary} Let $M\, ,$ $N\, ,$ $K(r)\, ,$ $\lambda\, $ and the growth
condition $(\ref{10.3})$ be as in Theorem \ref{T: 10.2}, in which $p=2\, .$ Then every harmonic map $u: M \to N$ is a constant.
\end{corollary}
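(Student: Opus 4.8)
The plan is to reduce the corollary to Theorem~\ref{T: 10.2} applied with $p=2$, so the main work is simply to check that the hypotheses and conclusion of that theorem specialize correctly. First I would observe that a harmonic map $u: M\to N$ is exactly a $p$-harmonic map in the case $p=2$, since the $p$-energy functional $\frac1p\int_M|du|^p\,dv$ becomes the ordinary Dirichlet energy $\frac12\int_M|du|^2\,dv$ when $p=2$, and its critical points are the harmonic maps. Thus every harmonic map $u:M\to N$ satisfies the hypotheses of Theorem~\ref{T: 10.2} provided the curvature and growth conditions hold with $p=2$.

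Next I would note that, by hypothesis of the corollary, the radial curvature $K(r)$ of $M$ satisfies one of the seven conditions in \eqref{10.2} with $p=2$, and the growth condition \eqref{10.3} holds with $p=2$, namely
\[
\tfrac12\int_{B_\rho(x_0)}|du|^2\,dv = o(\rho^\lambda)\quad\text{as }\rho\to\infty,
\]
where $\lambda$ is the corresponding value in \eqref{10.4} with $p=2$. Therefore Theorem~\ref{T: 10.2} applies verbatim with $p=2$ and yields that $u$ is a constant.

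Finally I would record the particular case: when $u$ has finite energy, the growth condition \eqref{10.3} is automatically satisfied (the left-hand side is bounded, hence $o(\rho^\lambda)$ as long as $\lambda>0$, which holds under each of the seven curvature conditions in \eqref{10.2} for $p=2$), so every finite-energy harmonic map $u:M\to N$ is constant as well. The one point that needs a line of care, rather than being purely formal, is the identification $d_F=\frac p2$ used inside Theorem~\ref{T: 10.2}: with $F(t)=\frac1p(2t)^{p/2}$ one computes $tF'(t)/F(t)=p/2$ for all $t\ge0$, so $d_F=\frac p2$ by Definition~8.1, and for $p=2$ this gives $d_F=1$; substituting this into the conditions \eqref{8.12} and the exponents \eqref{8.14} is precisely how the seven cases of \eqref{10.2} and the values \eqref{10.4} arise. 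I do not anticipate a genuine obstacle here, since the corollary is a direct specialization; the only thing to double-check is that the specialization of the conditions and of $\lambda$ is done consistently, which is routine.
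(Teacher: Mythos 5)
Your proposal is correct and is exactly the argument the paper intends: the corollary is the direct specialization of Theorem \ref{T: 10.2} to $p=2$, where a harmonic map is precisely a $2$-harmonic map (i.e.\ an $F$-harmonic map with $F(t)=t$, so $d_F=l_F=1$), and the curvature conditions, growth condition \eqref{10.3}, and exponent $\lambda$ in \eqref{10.4} specialize verbatim. Your side checks (that $tF'(t)/F(t)=p/2$ and that finite energy implies the growth condition since $\lambda>0$ under each curvature hypothesis) are consistent with what the paper does in deriving Theorem \ref{T: 10.2} from Theorem \ref{T: 10.1}.
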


\section{Generalized Yang-Mills-Born-Infeld
fields (with the plus sign) on Manifolds}
In \cite {SiSiYa}, Sibner-Sibner-Yang consider a variational problem which is a generalization of the (scalar valued) Born-Infeld model and at the same time a quasilinear generalization of the Yang-Mills theory. This motivates the study of Yang-Mills-Born-Infeld
fields on $\Bbb{R}^4$, and they prove that a generalized self-dual
equation whose solutions are Yang-Mills-Born-Infeld fields has no
finite-energy solution except the trivial solution on $\Bbb{R}^4$. In \cite {DW}, Dong and Wei introduced the following notions:
\begin{definition11.1}
The \emph{generalized Yang-Mills-Born-Infeld energy functional with the plus sign on a manifold} $M$ is the mapping $\mathcal{YM}_{BI}^{+} : \mathcal{C}\to \mathbb{R}^+\, $ given by
\begin{equation}
\mathcal{YM}_{BI}^{+}(\nabla )=\int_{M} \sqrt{1+|| R ^\nabla
||^2}-1 \quad  dv\, .\label{11.1}
\end{equation}
The generalized Yang-Mills-Born-Infeld energy functional with the negative sign on a manifold $M$ is the mapping $\mathcal{YM}_{BI}^{-} : \mathcal{C}\to \mathbb{R}^+\, $ given by
\[
\mathcal{YM}_{BI}^{-}(\nabla )=\int_{M}(1-\sqrt{1-|| R ^\nabla
||^2}) dv\, .
\]
The associate curvature form $R ^\nabla $ of a critical connection $\nabla $ of
$\mathcal{YM}_{BI}^{+}$ $($resp. $\mathcal{YM}_{BI}^{-})$ is called a generalized
\emph{Yang-Mills-Born-Infeld field with the plus sign} $($resp. \emph {with the
minus sign}$)$ on a manifold.
\end{definition11.1}

\begin{theorem}[$\operatorname{See}$ \cite {DW}]
Every generalized Yang-Mills-Born-Infeld field $($with the plus sign or with the minus sign$)$ on a manifold satisfies an $F$-conservation law.
\label{T: 11.1} \end{theorem}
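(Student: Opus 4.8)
The plan is to verify the $F$-conservation law $\operatorname{div} S_{F,\omega} \equiv 0$ for $\omega = R^\nabla$ when $\nabla$ is a critical connection of $\mathcal{YM}_{BI}^{+}$ (and similarly for $\mathcal{YM}_{BI}^{-}$). The key observation is that the generalized Yang-Mills-Born-Infeld functional $\mathcal{YM}_{BI}^{+}(\nabla) = \int_M \bigl(\sqrt{1+\|R^\nabla\|^2} - 1\bigr)\,dv$ is precisely an $\mathcal{E}_{F,g}$-energy functional of the form $\int_M F\bigl(\frac{|R^\nabla|^2}{2}\bigr)\,dv$ for the specific choice $F(t) = \sqrt{1+2t} - 1$, since then $F\bigl(\frac{|R^\nabla|^2}{2}\bigr) = \sqrt{1 + |R^\nabla|^2} - 1$. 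One checks that this $F$ satisfies the standing hypotheses: $F \in C^2([0,\infty))$, $F(0) = 0$, and $F'(t) = (1+2t)^{-1/2} > 0$ on $[0,\infty)$. (For the minus sign, take $F(t) = 1 - \sqrt{1 - 2t}$ on the appropriate domain where $\|R^\nabla\| < 1$, which likewise satisfies $F(0)=0$ and $F' > 0$.)

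First I would recall the general principle, established in \cite{DW} (and underlying Theorem 3.1 there, already invoked in the proof of Theorem \ref{T: 9.2}), that for \emph{any} admissible $F$ the curvature form $R^\nabla$ of an $\mathcal{E}_{F,g}$-critical connection — equivalently, an $F$-Yang-Mills connection — satisfies an $F$-conservation law: its stress-energy tensor $S_{F,\omega}$ with $\omega = R^\nabla$ is divergence free. This is proved by computing the first variation of $\mathcal{E}_{F,g}$ under compactly supported deformations of the connection in two ways — varying the connection directly yields the $F$-Yang-Mills equation $\delta^\nabla\bigl(F'(\tfrac{|R^\nabla|^2}{2})R^\nabla\bigr) = 0$, while varying through a one-parameter family of diffeomorphisms (a Lie-derivative / reparametrization variation) produces exactly $\int_M \langle S_{F,\omega}, \nabla X^\flat\rangle\,dv$ for the generating vector field $X$; combining these with \eqref{8.7} forces $\operatorname{div} S_{F,\omega} = 0$. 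I would then simply specialize this to $F(t) = \sqrt{1+2t}-1$ (resp. $F(t) = 1 - \sqrt{1-2t}$), since a critical connection of $\mathcal{YM}_{BI}^{\pm}$ is by Definition 11.1 exactly a critical connection of the corresponding $\mathcal{E}_{F,g}$-functional.

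The remaining step is to confirm that $F(t) = \sqrt{1+2t} - 1$ genuinely meets every regularity requirement imposed on $F$ at the start of Section 8, so that the cited $F$-conservation machinery applies verbatim: we need $F^{\prime} > 0$ on all of $[0,\infty)$ (true, as computed above, and in fact $F$ is smooth there since $1+2t \ge 1$), and $F(0) = 0$ (immediate). For $\mathcal{YM}_{BI}^{-}$ the function $F(t) = 1 - \sqrt{1-2t}$ is $C^2$ and has $F' > 0$ only on $[0, \tfrac12)$, so one restricts attention to connections with $\|R^\nabla\| < 1$, which is exactly the natural domain of $\mathcal{YM}_{BI}^{-}$; the variational computation is local and unaffected.

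The main obstacle — such as it is — is not analytic but bookkeeping: one must make sure the general $F$-conservation law of \cite{DW} is stated for a class of functions $F$ broad enough to include $\sqrt{1+2t}-1$ and $1-\sqrt{1-2t}$, rather than only for the polynomial-type $F$ (e.g. $F(t)=\tfrac1p(2t)^{p/2}$) used in the $p$-Yang-Mills applications. Since the derivation of \eqref{8.7} and of the two first-variation formulas uses only $F \in C^2$ with $F(0)=0$ and $F' > 0$ on the relevant interval, and makes no structural assumption beyond that, the argument goes through; the honest content of the proof is therefore the single line ``$\mathcal{YM}_{BI}^{\pm}$ is an $\mathcal{E}_{F,g}$-energy for $F(t) = \mp(1 \mp \sqrt{1 \pm 2t})$, hence Theorem 3.1 of \cite{DW} applies.''
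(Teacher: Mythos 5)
Note first that the paper does not prove Theorem 11.1 at all: it is stated as a quotation of \cite{DW} (Dong--Wei), and only its consequence (Theorem 12.1 applied with $F(t)=\sqrt{1+2t}-1$, $d_F=1$, $k=2$) is used later. So there is no in-paper proof to compare against; judged on its own, your proposal is correct and is in substance the argument of the cited source. The honest content is exactly the line you isolate: $\mathcal{YM}_{BI}^{\pm}$ is the $\mathcal{E}_{F,g}$-energy \eqref{8.2} for $F(t)=\sqrt{1+2t}-1$ (resp.\ $F(t)=1-\sqrt{1-2t}$ on $[0,\tfrac12)$, i.e.\ where $\|R^\nabla\|<1$), these $F$ meet the standing hypotheses $F(0)=0$, $F'>0$, $F\in C^2$ on the relevant interval, and a critical connection of $\mathcal{YM}_{BI}^{\pm}$ is therefore an $F$-Yang--Mills connection for that $F$, so the general result of \cite{DW} that $F$-Yang--Mills fields obey an $F$-conservation law applies; your caveat about the restricted domain in the minus-sign case is also the right one, since the divergence computation is pointwise.

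The one place where your sketch diverges from the actual mechanism in \cite{DW} is the justification of the intermediate step ``$F$-Yang--Mills $\Rightarrow$ $F$-conservation law.'' You argue via a diffeomorphism (Lie-derivative) variation producing $\int_M\langle S_{F,\omega},\nabla X^{\flat}\rangle\,dv$; that is a legitimate route in spirit, but as stated it only gives the integrated identity for compactly supported $X$ and requires some care to make ``reparametrization variations'' of a connection rigorous before concluding $\operatorname{div}S_{F,\omega}\equiv 0$ pointwise. The proof in \cite{DW} is instead a direct computation: a pointwise formula expressing $(\operatorname{div}S_{F,\omega})(X)$ in terms of $\big\langle \delta^{\nabla}\big(F'(\tfrac{|\omega|^2}{2})\omega\big), i_X\omega\big\rangle$ and $F'(\tfrac{|\omega|^2}{2})\langle i_X d^{\nabla}\omega,\omega\rangle$, which vanishes for $\omega=R^\nabla$ because of the second Bianchi identity $d^{\nabla}R^{\nabla}=0$ together with the Euler--Lagrange equation $\delta^{\nabla}\big(F'(\tfrac{|R^\nabla|^2}{2})R^{\nabla}\big)=0$ (for the plus sign this is $\delta^{\nabla}\big(R^{\nabla}/\sqrt{1+\|R^{\nabla}\|^2}\big)=0$). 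If you replace your variational heuristic by this Bianchi-identity computation, or supply the missing density/localization details in the variational argument, your write-up is a complete proof.
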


\begin{theorem} \label{T: 11.2} Let the radial curvature $K(r)$ of $M$ satisfy one of seven conditions in $(\ref{8.12})$ in which $k=2\, $ and $d_F=1\, .$ Let $R ^\nabla $ be a generalized
Yang-Mills-Born-Infeld field with the plus sign on $M$. If
$R ^\nabla $ satisfies the following growth
condition
$$
\int_{B_\rho(x_0)} \sqrt{1+|| R ^\nabla ||^2}-1 \quad dv = o(\rho^{\lambda})\quad \text{as
} \rho\rightarrow \infty\, ,
$$
where

\begin{equation}
\lambda =\left\{
\begin{array}{cc}
1+(n-1)A_1-4A, & \text{if }\quad K(r) \text{ satisfies $($i$)$}, \\
1 + (n-1)\frac{1 + \sqrt {1+4A_1}}{2} -2\sqrt {1+4A}),& \text{if }\quad K(r) \text{ satisfies $($ii$)$}, \\
1 + (n-1)(|B-\frac 12|+\frac 12) -2\big (1 + \sqrt {1+4B_1(1-B_1)}\big ), &\text{if }\quad K(r) \text{ satisfies $($iii$)$}, \\
1+ (n-1)\frac{1 + \sqrt {1-4B}}{2} -2(1 + \sqrt {1+4B_1}), & \text{if}\quad K(r) \text{ satisfies $($iv$)$},\\
n-4\frac \alpha \beta,  & \text {if }\quad  K(r)  \text {satisfies $($v$)$},\\
n-4, &\text {if } K(r) \text{ satisfies $($vi$)$},\\
n - (n-1)\frac B{2\epsilon} - 4 e^{\frac {A}{2\epsilon}}, &\text{if } K(r) \text{
satisfies $($vii$)$},
\end{array}
\right.\label{11.2}
\end{equation}
then its curvature $R ^\nabla \equiv 0$. In particular, if
$R ^\nabla $ has finite $\mathcal{YM}_{BI}^{+}$-energy, then $R ^\nabla \equiv
0$.
\end{theorem}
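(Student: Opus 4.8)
The plan is to reduce Theorem 11.2 to the already-established vanishing machinery, rather than redoing the monotonicity argument from scratch. First I would invoke Theorem 11.1, which guarantees that the generalized Yang-Mills-Born-Infeld field $R^\nabla$ (with the plus sign) satisfies an $F$-conservation law for the appropriate choice of $F$. Indeed, the functional $\mathcal{YM}_{BI}^+$ in \eqref{11.1} is exactly $\mathcal{E}_{F,g}$ with $F(t) = \sqrt{1+2t} - 1$, so that $F'(t) = (1+2t)^{-1/2} > 0$, $F(0)=0$, and one computes the $F$-degree $d_F = \sup_{t\ge 0} \tfrac{tF'(t)}{F(t)}$. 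The key elementary fact is that $d_F = 1$ for this $F$: writing $s = 2t$, we have $\tfrac{tF'(t)}{F(t)} = \tfrac{s/2 \cdot (1+s)^{-1/2}}{\sqrt{1+s}-1} = \tfrac{s}{2(1+s)^{1/2}(\sqrt{1+s}-1)}$, and multiplying numerator and denominator by $(\sqrt{1+s}+1)$ gives $\tfrac{s(\sqrt{1+s}+1)}{2(1+s)^{1/2}\cdot s} = \tfrac{\sqrt{1+s}+1}{2\sqrt{1+s}} = \tfrac12 + \tfrac{1}{2\sqrt{1+s}}$, which is decreasing in $s$ and has supremum $1$ attained as $s\to 0^+$. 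Hence $d_F = 1$, matching the hypothesis "$d_F = 1$" in the statement.

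Next I would apply Theorem \ref{T: 9.1} directly with $k=2$ and $\omega = R^\nabla$. By hypothesis the radial curvature $K(r)$ satisfies one of the seven conditions in \eqref{8.12} with $k=2$ and $d_F=1$; substituting $k=2$ and $d_F=1$ into the formula \eqref{8.14} for $\lambda$ yields precisely the seven expressions listed in \eqref{11.2} — for instance condition (i) gives $\lambda = 1+(n-1)A_1 - 2\cdot 2 \cdot 1 \cdot A = 1+(n-1)A_1-4A$, condition (v) gives $\lambda = n - 2\cdot 2 \cdot \tfrac{\alpha}{\beta}\cdot 1 = n - 4\tfrac{\alpha}{\beta}$, condition (vi) gives $\lambda = n - 2\cdot 2\cdot 1 = n-4$, and condition (vii) gives $n - (n-1)\tfrac{B}{2\epsilon} - 2\cdot 2\cdot e^{A/2\epsilon}\cdot 1 = n - (n-1)\tfrac{B}{2\epsilon} - 4e^{A/2\epsilon}$, and similarly for (ii)--(iv). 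Since $\omega = R^\nabla$ satisfies the $F$-conservation law \eqref{8.4} and the growth condition $\int_{B_\rho(x_0)} F(\tfrac{|R^\nabla|^2}{2})\, dv = \int_{B_\rho(x_0)}(\sqrt{1+|R^\nabla|^2}-1)\, dv = o(\rho^\lambda)$ is exactly \eqref{9.1}, Theorem \ref{T: 9.1} gives $F(\tfrac{|R^\nabla|^2}{2}) \equiv 0$. Because $F(t) = 0$ if and only if $t = 0$ (as $F$ is strictly increasing with $F(0)=0$), this forces $|R^\nabla|^2 \equiv 0$, i.e. $R^\nabla \equiv 0$ on $M$.

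For the final sentence, if $R^\nabla$ has finite $\mathcal{YM}_{BI}^+$-energy, then $\int_M F(\tfrac{|R^\nabla|^2}{2})\, dv < \infty$, so $\int_{B_\rho(x_0)} F(\tfrac{|R^\nabla|^2}{2})\, dv$ is bounded as $\rho \to \infty$; since $\lambda > 0$ in every case (this positivity is exactly the extra inequality imposed in each of the seven conditions of \eqref{8.12}), a bounded quantity is automatically $o(\rho^\lambda)$, and the conclusion $R^\nabla \equiv 0$ follows from the first part.

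The only genuine content beyond citation-chasing is the verification that $d_F = 1$ for $F(t) = \sqrt{1+2t}-1$ and the bookkeeping check that plugging $k=2$, $d_F=1$ into \eqref{8.14} reproduces \eqref{11.2}; I expect the $d_F$ computation to be the (mild) crux, since everything else is a direct appeal to Theorems 11.1 and \ref{T: 9.1}. There is no real analytic obstacle here — the heavy lifting (the monotonicity formula of Theorem \ref{T: 8.1} and the vanishing theorem \ref{T: 9.1}) has already been done — so the proof is essentially a one-paragraph specialization, and I would keep it short.
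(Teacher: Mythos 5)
Your proposal is correct and follows essentially the same route as the paper: invoke Theorem 11.1 for the $F$-conservation law with $F(t)=\sqrt{1+2t}-1$, and specialize the monotonicity/vanishing machinery (the paper cites Theorem 8.1, you cite its immediate consequence Theorem \ref{T: 9.1}) with $k=2$, $d_F=1$, $\omega=R^\nabla$. Your explicit verification that $d_F=1$ and the bookkeeping check that \eqref{8.14} reduces to \eqref{11.2} are details the paper leaves implicit, and they check out.
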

\begin{proof} By applying
Theorem \ref {T: 11.1} and $F(t)=\sqrt{1+2t}-1$ to Theorem \ref{T: 8.1} in which $d_F=1\, ,$  and $k=2\, ,$ for $R ^\nabla \in A^2(Ad P)$, the result follows immediately.
\end{proof}

\section{Dirichlet Boundary Value Problems}

We recall \emph{$F$-lower degree} $l_F$ is defined to be
\begin{equation}
l_F=\inf_{t\geq 0}\frac{tF^{\prime }(t)}{F(t)}\, .\label{12.1}
\end{equation}
A bounded domain $D\subset M$ with $C^1$ boundary is called
\emph {starlike} ( relative to $x_0\, )$ if there exists an inner point $x_0\in D$ such that
\begin{equation}
\langle\frac \partial {\partial r_{x_0}},\nu\rangle |_{\partial
D}\geq 0\, , \label{12.2}
\end{equation}
where $\nu$ is the unit outer normal to $\partial
D\, ,$ and for any $x\in D \backslash \{x_0\} \cup \partial D\, ,$ $\frac {\partial} {\partial r_{x_0}}(x)$ is the unit vector field tangent to the unique geodesic emanating from $x_0$ to $x$.
\smallskip

It is obvious that any disc or convex domain is starlike.

\begin{theorem} \label{T: 12.1} Let $D$ be a bounded starlike domain $($relative to $x_0)$ with
$C^1$ boundary in a complete Riemannian $n$-manifold $M$. 
Let $\xi :E\rightarrow M$
be a Riemannian vector bundle on $M$ and $ \omega \in A^1(\xi )\, .	$
Assume that the radial curvature $K(r)$ of $M$ satisfies one of the
following seven conditions:
\begin{equation}
\aligned
(i)&\quad \eqref{3.14}\, \operatorname{holds}\,  \operatorname{with}\,  1+(n-1)A_1-2d_FA > 0; \\
(ii)&\quad \eqref{3.17}\,\, \operatorname{holds}\,   \operatorname{with}\,   1 + (n-1)\frac{1 + \sqrt {1+4A_1}}{2} -d_F(1 + \sqrt {1+4A}) > 0;\\
(iii)&\quad \eqref{3.49}\,\, \operatorname{holds}\,   \operatorname{with}\,   1 + (n-1)\big (|B-\frac 12|+\frac 12\big ) - d_F\big (1 + \sqrt {1+4B_1(1-B_1)}\big ) > 0;\\ 
(iv)&\quad \eqref{3.45}\, \operatorname{holds}\,  \operatorname{with}\,  1+ (n-1)\frac{1 + \sqrt {1-4B}}{2} -d_F(1 + \sqrt {1+4B_1}) > 0;\\
 (v)& \quad -\alpha ^2\leq K(r)\leq -\beta ^2\quad  \operatorname{with}\quad \alpha > 0, \beta > 0 \quad \operatorname{and}\quad (n-1)\beta -2\alpha d_F\geq 0;\\
(vi)& \quad K(r) = 0\quad  \operatorname{with}\quad  n-2d_F>0;\\
(vii)& -\frac A{(1+r^2)^{1+\epsilon}}\leq K(r) \leq \frac B{(1+r^2)^{1+\epsilon}}\quad  \operatorname{with}\quad \epsilon > 0\, , A \ge 0\, , 0 < B < 2\epsilon\quad \operatorname{and}\\
&\qquad n - (n-1)\frac B{2\epsilon} -2e^{\frac {A}{2\epsilon}}d_F > 0.
\endaligned\label{12.3}
\end{equation}
Assume that $ l_F\geq \frac 12$. If $\omega \in A^1(\xi
)$ satisfies $F$-conservation law and annihilates any tangent
vector $\eta $ of $\partial D$, then $\omega $ vanishes on $D$.
\end{theorem}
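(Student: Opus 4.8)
The plan is to run the standard Pohozaev-type argument for the $F$-stress-energy tensor on the starlike domain $D$, using the vector field $X = r_{x_0}\nabla r_{x_0} = \nabla(\tfrac12 r^2)$ and the integral identity \eqref{8.8}. First I would check that the hypotheses of Lemma \ref{L: 8.1} are met: each of the seven curvature conditions in \eqref{12.3} is (via Corollary \ref{C: 3.1}, Theorem A, Corollary \ref{C: 3.5}, Theorem \ref{T: 3.5}, and the explicit comparison arguments for cases (v)--(vii)) exactly the hypothesis needed to get two positive functions $h_1(r), h_2(r)$ with $h_1(r)[g - dr\otimes dr] \le \operatorname{Hess} r \le h_2(r)[g - dr\otimes dr]$ on $D\setminus\{x_0\}$ and with $r h_2(r) \ge 1$; this is the same verification already carried out in the proof of Theorem \ref{T: 8.1}. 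Hence, with $k=1$, Lemma \ref{L: 8.1} gives the pointwise bound
\begin{equation}
\langle S_{F,\omega}, \nabla X^{\flat}\rangle \ge \big(1 + (n-1) r h_1(r) - 2 d_F\, r h_2(r)\big) F\big(\tfrac{|\omega|^2}{2}\big)
\label{12.5}
\end{equation}
on $D\setminus\{x_0\}$, and each curvature condition in \eqref{12.3} forces the coefficient on the right-hand side to be $\ge$ some $\lambda > 0$ (the $\lambda$ computed as in \eqref{8.14} for $k=1$). By continuity this bound extends across $x_0$ to all of $D$.

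Next I would evaluate the boundary term in \eqref{8.8}. On $\partial D$ decompose the unit outer normal $\nu$ into its radial and tangential parts relative to the geodesic spheres; since $X = r\,\partial/\partial r$, the definition \eqref{8.3} of $S_{F,\omega}$ gives
\begin{equation}
S_{F,\omega}(X,\nu) = r\,\Big\langle \tfrac{\partial}{\partial r},\nu\Big\rangle F\big(\tfrac{|\omega|^2}{2}\big) - r\, F'\big(\tfrac{|\omega|^2}{2}\big)\,\big\langle i_{\partial/\partial r}\omega,\ i_\nu\omega\big\rangle .
\label{12.6}
\end{equation}
The starlikeness assumption \eqref{12.2} makes the first term nonnegative. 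For the second term I would use the hypothesis that $\omega$ annihilates every tangent vector of $\partial D$: write $\nu = \cos\theta\,\tfrac{\partial}{\partial r} + \sin\theta\, T$ with $T$ a unit tangent to $\partial D$, so $i_\nu\omega = \cos\theta\, i_{\partial/\partial r}\omega$ because $i_T\omega = 0$; substituting, $\langle i_{\partial/\partial r}\omega, i_\nu\omega\rangle = \cos\theta\,|i_{\partial/\partial r}\omega|^2 \ge 0$ since $\cos\theta = \langle\partial/\partial r,\nu\rangle \ge 0$. Thus the second term in \eqref{12.6} carries a factor $-F'(\cdot) \le 0$. Here I would invoke the hypothesis $l_F \ge \tfrac12$: from $tF'(t) \ge l_F\, F(t) \ge \tfrac12 F(t)$ one gets, writing $t = \tfrac{|\omega|^2}{2}$, the pointwise inequality $F'(\tfrac{|\omega|^2}{2})\,|i_{\partial/\partial r}\omega|^2 \le 2\,l_F^{-1}\cdots$; more directly, the combination of the two terms in \eqref{12.6} is bounded below using $F'(t)\langle i_{\partial/\partial r}\omega, i_{\partial/\partial r}\omega\rangle \le 2 d_F F(t)$ together with $\langle\partial/\partial r,\nu\rangle \ge 0$ and the algebraic fact that $|i_{\partial/\partial r}\omega|^2 \le |\omega|^2$ for a $1$-form — the point being to show $S_{F,\omega}(X,\nu)\le 0$ on $\partial D$, or at least that $\int_{\partial D} S_{F,\omega}(X,\nu)\,ds_g \le 0$.

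Then the argument closes: by \eqref{8.8}, the $F$-conservation law for $\omega$, the lower bound \eqref{12.5} with coefficient $\ge \lambda > 0$, and the boundary sign just established,
\begin{equation}
0 \ge \int_{\partial D} S_{F,\omega}(X,\nu)\, ds_g = \int_D \langle S_{F,\omega}, \nabla X^{\flat}\rangle\, dv_g \ge \lambda \int_D F\big(\tfrac{|\omega|^2}{2}\big)\, dv_g \ge 0 ,
\label{12.7}
\end{equation}
forcing $\int_D F(\tfrac{|\omega|^2}{2})\,dv_g = 0$. Since $F > 0$ off $0$ and $F(0)=0$, this gives $|\omega| \equiv 0$ on $D$, i.e. $\omega$ vanishes on $D$, as claimed. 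I expect the main obstacle to be the boundary-term estimate: one must handle the interior-multiplication term $F'(\tfrac{|\omega|^2}{2})\langle i_{\partial/\partial r}\omega, i_\nu\omega\rangle$ carefully, exploiting simultaneously (a) that $\omega$ kills tangent vectors of $\partial D$, so $i_\nu\omega$ is a scalar multiple of $i_{\partial/\partial r}\omega$ with nonnegative scalar $\langle\partial/\partial r,\nu\rangle$, (b) starlikeness \eqref{12.2}, and (c) the lower degree bound $l_F \ge \tfrac12$, which is precisely what is needed to dominate this term by the nonnegative $F(\tfrac{|\omega|^2}{2})$ contribution and conclude the boundary integral has the right sign; the interior inequality and the final vanishing step are then routine given Lemma \ref{L: 8.1} and the curvature bookkeeping already done for Theorem \ref{T: 8.1}.
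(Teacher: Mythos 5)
Your overall strategy is exactly the paper's: take $X=\nabla(\tfrac12 r^2)$, get the interior lower bound $\langle S_{F,\omega},\nabla X^{\flat}\rangle \ge \lambda F(\tfrac{|\omega|^2}{2})$ with $\lambda>0$ from Lemma 8.1 with $k=1$ (as in the proof of Theorem 8.1), and close with the identity (8.8) once the boundary integrand is shown nonpositive. The gap is in the boundary estimate, which is the crux. Your decomposition $\nu=\cos\theta\,\tfrac{\partial}{\partial r}+\sin\theta\,T$ with $T$ tangent to $\partial D$ is false in general: the tangent space of $\partial D$ is $\nu^{\perp}$, so the component of $\nu$ orthogonal to $\tfrac{\partial}{\partial r}$ need not be tangent to $\partial D$. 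What is true is the reverse decomposition $\tfrac{\partial}{\partial r}=\langle\tfrac{\partial}{\partial r},\nu\rangle\,\nu+(\text{part tangent to }\partial D)$. Hence your identity $i_{\nu}\omega=\cos\theta\, i_{\partial/\partial r}\omega$ is wrong; since $\omega$ kills tangent vectors of $\partial D$, the correct one is $i_{\partial/\partial r}\omega=\langle\tfrac{\partial}{\partial r},\nu\rangle\, i_{\nu}\omega$, and moreover $|\omega|^2=|i_{\nu}\omega|^2$ on $\partial D$, so that
\[
\big\langle i_{\partial/\partial r}\omega,\ i_{\nu}\omega\big\rangle=\Big\langle\tfrac{\partial}{\partial r},\nu\Big\rangle\,|\omega|^2 .
\]
This exact identity is what makes the boundary term close: with $t=\tfrac{|\omega|^2}{2}$,
\[
S_{F,\omega}(X,\nu)=r\Big\langle\tfrac{\partial}{\partial r},\nu\Big\rangle\big(F(t)-2tF'(t)\big)\le r\Big\langle\tfrac{\partial}{\partial r},\nu\Big\rangle F(t)\,(1-2l_F)\le 0
\]
by starlikeness (12.2) and $l_F\ge\tfrac12$, which is precisely the paper's computation (12.5).

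By contrast, the inequalities you propose at this point, $F'(t)\,|i_{\partial/\partial r}\omega|^2\le 2d_F F(t)$ and $|i_{\partial/\partial r}\omega|^2\le|\omega|^2$, bound the magnitude of the negative term from above, i.e.\ they go in the wrong direction: to obtain $S_{F,\omega}(X,\nu)\le 0$ you must bound $F'(t)\langle i_{\partial/\partial r}\omega,i_{\nu}\omega\rangle$ from \emph{below} by $\langle\tfrac{\partial}{\partial r},\nu\rangle F(t)$, which requires the lower-degree bound $2tF'(t)\ge 2l_F F(t)\ge F(t)$ together with the exact identity above; $d_F$ plays no role on the boundary. Indeed, following your proportionality literally would give a factor $\cos^2\theta$ multiplying $2tF'(t)$, and $F(t)-2tF'(t)\cos^2\theta$ need not be $\le 0$ when $\langle\tfrac{\partial}{\partial r},\nu\rangle$ is small, so the sign of the boundary term genuinely fails along that route. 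Once the boundary identity is corrected, the rest of your argument (interior bound, (8.8), and $F>0$ off $0$, $F(0)=0$) is the paper's proof verbatim.
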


\begin{proof} By the assumption, there exists a point $x_0\in D$
such that the distance function $r_{x_0}$ relative to $x_0\, $ satisfies (\ref{12.2}). Take
$X=r\nabla r$, where $r=r_{x_0}$. From the proof of Theorem 8.1,
we know that
\begin{equation}
\langle S_{F,\omega },\nabla X^{\flat}\rangle\geq
\,\lambda F(\frac{|\omega |^2}2)\tag{8.15}
\end{equation}
in $D\, ,$ where $\lambda\, $ is a positive constant given by 
\begin{equation}
\lambda =\left\{
\begin{array}{cc}
1+(n-1)A_1-2d_FA, & \text{if }\quad K(r) \text{ satisfies $($i$)$}, \\
1 + (n-1)\frac{1 + \sqrt {1+4A_1}}{2} - d_F(1 + \sqrt {1+4A}),& \text{if }\quad K(r) \text{ satisfies $($ii$)$}, \\
1 + (n-1)(|B-\frac 12|+\frac 12) - d_F\big (1 + \sqrt {1+4B_1(1-B_1)}\big ), & \text{if }\quad K(r) \text{ satisfies $($iii$)$}, \\
1+ (n-1)\frac{1 + \sqrt {1-4B}}{2} -d_F ( 1 + \sqrt {1+4B_1} ), & \text{if }\quad K(r) \text{ satisfies $($iv$)$}, \\
n-2\frac \alpha \beta d_F, & \text {if }\quad  K(r)  \text { satisfies $($v$)$},\\
n-2d_F, &\text {if }\quad K(r) \text{ satisfies $($vi$)$},\\
n - (n-1)\frac B{2\epsilon} -2 e^{\frac {A}{2\epsilon}}d_F, &\text{if }\quad K(r) \text{
satisfies $($vii$)$}\, .
\end{array}
\right.
\label{12.4}
\end{equation}
Since $\omega \in A^1(\xi )$
annihilates any tangent vector $\eta$ of $\partial D$, we easily
derive via \eqref{8.3} and \eqref{12.1}, the following inequality on $\partial D$
\begin{equation}
\aligned S_{F,\omega }(X,\nu)&=rS_{F,\omega }(\frac
\partial
{\partial r},\nu) \\
&= r\bigg (F(\frac{|\omega |^2}2)\langle\frac \partial {\partial r},\nu
\rangle-F^{\prime }(\frac{|\omega |^2}2)\langle\omega (\frac
\partial
{\partial r}),\omega (\nu )\rangle\bigg )\\
&=r\langle\frac \partial {\partial r},\nu
\rangle\bigg (F(\frac{|\omega |^2}2)-2F^{\prime }(\frac{|\omega |^2}2)\frac{|\omega |^2}2\bigg )\\
&\leq r\langle\frac \partial {\partial r},\nu \rangle
F(\frac{|\omega |^2}2)(1-2l_F)\leq 0\, .
\endaligned \label{12.5}
\end{equation}
From (\ref{8.8}), (\ref{12.3}) and (\ref{12.5}), we have
$$
0 \le \int_D \lambda F(\frac{|\omega |^2}2) dv \leq 0\, ,
$$
which implies that $\omega \equiv 0$.  \end{proof}

\begin{theorem}$($Dirichlet problems for $F$-harmonic maps$)$ Let $M$, $D$, and  $\xi$ be as in Theorem $\ref{T: 12.1}$. Assume that the radial curvature $K(r)$ of $M$ satisfies one of the
following seven conditions:
\begin{equation}
\aligned
&(i)\quad \eqref{3.14}\, \operatorname{holds}\,  \operatorname{with}\,  1+(n-1)A_1-2d_FA > 0\, ;\\
&(ii)\quad \eqref{3.17}\, \operatorname{holds}\,   \operatorname{with}\,   1 + (n-1)\frac{1 + \sqrt {1+4A_1}}{2} -d_F(1 + \sqrt {1+4A}) > 0;\\
&(iii)\quad  -\alpha ^2\leq K(r)\leq -\beta ^2\, \operatorname{with}\,  \alpha >0\, , \beta
>0\, \operatorname{and}\,  (n-1)\beta -2d_F\alpha \geq 0;\\
&(iv)\quad  K(r) = 0\, \operatorname{with}\,  n-2d_F >0;\\
&(v)\quad  -\frac A{(1+r^2)^{1+\epsilon}} \le K(r) \leq \frac B{(1+r^2)^{1+\epsilon}}\, \operatorname{with}\,  \epsilon > 0 \, , A \ge 0\, , 0 < B < 2\epsilon \,  ,\operatorname{and}\\
& \qquad \qquad  n-(n-1)\frac{B}{2\epsilon} - 2e^{\frac{A}{2\epsilon}} d_F>0\, ;\\
&(vi)\quad \eqref{3.49}\, \operatorname{holds}\,   \operatorname{with}\,   1 + (n-1)(|B-\frac 12|+\frac 12) - d_F\big (1 + \sqrt {1+4B_1(1-B_1)}\big ) > 0\, ;\\ 
&(vii)\quad \eqref{3.45}\, \operatorname{holds}\,  \operatorname{with}\,  1+ (n-1)\frac{1 + \sqrt {1-4B}}{2} -d_F (1 + \sqrt {1+ 4B_1}) > 0\, .\endaligned\label{12.6}
\end{equation}
Let $u:\overline{D}\rightarrow N$ be
an $F$-harmonic map with $l_F \ge \frac{1}{2}$ into an arbitrary Riemannian
manifold $N$. If $u|_{\partial D}$ is constant, then $u|_D$ is
constant. \label{T: 12.2}
\end{theorem}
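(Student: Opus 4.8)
The plan is to reduce Theorem \ref{T: 12.2} to the vanishing result of Theorem \ref{T: 12.1} applied to the bundle-valued $1$-form $\omega = du$. First I would observe that the differential $du$ of a map $u : \overline{D} \to N$ is a section of $\Lambda^1 T^*M \otimes u^{-1}TN$, i.e.\ $du \in A^1(\xi)$ where $\xi : u^{-1}TN \to M$ is the pull-back bundle endowed with the pull-back metric and the pull-back of the Levi-Civita connection of $N$; this exhibits $\xi$ as a Riemannian vector bundle over $M$ of the type allowed in Theorem \ref{T: 12.1}. Since $u$ is $F$-harmonic, its stress-energy tensor $S_{F,du}$ is divergence free, which is precisely the $F$-conservation law $\eqref{8.4}$ for $\omega = du$ noted in the discussion preceding Theorem \ref{T: 10.1}; hence the first hypothesis of Theorem \ref{T: 12.1} holds.

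Next I would check the boundary annihilation condition. If $u|_{\partial D}$ is constant, then for every $x \in \partial D$ and every $\eta \in T_x(\partial D)$ one picks a curve $c(t)$ in $\partial D$ with $c(0)=x$ and $c'(0)=\eta$; since $u\circ c$ is constant, $du(\eta) = \frac{d}{dt}\big|_{t=0}(u\circ c) = 0$. Thus $\omega = du$ annihilates every tangent vector of $\partial D$, which is the remaining hypothesis of Theorem \ref{T: 12.1}.

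I would then reconcile the curvature hypotheses: the seven conditions in $\eqref{12.6}$ are exactly the seven conditions in $\eqref{12.3}$ after reordering (the quadratically pinched cases $\eqref{3.14}$, $\eqref{3.17}$ listed first, the bounded and Euclidean cases $-\alpha^2 \le K \le -\beta^2$, $K=0$, $-\tfrac{A}{(1+r^2)^{1+\epsilon}}\le K \le \tfrac{B}{(1+r^2)^{1+\epsilon}}$ next, then $\eqref{3.49}$, $\eqref{3.45}$), so $K(r)$ satisfies one of $\eqref{12.3}$ and the attached positivity inequalities guarantee the associated constant $\lambda$ in $\eqref{12.4}$ (equivalently $\eqref{8.14}$ with $k=1$) is strictly positive. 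Together with $l_F \ge \tfrac12$, all the hypotheses of Theorem \ref{T: 12.1} (in the case $k=1$) are met, so $\omega = du \equiv 0$ on $D$. Since $D$ is a connected domain, $du \equiv 0$ forces $u$ to be constant on $D$, which is the assertion.

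The step I expect to need the most care is the verification that the constant $\lambda$ is positive under each pinching regime and the honest bookkeeping that $\eqref{12.6}$ really is a permutation of $\eqref{12.3}$; this is exactly what the positivity clauses appended to each case of $\eqref{12.6}$ encode, and it is what makes the estimate $0 \le \int_D \lambda F(|\omega|^2/2)\,dv \le 0$ in the proof of Theorem \ref{T: 12.1} collapse to $\omega \equiv 0$. The geometric heavy lifting — passing from the curvature bound to the Hessian comparison $\eqref{8.9}$ and thence to the pointwise inequality $\eqref{8.11}$ for $\langle S_{F,\omega},\nabla X^\flat\rangle$ with $X = r\nabla r$ — is already provided by Theorems \ref{T: 3.1}--\ref{T: 3.5} and Lemma \ref{L: 8.1}, so no new analytic input is required beyond invoking Theorem \ref{T: 12.1}.
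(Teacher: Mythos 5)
Your proposal is correct and follows essentially the same route as the paper: set $\omega = du$, note that $F$-harmonicity gives the $F$-conservation law (the paper cites \cite[Theorem 6.1]{DW} for this), that constancy of $u|_{\partial D}$ makes $du$ annihilate tangent vectors of $\partial D$, and that \eqref{12.6} is a permutation of \eqref{12.3}, so Theorem 12.1 applies and yields $du\equiv 0$, hence $u$ constant on $D$. Your write-up merely fills in details (the pull-back bundle $u^{-1}TN$, the boundary computation, the matching of the seven curvature cases) that the paper's one-line proof leaves implicit.
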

\begin{proof} Take $\omega =du$. Then $\omega|_{\partial D} = 0$. Hence $\omega$ satisfies an $F$-conservation law and annihilates any tangent
vector $\eta $ of $\partial D\, .$ The assertion  
follows at once from Theorem \ref{T: 12.1} and \cite [Theorem 6.1]{DW}.
\end{proof}
\begin{corollary}\label{C:12.1} Suppose $M$ and $D$ satisfy the same
assumptions of Theorem \ref{T: 12.2}. Let $u:\overline{D}\rightarrow N$ be
a $p$-harmonic map $(p\geq 1)$ into an arbitrary Riemannian
manifold $N$. If $u|_{\partial D}$ is constant, then $u|_D$ is
constant.
\end{corollary}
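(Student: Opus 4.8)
The plan is to deduce Corollary \ref{C:12.1} directly from Theorem \ref{T: 12.2} by specializing the function $F$. Recall that a $p$-harmonic map is by definition a critical point of the $p$-energy $\frac1p\int_M|du|^p\,dv$, and that this corresponds to the choice $F(t)=\frac1p(2t)^{p/2}$ for $p\ge 1$ in the general $F$-energy framework: indeed $F(\frac{|du|^2}{2})=\frac1p|du|^p$, so $\mathcal E_{F,g}(du)$ is the $p$-energy. Thus a $p$-harmonic map into $N$ is exactly an $F$-harmonic map with this particular $F$.

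First I would verify that this $F$ satisfies all the standing hypotheses on $F$ needed in Theorem \ref{T: 12.2}: it is $C^2$ on $[0,\infty)$ (for $p\ge 1$), $F(0)=0$, and $F'>0$ on $(0,\infty)$. Then I would compute $tF'(t)/F(t)$. A direct calculation gives $F'(t)=\frac1p\cdot\frac p2(2t)^{p/2-1}\cdot 2=(2t)^{p/2-1}\cdot 2^{?}$; more cleanly, $\frac{tF'(t)}{F(t)}=\frac{p}{2}$ identically in $t$. Hence both the $F$-degree $d_F=\sup_t \frac{tF'(t)}{F(t)}$ and the $F$-lower degree $l_F=\inf_t\frac{tF'(t)}{F(t)}$ equal $\frac p2$. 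In particular, $l_F=\frac p2\ge\frac12$ precisely when $p\ge 1$, which is the stated range, so the hypothesis $l_F\ge\frac12$ in Theorem \ref{T: 12.2} is automatically met.

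Next I would observe that with $d_F=\frac p2$ substituted into the seven curvature conditions \eqref{12.6}, each condition becomes a purely curvature-and-dimension condition on $M$ (e.g.\ condition (iv) becomes $n-p>0$, condition (iii) becomes $(n-1)\beta-p\alpha\ge 0$, etc.). So the statement ``$M$ and $D$ satisfy the same assumptions of Theorem \ref{T: 12.2}'' for a $p$-harmonic map is exactly the hypothesis that $M,D$ satisfy \eqref{12.6} with $d_F$ replaced by $\frac p2$, which is what we are given. Finally, since $u|_{\partial D}$ is constant, $\omega=du$ annihilates every tangent vector of $\partial D$, so applying Theorem \ref{T: 12.2} (with this $F$) yields that $u|_D$ is constant, completing the proof.

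The step requiring the most care is simply confirming the elementary identity $\frac{tF'(t)}{F(t)}=\frac p2$ for $F(t)=\frac1p(2t)^{p/2}$ and checking the $C^2$ regularity of $F$ at $t=0$ when $1\le p<2$; but these are routine, and there is no substantive obstacle — the corollary is an immediate specialization once the bookkeeping with $d_F$ and $l_F$ is recorded. Concretely, the proof I would write is:

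\begin{proof}
Take $F(t)=\tfrac1p(2t)^{p/2}$, $p\ge 1$, so that $F(0)=0$, $F'(t)>0$ on $(0,\infty)$, and $\tfrac{tF'(t)}{F(t)}=\tfrac p2$ for all $t$; hence $d_F=l_F=\tfrac p2$, and in particular $l_F\ge\tfrac12$. An $F$-harmonic map for this $F$ is precisely a $p$-harmonic map, since $\mathcal E_{F,g}(du)=\tfrac1p\int_M|du|^p\,dv$. Substituting $d_F=\tfrac p2$ into \eqref{12.6}, the hypotheses on $M$ and $D$ in Theorem \ref{T: 12.2} are exactly those assumed here. Therefore Theorem \ref{T: 12.2} applies and gives that $u|_D$ is constant whenever $u|_{\partial D}$ is constant.
\end{proof}
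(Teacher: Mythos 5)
Your proposal is correct and follows essentially the same route as the paper: specialize to $F(t)=\tfrac1p(2t)^{p/2}$, note $d_F=l_F=\tfrac p2$ so $l_F\ge\tfrac12$ for $p\ge1$, take $\omega=du$, and invoke Theorem 12.2 (the paper also notes Theorem 12.1 works directly). The extra bookkeeping you record (substituting $d_F=\tfrac p2$ into \eqref{12.6}) is exactly what the paper leaves implicit.
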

\begin{proof} For a $p$-harmonic map $u$, we have $F(t)=\frac 1p
(2t)^{\frac{p}{2}}$. Obviously $d_F=l_F=\frac p2$. Take $\omega =du$. This
corollary follows immediately from Theorem \ref{T: 12.1} or Theorem \ref{T: 12.2}.
\end{proof}


\begin{thebibliography}{99}

\bibitem{AR}
 D. Alexandru-Rugina,
\emph{$L^p$-Integrabilite des forms harmoniques $k$-finies surles espaces hyperboliques reels et complexes}, Rend. Sem. Mat. Univ. Politec. Torino 54(1996), 75-87.

\bibitem{A} W.K. Allard, {\em On the first variation of a varifold}, Ann. of Math. (2) 95 (1972), 417--491.


\bibitem{AV} A. Andreotti and E. Vesentini, {\em Carleman estimate for the
Laplace-Beltrami equation on complex manifolds}, Inst. Hautes
\'Etudes Sci. Publ. Math. 25 (1965), 81-130.

\bibitem{ArM}  M. Ara, {\it Geometry of $F-$harmonic maps}, Kodai Math. J., 22
(1999), 243-263.

\bibitem{Ba}  P. Baird, {\it Stress-energy tensors and the Lichnerowicz
Laplacian}, J. Geom. Phys. 58 (2008), no 10, 1329-1342.


\bibitem{BE}  P. Baird, J. Eells, {\it A conservation law for harmonic maps},
in: Geometry Symposium, Utrecht 1980, in: Lecture notes in Mathematics, Vol.
894, Springer (1982), 1-25.


\bibitem{CKN} L. Caffarelli, R. Kohn and L. Nirenberg, First order
interpolation inequalities with weights. Compos. Math. 53 (1984), 259-275.


\bibitem{CCW} 
 S. C. Chang, J.-T. Chen and S. W. Wei,\emph{ Liouville
properties for $p$-harmonic maps with finite $q$-energy},  Trans. Amer. Math. Soc. 368 (2016), no. 2, 787-f825.

\bibitem{CW1}  B.-Y. Chen and S. W. Wei,  {\em Growth estimates for warping functions and their geometric applications.} Glasg. Math. J. {\bf 51}  (2009), no. 3, 579--592.

\bibitem{CW2} B.-Y. Chen and S. W. Wei, {\em Sharp growth estimates for warping functions in multiply warped product manifolds},
J. Geom. Symmetry Phys. {\bf 52} (2019) 27-46; arXiv:1809.05737.

\bibitem{CW3} B.-Y. Chen and S. W. Wei, {\em Riemannian submanifolds with concircular canonical field}, Bull. Korean Math. Soc. {\bf 56} (2019), no. 6, 1525-1537.
J. Geom. Symmetry Phys. {\bf 52} (2019) 27-46; arXiv:1809.05737.

\bibitem{CLW1} J.-T. Chen, Y. Li and S. W. Wei,  {\em Generalized Hardy type inequalities, Liouville theorems and Picard theorems in p-harmonic geometry}, Riemannian geometry and applications—Proceedings RIGA 2011, 95-108, Ed. Univ. Bucureşti, Bucharest, 2011. 

\bibitem{CLW2} J.-T. Chen, Y. Li and S. W. Wei, {\em Some geometric inequalities on manifolds with a pole. Riemannian Geometry and Applications—Proceedings}, RIGA 2014, 46–54, Editura Univ. Bucur., Bucharest, 2014.

\bibitem{C} D. G. Costa, Some new and short proofs for a class of
Caffarelli-Kohn-Nirenberg type inequalities. J. Math. Anal. Appl. 337
(2008), 311-317.

\bibitem{Gi}  E. de Giorgi, {\it Una estensione del theorema di Bernstein},
Ann. Scuola Norm. Sup. Pisa (3) 19 (1965), 79-85.

\bibitem{DLW} Y. X. Dong, H. Z. Lin and S. W. Wei, {\it $L^2$ curvature pinching theorems and vanishing theorems on complete Riemannian manifolds}Hami. Tohoku Math. J. (2) 71 (2019), no. 4, 581-607. 

\bibitem{DW} Y. X. Dong and S. W. Wei, {\it On vanishing theorems for vector bundle valued $p$-forms and their applications},  Comm. Math. Phy. 304, no. 2, (2011), 329-368. arXive: 1003.3777.

\bibitem{FF} H. Federer and  W. H. Fleming,  {\it   Normal and
integral currents}, Ann. of Math. 72, (1960), 458-520.

\bibitem{Fl} W.H. Fleming, {\it On the oriented Plateau problem}, Rend. Circ. Mat. Palermo (2) 11, (1962), 69--90.


\bibitem{G} C. Gherghe, {\em On a gauge-invariant functional}, Proc. Edinb. Math. Soc. (2) 53 (2010), no. 1, 143.
.
\bibitem{GW} R.E. Greene and H. Wu, {\it Function theory on manifolds which
posses a pole}, Lecture Notes in Math. 699 (1979), Springer-Verlag, ii+215 pp.

\bibitem{Gr} A. Grigor'yan, {\em Analytic and geometric background of recurrence and non-explosion of the Brownian motion on Riemannian manifolds}, Bull. Amer. Math. Soc. (N.S.) 36 (1999), no. 2, 135-249.

\bibitem{H1} R.S. Hamilton {\it Harmonic maps of manifolds with boundary}, Lecture Notes In Math, No. 471 (1975), Springer-Verlag, Berlin-New York,  i+168 pp.

\bibitem{HLRW} Y.B. Han, Y. Li, Y.B. Ren,  and S.W. Wei {\it New comparison theorems in Riemannian geometry}, Bull. Inst. Math. Acad. Sin. (N.S.) 9 (2014), no. 2, 163-186.

\bibitem{HL} R. Hardt and F.H. Lin {\it  Mapping minimizing
the $L^p$ norm of the gradient}, XL  Comm on. Pure and Applied
Math. (1987), 555-588.

\bibitem{HLP} G.H. Hardy, J.E. Littlewood, G. Polya, Inequalities, Cambridge Univ. Press, Cambridge, UK, 1952.

\bibitem{KW}  H. Karcher, J.C. Wood, {\it Non-existence results and growth
properties for harmonic maps and forms}, J. Reine Angew. Math. 353 (1984)
165-180.

\bibitem{LSC}  M. Lu, X.W. Shen and K.R. Cai, {\it Liouville type Theorem
for $p-$forms valued on vector bundle} (Chinese), J. of Hangzhou Normal Univ.
(Natural Sci.) (2008) no.7, 96-100.

\bibitem{Lu} S. Luckhaus {\em Partial H\"older continuity for
minima of certain energies among maps into a Riemannian manifold},
Indiana Univ. Math. J. 37 (1988), 349-367.

\bibitem{Mi} E. Mitidieri, A simple Approach to Hardy Inequalities,
Mathematical Notes, Vol. 67, No. 4, (2000), 479-486.

 \bibitem{PRS} S. Pigola, M. Rigoli and A.G. Setti, {\em Vanishing and finiteness results in geometric analysis. A generalization of the Bochner technique}, Progress in Mathematics, 266. Birkhäuser Verlag, Basel, (2008). xiv+282 pp.

\bibitem{PS}  P. Price and L. Simon, {\it Monotonicity formulae for Harmonic maps and Yang-Mills fields}, preprint, Canberra 1982. Final verson by P. Price, {\it A monotonicity formula for Yang-Mills fields},
Manus. Math. 43 (1983), 131-166

\bibitem{SU}  R. Schoen, K. Uhlenbeck, {\it A regularity theory for harmonic
maps}, J. Diff. Geom. 17 (1982), 307-335

\bibitem{SiSiYa}  L. Sibner, R. Sibner and Y.S. Yang, {\it Generalized
Bernstein property and gravitational strings in Born-Infeld theorey},
Nonlinearity \textbf{20} (2007) 1193-1213

\bibitem{W2} S.W. Wei,  {\it  Representing homotopy groups and spaces of maps by p-harmonic maps.} Indiana Univ. Math. J. 47 (1998), no. 2, 625?670.

\bibitem{W1} S. W. Wei, {\em $p$-Harmonic geometry and related topics}, Bull. Transilvania Univ. Brasov No. 15(50), III
(2008), 1-4 Series Mathematics, Information, Physics 415-458.

\bibitem{W} S.W. Wei,  {\it The Unity of $p$-harmonic Geometry}, Recent development in geometry and analysis, Advanced Lectures in Mathematics 23, Higher Education Press and International Press, Beijing - Boston, (2012), 439-483.

\bibitem{W3} S. W. Wei, {\em Comparison theorems in Riemannian geometry with applications}. Recent advances in the geometry of submanifolds-dedicated to the memory of Franki Dillen (1963-2013), 185-209, Contemp. Math., 674, Amer. Math. Soc., Providence, RI, (2016). 

\bibitem{W4} S. W. Wei, {\em Growth Estimates for  Generalized Harmonic Forms on Noncompact Manifolds with Geometric Applications}. Geometry of Submanifolds, 247-269, Contemp. Math., 756, Amer. Math. Soc., Providence, RI, (2020).

\bibitem{WLW}
 S. W. Wei, J. F. Li, and L. Wu, 
{\em Generalizations of the Uniformization Theorem and Bochner's
Method in $p$-Harmonic Geometry}, Proceedings of the 2006 Midwest
Geometry Conference, Commun. Math. Anal. (2008), Conference 1, 46–68.

\bibitem{WL} S.W. Wei and Y. Li, {\it Generalized sharp Hardy type and Caffarelli-Kohn-Nirenberg type inequalities on Riemannian manifolds}, Tamkang J. Math. Vol 40, N0. 4, (2009), 401-413.

\bibitem{WW} S.W. Wei and B.Y. Wu {\em Generalized Hardy type and Caffarelli-Kohn-Nirenberg type inequalities on Finsler manifolds}, Internat. J. Math. 31 (2020), no. 13, 2050109, 27 pp.

\bibitem{Y1} S. T. Yau, {\em Some function-theoretic properties of complete Riemannian
 manifolds and applications to geometry}, Indiana Math. J.
 25(1976), 659-679.

\bibitem{Y2} S. T. Yau, {\em Erratum: Some function-theoretic properties of complete Riemannian manifold and their applications to geometry} [Indiana Univ. Math. J. 25 (1976), no. 7, 659–670; MR 54 \#5502]. Indiana Univ. Math. J. 31 (1982), no. 4, 607. 

\end{thebibliography}
\end{document}